\newcommand{\Mn}{{\rm gl}_n}
\newcommand{\GL}{{\rm GL}}
\newcommand{\SL}{{\rm SL}}
\newcommand{\gl}{{\rm gl}}
\newcommand{\sll}{{\rm sl}}
\newcommand{\Hom}{{\rm Hom}}
\newcommand{\Cx}{{\mathbb{C}}}
\newcommand{\Zx}{{\mathbb{Z}}}
\newcommand{\Qx}{{\mathbb{Q}}}
\newcommand{\Nx}{{\mathbb{N}}}
\newcommand{\Dx}{{\mathbb{D}}}
\newcommand{\Rx}{{\mathbb{R}}}
\newcommand{\Px}{{\mathbb{P}}}
\newcommand{\bfone}{{\mathbf{1}}}
\newcommand{\Aut}{{\rm Aut}}
\newcommand{\Mon}{{\rm Mon}}
\newcommand{\DGal}{{\rm DGal}}
\newcommand{\der}{\partial}
\newcommand{\calA}{{\mathcal A}}
\newcommand{\calB}{{\mathcal B}}
\newcommand{\calS}{{\mathcal S}}
\newcommand{\calR}{{\mathcal R}}
\newcommand{\calG}{{\mathcal G}}
\newcommand{\calD}{{\mathcal D}}
\newcommand{\calO}{{\mathcal O}}
\newcommand{\calM}{{\mathcal M}}
\newcommand{\calT}{{\mathcal T}}
\newcommand{\calL}{{\mathcal L}}
\newcommand{\calP}{{\mathcal P}}
\newcommand{\calC}{{\mathcal C}}
\newcommand{\calH}{{\mathcal H}}
\newcommand{\ord}{{\mathrm ord}}
\newcommand{\dd}{{\partial}}
\newcommand{\Soln}{{\rm Soln}}
\newcommand{\Perm}{{\rm Perm}}
\newcommand{\Sets}{{\rm Sets}}
\newcommand{\Rep}{{\rm Rep}}
\newcommand{\Vect}{{\rm Vect}}
\newcommand{\Sym}{{\rm Sym}}
\def\miprod{{\scriptstyle \bigcirc\hspace{-0.23cm}\raisebox{0.018cm}{${\scriptstyle s}$}}\hspace{0.1cm}}
\author[Michael F. Singer]{Michael F. Singer\\Department of Mathematics\\
           North Carolina State University\\
           Raleigh, NC 27695-8205\\
           singer@math.ncsu.edu}
\begin{document}

\newtheorem{thm}{Theorem}[section]
\newtheorem{lem}[thm]{Lemma}
\newtheorem{cor}[thm]{Corollary}
\newtheorem{prop}[thm]{Proposition}
\newtheorem{defin}[thm]{Definition}
\newtheorem{ex}[thm]{Example}
\newtheorem{exs}[thm]{Examples}
 
\chapter[Galois Theory of Linear  Differential Equations]{Introduction to the Galois Theory of Linear  Differential Equations}
\author {Michael F. Singer\\Department of Mathematics\\
           North Carolina State University\\
           Raleigh, NC 27695-8205\\
           singer@math.ncsu.edu}

\section{Introduction} This paper is an expanded version of the 10 lectures I gave as the  2006 London Mathematical Society Invited Lecture Series at the Heriot-Watt University, 31 July - 4 August 2006\footnote{I would like to thank the London Mathematical Society for inviting me to present these lectures, the Heriot-Watt University for hosting them  and the International Centre for the Mathematical Sciences for sponsoring a mini-programme on the Algebraic Theory of Differential Equations to complement these talks.  Thanks also go to the Edinburgh Mathematical Society and the Royal Society of Edinburgh, who provided support for some of the participants and to Chris Eilbeck,  Malcolm MacCallum, and Alexandre Mikhailov for organizing these events.  This material is based upon work supported by the National Science Foundation under Grant No. 0096842 and 0634123}. 
My goal was to give the audience an introduction to  the algebraic, analytic and algorithmic aspects of the Galois theory of linear differential equations by focusing on   some of the main ideas and philosophies and on examples.   There are several texts   (\cite{beukers, kaplansky, DAAG, magid, PuSi2003} to name a few) that give detailed expositions and I hope that the taste offered here will encourage the reader to dine more fully with one of these.

The rest of the paper is organized as follows. In Section~\ref{mfssec1}, {\em What is a Linear Differential Equation?}, I  discuss three ways to think about linear differential equations: scalar equations, linear systems and differential modules.  Just as it is useful to think of linear maps in terms of linear equations, matrices and associated modules, it will be helpful in future sections to go back and forth between the different ways of presenting linear differential equations.  

In Section~\ref{mfssec2}, {\em Basic Galois Theory and Applications}, I will give the basic definitions and describe the Galois correspondence.  In addition I will describe the notion of monodromy and its relation to the Galois theory.  I will end by giving several applications and ramifications, one of which will be to answer the question {\em Although $y = \cos x$ satisfies $y'' + y = 0$, why doesn't $\sec x$ satisfy a linear differential equation?}

In Section~\ref{mfssec3}, {\em Local Galois Theory}, I will discuss the formal solution of a linear differential equation at a singular point and describe the asymptotics which  allow one to associate with it an analytic solution in a small enough sector at that point.  This will involve a discussion of Gevrey asymptotics,  the Stokes phenomenon and its relation to the Galois theory. A motivating question (which we will answer in this section) is {\em In what sense does the function }
\[ f(x) = \int_0^\infty \frac{1}{1+\zeta} e^{-\frac{\zeta}{x}}d\zeta\]
{\em represent the divergent series}
\[\sum_{n\geq 0}(-1)^nn!x^{n+1} \ ?\]

In Section~\ref{mfssec4}, {\em Algorithms}, I turn to a discussion of  algorithms that allow us to determine properties of a linear differential equation and its Galois group.  I will show how category theory, and in particular the tannakian formalism, points us in a direction that naturally leads to algorithms. I will also discuss algorithms to find ``closed form solutions'' of linear differential equations. 

In Section~\ref{mfssec5}, {\em Inverse Problems}, I will consider the problem of which groups can appear as Galois groups of linear differential equations.   I will show how monodromy and the ideas in Section~\ref{mfssec3} play a role as well as ideas from Lie theory.

In Section~\ref{mfssec6}, {\em Families of Linear Differential Equations},  I will look at linear differential equations that contain  parameters and ask {\em How does the Galois group change as we vary the parameters?}.  This will lead to a discussion of a a generalization of the above theory to a Galois theory of parameterized linear differential equations.  

\section{What is a Linear Differential Equation?}\label{mfssec1}
I will develop an algebraic setting for the study of linear differential equations.  Although there are many interesting questions concerning differential equations in characteristic $p$ \cite{ mvdp1,mvdp2,vdp_charp,vdp_reduction},  we will restrict ourselves throughout this paper, without further mention,  to fields of characteristic $0$.
I begin with some basic definitions. 
\begin{defin}\label{def1} 1) A {\em differential ring} $(R,\Delta)$ is a ring $R$ with a  set $\Delta = \{\dd_1, \ldots , \dd_m\}$ of maps ({\em  derivations}) $\dd_i: R \rightarrow R$, such that 
\begin{enumerate}
\item $\dd_i(a+b) = \dd_i(a)+\dd_i(b), \ \ \dd_i(ab) = \dd_i(a)b+a\dd_i(b)$ for all $a,b\in R$, and
\item $\dd_i\dd_j = \dd_j \dd_i$ for all $i,j$.
\end{enumerate}
\noindent 2) The ring $C_R = \{ c \in R \ | \ \dd(c) = 0 \  \forall \ \dd \in \Delta\}$ is called the {\em ring of constants of $R$}.
\end{defin}

When $m = 1$, we say $R$ is an {\em  ordinary differential ring} $ (R,\dd)$. We frequently use the notation $a'$ to denote $\dd(a)$ for $a \in R$. A differential ring that is also a field is called a {\em differential field}.  If $k$ is a differential field, then $C_k$ is also a field.

\begin{exs}{\rm 
1) $(C^{\infty}(\Rx^m), \Delta = \{\frac{\dd}{\dd x_1}, \ldots , \frac{\dd}{\dd x_m}\}$)
 = infinitely differentiable functions on $\Rx^m$.\\[0.1in]
2)  $(\Cx(x_1, \ldots , x_m),  \Delta = \{\frac{\dd}{\dd x_1}, \ldots , \frac{\dd}{\dd x_m}\}$) = field of rational functions\\[0.1in]
3) $(\Cx[[x]], \frac{\dd}{\dd x})$  = ring of formal power series \\[0.05in] \hspace*{.5in}$\Cx((x)) =  \mbox{ quotient field of }\Cx[[x]] =\Cx[[x]][\frac{1}{x}]$\\[0.1in]
4) $(\Cx\{\{x\}\}, \frac{\dd}{\dd x})$  = ring of germs of convergent series \\[0.05in] \hspace*{.5in}$\Cx(\{x\}) =  \mbox{ quotient field of }\Cx\{\{x\}\} =\Cx\{\{x\}\}[\frac{1}{x}]$\\[0.1in]
5)  $(\calM_{\calO}, \Delta = \{\frac{\dd}{\dd x_1}, \ldots , \frac{\dd}{\dd x_m}\}$) = field of functions meromorphic on $\calO^{\rm open, connected} \subset \Cx^m$}
\end{exs}
The following result of Seidenberg \cite{sei58, sei69} shows that many examples reduce to Example 5) above:

\begin{thm} Any differential field $k$, finitely generated over $\Qx$, is isomorphic to a differential subfield of some $\calM_\calO$.
\end{thm}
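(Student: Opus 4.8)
The plan is to produce an injective differential homomorphism from $k$ into the field of germs of meromorphic functions at a single point $a \in \Cx^m$; such germs continue analytically to honest meromorphic functions on a connected open neighborhood $\calO$, yielding $k \hookrightarrow \calM_\calO$ with each $\dd_i$ carried to $\partial/\partial x_i$. Since the germ field is the fraction field of the convergent power series ring $\Cx\{x_1 - a_1, \ldots, x_m - a_m\}$, which is a differential field under the partial derivatives, it suffices to map a finitely generated differential $\Qx$-subalgebra $R$ with $\mathrm{Frac}(R)=k$ injectively into $\Cx\{x-a\}$. First I would fix $R = \Qx[u_1, \ldots, u_N]$: take field generators of $k$, adjoin their derivatives (which, when $\mathrm{trdeg}_\Qx k < \infty$, are rational in the generators), and invert one denominator, obtaining a finitely generated domain closed under all $\dd_i$ with fraction field $k$. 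The differential structure is then recorded by rational functions $G_{ij}$ with $\dd_i u_j = G_{ij}(u)$, and the ideal $I$ of algebraic relations among the $u_j$ is a differential ideal.

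Next, embedding $\Qx \hookrightarrow \Cx$ and using that $\Cx$ has infinite transcendence degree over $\Qx$, I would choose a generic $\Cx$-point $c = (c_1, \ldots, c_N)$ of $\mathrm{Spec}\,R$, so that $u_j \mapsto c_j$ is itself a field embedding $k \hookrightarrow \Cx$, taking $c$ off the polar loci of the $G_{ij}$. I would then solve the overdetermined system $\partial \hat u_j / \partial x_i = G_{ij}(\hat u)$ with $\hat u_j(a) = c_j$. Its integrability conditions hold exactly because the abstract derivations commute, $\dd_i \dd_j = \dd_j \dd_i$, so a unique formal power series solution exists; the Cauchy--Kovalevskaya theorem (the $G_{ij}$ being analytic near $c$) upgrades it to a convergent solution $\hat u_j \in \Cx\{x-a\}$.

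I would then set $\phi\colon R \to \Cx\{x-a\}$ by $u_j \mapsto \hat u_j$ and verify it is an injective differential homomorphism. It respects $I$ because $I$ is a differential ideal: differentiating along the system gives $\partial_i[H(\hat u)] = (D_i H)(\hat u)$ for the induced derivation $D_i$, so every Taylor coefficient at $a$ of $H(\hat u)$ equals $(D^\alpha H)(c)$, which vanishes since $D^\alpha H \in I$ and $c \in V(I)$; hence $H(\hat u)=0$. It is a differential homomorphism by construction of the $\hat u_j$. Injectivity is free: the composite $\mathrm{eval}_a \circ \phi$ sends $u_j$ to $c_j$ and is the injective field embedding $k \hookrightarrow \Cx$, so $\ker \phi = 0$. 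Passing to fraction fields then gives $k \hookrightarrow \calM_\calO$.

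The hard part will be the analytic existence step. For $\mathrm{trdeg}_\Qx k$ finite the system is genuinely finite-dimensional and Cauchy--Kovalevskaya applies directly. If ``finitely generated'' is meant only in the differential sense, $k$ may have infinite transcendence degree, the relevant jet space is infinite-dimensional, and one cannot cite Cauchy--Kovalevskaya off the shelf; instead one must exploit the freedom in the infinitely many independent generic values determining the Taylor coefficients of the $\hat u_j$, choosing them to decay fast enough that the series converge while still defining a differential homomorphism. Making this growth estimate precise, without disturbing the algebraic relations, is the delicate technical core of Seidenberg's argument.
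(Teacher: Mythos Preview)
The paper does not prove this theorem; it merely states the result and cites Seidenberg's two papers \cite{sei58,sei69}, so there is no in-text argument to compare against. That said, your sketch is precisely the shape of Seidenberg's proof: realise $k$ as the fraction field of a finitely generated $\Qx$-algebra $R$ stable under the $\dd_i$, pick a generic $\Cx$-point $c$ using the infinite transcendence degree of $\Cx$ over $\Qx$, integrate the commuting vector fields on $\mathrm{Spec}(R\otimes\Cx)$ through $c$ to get analytic germs $\hat u_j$, and observe that evaluation at $a$ recovers the injective specialisation $R\hookrightarrow\Cx$, forcing $\phi$ to be injective. One small sharpening: the integrability condition you need is that the $\dd_i$ define commuting holomorphic vector fields on the smooth locus of the \emph{variety} $V(I)$ near $c$, not identities among the rational functions $G_{ij}$ on the ambient $\Cx^N$; thinking geometrically avoids the worry that the $G_{ij}$ are only determined modulo $I$. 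You have also correctly located the genuine difficulty: when ``finitely generated'' is read differentially (infinite transcendence degree), Cauchy--Kovalevskaya is unavailable, and Seidenberg's 1969 paper handles exactly this by the growth-control argument you anticipate.
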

We wish to consider and compare three different versions of the notion of a linear differential equation.
\begin{defin} Let $(k,\dd)$ be a differential field. \begin{enumerate}
 \item A {\em scalar linear differential equation} is an equation of the form 
 \[L(y) = a_ny^{(n)} + \ldots + a_0y = 0 , \ a_i \in k.\]
 \item A {\em matrix linear differential equation} is an equation of the form
 \[Y' = AY, \ A \in \Mn(k)\]
 where $\Mn(k)$ denotes the ring of $n\times n$ matrices with entries in $k$.
 \item A {\em differential module} of dimension $n$ is an $n$-dimensional $k$-vector space $M$ with a map  $\dd:M\rightarrow M$ satisfying 
\[\dd(fm) = f'm+f\dd m \mbox{ for all } f \in k, m \in M.\]
\end{enumerate}
\end{defin}

We shall show that these three notions are equivalent and give some further properties.\\[0.1in]

\noindent \underline{\bf From scalar to matrix equations:}  Let $L(y) = y^{(n)} + a_{n-1} y^{(n-1)} + \ldots + a_0 y = 0$. If we let
 $y_1 = y, y_2 = y', \ldots y_n = y^{(n-1)}$, then we have
\[\left(\begin{array}{c} y_1\\y_2\\ \vdots \\ y_{n-1}\\y_n \end{array}\right)' = \left(\begin{array}{ccccc}0&1&0&\ldots& 0\\ 0& 0 & 1& \ldots & 0 \\ \vdots &\vdots &\vdots &\vdots &\vdots \\ 0&0&0&\ldots& 1\\-a_0& -a_1& -a_2 & \ldots & -a_{n-1}\end{array}\right) \left(\begin{array}{c} y_1\\y_2\\ \vdots \\ y_{n-1}\\y_n \end{array}\right) \]
We shall write this last equation as $Y' = A_LY$ and refer $A_L$ as the companion matrix of the scalar equation and the matrix equation as the companion equation.  Clearly any solution of the scalar equation yields a solution of the companion equation and {\em vice versa}.\\[0.1in]
\noindent \underline{\bf From matrix equations to differential modules (and back):}\\[0.1in]
Given $Y' = AY, \ A \in \gl_n(k)$,  we construct a differential module in the following way:  Let $M = k^n, \ e_1, \ldots, e_n$ the usual basis.  Define $\dd e_i = -\sum_j a_{j,i} e_j, $ {\em i.e.,} $\dd e = -A^t e$.  Note that if $m = \sum_if_ie_i$ then $\dd m = \sum_i(f_i' - \sum_ja_{i,j}f_j)e_i$.  In particular, we have that $\dd m = 0$ if and only if 
\[\left(\begin{array}{c} f_1\\ \vdots \\ f_n\end{array}\right)' = A  \left(\begin{array}{c} f_1\\ \vdots \\ f_n\end{array}\right)\]
It is this latter fact that motivates the seemingly strange definition of this differential module, which we denote by $(M_A, \dd)$.\\[0.1in]
Conversely, given a differential module $(M, \dd)$, \underline{select} a basis $e = (e_1, \ldots ,e_n)$.  Define $A_M \in \gl_n(k)$ by $\dd e_i = \sum_j a_{j,i}e_j$.  This yields a matrix equation $Y' = AY$.  If $\bar{e} = (\bar{e}_1,  \ldots ,\bar{e}_n)$ is another basis, we get another equation $Y' = \bar{A}Y$.  If $f$ and $\bar{f}$ are vectors with respect to these two bases and $f = B\bar{f}, \ B \in \GL_n(k)$, then 
\[\bar{A} = B^{-1}AB - B^{-1}B' \ . \]
\begin{defin}\label{diffmodule} Let $(M_1, \dd_1)$ and $(M_2,\dd_2)$ be differential modules.\\
1) A {\em differential module homomorphism} $\phi:M_1 \rightarrow M_2$ is a $k$-linear map $\phi$ such that $\phi(\dd_1(m)) = \dd_2(\phi(m))$ for all $m \in M_1$.\\
2) The differential modules $(M_1, \dd_1)$ and $(M_2, \dd_1)$ are {\em isomorphic} if there exists a bijective differential homomorphism $\phi:M_1 \rightarrow M_2$.\\
3) Two differential equations $Y' = A_1Y$ and $Y' = A_2Y$ are {\em equivalent} if the differential modules $M_{A_1}$ and $M_{A_2}$ are isomorphic
\end{defin}
Instead of equivalent, some authors use the term ``gauge equivalent'' or ``of the same type''. \\[0.1in]
Differential modules offer us an opportunity to study linear differential equations in a basis-free environment.  Besides being of theoretical interest, it is important in computations to know that a concept is independent of bases, since this allows one to then select a convenient basis in which to compute.  \\[0.1in]
\noindent Before we show how one can recover a scalar equation from a differential module, we show that the standard constructions of linear algebra can be carried over to the context of differential modules. Let $(M_1, \dd_1)$ and $(M_2, \dd_2)$ be differential modules and assume that for certain bases these correspond to the equations $Y' = A_1Y$ and $Y' = A_2Y$.\\[0.1in]  
\noindent  In Table~\ref{tab1} we list  some standard linear algebra constructions and how they generalize to differential modules. In this table, $I_{n_i}$ represents the $n_i\times n_i$ identity matrix and for two matrices $A = (a_{i,j})$ and $B$, $A\otimes B$ is the matrix where the $i,j$-entry of $A$ is replaced by $a_{i,j}B$.  Also note that if $f \in \Hom_k(M_1,M_2)$ then $\dd(f) = 0$ if and only if $f(\dd m_1) = \dd_2(f(m_1))$, that is, if and only if $f$ is a differential module homomorphism. \\[0.1in]
 \noindent Referring to the table,  it is not hard to show that $\Hom_k(M_1, M_2) \simeq M_1 \otimes M_2^*$ as differential modules.  Furthermore, given $(M,\dd)$ with $\dim_k(M) = n$, corresponding to a differential equation $Y' = AY$, we have that $M \simeq \oplus_{i=1}^n \bfone_k$if and only if there exist $y_1, \ldots , y_n$ in $k^n$, linearly independent over $k$ such that $y_i' = Ay_i$.  \\[0.1in]

\setcounter{table}{0}
\begin{table*}\label{table1}
\caption{}
\label{tab1}
$\begin{array}{|l|l|l|}\hline
\mbox{Construction} & \hspace{.6in}\dd &\mbox{Matrix Equation}\\\hline
(M_1\oplus M_2, \dd) & \dd(m_1\oplus m_2) =  & Y' = \left(\begin{array}{cc}A_1&0\\0&A_2\end{array}\right)Y\\ 
& \ \dd_1m_1\oplus\dd_2 m_2 & \\\hline
(M_1\otimes M_2, \dd) &\dd(m_1\otimes m_2) =  & Y' =  \\
 & \  \dd_1m_1\otimes m_2 + m_1 \otimes \dd m_2 & \ \ (A_1\otimes I_{n_2} + I_{n_1}\otimes A_2)Y\\ \hline
(\Hom_k(M_1,M_2), \dd)& \dd(f)(m) =  &  Y' = YA_2^T-A_1^TY\\
 & \   f(\dd_1m) - \dd_2(f(m)) & \\ \hline 
  \bfone_k =  (k, \dd) =  \mbox{trivial} & \dd \equiv 0 & Y' = 0\\
 \mbox{  differential module} & & \\ \hline
 (M^*, \dd)= & \dd(f)(m) = f(\dd(m))& Y' = -A^TY\\
 \Hom_k(M, \bfone_k) & &  \\ \hline
 \end{array}$
 \end{table*}

\noindent \underline{\bf From matrix to scalar linear differential equations:}
Before I discuss the relationship between matrix and scalar linear differential equations, I need one more concept.

\begin{defin} Let $k$ de a differential field.  The {\em ring of differential operators over $k$} = $k[\dd]$ is the ring of noncommutative polynomials $\{a_n \dd^n + \ldots +a_1 \dd + a_0 \ | \ a_i \in k\}$ with coeffcients in $k$, where multiplication is determined by $\dd \cdot a = a' + a \dd$ for all $a \in k$.
\end{defin}

\noindent We shall refer to the degree of an element $L \in k[\dd]$ as its order $\ord L$. The following properties are not hard to check (\cite{PuSi2003}, Chapter 2.1):
\begin{lem}\label{euclid} Let $L_1 \neq 0, L_2 \in k[\dd]$.\\[0.05in]
\noindent 1) There exist unique $Q, R \in k[\dd]$  with $\ord R < \ord L_1$ such that $L_2 = QL_1+R$.\\[0.05in]
2) Every left ideal of $k[\dd]$ is of the form $k[\dd]L$ for some $L \in k[\dd]$.
\end{lem}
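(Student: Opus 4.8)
The plan is to model the proof on the classical Euclidean-algorithm argument in $k[x]$, the one extra ingredient being a careful accounting of how left multiplication by powers of $\dd$ affects leading coefficients. First I would establish the multiplicative structure of $k[\dd]$. The defining relation $\dd\cdot a = a' + a\dd$ gives, by induction on $j$, that $\dd^j\cdot a = a\dd^j + (\text{terms of order} \leq j-1)$ for every $a\in k$, so that the top coefficient is unchanged. From this it follows that for nonzero $P = a_p\dd^p + \cdots$ and $Q = b_q\dd^q + \cdots$ (with $a_p,b_q$ the leading coefficients) the product $PQ$ has leading term $a_pb_q\dd^{p+q}$. Since $k$ is a field, $a_pb_q \neq 0$, so $\ord(PQ) = \ord P + \ord Q$ and $k[\dd]$ has no zero divisors. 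This additivity of $\ord$ is the property I will lean on repeatedly.

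For existence in part 1) I would induct on $\ord L_2$. Write $L_1 = a_n\dd^n + \cdots$ with $a_n\neq 0$. If $\ord L_2 < n$, take $Q=0$ and $R = L_2$. Otherwise let $m = \ord L_2 \geq n$ with leading coefficient $b_m$; by the computation above, $(b_ma_n^{-1})\dd^{m-n}L_1$ has the same leading term $b_m\dd^m$ as $L_2$, so $\tilde L_2 := L_2 - (b_ma_n^{-1})\dd^{m-n}L_1$ has order strictly less than $m$. The inductive hypothesis applied to $\tilde L_2$ gives $\tilde L_2 = \tilde Q L_1 + R$ with $\ord R < n$, whence $L_2 = \bigl((b_ma_n^{-1})\dd^{m-n} + \tilde Q\bigr)L_1 + R$. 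For uniqueness, if $Q_1L_1 + R_1 = Q_2L_1 + R_2$ with both remainders of order $< n$, then $(Q_1-Q_2)L_1 = R_2 - R_1$; were $Q_1 \neq Q_2$, the left side would have order $\geq n$ by additivity of $\ord$, while the right side has order $< n$, a contradiction. Hence $Q_1 = Q_2$ and $R_1 = R_2$.

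Part 2) then follows by the standard principal-ideal argument. Given a nonzero left ideal $I$, choose $L\in I$ of minimal order. Clearly $k[\dd]L \subseteq I$. Conversely, for any $L_2\in I$ part 1) gives $L_2 = QL + R$ with $\ord R < \ord L$; since $R = L_2 - QL \in I$, minimality of $\ord L$ forces $R = 0$, so $L_2 \in k[\dd]L$. Thus $I = k[\dd]L$.

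The one genuinely noncommutative point, and the step I expect to be the main obstacle to get exactly right, is the leading-coefficient bookkeeping in the first paragraph: because $\dd$ does not commute with $k$, one must verify that left multiplication by $\dd^{m-n}$ reproduces the leading coefficient undisturbed, the lower-order error terms $a_n', a_n'',\ldots$ never reaching the top degree. Once that is nailed down, the additivity of $\ord$ renders the rest of the proof formally identical to the commutative case.
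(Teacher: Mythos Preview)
Your proof is correct and is exactly the standard argument. The paper itself does not prove this lemma; it merely states it with the remark that the properties ``are not hard to check'' and refers to Chapter~2.1 of \cite{PuSi2003}, so there is nothing to compare against beyond noting that your approach is the expected one.
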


\noindent We note that any differential module $M$ can be considered a left $k[\dd]$-module and conversely, any left $k[\dd]$-module that is finite dimensional as a $k$-vector space is a differential module.    In fact we have a stronger statement:
\begin{thm} (Cyclic Vector Theorem)  Assume there exists an $a \in k$ such that $a' \neq 0$.  Every differential module is of the form $k[\dd]/k[\dd]L$ for some $L \in k[\dd]$. \end{thm}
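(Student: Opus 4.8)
The plan is to reduce the statement to the existence of a \emph{cyclic vector} and then to produce one by an inductive perturbation argument in which the hypothesis $a'\neq 0$ plays the decisive role.

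\emph{Reduction.} Let $n=\dim_k M$. Suppose I can find $m\in M$ such that $m,\der m,\ldots,\der^{n-1}m$ are linearly independent over $k$. Since $M$ is a left $k[\der]$-module, the map $\phi:k[\der]\to M$, $P\mapsto Pm$, is a homomorphism of left $k[\der]$-modules. Its image is a $k$-subspace containing $m,\der m,\ldots,\der^{n-1}m$, hence all of $M$, so $\phi$ is surjective. Its kernel is a left ideal, which by Lemma~\ref{euclid}(2) equals $k[\der]L$ for some $L$. Thus $M\simeq k[\der]/k[\der]L$, and by the division algorithm of Lemma~\ref{euclid}(1) this quotient has $k$-dimension $\ord L$; comparing with $\dim_k M=n$ forces $\ord L=n$. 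So everything comes down to producing a cyclic vector $m$.

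\emph{Existence of a cyclic vector.} For $m\in M$ let $d(m)$ be the largest $r$ with $m,\der m,\ldots,\der^{r-1}m$ independent, and let $r$ be the maximum of $d(m)$ over all $m\in M$; fix an $m$ attaining it. I claim $r=n$. If not, then $W=\langle m,\der m,\ldots,\der^{r-1}m\rangle$ is a proper subspace, and it is $\der$-stable: by maximality $\der^r m\in W$, whence $\der W\subseteq W$ and $W$ is a differential submodule. Choose $e\in M\setminus W$ and consider the perturbed vectors $m_f=m+fe$ for $f\in k$. Because $\der^i m\in W$ for all $i$, their reductions modulo $W$ satisfy $\overline{\der^i m_f}=\der^i(f\bar e)$ in the quotient differential module $M/W$, so the ``new directions'' produced by the perturbation are governed by $f$; my aim is to choose $f$ so that $m_f,\der m_f,\ldots,\der^r m_f$ become independent, which would give $d(m_f)\ge r+1$ and contradict the maximality of $r$.

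\emph{The crux.} The independence of $m_f,\der m_f,\ldots,\der^r m_f$ is equivalent to the nonvanishing of a determinant $\Delta(f)$ whose entries are the coordinates of these vectors in a basis extending $m,\der m,\ldots,\der^{r-1}m$; expanding by the Leibniz rule shows $\Delta(f)$ is a differential polynomial in $f$ over $k$. The whole argument rests on finding $f\in k$ with $\Delta(f)\neq 0$, and this is exactly where the hypothesis enters: in the most degenerate situation the leading contributions cancel and $\Delta(f)$ collapses to an expression governed by $f'$. One sees this already for $n=2,\ r=1$: with $\der m=\gamma m$ and $\der e=\alpha m+\beta e$ the determinant of $m_f$ and $\der m_f$ in the basis $(m,e)$ is $f'+(\beta-\gamma)f-\alpha f^2$, which reduces to $f'$ when $\alpha=0$ and $\beta=\gamma$. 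Since some $a\in k$ has $a'\neq 0$, one can always evade the vanishing locus (take $f=a$ in the degenerate case), whereas over a field of constants no such $f$ need exist---precisely why the hypothesis is indispensable. I expect the main obstacle to be the bookkeeping that shows $\Delta(f)$ is not identically zero on $k$ in the general case; the reduction and the maximality framework are routine by comparison.
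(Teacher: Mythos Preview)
The paper does not give its own proof of this theorem; it simply states the result and cites several references (Churchill--Kovacic, Cope, Deligne, Jacobson, Katz), pointing to Chapter~2.1 of \cite{PuSi2003} for two proofs. So there is no in-paper argument to compare against, and your outline in fact follows one of the standard perturbation proofs found in those references.

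Your reduction step is correct, and so is the observation that the span $W$ of $m,\der m,\ldots,\der^{r-1}m$ is a $\der$-submodule when $r=d(m)$ is maximal. The gap is precisely where you locate it, but it is more than bookkeeping. Two separate things must be established at the ``crux'': first, that the differential polynomial $\Delta$ is not identically zero as a formal expression (your $2$-dimensional example is suggestive but proves nothing about the general shape of $\Delta$; in particular you have not shown that cancellation cannot kill every term); second, that a nonzero differential polynomial over $k$ actually takes a nonzero value somewhere on $k$. The latter is exactly what fails over a field of constants and must be squeezed out of the hypothesis $a'\neq 0$. One way: from $a'\neq 0$ and characteristic~$0$ one checks that $a$ is transcendental over $C_k$, hence $1,a,\ldots,a^N$ have nonvanishing Wronskian, which lets one hit any prescribed jet $(f,f',\ldots,f^{(N)})$ with a $C_k$-linear combination of powers of $a$; this is enough to make any nonzero differential polynomial of bounded order nonvanish. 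Alternatively, several of the cited proofs (e.g.\ Katz's) sidestep the ``generic $f$'' issue entirely by making an explicit choice such as $f=a^N$ for suitable $N$ and computing directly. Either way, this step carries the actual content of the theorem and needs to be written out rather than illustrated.
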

This result has many proofs (\cite{churchill-kovacic,cope1,cope2, deligne_LNM,  jacobson, katz}  to name a few), two of which can be found in Chapter 2.1 of \cite{PuSi2003}.

\begin{cor} (Systems to Scalar equations) Let $k$ be as above.  Every system $Y' = AY$ is equivalent to a scalar equation $L(y) = 0$. \end{cor}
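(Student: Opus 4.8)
The plan is to deduce the corollary directly from the Cyclic Vector Theorem together with the dictionary between systems, modules, and scalar equations developed above; the hypothesis ``$k$ as above'' is precisely the existence of an $a \in k$ with $a' \neq 0$ needed to invoke that theorem. First I would attach to the system $Y' = AY$, with $A \in \gl_n(k)$, its differential module $(M_A, \dd)$, which is $n$-dimensional over $k$. The Cyclic Vector Theorem then furnishes an operator $L \in k[\dd]$ and an isomorphism of differential modules $M_A \simeq k[\dd]/k[\dd]L$.

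Next I would determine the order of $L$ and normalize it. By part 1) of Lemma~\ref{euclid}, each class in $k[\dd]/k[\dd]L$ has a unique representative of order $< \ord L$, so the images of $1, \dd, \ldots, \dd^{\ord L - 1}$ form a $k$-basis and a dimension count forces $\ord L = n$. Since $k$ is a field, left-multiplying $L$ by the inverse of its leading coefficient does not change the left ideal $k[\dd]L$, so I may take $L = \dd^n + a_{n-1}\dd^{n-1} + \cdots + a_0$ monic, with associated scalar equation $L(y) = y^{(n)} + a_{n-1}y^{(n-1)} + \cdots + a_0 y = 0$.

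It then remains to identify $k[\dd]/k[\dd]L$ with the differential module of the companion system $Y' = A_LY$ of $L(y) = 0$. For this I would exhibit a cyclic vector in $M_{A_L}$: the ``scalar to matrix'' construction sends a solution $y$ of $L(y)=0$ to $(y, y', \ldots, y^{(n-1)})$, so the coordinate tracking $y$ generates $M_{A_L}$ under $\dd$ and is annihilated precisely by $L$. Hence $M_{A_L} \simeq k[\dd]/k[\dd]L$ as well, and combining the two isomorphisms gives $M_A \simeq M_{A_L}$, which by Definition~\ref{diffmodule} is exactly the assertion that $Y' = AY$ is equivalent to the scalar equation $L(y) = 0$.

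The only delicate point is this last identification. Carrying it out amounts to writing down the matrix of $\dd$ in the cyclic basis $\bar 1, \bar\dd, \ldots, \overline{\dd^{\,n-1}}$ and matching it against the companion matrix $A_L$; because the passage ``matrix $\leftrightarrow$ module'' above is governed by the sign-and-transpose convention $\dd e = -A^t e$, the computation recovers $A_L$ only after one keeps careful track of that convention. This is routine bookkeeping, and everything substantive is already supplied by the Cyclic Vector Theorem and the Euclidean division in Lemma~\ref{euclid}.
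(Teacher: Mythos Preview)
There is a genuine gap in the last step. With the paper's convention $\dd e=-A^{t}e$, the module $M_{A_L}$ is \emph{not} isomorphic to $k[\dd]/k[\dd]L$, so the ``routine bookkeeping'' you defer does not close. Already for $n=1$ and $L=\dd+a$ the companion matrix is $A_L=(-a)$, hence $\dd e_1=a\,e_1$ in $M_{A_L}$ and $e_1$ is annihilated by $\dd-a$, not by $L=\dd+a$; over $k=\Cx(x)$ with $a=1$ these two rank-one modules are non-isomorphic. In general one computes $\dd^{i}e_1=(-1)^{i}e_{i+1}$ in $M_{A_L}$, and the monic operator killing $e_1$ differs from $L$ by alternating signs on the coefficients. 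Your heuristic (``the coordinate tracking $y$ \ldots\ is annihilated by $L$'') conflates solutions of $Y'=A_LY$ with basis vectors of the module $M_{A_L}$; these live on opposite sides of the $\dd e=-A^{t}e$ convention.

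The paper sidesteps this by passing to the dual: it applies the Cyclic Vector Theorem to $M_{A^{*}}$ with $A^{*}=-A^{t}$ (equivalently, to $(M_A)^{*}$). In $M_{B^{*}}$ one has $\dd e=Be$, so a direct check gives $M_{(A_L)^{*}}\simeq k[\dd]/k[\dd]L$; thus $M_{A^{*}}\simeq k[\dd]/k[\dd]L\simeq M_{(A_L)^{*}}$, and undoing the duality yields $M_A\simeq M_{A_L}$. Your route can also be repaired without dualizing: having found $L=\dd^{n}+\sum_j b_j\dd^{j}$ with $M_A\simeq k[\dd]/k[\dd]L$, set $\tilde L=\dd^{n}+\sum_j(-1)^{n-j}b_j\dd^{j}$; then the same sign computation shows $M_{A_{\tilde L}}\simeq k[\dd]/k[\dd]L\simeq M_A$, and $\tilde L(y)=0$ is the desired scalar equation. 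Either way the sign discrepancy is substantive, not cosmetic, and must be handled explicitly.
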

\begin{proof} Let $A^* = -A^t$.  Apply the Cyclic Vector Theorem to $M_{A^*}$ to find an $L \in k[\dd]$ such that $M_{A^*} = k[\dd]/k[\dd]L$.  If $A_L$ is the companion matrix of $L$, a calculation shows that $M_A \simeq M_{A_L}$.\end{proof}

\noindent We note that the hypothesis that $k$ contain an element $a$ with $a' \neq 0$ is necessary. If the derivation is trivial on $k$, then two matrix equations $Y' = A_1Y$ and $Y' = A_2Y$ are equivalent if and only if the matrices are similar.  There exist many examples of matrices not similar to a companion matrtix.\\[0.05in]
Before we leave (for now) our discussion of the ring $k[\dd]$, I wish to make two remarks.\\[0.05in]
First, we can define a map $i :k[\dd] \rightarrow k[\dd]$  by $ i(\sum a_j\dd^j) = \sum (-1)^j \dd^j a_j,$.  This map is is an involution ($i^2 = id$). Denoting $i(L) = L^*$, we have that $(L_1L_2)^* = L_2^*L_1^*$.  The operator $L^*$ is referred to as the {\em adjoint} of $L$.  Using the adjoint one sees that there is right euclidean division as well and that every right ideal of $k[\dd]$ is also principal.  \\[0.05in]
Second,  Lemma~\ref{euclid}.2 allows us to define 
\begin{defin} Let $L_1, L_2 \in k[\dd]$.\\[0.05in]
1) The {\em least common left multiple LCLM($L_1,L_2$)} of $L_1$ and $L_2$ is the monic generator of $k[\dd]L_1\cap k[\dd]L_2$.\\
2) The {\em greatest common right divisor GCRD($L_1, L_2$)} of $L_1$ and $L_2$ is the monic generator of $k[\dd]L_1+ k[\dd]L_2$.
\end{defin} 
A simple modification of the usual euclidean algorithm allows one to find these objects. One can also define least common right multiples and greatest common left divisors using right ideals.  \\[0.1in]
\noindent \underline{\bf Solutions} I will give properties  of solutions of scalar and matrix linear differential equations and define the notion of the solutions of a differential module.
 Let $(k,\dd)$ be a differential field with constants $C_k$ (see Definition~\ref{def1}). 
 \begin{lem} \label{dependence} Let $v_1, \ldots v_r \in k^n$ satisfy $v_i' = Av_i$, $A \in \gl_n(k)$. If $v_1, \ldots , v_r$ are $k$-linearly dependent,  then they are $C_k$-linearly dependent.
 \end{lem}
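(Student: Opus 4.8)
The plan is to run the classical ``minimal relation'' argument, exploiting the fact that differentiating a linear relation among solutions of $Y'=AY$ produces another relation among the \emph{same} vectors. First I would suppose, for contradiction in spirit (or simply to exhibit the desired dependence), that there is a nontrivial $k$-linear relation $\sum_{i=1}^r c_i v_i = 0$ with the $c_i \in k$ not all zero. Among all such nontrivial relations I would choose one with the fewest nonzero coefficients, and after reindexing assume the nonzero coefficients are exactly $c_1, \ldots, c_s$. Dividing through by $c_1$, I may normalize so that $c_1 = 1$.

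The key step is to differentiate the relation and use the differential equation. Applying $\dd$ to $\sum_{i=1}^s c_i v_i = 0$ gives
\[
\sum_{i=1}^s c_i' v_i + \sum_{i=1}^s c_i v_i' = 0.
\]
Now the second sum vanishes: since each $v_i' = A v_i$, we have $\sum_{i=1}^s c_i v_i' = A\bigl(\sum_{i=1}^s c_i v_i\bigr) = A\cdot 0 = 0$. Hence $\sum_{i=1}^s c_i' v_i = 0$ is again a $k$-linear relation among the $v_i$. This is the crucial observation — the $A$-term is absorbed precisely because the $v_i$ solve the same system.

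Because $c_1 = 1$ we have $c_1' = 0$, so the new relation $\sum_{i=2}^s c_i' v_i = 0$ involves at most $s-1$ of the original vectors, strictly fewer than $s$. By the minimality of $s$, this relation must be trivial, forcing $c_i' = 0$ for all $i$; that is, every $c_i \in C_k$. Thus the original relation $\sum_{i=1}^s c_i v_i = 0$ is a nontrivial relation with constant coefficients, which is exactly $C_k$-linear dependence of $v_1, \ldots, v_r$.

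I do not expect a genuine obstacle here; the only point requiring care is the normalization $c_1 = 1$, which guarantees $c_1' = 0$ and hence that the differentiated relation is strictly shorter (rather than merely no longer). This normalization is what makes the minimality argument bite, and it is worth stating explicitly so that the descent step is clearly valid.
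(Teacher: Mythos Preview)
Your argument is correct and is essentially the paper's proof: the paper phrases the descent as induction on $r$ (reducing to the case where $v_2,\ldots,v_r$ are $k$-linearly independent, then differentiating the expression $v_1=\sum a_i v_i$), while you phrase it via a minimal-length relation, but the core step---differentiating the relation, absorbing the $A$-term via $v_i'=Av_i$, and concluding the coefficients are constant---is identical.
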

 \begin{proof} Assume, by induction,  that $v_2, \ldots, v_{r}$ are $k$-linearly independent and $v_1 = \sum_{i=2}^r a_iv_i, a_i \in k$. We then have
\[ 0 = v_1' - Av_1 = \sum_{i=2}^r a_i'v_i +\sum_{i=2}^r a_i(v_i'-Av_i) = \sum_{i=2}^r a_i'v_i \]
so by assumption, each $a_i' = 0$.\end{proof}
\begin{cor} \label{wronskiancor}Let $(k,\dd)$ be a differential field with constants $C_k$, $A\in \gl_n(k)$ and $L \in k[\dd]$.\\[0.05in]
\noindent 1) The solution space $\Soln_k(Y'=AY)$ of $Y'=AY$ in $k^n$ is a $C_k$-vector space of dimension at most $n$.\\[0.05in]
2) The elements $y_1, \ldots , y_r \in k$ are linearly dependent over $C_k$ if and only if the wronskian determinant 
\[wr(y_1, \ldots , y_r) = \det \left(\begin{array}{ccc} y_1&\ldots &y_r\\ \vdots & \vdots & \vdots \\ y_1^{(r-1)} & \ldots &y_r^{(r-1)} \end{array}\right)\]
is zero.\\[0.05in]
3) The solution space $\Soln_k(L(y)=0) = \{y \in k \ | \ L(y) = 0 \}$ of $L(y)=0$ in $k$ is a $C_k$-vector space of dimension at most $n$.
\end{cor}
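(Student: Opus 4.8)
The plan is to handle the three parts in order, using Lemma~\ref{dependence} as the engine behind both dimension bounds and reserving an induction on $r$ for the Wronskian criterion, which I expect to be the crux. For part 1, I would first verify that $\Soln_k(Y'=AY)$ is a $C_k$-vector space: if $v,w$ satisfy $Y'=AY$ and $c,d\in C_k$, then $(cv+dw)'=cv'+dw'=A(cv+dw)$ because $c'=d'=0$, so the solution set is closed under $C_k$-linear combinations. For the dimension bound I take any $n+1$ solution vectors; as $n+1$ elements of the $n$-dimensional $k$-vector space $k^n$ they are $k$-linearly dependent, and Lemma~\ref{dependence} promotes this to $C_k$-linear dependence. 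Hence no $n+1$ solutions are $C_k$-independent, giving $\dim_{C_k}\Soln_k(Y'=AY)\le n$.

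For part 2, the forward direction is routine: a relation $\sum_i c_iy_i=0$ with $c_i\in C_k$ not all zero, differentiated $j$ times, gives $\sum_i c_iy_i^{(j)}=0$ for every $j$, so $(c_1,\dots,c_r)^t$ is a nonzero $k$-vector in the kernel of the Wronskian matrix and $wr(y_1,\dots,y_r)=0$. The converse is the substantive claim, and I would prove it by induction on $r$ (the case $r=1$ being $y_1=0$). In the inductive step, assuming $wr(y_1,\dots,y_r)=0$, if $wr(y_1,\dots,y_{r-1})=0$ I am done by induction; otherwise the first $r-1$ columns $Y_i=(y_i,\dots,y_i^{(r-1)})^t$ are $k$-independent while all $r$ columns are $k$-dependent, so $Y_r=\sum_{i<r}b_iY_i$ with $b_i\in k$. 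Reading this relation row by row, differentiating the rows $0,\dots,r-2$, and subtracting the next row's relation yields $\sum_{i<r}b_i'y_i^{(j)}=0$ for $0\le j\le r-2$; since $wr(y_1,\dots,y_{r-1})\ne0$ this forces every $b_i'=0$, so the $b_i$ are constants and row $0$ reads $y_r=\sum_{i<r}b_iy_i$, a $C_k$-dependence.

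For part 3, I would reduce to part 1. After dividing by its (nonzero) leading coefficient we may take $L$ monic of order $n$; then $y\mapsto(y,y',\dots,y^{(n-1)})^t$ is a $C_k$-linear injection of $\Soln_k(L(y)=0)$ into $\Soln_k(Y'=A_LY)$, where $A_L$ is the companion matrix, so the bound of part 1 transfers. Alternatively one can invoke part 2 directly: for a solution $y^{(n)}$ is a $k$-combination of $y,\dots,y^{(n-1)}$, so among any $n+1$ solutions the bottom row of the $(n+1)\times(n+1)$ Wronskian is a $k$-combination of the others, the determinant vanishes, and part 2 forces a $C_k$-dependence.

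The main obstacle is the converse half of part 2: it is the only place where one must genuinely exploit the derivation rather than mere linear algebra, and the delicate point is checking that the coefficients $b_i$ expressing the dependent element are in fact killed by $\dd$. Everything else is either the closure computation of part 1 or a transfer of that bound along the companion construction in part 3.
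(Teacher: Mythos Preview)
Your proof is correct. Parts 1 and 3 match the paper's argument exactly: both invoke Lemma~\ref{dependence}, and your reduction of part 3 to part 1 via the companion matrix $A_L$ is precisely what the paper does.

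For the converse in part 2, however, you and the paper take genuinely different routes. You argue by induction on $r$, expressing the last column as a $k$-combination of the others and then differentiating row by row to force the coefficients into $C_k$. The paper instead takes a nonzero \emph{row} vector $(a_0,\dots,a_{r-1})$ in the left kernel of the Wronskian matrix, observes that every $y_i$ then satisfies the scalar equation $a_{r-1}y^{(r-1)}+\cdots+a_0y=0$, passes to the associated companion system, and applies Lemma~\ref{dependence} once more. Your approach is self-contained and avoids a second appeal to Lemma~\ref{dependence}; the paper's approach is more uniform, reducing every part of the corollary to that single lemma, and it nicely foreshadows the later theme that solution spaces are controlled by auxiliary linear operators. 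Both are standard and equally valid.
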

\begin{proof} 1) This follows immediately from Lemma~\ref{dependence}.\\[0.05in]
2) If $\sum_{i=1}^r c_iy_i=0$ for some $c_i \in C_k$, not all zero,  then $\sum_{i=1}^r c_iy_i^{(j)}=0$  for all $j$.  Therefore, $ wr(y_1, \ldots  , y_r) =0$.  Conversely if $wr(y_1, \ldots , y_r) =0$, then there exists a nonzero vector $(a_0, \ldots , a_{r-1})$ such that 
\[(a_0, \ldots , a_{r-1})\left(\begin{array}{ccc} y_1&\ldots &y_r\\ \vdots & \vdots & \vdots \\ y_1^{(r-1)} & \ldots &y_r^{(r-1)} \end{array}\right)=0\]
Therefore each $y_i$ satisfies the scalar linear differential equation $L(y) = a_{r-1}y^{(r-1)} + \ldots + a_0y = 0$ so each vector $v_i = (y_i, y_i^\prime, \ldots , y_i^{(r-1)})^t$ satisfies $Y' = A_LY$, where $A_L$ is the companion matrix. Lemma~\ref{dependence} implies that the $v_i$ and therefore the $y_i$ are linearly dependent over $C_k$.\\[0.05in]
3) Apply Lemma~\ref{dependence} to $Y' = A_LY$. \end{proof}
In general, the dimension of the solutions space of a linear differential equation in a field is less than $n$.  In the next section we will construct a field such that over this field the solutions space has dimension $n$.  It will be useful to have the following definition
\begin{defin}\label{fundmatrixdef} Let $(k,\dd)$ be a differential field,  $A\in \gl_n(k)$ and $R$ a differential ring containing $k$.  A matrix $Z \in \GL_n(R)$ such that $Z' = AZ$ is called a {\em fundamental solution matrix} of $Y' = AY$.
\end{defin}
Note that if $R$ is a field and $Z$ is a fundamental solution matrix, then the columns of $Z$ form a $C_R$-basis of the solution space  of $Y' = AY$ in $R$ and that this solution space has dimension $n$.\\[0.1in]
\noindent Let $(M,\dd)$ be a differential module over $k$.  We define the solution space $\Soln_k(M)$ of $(M,\dd)$ to be the kernel $\ker_M\dd$ of $\dd$ on $M$.   As we have noted above, if $\{e_i\}$ is a basis of $M$ and $Y' = AY$ is the associated matrix differential equation in this basis, then $\sum_iv_ie_i \in \ker \dd$ if and only if $v' = Av$ where $v = (v_1, \ldots , v_n)^t$.  If $K \supset k$ is a differential extension of $k$, then $K\otimes_kM$ can be given the structure of a $K$-differential module, where $\dd(a\otimes m) = a'\otimes m+ a \otimes \dd m$.  We then define $\Soln_K(M)$ to be the kernel $\ker(\dd, K\otimes_kM )$ of $\dd$ on $K\otimes_kM$.

\section{Basic Galois Theory and Applications}\label{mfssec2}

\noindent \underline{\bf Galois theory of polynomials} The idea behind the Galois theory of polynomials is to associate to a polynomial a group (the group of symmetries of the roots that preserve all the algebraic relations among these roots) and deduce properties of the roots from properties of this group (for example, a solvable group implies that the roots can be expressed in terms of radicals). This idea can be formalized in the following way.\\[0.1in]
Let $k$ be a field (of characteristic $0$ as usual) and let $P(X) \in k[X]$ be a polynomial  of degree $n$ without repeated roots (i.e., ${\rm  GCD}(P.P') = 1$).  Let \[S =  k[X_1, \ldots ,X_n, \frac{1}{\prod(X_i-X_j)}]\] (the reason for the term $\frac{1}{\prod(X_i-X_j)}$ will be explained below) and let $I = (P(X_1), \ldots , P(X_n)) \triangleleft  S$ be the ideal generated by the $P(X_i)$.  The ring $S/I$ is generated by $n$ distinct roots of $P$ but does not yet reflect the possible algebraic relations among these roots.  We therefore consider any maximal ideal $M$ in $S$ containing $I$.  This ideal can be thought of as a maximally consistent set of algebraic relations among the roots.  We define
\begin{defin} 1) The {\em splitting ring} of the polynomial $P$ over $k$ is the ring
\[R = S/M = k[X_1, \ldots ,X_n, \frac{1}{\prod(X_i-X_j)}]/M \ .\]
2) The {\em Galois group} of $P$ (or of $R$ over $k$) is the group of automorphisms $\Aut(R/k)$.
\end{defin}
Note that since $M$ is a maximal ideal, $R$ is actually a field.  Furthermore, since $S$ contains $\frac{1}{\prod(X_i-X_j)}$, the images of the $X_i$ in $R$ are {\em distinct} roots of $P$.  So, in fact, $R$ coincides with the usual notion of a splitting field (a field generated over $k$ by the distinct roots of $P$) and as such, is unique up to $k$-isomorphism.  Therefore, $R$ is independent of the choice of maximal ideal $M$ containing $I$.  We will follow a similar  approach with linear differential equations.\\[0.1in]
\underline{\bf Galois theory of linear differential equations} Let $(k,\dd)$ be a differential field and $Y' = AY, A\in \gl_n(k)$ a matrix differential equation over $k$. We now want the Galois group to be the group of symmetries of solutions preserving  all algebraic {\em and differential} relations.  We proceed in a way similar to the above.\\[0.1in]
Let 
\[ S = k[Y_{1,1}, \ldots , Y_{n,n}, \frac{1}{\det (Y_{i,j})}]\]
where $Y = (Y_{i,j})$ is an $n\times n$ matrix of indeterminates.  We define a derivation on $S$ by setting $Y' = AY$.  The columns of $Y$ form $n$ independent solutions of the matrix linear differential equation $Y' =AY$ but we have not yet taken into account other possible algebraic and differential relations. To do this, let $M$ be any maximal {\em differential} ideal and let $R = S/M$.  We have now constructed a ring that satisfies the following definition
\begin{defin} Let $(k,\dd)$ be a differential field and $Y' = AY, A\in \gl_n(k)$ a matrix differential equation over $k$. A {\em Picard-Vessiot ring (PV-ring)} for  $Y' = AY$ is a differential ring $R$ over $k$ such that
\begin{enumerate} \item $R$ is a simple differential ring ({\em i.e.,} the only differential ideals are $(0)$ and $R$).
\item There exists a fundamental matrix $Z \in \GL_n(R)$ for the equation $Y' = AY$.
\item R is generated as a ring by $k$, the entries of $Z$ and $\frac{1}{\det Z}$.
\end{enumerate}
\end{defin} 
One can show that a PV-ring must be a domain and, if $C_k$ is algebraically closed, then any PV-rings for the same equation are $k$-isomorphic as differential rings and $C_k = C_R$.  Since a PV-ring is a domain we define a {\em PV-field} $K$ to be the quotient field of a PV-ring.  Assuming that $C_k$ is algebraically closed this is the same as saying that $K$ is generated over $k$ by the entries of a fundamental solution matrix and that $C_K = C_k$.  These facts are proven in Chapter 1.2 of \cite{PuSi2003}.  {\em From now on, we will always assume that $C_k$is algebraically closed} (see \cite{dyckerhoff, hendriks_vdp, DAAG, ulmer_prime} for results  when this is not the case) .

\begin{exs}\label{exPV}1) $k = \Cx(x) \ \ x' = 1 \ \ \alpha \in \Cx \ \ Y' = \frac{\alpha}{x}Y$
$S = k[Y,\frac{1}{Y}] \ \ \ \ Y' = \frac{\alpha}{x}Y$\\[0.1in]
\underline{Case 1: $\alpha = \frac{n}{m}, \ \ GCD(n,m) = 1$.}  I claim that   $\calR = k[Y,\frac{1}{Y}]/[Y^m - x^n] = k(x^{\frac{m}{n}})$.  To see this note that  the ideal $(Y^m - x^n) \triangleleft k[Y,\frac{1}{Y}]$ is a maximal ideal and closed under differentiation since $(Y^m-x^n)' = \frac{n}{x}(Y^m - x^n)$.\\[0.1in]
\underline{Case 2: $\alpha \notin \Qx$.} In this case, I claim that $(0)$ is a maximal differential  ideal and so $R = k[Y,\frac{1}{Y}]$.  To see this, first note that for any $f \in \Cx(x)$,
\[\frac{f'}{f} = \sum \frac{n_i}{(x-\alpha_i)}, n_i \in \Zx, \alpha_i \in \Cx\]
and this can never equal $\frac{N\alpha}{x}$.  Therefore $Y' = \frac{N\alpha}{x}Y$ has no nonzero solutions in $\Cx(x)$.  Now assume $I$ is a proper nonzero differential ideal in $K[Y, \frac{1}{Y}]$.  One sees that $I$ is of the form $I = (f)$ where  $f = Y^N+a_{N-1}Y^{N-1} + \ldots + a_0$, $N>0$.  Since $Y$ is invertible, we may assume that $a_0 \neq 0$.  Since $f' \in (f)$,  by comparing leading terms we have that $f' = \frac{N\alpha}{x}f$ and so $a_0' = \frac{N\alpha}{x}a_0$, a contradiction.\\[0.05in]
2) $k = \Cx(x) \ \ x' = 1  \ \ Y' = Y$.  Let $S = k[Y,\frac{1}{Y}]$.  An argument similar to Case 2 above shows that $(0)$ is the only proper differential ideal so the PV-ring is $k[Y, \frac{1}{Y}]$.
\end{exs}

As expected the differential Galois group is defined as
\begin{defin} Let $(k,\dd)$ be a differential field and $R$ a PV-ring over $k$. The {\em differential Galois group of $R$ over $k$}, $\DGal(R/k)$ is the group $\{ \sigma: R\rightarrow R \ | \ \sigma \mbox{ is a differential $k$-isomorphism}\}$. 
\end{defin}
If $R$ is the PV-ring associated with $Y' = AY, A\in \gl_n(k)$, we sometimes denote the differential Galois group by $\DGal(Y' = AY)$.  If $K$ is a PV-field over $k$, then the $k$-differential automorphisms of $K$ over $k$ are can be identified with the differential $k$-automorphisms of $R$ over $k$ where $R$ is a PV-ring whose quotient field is $K$. \\[0.1in]
Let $R$ be a PV-ring for the equation $Y'=AY$ over $k$ and $\sigma \in\DGal(R/k)$. Fix a fundamental solution matrix $Z$ and let $\sigma \in \DGal(R/k)$. We then have that  $\sigma(Z)$ is again a fundamental solution matrix of $Y'=AY$ and, a calculation shows that $(Z^{-1}\sigma(Z))' = 0$.  Therefore, $\sigma(Z) = Zm_\sigma$ for some $m_\sigma\in \GL_n(C_k)$.  This gives an injective group homomorphism $\sigma \mapsto m_\sigma$ of $\DGal(R/k)$ into $\GL_n(C_\sigma)$. If we select a different fundamental solution matrix, the images of the resulting maps would be conjugate.  The image has a further property.  
\begin{defin}A subgroup $G$ of $\GL_n(C_k)$ is said to be a {\em linear algebraic group} if it is  the zero set in $\GL_n(C_k)$ of a system of polynomials over $C_k$ in $n^2$ variables.
\end{defin}

With the identification above $\DGal(R/k) \subset \GL_n(C_k)$ we have 
\begin{prop} $\DGal(R/k) \subset \GL_n(C_k)$ is a linear algebraic group.\end{prop}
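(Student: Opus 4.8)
The plan is to identify the image of $\DGal(R/k)$ in $\GL_n(C)$, where $C = C_k$, with the stabilizer of a single ideal, and then to check that this stabilizer is cut out by polynomials over $C$. Write $R = S/M$ with $S = k[Y_{1,1}, \ldots, Y_{n,n}, 1/\det(Y_{i,j})]$ and $Y' = AY$, where $M$ is the maximal differential ideal used to build $R$; let $Z \in \GL_n(R)$ be the image of $(Y_{i,j})$, a fundamental solution matrix. For $c \in \GL_n(C)$ the substitution $\tau_c$ determined by $Y \mapsto Yc$ is a $k$-algebra automorphism of $S$ commuting with $\partial$ (since $(Yc)' = A(Yc)$ because $c$ is constant), and $\tau_c \tau_{c'} = \tau_{cc'}$. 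First I would prove
\[
\{\, m_\sigma : \sigma \in \DGal(R/k)\,\} = \{\, c \in \GL_n(C) : \tau_c(M) = M \,\}.
\]
The inclusion $\supseteq$ is clear: if $\tau_c(M) = M$ then $\tau_c$ descends to a differential $k$-automorphism $\sigma$ of $R$ with $\sigma(Z) = Zc$, so $c = m_\sigma$. For $\subseteq$, given $\sigma$ with $\sigma(Z) = Zm_\sigma$, the projection $\pi : S \to R$ satisfies $\pi \circ \tau_{m_\sigma} = \sigma \circ \pi$; comparing kernels and using that $\sigma$ is injective forces $\tau_{m_\sigma}(M) = M$.

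The core of the argument is to show that $G' := \{\, c \in \GL_n(C) : \tau_c(M) \subseteq M \,\}$ is Zariski closed over $C$. Since $S$ is Noetherian, $M = (g_1, \ldots, g_s)$, and $\tau_c(M) \subseteq M$ holds exactly when the image $\overline{\tau_c(g_i)}$ of each $\tau_c(g_i)$ vanishes in $R$. Expanding $\tau_c(g_i)$ and reducing modulo $M$ gives $\overline{\tau_c(g_i)} = \sum_\beta P_{i,\beta}(c)\, z_\beta$, a finite sum with $z_\beta \in R$ and each $P_{i,\beta}$ a polynomial in the entries of $c$ with coefficients in $k$. Writing the $z_\beta$ in a fixed $C$-basis of $R$ turns the vanishing of $\overline{\tau_c(g_i)}$ into finitely many polynomial conditions on the entries of $c$, still with coefficients in $k$; but because the entries of $c$ lie in $C$, expanding those $k$-coefficients along a $C$-basis of $k$ replaces each condition by finitely many polynomial equations over $C$. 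Hence $G'$ is the zero set in $\GL_n(C)$ of a system of polynomials over $C$.

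It remains to promote the containment $\tau_c(M) \subseteq M$ to equality, i.e.\ to see that $G'$ equals the group on the right above. Here $G'$ is a Zariski-closed submonoid of $\GL_n(C)$ (it contains the identity and is closed under products since $\tau_c\tau_{c'} = \tau_{cc'}$), and a standard fact makes any such submonoid a subgroup: for $c \in G'$ the descending chain of closed sets $G' \supseteq cG' \supseteq c^2G' \supseteq \cdots$ stabilizes, say $c^mG' = c^{m+1}G'$, whence $G' = cG'$ and $c^{-1} \in G'$. Applying $\tau_c$ to $\tau_{c^{-1}}(M) \subseteq M$ then yields $M \subseteq \tau_c(M)$, so $\tau_c(M) = M$; thus $G'$ is exactly the image of $\DGal(R/k)$, which is therefore a linear algebraic group.

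The step I expect to be the main obstacle is the descent in the middle paragraph: turning the purely algebraic condition ``$\tau_c$ stabilizes $M$'' into genuine polynomial equations over the constant field $C$ rather than merely over $k$. This is what makes the group defined over $C$, and it relies on $c$ being a constant matrix together with the equality $C_R = C_k$. A cleaner but more structural route I would keep in reserve passes to the differential ring $R \otimes_k R$: the matrix $(Z\otimes 1)^{-1}(1\otimes Z)$ has derivative zero, its entries generate the ring of constants of $R\otimes_k R$ over $C$, and identifying those constants with a quotient of $C[X_{1,1}, \ldots, X_{n,n}, 1/\det(X_{i,j})]$ realizes $\DGal(R/k)$ directly as the $C$-points of an affine group scheme defined over $C$.
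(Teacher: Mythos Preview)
Your proof is correct and its overall architecture matches the paper's: both identify the image of $\DGal(R/k)$ in $\GL_n(C)$ with $\{c:\tau_c(M)\subseteq M\}$ and then show this is Zariski closed over $C$. The difference is in how that last step is carried out.

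The paper never passes to the quotient $R$. Instead it notices that $M$ is generated by $V_N=M\cap W_N$, where $W_N\subset k[Y_{i,j}]$ is the finite-dimensional (over $k$) space of polynomials of degree $\le N$ for $N$ large enough. Then $\tau_c(M)\subseteq M$ is equivalent to $\tau_c(V_N)\subseteq V_N$, and choosing a $C$-basis $\{e_\alpha\}$ of $W_N$ extending one of $V_N$ immediately exhibits the stabilizer as the zero set of the polynomials $p_{\alpha\beta}(c_{ij})$ picking out the ``off-block'' coordinates of $\tau_c$. This Chevalley-style reduction also makes your monoid-to-group step unnecessary: $\tau_c$ is $k$-linear and injective on the finite-dimensional $k$-space $V_N$, so $\tau_c(V_N)\subseteq V_N$ forces equality, hence $\tau_c(M)=M$ automatically.

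Your route---expanding $\overline{\tau_c(g_i)}$ in $R$ and then pushing the coefficients down from $k$ to $C$---works, but the sentence ``writing the $z_\beta$ in a fixed $C$-basis of $R$ turns the vanishing \ldots\ into conditions with coefficients in $k$'' is slightly scrambled. Since the $P_{i,\beta}(c)$ lie in $k$, multiplication by them does not respect a $C$-basis of $R$ in the naive way; what actually works is to first use a $k$-basis of the $k$-span of the $z_\beta$ (or of $R$) to obtain finitely many polynomial conditions with coefficients in $k$, and \emph{then} expand those $k$-coefficients along a $C$-basis of $k$ to land in $C[X_{ij}]$. That is exactly the two-step descent you describe in the next clause, so the idea is right even if the phrasing should be tightened. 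Your closed-submonoid argument for upgrading $\tau_c(M)\subseteq M$ to equality is standard and fine, though, as noted, the paper sidesteps it entirely via finite-dimensionality.

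Your reserve route through $R\otimes_k R$ is indeed the torsor/Hopf-algebra argument; the paper uses that viewpoint later (for the Fundamental Theorem and Corollary~\ref{torsorcor}) but not for this proposition.
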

\begin{proof} Let $R = k[Y,\frac{1}{\det Y}]/M, \ M = (f_1, \ldots, f_m)$.  We may assume that the $f_i \in k[Y_{1,1}, \ldots , Y_{n,n}]$.  We may identify $\DGal(R/k)$ with the subgroup of $\sigma \in \GL_n(C_k)$ such that $\sigma(M) \subset M$.  Let $W_N$ be the $C_k$-space of polynomials in $k[Y_{1,1}, \ldots , Y_{n,n}]$ of degree at most $N$.  Note that $\GL_n(C_k)$ acts on $k[Y_{1,1}, \ldots , Y_{n,n}]$ by substitution and  leaves $W_N$ invariant.  If $N$ is the maximum of the degrees of the $f_i$ then $V_N = W_N \cap M$ generates $M$ and we may identify $\DGal(R/k)$ with the group of $\sigma \in \GL_n(C_k)$ such that $\sigma(V_N) \subset V_N$.  Let $\{e_\alpha\}_\calA$ be a $C$-basis of $W_N$ with  $\{e_\alpha\}_{\alpha \in \calB\subset \calA}$ a $C$-basis of $V_N$.  For any $\beta \in \calA \mbox{ there exist polynomials }p_{\alpha \beta}(x_{ij})$ such that for any $\sigma = (c_{ij})\in \GL_n, \ \sigma(e_{\beta}) = \sum_\alpha p_{\alpha \beta} (c_{ij}) e_\alpha$.  We then have that $\sigma = (c_{ij}) \in \DGal(R/k)$ if and only if $p_{\alpha \beta}(c_{ij}) = 0$ for all $ \beta \in \calB$ and  $ \alpha \notin \calB.$
\end{proof}
\begin{exs} 1) $k = \Cx(x) \ \ x'= 1 \ \ \  Y' = \frac{\alpha}{x} Y\ \ DGal \subset \GL_1(\Cx)$.  Form  the above description of the differential Galois group as the group laving a certain ideal invariant, we see (using the information of Example~\ref{exPV}):\\[0.05in]
\noindent 
\underline{Case 1: $\alpha = \frac{n}{m},  GCD(n,m) = 1$}. We then have  $R = \Cx[Y,\frac{1}{Y}]/(Y^m-x^n) = \Cx(x)[x^\frac{n}{m}]$. Therefore  $\DGal = \Zx/m\Zx = \{(c)\ | \ c^n-1=0\}\subset \GL_1(\Cx)$\\[0.05in]
\noindent
\underline{Case 2: $\alpha \notin \Qx$}. we have $R = \Cx(x)[Y,\frac{1}{Y}]/(0)$. Therefore $\DGal= \GL_1(\Cx)$.\\[0.1in]
2) $k = \Cx(x) \ \ x'= 1  \ Y' = Y$.  As above, one sees that $\DGal= \GL_1(\Cx)$
\end{exs}

\noindent In general, it is not easy to compute Galois groups (although there is a complete algorithm due to Hrushovski  \cite{Hrushovski}).  We will give more examples in Sections \ref{mfssec3}, \ref{mfssec4}, and \ref{mfssec5}.\\[0.1in]
\noindent One can define the differential Galois group of a differential module in the following way. Let $M$ be a differential module.  Once we have selected a basis, we may assume that $M \simeq M_A$, where $M_A$ is the differential module associated with $Y' = AY$.  We say that $Y'=AY$ is a  matrix linear differential equation associated with $M$.
\begin{defin} Let $M$ be a differential module over $k$.  The {\em differential Galois group of $M$} is the differential Galois group of an associated matrix linear differential equation.
\end{defin}
To show that this definition makes sense we must show (see the discussion before Definition~\ref{diffmodule}) that equivalent matrix differential equations  have the same differential Galois groups. Let $Y' = A_1Y$, $Y' = A_2Y$ be equivalent equations and let $A_1 = B^{-1}A_2B - B^{-1}B', \ \ B\in \GL_n(k)$.  If $R = k[Z_1, \frac{1}{Z_2}]$ is a PV-ring for $Y' = A_1Y$, then a computation shows that $Z_2=BZ_1$ is a fundamental solution matrix for $Y' = A_2Y$ and so $R$ will also be a PV-ring for $Y' = A_2Y$.  The equation $Z_2 = BZ_1$ furthermore shows that the action of the differential Galois group (as a matrix group) on $Z_1$ and $Z_2$ is the same.

\noindent We now state
\begin{thm}{\bf (Fundamental Theorem of Differential Galois Theory)} Let $k$ be a differential field with algebraically closed constants $C_k$. Let $K$ be a Picard-Vessiot field with differential Galois group $G$.\\[0.05in]
1) There is a bijective correspondence between Zariski-closed subgroups $H \subset G$ and differential subfields $F$, $k \subset F\subset K$ given by
\begin{eqnarray*} 
H \subset G  &\mapsto & K^H  = \{ a \in K \ | \ \sigma(a) = a \mbox{ for all } \sigma \in H \}\\
k \subset F \subset K &\mapsto& \DGal(K/F) = \{ \sigma \in G \ | \ \sigma(a) = a \mbox{ for all } a \in F\}
\end{eqnarray*}
2) A differential subfield $k\subset F \subset K$ is  a Picard-Vessiot extension of $k$ if and only if  $\DGal(K/F)$ is a normal subgroup of $G$, in which case $\DGal(F/k) \simeq\DGal(K/k) \DGal(K/F)$.
\end{thm}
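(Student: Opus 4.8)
The plan is to transport the proof of the classical fundamental theorem of Galois theory into the differential setting, replacing the finite Galois group by the linear algebraic group $G = \DGal(K/k)$ and the discrete topology by the Zariski topology; write $\gamma(H) = K^H$ and $\delta(F) = N_F$ for the two maps in the statement, where $N_F = \DGal(K/F)$ for an intermediate differential field $k \subseteq F \subseteq K$. The technical heart, which I would establish first, is the \emph{no-extra-invariants} lemma $K^G = k$. I would prove it through the torsor structure of the PV-ring $R$ (whose quotient field is $K$): equipping $R \otimes_k R$ with the total derivation, both $Z \otimes 1$ and $1 \otimes Z$ are fundamental solution matrices, so the matrix $P = (Z \otimes 1)^{-1}(1 \otimes Z)$ is annihilated by the derivation and hence has entries in the constants $C = C_{R \otimes_k R}$. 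One checks that $C$ is the coordinate ring $C_k[G]$ and obtains an isomorphism $R \otimes_k R \cong R \otimes_{C_k} C_k[G]$, exhibiting $\mathrm{Spec}\, R$ as a $G$-torsor over $k$. Galois descent along this torsor then forces any $G$-invariant element of $K$ to lie in $k$.

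Granting $K^G = k$, Part~1 splits into two inclusions. For $\delta$ I would observe that $K$ is generated over any intermediate $F$ by the entries of the fundamental matrix $Z$ and satisfies $C_K = C_k = C_F$, so $K$ is itself a Picard--Vessiot extension of $F$; applying the lemma to $K/F$ gives $K^{N_F} = F$, so $\gamma \circ \delta = \mathrm{id}$. For $\gamma$ with $H$ Zariski-closed, the inclusion $H \subseteq \DGal(K/K^H)$ is immediate, and for the reverse I would invoke Chevalley's theorem: $H$ is the stabilizer in $G$ of a line $C_k \cdot w$ inside some tensor construction on the standard representation. The corresponding point of projective space is represented by ratios lying in $K^H$, so any $\sigma \in G$ fixing $K^H$ stabilizes that line and hence lies in $H$; thus $\DGal(K/K^H) = H$ and $\delta \circ \gamma = \mathrm{id}$. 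Both maps are visibly inclusion-reversing, completing the bijection.

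For Part~2, suppose first that $N_F$ is normal in $G$. For $\sigma \in G$ one has $\sigma N_F \sigma^{-1} = N_F$, and since $K^{N_F} = F$ this forces $\sigma(F) = F$; restriction therefore defines a homomorphism $G \to \Aut_\partial(F/k)$ with kernel exactly $N_F$. I would then check that $F$ is a Picard--Vessiot extension of $k$ (its constants are $C_k$, and it is generated by solutions of a matrix equation built from a $G$-stable finite-dimensional subspace) and that the image of $G$ is $\DGal(F/k)$, yielding $\DGal(F/k) \simeq G/N_F$ by the first isomorphism theorem. Conversely, if $F$ is Picard--Vessiot over $k$, then $F$ is generated by solutions of an equation with coefficients in $k$, so it is stable under every $\sigma \in G$; hence $\sigma N_F \sigma^{-1} = \DGal(K/\sigma(F)) = N_F$, so $N_F$ is normal. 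The surjectivity of $G \to \DGal(F/k)$ needed for the isomorphism follows by extending any differential automorphism of $F$ to $K$, using that $K$ is Picard--Vessiot over $F$.

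The main obstacle is the lemma $K^G = k$, and within it the identification $C_{R \otimes_k R} = C_k[G]$ that produces the torsor: this is where algebraic closure of $C_k$ is essential and where the simplicity of $R$ and the structure of $G$ as a linear algebraic group must be combined. The reverse inclusion in Part~1 is the second delicate point, since it genuinely requires the representation-theoretic input (Chevalley's theorem) that a closed subgroup is cut out as a line-stabiliser; without it one obtains only $H \subseteq \DGal(K/K^H)$.
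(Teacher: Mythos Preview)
Your outline is correct and aligns with the approach the paper points to: the paper does not give a self-contained proof but refers to Chapter~1.3 of \cite{PuSi2003}, noting explicitly that the Fundamental Theorem is deduced there from the torsor proposition (that the Picard--Vessiot ring $R$ is the coordinate ring of a $G$-torsor over $k$). Your sketch---establishing the isomorphism $R\otimes_k R \cong R\otimes_{C_k} C_k[G]$ to obtain the torsor structure, deducing $K^G=k$, observing that $K$ is Picard--Vessiot over every intermediate $F$, and invoking Chevalley's theorem for the reverse inclusion $\DGal(K/K^H)\subseteq H$---is precisely the architecture of that reference, so there is nothing substantively different to compare.
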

The Zariski closed sets form the collection of closed sets of a topology on $\GL_n$ and one can speak of closures, components, etc. in this topology. In this  topology, each linear algebraic group $G$ may be written (uniquely) as the finite disjoint union of connected closed subsets (see Appendix A of \cite{PuSi}  or \cite{springer} for a further discussion of linear algebraic groups).  The connected component containing the identity matrix is denoted by $G^0$ and is referred to as the identity component. It is a normal subgroup of $G$.  As a consequence of the Fundamental Theorem, one can show  the folowing:
\begin{cor}\label{fundcor} Let $k, K$ and $G$ be as above.\\[0.05in]
1) For $a \in K$, we have that  $\ a\in k \Leftrightarrow \sigma(a) = a \ \forall \sigma \in DGal(K/k)$\\[0.05in]
2) For $H \subset DGal(K/k),$ the Zariski closure $\overline{H}$ is  $\DGal(K/k)$ if and only if  $K^H = k$\\[0.05in]
3) $K^{G^0}$ is precisely the algebraic closure $\tilde{k}$ of $k$ in $K$. 
\end{cor}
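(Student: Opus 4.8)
The plan is to derive all three statements from the Fundamental Theorem, exploiting that the two maps $H \mapsto K^H$ and $F \mapsto \DGal(K/F)$ are mutually inverse bijections. Part 1 is essentially the special case $F = k$: the correspondence sends $k$ to $\DGal(K/k) = G$, and since the maps are inverse, applying $H \mapsto K^H$ to $G$ gives $K^{G} = k$. Hence for $a \in K$ we have $a \in k$ iff $a \in K^G$ iff $\sigma(a) = a$ for all $\sigma \in G$, which is exactly Part 1.

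For Part 2 the key preliminary observation is that $K^H = K^{\overline H}$ for every subgroup $H \subseteq G$. To see this I would set $F = K^H$; by the Fundamental Theorem $\DGal(K/F)$ is Zariski closed and $K^{\DGal(K/F)} = F$. Every element of $H$ fixes $F = K^H$ by definition, so $H \subseteq \DGal(K/F)$, and since the latter is closed, $\overline H \subseteq \DGal(K/F)$; taking fixed fields yields $K^{\DGal(K/F)} \subseteq K^{\overline H} \subseteq K^H$, and since the two ends both equal $F$, we get $K^{\overline H} = K^H$. With this in hand Part 2 follows: if $\overline H = G$ then $K^H = K^{\overline H} = K^G = k$ by Part 1, while if $K^H = k$ then $K^{\overline H} = k = K^G$, and injectivity of the correspondence on closed subgroups forces $\overline H = G$.

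For Part 3, write $F = K^{G^0}$. Since $G^0$ is closed, bijectivity gives $\DGal(K/F) = G^0$, and because $G^0$ is normal in $G$, Part 2 of the Fundamental Theorem shows $F$ is itself Picard--Vessiot over $k$ with $\DGal(F/k) \simeq G/G^0$; as a linear algebraic group has only finitely many components, this quotient is finite. I would then prove $F = \tilde k$ by two inclusions. For $F \subseteq \tilde k$: given $a \in F$, its orbit under the finite group $\DGal(F/k)$ is finite, so $a$ is a root of the monic polynomial whose roots are the distinct conjugates of $a$; its coefficients are fixed by $\DGal(F/k)$, hence lie in $k$ by Part 1 applied to the Picard--Vessiot field $F$, and $a$ is algebraic over $k$. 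For $\tilde k \subseteq F$: given $a$ algebraic over $k$, the field $k(a)$ is a differential subfield (differentiating the minimal polynomial shows $a' \in k(a)$), and $\DGal(K/k(a)) = \{\sigma \in G : \sigma(a)=a\}$ is closed of finite index in $G$ because all conjugates of $a$ are roots of its minimal polynomial. A closed subgroup of finite index is a finite disjoint union of cosets, each of which is closed, so the connected set $G^0$, containing the identity, lies entirely inside it; thus $G^0$ fixes $a$ and $a \in K^{G^0} = F$.

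The symmetric-function and minimal-polynomial manipulations are routine. The main obstacle, and the step deserving the most care, is the interplay between the Zariski topology and the group structure in Part 3: verifying that $\DGal(K/k(a))$ is closed of finite index, and that a closed finite-index subgroup must contain the identity component. This is precisely where the connectedness of $G^0$ and the finiteness of the number of components of $G$ are genuinely used.
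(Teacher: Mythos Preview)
Your argument is correct and is the standard derivation of all three parts from the Fundamental Theorem; note, however, that the paper does not itself give a proof of this corollary but simply refers the reader to Chapter~1.3 of \cite{PuSi2003}, so there is no in-paper proof to compare against. Your proof would serve perfectly well as the omitted argument: Part~1 is the identification $K^G=k$, Part~2 reduces to the observation $K^H=K^{\overline H}$ together with injectivity on closed subgroups, and Part~3 uses normality of $G^0$ and finiteness of $G/G^0$ exactly as one expects. The only point worth tightening is the phrase ``a closed subgroup of finite index is a finite disjoint union of cosets, each of which is closed'': you should say explicitly that cosets of a closed subgroup are closed (being translates by a homeomorphism), so the complement of the subgroup is closed, hence the subgroup is also open, and a connected set meeting it must lie entirely within it. Otherwise the proof is complete.
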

 I refer the reader to Chapter 1.3 of \cite{PuSi2003} for the proof of the Fundamental Theorem and its corollary.
 
 \begin{ex}$ y' = \frac{\alpha}{x}y \ \ \ \alpha \notin \Qx , \ \  k = \Cx(x) \ \ K = k(x^\alpha)  \ \ DGal = \GL_1 = \Cx^*$

\vspace{.1in}
\noindent
The Zariski-closed subsets of $\Cx^*$ are finite so the closed subgroups are finite and cyclic. We have the following correspondence 
\begin{center}
\begin{tabular}{ccc}
Groups & \hspace{.75in} & Fields\\
$\{1\}$ &$\Leftrightarrow$ & $k(x^\alpha)$\\
$\cap$& & $\cup$\\
$\{1, e^{2\pi i/n}, \ldots , e^{2(n-1)\pi i/n}\}$ & $\Leftrightarrow$& $k(x^{n\alpha})$\\
$\cap$& & $\cup$\\
$\Cx^*$&$\Leftrightarrow$& $k$
\end{tabular}
\end{center}
\end{ex}
The Fundamental Theorem also follows from a deeper fact giving geometric meaning to  the Picard-Vessiot ring and relating this to the Galois group.  \\[0.1in]
\noindent{\bf \underline{Galois theory and torsors}}  Let $k$ be a field (not necessarily a differential field) and $\bar{k}$ its algebraic closure. I wish to refine the notion of a Zariski closed set in order to keep track of the field over which these objects are defined. 
\begin{defin} 1) A Zariski closed subset $V \subset \bar{k}^n$ is called a {\em variety defined over $k$ or $k$-variety} if it the set of common zeroes of polynomials in $k[X_1, \ldots X_n]$.\\[0.05in]
2) If $V$ is a variety defined over $k$, the {\em $k$-ideal of $V$, $I_k(V),$} is $\{ f\in k[X_1, \ldots ,X_n] \  | \ f(a) = 0 \mbox{ for all } a \in X\}$.\\[0.05in]
3) If $V$ is a variety defined over $k$, the {\em $k$-coordinate ring, $k[V]$, of $V$} is the ring $k[X_1, \ldots , X_n]/I_k(V)$.\\[0.05in]
4) If $V\subset\bar{k}^n$ and $W\subset\bar{k}^n$, a {\em $k$-morphism $f:V\rightarrow W$} is an $n$-tuple of polynomials $f = (f_1, \ldots ,f_n)$ in $k[X_1, \ldots , X_n]$ such that $f(a) \in W$ for all $a \in V$.\\[0.05in]
5) We say two $k$-varieties $V$ and $W$ are {\em $k$-isomorphic} if there are $k$-morphisms $f:V\rightarrow W$ and $g:W\rightarrow V$ such that $fg$ and $gf$ are the identity maps.
\end{defin}

Note that the coordinate ring is isomorphic to the ring of $k$-morphisms from $V$ to $k$.  Also note that a $k$-morphism $f:V\rightarrow W$ induces a $k$-algebra homomorphism $f^*:k[W] \rightarrow k[V] $ given by $f^*(g)(v) = g(f(v))$ for $v \in V$ and $g \in k[W]$.

\begin{exs} 1) The affine spaces $k^n$ are $\Qx$-varieties and, as such, their coordinate rings are just the rings of polynomials $\Qx[X_1, \ldots , X_n]$. \\[0.05in]
2) The group of invertible matrices $\GL_n(\bar{k})$ is also a $\Qx$-variety. In order to think of it as a closed set, we must embed each
invertible matrix $A$ in the $n+1 \times n+1$ matrix $\left(\begin{array}{cc}A&0\\0&\det(A)\end{array}\right)$. The defining ideal in $\Qx[ X_{1,1}, \ldots, X_{n+1, n+1}]$ is generated by 
\[X_{n+1, 1} , \ldots , X_{n+1, n}, X_{1, n+1}, \ldots ,X_{n, n+1}, (\det X) - X_{n+1, n+1}\]
where $X$ is the $n\times n$ matrix with entries $X_{i,j}, 1\leq i \leq n, 1\leq j \leq n$. 
The coordinate ring is the ring $\Qx[X_{1,1}, \ldots ,X_{n,n}, \frac{1}{\det X}]$
\end{exs}
\begin{exs}1) Let $k = \Qx$ and $V = \{a \ | \ a^2 - 2 = 0\} \subset \bar{\Qx}$. We have that $I_\Qx(V) = (X^2-2) \subset \Qx[X]$ and $k[V] = \Qx[X]/(X^2-2) = \Qx(\sqrt{2})$. Note that in this setting ``$\sqrt{2}$'' is a function on $V$, defined over $\Qx$ with values in $\bar{\Qx}$  and not a number!\\[0.05in]
2) Let $k = \Qx$ and $W = \{a \ | \ a^2 - 1 = 0\} = \{\pm 1\} \subset \bar{\Qx}$. We have that $I_\Qx(W) = (X^2 - 1)$ and $k[W] = \Qx[X]/(X^2-1) = \Qx\oplus \Qx$.  \\[0.05in]
3) Note that $V$ and $W$ are not $\Qx$-isomorphic since such an isomorphism would induce an isomorphism of their $\Qx$-coordinate rings.  If we consider $V$ and $W$ as $\bar{\Qx}$-varieites, then the map $f(x) = (\sqrt{2}(x-1)+\sqrt{2}(x+1))/2$ gives a $\bar{\Qx}$-isomorphism of $W$ to $V$.
\end{exs}  
We shall be interested in certain $k$-subvarieties of $\GL_n(\bar{k})$ that have a group acting on them.  These are described in the  following definition (a more general definition of torsor is given in Appendix A of \cite{PuSi2003} but the definition below suffices for our present needs).

\begin{defin} Let $G \subset \GL_n(\bar{k})$ be a linear algebraic group defined over $k$ and $V\subset \GL_n(\bar{k})$ a $k$-variety. We say that $V$ is a {\em $G$-torsor (or $G$-principal homogeneous space) over $k$} if for all $v, w \in V$ there exists a unique $g\in G$ such that $vg=w$. Two $G$-torsors $V, W \in \GL_n(\bar{k})$ are said to be {\em $k$-isomorphic} if there is a $k$-isomorphism between the two commuting with the action of $G$.
\end{defin}

\begin{exs} 1) $G$ itself is always a  $G$-torsor.\\[0.05in]
2) Let $k = \Qx$ and $V = \{a \ | \ a^2 - 2 = 0\} \subset \bar{\Qx}$.  We have that $V \subset \bar{\Qx}^* =  \GL_1(\bar{\Qx})$. Let $G = \{\pm 1\} \subset \GL_1(\bar{\Qx})$. It is easy to see that $V$ is a $G$-torsor. As noted in the previous examples, the torsors $V$ and $G$ are not isomorphic over $\Qx$ but are isomorphic over $\bar{\Qx}$.
\end{exs}

We say a $G$-torsor over $k$ is {\em trivial} if it is $k$-isomorphic to $G$ itself. One has the following criterion.
\begin{lem} A $G$-torsor $V$ over $k$ is trivial if and only if it contains a point with coordinates in $k$. In particular, any $G$-torsor over an algebraically closed field is trivial.
\end{lem}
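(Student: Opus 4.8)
The plan is to prove both implications directly from the definition of a $G$-torsor, the key observation being that a point $v_0 \in V$ with coordinates in $k$ is precisely an element $v_0 \in \GL_n(k)$, and that then $v_0^{-1} \in \GL_n(k)$ as well (Cramer's rule keeps the entries in $k$). Left translation by a fixed matrix with entries in $k$ is given, coordinate by coordinate, by $k$-linear forms in the matrix entries, hence is a $k$-morphism; this is the mechanism that will produce the $k$-isomorphism.

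For the forward implication I would argue as follows. If $V$ is trivial there is a $k$-isomorphism $\phi : G \to V$, in particular a $k$-morphism whose component polynomials have coefficients in $k$. The identity matrix $e = I_n$ is a point of $G$ with coordinates in $\Qx \subseteq k$, so $\phi(e)$ is a point of $V$ with coordinates in $k$. Note that the $G$-equivariance of $\phi$ is not needed here; only that $\phi$ is defined over $k$ and that $G$ carries the $k$-point $e$.

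For the converse, suppose $v_0 \in V$ has coordinates in $k$. I would set $\phi : G \to V,\ \phi(g) = v_0 g$ and $\psi : V \to G,\ \psi(w) = v_0^{-1} w$, and verify four points: that $\phi$ lands in $V$ (the right $G$-action preserves $V$, which is part of the torsor structure) and $\psi$ lands in $G$ (this is exactly the torsor axiom, since $v_0^{-1}w$ is the unique $g \in G$ with $v_0 g = w$); that both maps are $k$-morphisms (because $v_0, v_0^{-1} \in \GL_n(k)$); that they are mutually inverse (associativity of matrix multiplication); and that $\phi$ is $G$-equivariant, since $\phi(gh) = (v_0 g)h = \phi(g)h$. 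This exhibits $V$ as $k$-isomorphic to $G$, that is, trivial. The \emph{in particular} clause is then immediate: when $k = \bar k$, every point of the nonempty variety $V \subseteq \GL_n(\bar k) = \GL_n(k)$ already has coordinates in $k$, so the converse applies.

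I expect no deep obstacle, as the argument is largely bookkeeping; the one place deserving attention is the well-definedness of $\psi$ as a map into $G$. One must resist reading $v_0^{-1} w$ merely as a point of $\GL_n(\bar k)$ and instead invoke simple transitivity to land it in $G$, and one must check that $v_0^{-1}$ genuinely has entries in $k$ so that $\psi$ is defined over $k$ rather than only over $\bar k$ — this is exactly the point distinguishing triviality over $k$ from the automatic triviality available over $\bar k$.
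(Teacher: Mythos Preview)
Your proof is correct and follows essentially the same approach as the paper: for the forward direction, the paper also takes the image of the identity under the $k$-isomorphism $G \to V$ to obtain a $k$-point, and for the converse, the paper also uses left translation $g \mapsto v_0 g$ with inverse $w \mapsto v_0^{-1}w$. Your write-up is more careful about why $\psi$ lands in $G$ and why $v_0^{-1}$ has entries in $k$, but the underlying argument is identical.
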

\begin{proof} If $V$ contains a point $v$ with coordinates in $k$ then the map $g\mapsto xg$ is defined over $k$ and gives a $k$-isomorphism between $G$ and $V$ (with inverse $v \mapsto x^{-1}v$).  If $V$ is trivial and $f:G\rightarrow V$ is  an isomorphism defined over $k$, then $f(id)$ has coordinates in $k$ and so is a $k$-point of $V$.\end{proof}

For connected groups $G$, there are other fields for which all torsors are trivial.
\begin{prop} Let $G$ be a {\em connected} linear algebraic group defined over $k$ and  $V$ a $G$-torsor over $k$. If $k$ is an algebraic extension of 
$\Cx(x), \ , \Cx((x)),$or $ \Cx(\{x\})$, then $V$ is a trivial $G$-torsor.
\end{prop}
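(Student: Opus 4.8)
The plan is to recast the problem in the language of Galois cohomology and then apply a triviality theorem valid over cohomologically small fields. First I would recall that, over a field of characteristic $0$ (where every linear algebraic group is smooth and hence, by the preceding Lemma, every torsor becomes trivial after base change to $\kbar$), the isomorphism classes of $G$-torsors over $k$ are classified by the pointed set $H^1(\mathrm{Gal}(\kbar/k), G(\kbar))$, the trivial torsor $G$ itself being the distinguished element. By the preceding Lemma, a $G$-torsor is trivial exactly when it carries a $k$-rational point, i.e.\ exactly when its class in this $H^1$ is trivial. Thus the proposition is equivalent to the vanishing statement $H^1(\mathrm{Gal}(\kbar/k), G(\kbar)) = 1$ for connected $G$.

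The arithmetic input is that each of the listed fields has cohomological dimension $\mathrm{cd}(k) \le 1$. For $k = \Cx(x)$, the function field of a curve over the algebraically closed field $\Cx$, this follows from Tsen's theorem (it is a $C_1$-field). For $k = \Cx((x))$ and $k = \Cx(\{x\})$, each is a Henselian discretely valued field with algebraically closed residue field $\Cx$, so the inequality $\mathrm{cd}(K) \le \mathrm{cd}(\kappa) + 1$ for a Henselian discretely valued field $K$ with residue field $\kappa$ gives $\mathrm{cd}(k) \le 0 + 1 = 1$. Finally, cohomological dimension does not increase under algebraic field extensions, so any algebraic extension of one of these three fields again has $\mathrm{cd} \le 1$.

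With $\mathrm{cd}(k) \le 1$ established (and $k$ perfect, being of characteristic $0$), I would invoke Steinberg's theorem (Serre, \emph{Galois Cohomology}, III.2): for such a field and any \emph{connected} linear algebraic group $G$ over $k$, one has $H^1(\mathrm{Gal}(\kbar/k), G(\kbar)) = 1$. Applied to the group $G$ at hand, this shows the class of $V$ is trivial, whence $V$ has a $k$-point and is a trivial $G$-torsor by the Lemma. Connectedness is indispensable here: disconnected groups can carry nontrivial torsors even over these fields, as already seen for the finite cyclic Galois group $\Zx/m\Zx$ in Examples~\ref{exPV}.

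The genuinely deep ingredient, and hence the main obstacle, is Steinberg's theorem, which I would cite rather than reprove. A self-contained proof proceeds by d\'evissage along the structure of the connected group $G$: one filters $G$ by its unipotent radical and reduces the reductive quotient to a central torus and a semisimple group. The unipotent pieces contribute no $H^1$ by the additive analogue of Hilbert's Theorem~90, the multiplicative and torus pieces vanish by (a form of) Hilbert's Theorem~90 together with the $C_1$ hypothesis, and the semisimple case is the hard core of Steinberg's argument, reducing via regular elements and maximal tori to rank-one groups. Carrying this out in full is what one avoids by quoting the theorem; everything else in the proof is formal.
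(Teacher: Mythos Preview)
Your proposal is correct and takes essentially the same approach as the paper: both reduce to Steinberg's theorem (cited via Serre, \emph{Cohomologie Galoisienne}, III.2.3), with your version supplying the details---the translation to $H^1$, the verification that the three base fields and their algebraic extensions have cohomological dimension $\le 1$---that the paper leaves implicit in its one-line citation.
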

\begin{proof} This result is due to Steinberg, see \cite{serre}, Chapter III.2.3
\end{proof}

Another way of saying that $V$ is a $G$ torsor is to say that $V$ is defined over $k$ and is a left coset of $G$ in $\GL_n(\bar{k})$.  Because of this one would expect that $V$ and $G$ share many geometric properties. The one we will be interested in concerns the dimension. We say that a $k$-variety $V$ is {\em irreducible} if it is not the union of two proper $k$-varieties.  This is equivalent to saying that $I_k(V)$ is a prime ideal or, equivalently, that $k[V]$ is an integral domain.  For an irreducible $k$-variety $V$,  one defines the {\em dimension of $V$, $\dim_k(V)$} to be the transcendence degree of $k(V)$ over $k$, where $k(V)$ is the quotient field of $k[V]$.  For example, if $k = \Cx$ and $V$ is a $k$-variety that is nonsingular as a complex manifold, then the dimension in the above sense is just the dimension as a complex manifold (this happens for example when $V$ is a subgroup of $\GL_n(\Cx)$). In general, any $k$-variety $V$ can be written as the irredundant union of irreducible $k$-varieties, called the irreducible components of $V$  and the dimension of $V$ is defined to be the maximum of the dimensions of its irreducible components. One can show that if  $k \subset K$, then $\dim_k(V) = \dim_K(V)$ and in particular, $\dim_k(V) = \dim_{\bar{k}}V$ (see \cite{PuSi2003}, Appendix A for more information about $k$-varieties). 
\begin{prop} If $V$ is a $G$-torsor, then $\dim_k(V) = \dim_k(G)$.
\end{prop}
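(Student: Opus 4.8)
The plan is to reduce everything to the algebraically closed field $\bar{k}$, where the preceding Lemma guarantees that every $G$-torsor is trivial, and then to invoke the base-change invariance of dimension recorded just before the statement, namely $\dim_k(V) = \dim_{\bar{k}}(V)$. The whole argument is therefore: triviality over $\bar{k}$ gives an isomorphism $V \cong G$ there, isomorphic varieties have equal dimension, and dimension does not see the extension $k \subset \bar{k}$.

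First I would extend scalars and regard both $V$ and $G$ as $\bar{k}$-varieties inside $\GL_n(\bar{k})$. Since $G$ is defined over $k$ it is a fortiori defined over $\bar{k}$, and $V$ is still a $G$-torsor over $\bar{k}$. By the Lemma, a $G$-torsor over the algebraically closed field $\bar{k}$ is trivial, so there is a $\bar{k}$-isomorphism $\phi : G \to V$ (commuting with the $G$-action, though only the bare isomorphism is needed here). Passing to coordinate rings, $\phi$ induces an isomorphism of $\bar{k}$-algebras $\phi^* : \bar{k}[V] \to \bar{k}[G]$. An algebra isomorphism carries minimal primes to minimal primes, hence matches the irreducible components of $V$ with those of $G$ and induces isomorphisms of the corresponding function fields; since dimension is the maximum of the transcendence degrees of these function fields over the components, we get $\dim_{\bar{k}}(V) = \dim_{\bar{k}}(G)$. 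Finally, using the stated invariance $\dim_k(V) = \dim_{\bar{k}}(V)$ and $\dim_k(G) = \dim_{\bar{k}}(G)$, I chain
\[
\dim_k(V) = \dim_{\bar{k}}(V) = \dim_{\bar{k}}(G) = \dim_k(G),
\]
which is the claim.

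The main obstacle is not conceptual but bookkeeping: the dimension is defined as the maximum over irreducible components, so I must check honestly that the ring isomorphism $\phi^*$ respects this decomposition rather than silently assuming $V$ and $G$ are irreducible (they are irreducible exactly when $G$ is connected, which is not hypothesized). This is the routine commutative-algebra fact that minimal primes correspond under a ring isomorphism and that each quotient field is carried isomorphically, but it is the one place where the ``max over components'' definition has to be taken seriously. Everything else is a direct application of the Lemma and of the extension-invariance of dimension quoted in the text.
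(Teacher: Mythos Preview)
Your proof is correct and follows essentially the same approach as the paper: trivialize over $\bar{k}$ via the Lemma, then use base-change invariance of dimension to get the chain $\dim_k(V) = \dim_{\bar{k}}(V) = \dim_{\bar{k}}(G) = \dim_k(G)$. The paper's proof is this one-line chain without further comment; your additional care about matching irreducible components under the isomorphism is a reasonable elaboration but not a different argument.
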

\begin{proof} As noted above, any $G$-torsor over $k$ is trivial when considered as a $\bar{k}$ torsor.  We therefore have $\dim_k(V) = \dim_{\bar{k}}(V) = \dim_{\bar{k}}(G)= \dim_k(G)$.\end{proof}
The reason for introducing the concept of a $G$-torsor in this paper is the following proposition.  We use the fact that if $V$ is a $G$-torsor over $k$ and $g \in G$, then the isomorphism $\rho_g:v\mapsto vg$ induces an isomorphism $\rho_g^*:k[V] \rightarrow k[V]$.
\begin{prop} Let $R$ be a Picard-Vessiot extension with differential Galois group $G$.  Then $R$ is the coordinate ring of a $G$-torsor $V$ defined over $k$. Furthermore, for $\sigma \in G$, the Galois action of $\sigma$ on $R = k[V]$ is the same as $\rho_\sigma^*$.
\end{prop}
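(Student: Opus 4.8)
The plan is to exhibit $V$ as the zero locus of the defining ideal of $R$, to read off both the $G$-action and the identification of the Galois action with $\rho_\sigma^*$ directly from the relation $\sigma(Z)=Zm_\sigma$, and then to reduce the torsor property to a single ring isomorphism $R\otimes_k R\cong R\otimes_{C_k}C_k[G]$.

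First I would set up $V$. Write $R=S/M$ with $S=k[Y_{1,1},\dots,Y_{n,n},\frac{1}{\det Y}]=k[\GL_n]$ and $M$ the maximal differential ideal used to construct $R$; since $R$ is a domain, $M$ is prime. Let $V\subset\GL_n(\bar{k})$ be the $k$-variety defined by $M$. As $k$ has characteristic $0$, the domain $R$ is geometrically reduced, so $M=I_k(V)$ and $k[V]=R$. For $\sigma\in G$ the relation $\sigma(Z)=Zm_\sigma$ says that the lift of $\sigma$ to $S$ is $Y\mapsto Ym_\sigma$, which is exactly the comorphism $\rho_{m_\sigma}^*$ of right translation $\rho_{m_\sigma}\colon X\mapsto Xm_\sigma$. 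Because $\sigma$ is an automorphism of $R=S/M$ it preserves $M$, hence $\rho_{m_\sigma}(V)=V$; this gives a right action of $G$ on $V$, and by construction the Galois action of $\sigma$ on $R=k[V]$ coincides with $\rho_\sigma^*$, which is the final assertion of the proposition. Note too that uniqueness of the torsor element is automatic: if $vg=w=vg'$ with $g,g'\in G\subset\GL_n(\bar{k})$ then $g=g'$ by cancellation in $\GL_n$. Thus only existence, i.e. transitivity of $G$ on $V(\bar{k})$, remains.

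To get transitivity I would pass to $R\otimes_k R$ with the derivation $(a\otimes b)'=a'\otimes b+a\otimes b'$ and set $Z_1=Z\otimes 1$, $Z_2=1\otimes Z$, both fundamental solution matrices for $Y'=AY$. Then $(Z_1^{-1}Z_2)'=0$, so $B:=Z_1^{-1}Z_2$ has entries in the constants $\mathfrak{C}$ of $R\otimes_k R$, and $1\otimes Z=(Z\otimes 1)B$. The goal is the isomorphism $R\otimes_k R\cong R\otimes_{C_k}C_k[G]$ sending $X\mapsto B$; unwinding it produces an isomorphism $V\times_k V\cong V\times_k G$ intertwining the actions, which says precisely that for $v,w\in V(\bar{k})$ the element $v^{-1}w$ lies in $G$, i.e. transitivity. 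This isomorphism splits into two parts. For the factorization $R\otimes_k R\cong R\otimes_{C_k}\mathfrak{C}$ I would use that $R\otimes 1\cong R$ is a simple differential ring with field of constants $C_k$: since $1\otimes Z=(Z\otimes 1)B$ with $B$ constant, $R\otimes_k R$ is generated over $R\otimes 1$ by the constant entries of $B$ and $1/\det B$, and the standard fact that every differential ideal of $R\otimes_{C_k}\mathfrak{C}$ is generated by its intersection with $\mathfrak{C}$ forces the multiplication map $R\otimes_{C_k}\mathfrak{C}\to R\otimes_k R$ to be an isomorphism.

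The main obstacle is the second part, identifying $\mathfrak{C}$ with $C_k[G]$. The surjection $C_k[\GL_n]\to\mathfrak{C}$, $X\mapsto B$, is clear from the factorization, so the issue is that its kernel is exactly $I_{C_k}(G)$. Here I would exploit that $G$ is the full group of differential $k$-automorphisms: the second-factor action $\mathrm{id}\otimes\sigma$ fixes $R\otimes 1$ and sends $B\mapsto Bm_\sigma$, and the Fundamental Theorem, in the form $R^{G}=k$, applied to the second factor gives $\mathfrak{C}^{G}=C_k$; comparing with the quotients of $R\otimes_k R$ by its maximal differential ideals, each of which is a Picard--Vessiot ring in which $B$ specializes to an honest element of $G\subset\GL_n(C_k)$, pins down the relations among the entries of $B$ as exactly those defining $G$. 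Granting this, $R\otimes_k R\cong R\otimes_{C_k}C_k[G]$, whence $V$ is a $G$-torsor and the proposition follows. The delicate points throughout are the simplicity of $R$ as a differential ring and the equality $C_R=C_k$; these are exactly what make both the tensor factorization and the identification of $\mathfrak{C}$ go through, and they are the hypotheses I would lean on hardest.
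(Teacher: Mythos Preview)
The paper does not actually prove this proposition; it refers the reader to Chapter~1.3 of \cite{PuSi2003} and explicitly states that the Fundamental Theorem is \emph{derived from} this proposition there. Your overall architecture --- realize $V$ as the zero set of the defining ideal $M$, read off the $G$-action and the identification with $\rho_\sigma^*$ from $\sigma(Z)=Zm_\sigma$, and establish transitivity via an isomorphism $R\otimes_k R\cong R\otimes_{C_k}C_k[G]$ --- is exactly the standard route taken in that reference, so in that sense your proposal is on target.

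There is, however, a genuine logical gap in the last step. You invoke ``the Fundamental Theorem, in the form $R^{G}=k$'' to conclude $\mathfrak{C}^{G}=C_k$. In the paper's logical order this is circular: the torsor proposition is the input from which the Fundamental Theorem (and hence $R^G=k$) is deduced, so you cannot use it here. Fortunately the other half of your sentence already contains the correct, non-circular argument, which you should make carry the full weight. The maximal differential ideals of $R\otimes_k R$ are in bijection with $G$: each $\sigma\in G$ gives the kernel of $a\otimes b\mapsto a\,\sigma(b)$, and conversely any quotient of $R\otimes_k R$ by a maximal differential ideal is a PV ring for the same equation, hence isomorphic to $R$ by uniqueness, and the composite $R\cong 1\otimes R\hookrightarrow R\otimes_k R\twoheadrightarrow R$ is an element of $G$. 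Under your factorization $R\otimes_k R\cong R\otimes_{C_k}\mathfrak{C}$ together with simplicity of $R$, differential ideals correspond to ordinary ideals of $\mathfrak{C}$, so the maximal spectrum of $\mathfrak{C}$ is exactly $G\subset\GL_n(C_k)$. To finish you need one more ingredient you did not mention: $\mathfrak{C}$ is \emph{reduced}. This holds because in characteristic~$0$ the domain $R$ is separable over $k$, so $R\otimes_k R$ is reduced, and $\mathfrak{C}$ sits inside it. A reduced finitely generated algebra over the algebraically closed field $C_k$ is determined by its maximal spectrum, whence $\mathfrak{C}\cong C_k[G]$. With that correction your argument goes through and matches the intended proof.
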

\begin{cor} \label{torsorcor}1) If $K$ is a Picard-Vessiot field over $k$ with Galois group $G$, then $\dim_{C_k} G = tr.deg.(K/k)$.\\[0.05in]
2) If $R$ is a Picard-Vessiot ring over $k$ with Galois group $G$ where $k$ is either algebraically closed,  $\Cx(x), \  \Cx((x)),$ or $ \Cx(\{x\})$, then $R \simeq k[G]$.
\end{cor}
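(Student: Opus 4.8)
The plan is to derive both statements from the immediately preceding Proposition, which exhibits $R$ as the coordinate ring $k[V]$ of a $G$-torsor $V$ defined over $k$, combined with the two structural facts about torsors established just above it: that $\dim_k(V) = \dim_k(G)$, and that a torsor is trivial exactly when it has a $k$-rational point.

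For part 1) I would first observe that a Picard-Vessiot ring is a domain, so $I_k(V)$ is prime, $V$ is irreducible, and the Picard-Vessiot field $K$ is precisely the quotient field $k(V)$ of $R = k[V]$. By the definition of the dimension of an irreducible $k$-variety, $tr.deg.(K/k) = tr.deg.(k(V)/k) = \dim_k(V)$. The torsor-dimension Proposition gives $\dim_k(V) = \dim_k(G)$, and since $G$ is defined over $C_k \subseteq k$ and dimension is unchanged under extension of the base field, $\dim_k(G) = \dim_{C_k}(G)$. Chaining these equalities yields $tr.deg.(K/k) = \dim_{C_k}(G)$.

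For part 2) the goal is to show that the torsor $V$ is \emph{trivial}, i.e. $k$-isomorphic to $G$: a $k$-isomorphism $V \to G$ of torsors induces a $k$-algebra isomorphism on coordinate rings, giving $R = k[V] \simeq k[G]$, which is exactly the assertion. When $k$ is algebraically closed, triviality is immediate from the Lemma characterizing trivial torsors. When $k$ is $\Cx(x)$, $\Cx((x))$, or $\Cx(\{x\})$ and $G$ is \emph{connected}, triviality follows from the Steinberg Proposition stated above.

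The hard part is the case of a disconnected group $G$ over the special fields $\Cx(x), \Cx((x)), \Cx(\{x\})$, since the Steinberg result is available only for connected $G$ and torsors under a finite group can be nontrivial (they record nonzero Galois-cohomology data). To handle this I would bring in Corollary \ref{fundcor}(3), which identifies $K^{G^0}$ with the algebraic closure $\tilde{k}$ of $k$ in $K$: one analyzes the torsor for the identity component $G^0$ over $K^{G^0}$ using Steinberg and then controls the finite quotient $G/G^0$ separately, or else invokes the more complete torsor analysis of Chapter 1.3 of \cite{PuSi2003}. This is where the genuine content lies; by contrast the dimension count in part 1) is essentially formal once the torsor description is in hand.
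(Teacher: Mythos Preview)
Your proposal is correct and follows precisely the route the paper intends: the paper gives no explicit proof of this corollary, simply stating it as an immediate consequence of the torsor Proposition together with the earlier facts that $\dim_k(V)=\dim_k(G)$ and that torsors over the listed fields are trivial (referring to Chapter~1.3 of \cite{PuSi2003} for details). Your identification of the disconnected-$G$ case in part~2) as the place where genuine extra input is required is accurate and honest; the paper's Steinberg Proposition is indeed stated only for connected $G$, and the full statement for arbitrary $G$ over these fields relies on the more complete analysis in \cite{PuSi2003} that you cite.
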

We refer to Chapter 1.3 of \cite{PuSi2003} for a proof of the proposition and note that the Fundamental Theorem of Galois Theory is shown  there to follow from this proposition. \\[0.2in]
{\bf \underline{Applications and ramifications.}} I shall discuss the following applications and ramifications of the  differential Galois theory
\begin{itemize}
\item Monodromy
\item Factorization of linear differential operators
\item Transcendental numbers
\item Why doesn't $\sec x$ satisfy a linear differential equation?
\item Systems of linear partial differential equations
\end{itemize}
Another application concerning   solving linear differential equations in terms of special functions and algorithms to do this will be discussed in  Section~\ref{mfssec4}.  The Picard-Vessiot theory has also been used to identify obstructions to complete integrability of Hamiltonian systems (see \cite{audin} or \cite{morales}) \\[0.2in]
\underline{\em Monodromy.}  We will now consider differential equations over the complex numbers, so let $k = \Cx(x), \ x' = 1$ and 
\begin{eqnarray}\label{eqnmono}\frac{dY}{dx} &=& A(x) Y, \ A(x) \in \gl_n(\Cx(x)) \end{eqnarray} 
\begin{defin} A point $x_0 \in \Cx$ is an {\em ordinary point} of equation~(\ref{eqnmono}) if all the entries of $A(x)$ are analytic at $x_0$, otherwise $x_0$ is a {\em singular point}. \end{defin}
One can also consider the point at infinity, $\infty$.  Let $t = \frac{1}{x}$.  We then have that $\frac{dY}{dt} = -\frac{1}{t^2}A(\frac{1}{t}) Y$.  The point $x=\infty$ is an ordinary or singular point precisely when $t=0$ is an ordinary or singular point of the transformed equation.
\begin{ex} $\frac{dy}{dx} = \frac{\alpha}{x} y \Rightarrow \frac{dy}{dt} = -\frac{\alpha}{t} y$ so the singular points are $\{0, \infty\}$.
\end{ex}
Let $S = \Cx\Px^1$ be the Riemann Sphere and let $X  = S^2 - \{ \mbox{singular points of (\ref{eqnmono})}\}$ and let $x_0 \in X$.  From standard existence theorems, we know that in a small neighborhood $\calO$ of $x_0$, there exists a fundamental matrix $Z$ whose entries are analytic in $\calO$.  Let $\gamma$ be a closed curve in $X$ based at $x_0$.  Analytic continuation along $\gamma$ yields a new fundamental solution matrix $Z_\gamma$ at $x_0$ which must be related to the old fundamental matrix as
\begin{eqnarray*} Z_\gamma &=& Z D_\gamma\end{eqnarray*}
where $D_\gamma \in \GL_n(\Cx)$.  One can show that $D_\gamma$ just depends on the homotopy class of $\gamma$.  This defines  a homomorphism
\begin{eqnarray*}
\Mon: \pi_1(X, x_0) &\rightarrow& \GL_n(\Cx)\\
    \gamma &\mapsto& D_\gamma
\end{eqnarray*}
 \begin{defin} The homomorphism $\Mon$ is called the {\em monodromy map} and its image is called the {\em monodromy group}.
 \end{defin}
 Note that the monodromy group depends on the choice of $Z$ so it is only determined up to conjugation.  Since analytic continuation preserves analytic relations, we have that the monodromy group is contained in the differential Galois group.
 \begin{exs} 1) $ y' = \frac{\alpha}{x} y \ \ (y = e^{\alpha \log x} = x^\alpha)$\ \ Singular points = $\{0, \infty\}$
\[\Mon(\pi_1(X,x_0)) = 
\{(e^{2\pi i \alpha})^n \ | \ n \in \Zx \}\]
If $\alpha \in \Qx$ this image is finite and so equals the differential Galois group.  If $\alpha \notin \Qx$, then this image is infinite and so Zariski dense in $\GL_1(\Cx)$.  In either case the monodromy group is Zariski dense in the differential Galois group. \\[0.05in]
2)  $ y'=y \ \ (y = e^x)$,  \ \  Singular points = $\{\infty\}$\[\Mon(\pi_1(X,x_0)) = \{1\} \]  
In this example the differential Galois group is $\GL_1(\Cx)$ and the monodromy group is not Zariski dense in this group. 
 \end{exs}
The second example shows that the monodromy group is not necessarily Zariski dense in the differential Galois group but the first example suggests that under certain conditions it may be.  To state these conditions we need the following  defintions.
\begin{defin} \label{prelimreg}1) An open sector $\calS = \calS(a,b,\rho)$ at a point $p$ is the set of complex numbers $z \neq p$ such that $\arg(z-p) \in (a,b)$ and $|z-p| < \rho(\arg(z-p))$ where $\rho:(a,b)\rightarrow \Rx_{>0}$ is a continuous function. \\[0.1in]
2) A singular point $p$ of $Y' = AY$ is a {\em  regular singular point} if for any open sector $\calS(a,b,\rho)$ at $p$ with $|a-b|<2\pi$, there exists a fundamental solution matrix $Z = (z_{ij})$, analytic in $\calS$ such that for some  positive constant  $c$ and  integer $N$,  $|z_{ij}|< c |x^N|$ as $x\rightarrow p$ in $\calS$.
\end{defin}
Note that in the first example above, the singular points are regular while in the second they are not.  For scalar linear differential equations $L(y) = y^{(n)} + a_{n-1}y^{(n-1)} + \ldots +a_0y = 0$,  the criterion of Fuchs states that $p$ is a regular singular point if and only if each $a_i$ has a pole of order $\leq n-i$ at $p$ (see Chapter 5.1 of \cite{PuSi2003} for a further discussion of regular singular points). For equations with only regular singular points we have
\begin{prop} \label{schlesinger}(Schlesinger) If $Y' = AY$ has only regular singular points, then $\Mon(\pi_1(X,x_0))$ is Zariski dense in $DGal$.
\end{prop}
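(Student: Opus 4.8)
The plan is to reduce the density statement to a statement about fixed fields via Corollary~\ref{fundcor}.2. Write $M = \Mon(\pi_1(X,x_0))$; the preceding discussion shows that $M$ is a subgroup of $G = \DGal(K/k)$, since analytic continuation preserves all algebraic and differential relations. By Corollary~\ref{fundcor}.2 the Zariski closure $\overline{M}$ equals $G$ if and only if $K^M = k$, so it suffices to prove $K^M = k$. Everything then reduces to showing that an element of the Picard--Vessiot field fixed by all monodromy must be a rational function.

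To analyze $K^M$ I would realize $K$ concretely as a field of germs of multivalued meromorphic functions near $x_0$: fix the fundamental matrix $Z$ analytic near $x_0$, so that $K$ is generated over $k = \Cx(x)$ by the entries of $Z$ and $1/\det Z$, each of which continues analytically along every path in $X$. For a loop $\gamma$ the associated Galois element acts exactly by analytic continuation along $\gamma$ (this is the content of $Z_\gamma = Z D_\gamma$). Hence $f \in K$ lies in $K^M$ precisely when $f$ is invariant under continuation around every loop, i.e.\ when $f$ is a single-valued meromorphic function on $X = \Px^1 \setminus \{\text{singular points}\}$.

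Next I would upgrade single-valuedness to rationality using the hypothesis. By the very definition of a regular singular point (Definition~\ref{prelimreg}), at each singular point $p$ and in each sector there is a fundamental matrix with entries of moderate growth, $|z_{ij}| < c\,|x-p|^{N}$; since any two fundamental matrices differ by a constant invertible matrix, the entries of our $Z$, and likewise $1/\det Z$, are of moderate growth at every $p$ and at $\infty$. I would then argue that every element of $K$ inherits moderate growth, so that a single-valued $f \in K^M$ has at worst a pole at each singular point and at $\infty$. A single-valued meromorphic function on all of $\Px^1$ is rational, so $f \in \Cx(x) = k$; this gives $K^M = k$ and hence $\overline{M} = G$, as desired.

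The main obstacle is precisely the moderate-growth bookkeeping in the final step, and specifically that passing to quotients preserves it: an arbitrary $f \in K$ is a ratio $a/b$ of elements of the Picard--Vessiot ring, and although $a$ and $b$ each have moderate growth, controlling $1/b$ (and already $1/\det Z$) requires knowing that $b$ has no essential zero. This is exactly where regular singularity is indispensable: the local solutions are finite combinations of $(x-p)^{\lambda}(\log(x-p))^{j}$ with coefficients meromorphic at $p$, so that the building blocks and their reciprocals are all of moderate type, whereas at an irregular singular point a solution such as $e^{1/x}$ has an essential singularity and the argument collapses. Indeed the equation $y' = y$ of the preceding examples, whose monodromy is trivial while $\DGal = \GL_1(\Cx)$, shows that the conclusion genuinely fails without the regular-singular hypothesis.
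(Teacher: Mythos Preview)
Your proof is correct and follows essentially the same route as the paper's: reduce via the Galois correspondence (Corollary~\ref{fundcor}) to showing $K^M=k$, observe that monodromy-invariance means single-valuedness on $X$, and then use the regular-singular hypothesis to force moderate growth at the punctures, so that a single-valued element of $K$ extends to a meromorphic function on $\Px^1$ and is therefore rational. Your final paragraph on the quotient issue is more scrupulous than the paper, which simply asserts polynomial growth of any $f\in\Cx(x,z_{ij})$ without comment; your appeal to the explicit local form $(x-p)^\lambda(\log(x-p))^j$ of solutions is the standard way to fill that in.
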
 
\begin{proof} Let $K$ = PV extension field  of $k$ for $Y' = AY$. We have $Mon(\pi_1(X,x_0)) \subset DGal(K/k)$.  By Corollary~\ref{fundcor} it is enough to show that if $f \in K$ is left fixed by $Mon(\pi_1(X,x_0))$ then $f \in k = \Cx(x)$. If $f$ is left invariant by analytic continuation on $X$, then $f$ is a single valued meromorphic function on $X$.  Let $Z = (z_{ij}), z_{ij} \in K$ be a fundamental solution matrix and let $f \in \Cx(x, z_{ij})$. Regular singular points imply that $f$ has polynomial growth near the singular points of $Y'=AY$ and is at worst meromorphic at other points on the Riemann Sphere. Cauchy's Theorem implies that $f$ is meromorphic on the Riemann Sphere and Liouville's Theorem implies that $f$ must be a rational function. \end{proof}

\noindent In Section~\ref{mfssec3}, we shall see what other analytic information one needs to determine the differential Galois group in general.\\[0.2in]

\noindent \underline{\em Factorization of linear differential operators.} In Section \ref{mfssec4}, we will see that factorization properties of linear differential operators and differential modules play a crucial role in algorithms to calculate properties of Galois groups.  Here I shall relate factorization of operators to the Galois theory.  \\[0.1in]
Let $k$ be a differential field, $k[\dd]$ the ring of differential operators over $k$ and $L \in k[\dd]$.  Let $K$ be the PV-field over $k$ corresponding to $L(y) = 0$, $G$ its Galois group and $V = \Soln_K(L(y) = 0)$. 
\begin{prop}\label{invsubprop} There is a correspondence among the following objects \begin{enumerate}
\item Monic $L_0 \in k[\dd]$ such that $L = L_1\cdot L_0$.
\item Differential submodules of $k[\dd]/k[\dd]L$.
\item $G$-invariant subspaces of $V$.
\end{enumerate}
\end{prop}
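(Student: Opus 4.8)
The plan is to establish a chain of bijections $(1) \leftrightarrow (2) \leftrightarrow (3)$, passing through the differential module $\calM = k[\dd]/k[\dd]L$, whose solution space in $K$ is naturally identified with $V = \Soln_K(L(y)=0)$. The guiding principle is that the PV-ring/torsor correspondence, together with the Fundamental Theorem, lets us translate purely differential-algebraic data (submodules) into Galois-theoretic data ($G$-invariant subspaces), while the Euclidean structure on $k[\dd]$ from Lemma~\ref{euclid} lets us translate factorizations into submodules.

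For $(1) \leftrightarrow (2)$, I would argue as follows. Given a factorization $L = L_1 \cdot L_0$ with $L_0$ monic of order $r < \ord L$, the left ideal $k[\dd]L \subset k[\dd]L_0$, so $k[\dd]L_0 / k[\dd]L$ is a $k[\dd]$-submodule of $\calM = k[\dd]/k[\dd]L$. Conversely, any differential submodule $\calN \subset \calM$ is a left $k[\dd]$-submodule, hence of the form $k[\dd]L_0 / k[\dd]L$ where $k[\dd]L_0 \supset k[\dd]L$ is the preimage in $k[\dd]$; by Lemma~\ref{euclid}.2 this preimage is principal, generated by a (unique, after normalizing to monic) $L_0$, and $k[\dd]L \subset k[\dd]L_0$ forces $L_0$ to be a right divisor of $L$, i.e. $L = L_1 \cdot L_0$. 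The two constructions are mutually inverse because a monic generator of a left ideal is unique. The main thing to check carefully here is that every $k$-subspace of $\calM$ that is a $k[\dd]$-submodule is exactly a differential submodule in the sense of Definition~\ref{diffmodule}, which is just the observation, noted earlier in the excerpt, that left $k[\dd]$-modules finite-dimensional over $k$ are the same as differential modules, and submodules correspond under this identification.

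For $(2) \leftrightarrow (3)$, I would use the solution functor. A differential submodule $\calN \subseteq \calM$ gives, by tensoring with $K$, an inclusion $K \otimes_k \calN \subseteq K \otimes_k \calM$ of $K[\dd]$-modules, and taking $\ker \dd$ yields $\Soln_K(\calN) \subseteq \Soln_K(\calM) = V$. Because $K$ is a PV-field with $C_K = C_k$ algebraically closed, the solution space has full dimension equal to $\dim_k \calN$, so this assignment loses no information. The Galois group $G$ acts on $K \otimes_k \calM$ through its action on $K$ (fixing $\calM$), hence commutes with $\dd$ and preserves $V$; since $\calN$ is defined over $k$, its solution space $\Soln_K(\calN)$ is $G$-stable, giving a $G$-invariant subspace. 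For the reverse direction, given a $G$-invariant subspace $W \subseteq V$, I would recover a submodule by taking the $\dd$-stable $k$-subspace of $\calM$ whose solutions are $W$; concretely, $W$ spans a $K$-subspace $K \cdot W$ of $K \otimes_k \calM$, and by Corollary~\ref{fundcor}.1 (descent of $G$-invariants to $k$) its intersection with $\calM$, or equivalently the $k$-span of the $G$-fixed combinations, is a genuine submodule defined over $k$ recovering $W$.

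The hard part will be the reverse direction of $(2) \leftrightarrow (3)$: producing, from an abstract $G$-invariant subspace $W$, an honest submodule of $\calM$ defined over $k$, and verifying that the two passages are mutually inverse. This is precisely where the full strength of the Galois correspondence is needed — the key input is that a $K$-subspace of $K \otimes_k \calM$ that is stable under $G$ descends to a $k$-subspace of $\calM$, which follows from Corollary~\ref{fundcor}.1 applied coordinatewise after choosing a basis, together with the fact that $G$-invariance of $W$ matches $\dd$-stability of the descended subspace. Once descent is in hand, the bijectivity is a matter of checking dimensions (using $\dim_{C_k} W = \dim_k \calN$) and that the composites are identities, which is routine.
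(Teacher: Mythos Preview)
Your $(1)\leftrightarrow(2)$ argument matches the paper's exactly. For the link with $G$-invariant subspaces, however, the paper takes a different and more concrete route. Rather than going $(2)\leftrightarrow(3)$ by the solution functor plus Galois descent, the paper goes directly $(3)\to(1)$: given a $G$-invariant $W$ with $C_k$-basis $b_1,\ldots,b_t$, it writes down the operator
\[
L_W(Y)\;=\;\frac{wr(Y,b_1,\ldots,b_t)}{wr(b_1,\ldots,b_t)}
\]
and observes that under $\sigma\in G$ both numerator and denominator pick up the same factor $\det[\sigma]_B$, so the coefficients of $L_W$ are $G$-fixed and hence lie in $k$ by Corollary~\ref{fundcor}. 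Euclidean division then shows $L_W$ is a right factor of $L$. For $(1)\to(3)$ the paper does an explicit kernel/image dimension count for the map $L_0:V\to\Soln_K(L_1)$ to establish $\dim_{C_k}\Soln_K(L_0)=\ord(L_0)$; this is precisely the fact you invoke when you assert ``the solution space has full dimension equal to $\dim_k\calN$'', and it deserves the argument the paper gives. Your descent approach is correct and is really the tannakian viewpoint the paper adopts later in Section~\ref{mfssec4}; it is cleaner and generalizes well. The paper's Wronskian construction is more elementary, stays entirely within the tools already developed (Corollary~\ref{wronskiancor} and Lemma~\ref{euclid}), and produces an explicit formula for the right factor---which matters for the algorithmic applications that follow.
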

\begin{proof} Let $W \subset V$ be  $G$-invariant and  $B = \{b_1, \ldots ,b_t\}$ a $C_k$-basis of $W$.  For $\sigma \in G$, let $[\sigma]_B$ be the matrix of $\sigma|_W$ with respect to $B$,
 
\[\sigma(\left[\begin{array}{ccc}b_1&\ldots&b_t\\ \vdots & \vdots &\vdots\\b_1^{(t-1)}&\ldots&b_t^{(t-1)}\end{array}\right]) = \left[\begin{array}{ccc}b_1&\ldots&b_t\\ \vdots & \vdots &\vdots\\b_1^{(t-1)}&\ldots&b_t^{(t-1)}\end{array}\right] [\sigma]_B \ .\]

\noindent Taking determinants we have
$\sigma(wr(b_1, \ldots , b_t)) = wr(b_1, \ldots , b_t)\det([\sigma]_B)$. Define
$L_W(Y) \stackrel{def}{=} \frac{wr(Y,b_1, \ldots, b_t)}{wr(b_1, \ldots , b_t)}$. Using the above, we have that the coefficients of $L_W$ are left fixed by the differential Galois group and so lie in $k$. Therefore $ L_W\in k[\dd]$.  Furthermore,  Corollary~\ref{wronskiancor} implies that $L_W$ vanishes on $W$.  If we write $L = L_1\cdot L_W + R$ where $\ord(R) < \ord(L_W)$, one sees that $R$ also vanishes on $W$.  Since the dimension of $W = \ord(L_W)$, Corollary~\ref{wronskiancor} again implies that $R= 0$.  Therefore $W$ is the solution space of the right factor $L_W$ of $L$.\\[0.05in]
Let $L_0 \in k[\dd]$ with $L = L_1\cdot L_0 $. Let $W = Soln_K(L_0(y) = 0)$. We have that $ L_0$ maps $V$ into  $Soln_K(L_1(y) = 0)$. Furthermore we have the following implications
\begin{itemize}
\item $Im(L_0) \subset  Soln_K(L_1(y) = 0) \Rightarrow \dim_{C_k}(Im(L_0))\leq \ord (L_1)$
\item $Ker(L_0) = Soln_K(L_0(y) = 0) \Rightarrow \dim_{C_k}(Ker(L_0))\leq \ord (L_0)$
\item $n = \dim_{C_k}(V)=\dim_{C_k}(Im(L_0)) + \dim_{C_k}(Ker(L_0)) \leq $\\$ \hspace*{2in} \ord(L_1) + \ord(L_0) =n$
\end{itemize}
Therefore $dim_{C_k}Soln_K(L_0(y)=0) = \ord (L_0)$.  This now implies that the association of $G$-invariant subspaces of $V$ to right factors of $L$ is a bijection. \\[0.05in]
I  will now describe the correspondence between right factors of $L$ and submodules of $k[\dd]/k[\dd]L$. One easily checks that if $L = L_1 L_0$, then $k[\dd]L  \subset k[\dd]L_0$ so $M = k[\dd]L_0/ k[\dd]L\subset k[\dd]/k[\dd]L$.  Conversely let $M$ be a differential submodule of $k[\dd]/k[\dd]L$. Let $\pi: k[\dd] \rightarrow k[\dd]/k[\dd]L$ and $\overline{M} = \pi^{-1}(M)$.  $\overline{M} = k[\dd]L_0$ for some $L_0 \in k[\dd]$.  Since $L \in \overline{M}, L = L_1L_0$. \end{proof}
\noindent 
By induction on the order, one can show that any differential operator can be written as a product of irreducible operators.  This need not be unique (for example $\dd^2 = \dd \cdot \dd = (\dd +\frac{1}{x})(\dd - \frac{1}{x}))$ but a version of the Jordan-H\"older Theorem implies that if $L \ L_1 \cdot \ldots \cdot L_r = \tilde{L}_1\cdot\ldots \cdot \tilde{L}_s$ the $r=s$ and, after possibly renumbering, $k[\dd]/k[\dd]L_i \simeq k[\dd]/k[\dd]\tilde{L}_i$ (\cite{PuSi2003}, Chapter 2).\\[0.1in]
One can also characterize submodules of differential modules in terms of corresponding  matrix linear differential equations.  Let $Y' = AY$ correspond to the differential module $M$ of dimension $n$. One then has that $M$ has a proper submodule of dimension $t$ if and only if $Y' = AY$ is equivalent to $Y' = BY, B = \left(\begin{array}{cc} B_0 & B_1 \\ 0 & B_2\end{array} \right) \ B_0 \in {\rm gl}_t(k)$.\\[0.2in]
\underline{\em Transcendental numbers.}  Results  of Siegel and Shidlovski (and subsequent authors) reduce the problem of showing that the values of certain functions are algebraically independent to showing that these functions are themselves algebraically independent.  The Galois theory of linear differential equations can be used to do this.
\begin{defin}  $f = \sum^\infty_{i=0} a_n\frac{z^n}{n!} \in \Qx[[z]]$ is an {\em E-function} if $\exists \ c \in \Qx$ such that, for all $n$, $|a_n| \leq c^n$ and the least  common denominator of $a_0, \ldots , a_n \leq c^n$.
\end{defin} 
\begin{exs}$e^x = \sum \frac{z^n}{n!}$, \ \ \ \ \ $J_0(z) = \sum \frac{z^{2n}}{(n!)^2}$
\end{exs}
\begin{thm} (Siegel-Shidlovski, {\em c.f.,} \cite{Lang_trans}): If $f_1, \ldots  ,f_s$ are E-functions, \underline{\em algebraically independent} over $\Qx(z)$ and satisfy a linear differential equation
\[L(y) = y^{(m)} + a_{m-1}y^{(n-1)} + \ldots + a_0y , \ \ \  a_i \in \Qx(z)\]
then for $\alpha \in \Qx, \alpha \neq 0$ and $\alpha $ not a pole of the $a_i$, the numbers $f_1(\alpha), \ldots , f_s(\alpha)$ are  algebraically independent over $\Qx$.\end{thm}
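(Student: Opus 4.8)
The plan is to follow the classical approximation method of Siegel as completed by Shidlovski; note that the differential Galois theory developed above enters only to \emph{certify the hypothesis}, namely the algebraic independence of $f_1, \ldots, f_s$ over $\Qx(z)$, whereas the theorem itself is an arithmetic statement proved by hand. First I would record the closure property supplied by $L$: adjoining to the $f_i$ their derivatives up to order $m-1$ produces a finite list of analytic functions that generate a $\Qx(z)$-algebra stable under $d/dz$ (since $f_i^{(m)}$ is a $\Qx(z)$-combination of $f_i, \ldots, f_i^{(m-1)}$), equivalently the companion system $Y' = A_L Y$ with $A_L \in \gl_m(\Qx(z))$. The goal then reduces to showing that $f_1(\alpha), \ldots, f_s(\alpha)$ have transcendence degree $s$ over $\Qx$, i.e. that the ``rank at $\alpha$'' matches the rank of the functions; the whole argument balances an analytic upper bound against an arithmetic lower bound at $z = \alpha$.

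Step one (auxiliary forms via Siegel's Lemma). Using the pigeonhole principle I would construct, for each large integer $N$, a nonzero polynomial $P$ with integer coefficients of controlled height, of bounded degree in $z$ and in the variables attached to the $f_i^{(j)}$, such that the function $R_0(z) = P(z, f_1(z), \ldots)$ vanishes at $z = 0$ to order $\ge M$, with $M$ chosen just below the number of free coefficients. The $E$-function hypothesis, giving bounds $c^n$ on both the archimedean size and the common denominators of the Taylor coefficients, is exactly what makes the underlying linear system over $\Zx$ solvable with a small solution. Differentiating repeatedly and re-expressing derivatives of the $f_i$ via $L$, I obtain a sequence $R_0, R_1, R_2, \ldots$, each of the form $P_j(z, f_1, \ldots)$ with the $P_j$ of bounded degree and controlled height.

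Step two (Shidlovski's lemma — the crux). The essential difficulty is to prevent the approximating forms from degenerating: I must find among the $P_j$ enough forms whose associated functional determinant does not vanish identically in $z$. Concretely one shows that the $\Qx(z)$-rank of the coefficient vectors of the $R_j$, read as linear forms in the monomials in the $f_i$, is maximal, which is precisely where the algebraic independence of $f_1, \ldots, f_s$ over $\Qx(z)$ is decisive (together with the absence of nonzero solutions in $\Qx(z)$). This \emph{nonvanishing-of-rank} statement is the deepest and most delicate point, and the one I expect to be the main obstacle; it is what turned Siegel's special cases into a general theorem. Once maximal rank is secured, a nonvanishing functional determinant lets me solve for the values and transfer nondegeneracy from the function level to the point $\alpha$.

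Step three (the arithmetic balance). Finally I would estimate the numbers $R_j(\alpha)$. The high order of vanishing of $R_0$ at $0$ together with the $E$-function growth yields an archimedean upper bound on the $|R_j(\alpha)|$ that decays rapidly as $N$ grows. On the other hand, because $\alpha$ is algebraic and the coefficients and denominators of the $f_i$ grow no faster than $c^n$, clearing denominators turns each $R_j(\alpha)$ into a bounded multiple of an algebraic number whose denominator and maximum conjugate modulus are controlled; a Liouville-type inequality (the product formula) then bounds $|R_j(\alpha)|$ from below whenever it is nonzero. For $N$ large the upper bound falls below this lower bound, so the forms that Step two guarantees to be nonzero at the function level cannot all vanish at $\alpha$. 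Tracing this through shows the rank of the values equals the rank of the functions, whence $f_1(\alpha), \ldots, f_s(\alpha)$ are algebraically independent over $\Qx$ (equivalently over $\overline{\Qx}$).
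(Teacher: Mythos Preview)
The paper does not prove this theorem at all: it is stated with a reference (``\emph{c.f.}, \cite{Lang_trans}'') and then immediately used as a black box, the point being only that differential Galois theory supplies the algebraic-independence hypothesis via Corollary~\ref{torsorcor}. So there is no proof in the paper to compare your proposal against.

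For what it is worth, your outline is a faithful sketch of the classical Siegel--Shidlovski argument as one finds it in Lang or Shidlovski, with the three standard ingredients correctly identified (Siegel's lemma to build auxiliary forms, Shidlovski's rank lemma to guarantee nondegeneracy, and the Liouville-type endgame). You are also right that Step~two is the crux and the place where the algebraic-independence hypothesis actually bites. If you were writing this up in full you would need to be more careful about exactly which system of functions you work with (one typically passes to a maximal algebraically independent subset of the full set $\{f_i^{(j)}\}$ and handles the remaining ones separately), but as a plan there is nothing wrong here---it simply goes well beyond what the paper itself supplies.
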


The differential Galois group of $L(y) = 0$ measures the algebraic relations among solutions.  In particular, Corollary~\ref{torsorcor} implies that if $y_1, \ldots y_n$ are a $\bar{\Qx}$-basis of the solution space of $L(y) = 0$ then the transcendence degree of $\bar{\Qx}(x)(y_1, \ldots y_n, y_1', \ldots y_n', \ldots , y_1^{(n-1)}, \ldots , y_n^{(n-1)})$ over $\bar{\Qx}(x)$ is equal to the dimension of the differential Galois group of $L(y) = 0$.
\begin{ex} The differential Galois group of the Bessel equations $y''+\frac{1}{z} y' + (1 - \frac{\lambda^2}{z^2})y = 0, \ \lambda - \frac{1}{2} \notin \Zx$ over $\bar{\Qx}(x)$ can be shown to be $\SL_2$ (\cite{kolchin_ostrowski}) and examining the recurrence defining a power series solution of this equation, one sees that the solutions are E-functions.   Given a  fundamental set of solutions $\{J_\lambda, Y_\lambda\}$ we have that the transcendence degree of $\bar{\Qx}(x)( J_\lambda, Y_\lambda, J_\lambda', Y_\lambda')$ over $\bar{\Qx}(x)$ is $3$ (one can show that $ J_\lambda Y_\lambda' -  J_\lambda', Y_\lambda \in \bar{\Qx}(x)$.  Therefore one can apply the Siegel-Shidlovski Theorem to show that certain values of $J_\lambda, Y_\lambda$ and $ J_\lambda'$ are algebraically independent over $\bar{\Qx}$.
\end{ex}
\vspace{.1in}
\noindent \underline{\em Why does $\cos x$ satisfy a linear differential equation over $\Cx(x)$, while} \linebreak \underline{\em  $\sec x$ does not?} There are many reasons for this.  For example, $\sec x = \frac{1}{cos x}$ has an infinite number of poles which is impossible for a solution of a linear differential equation over $\Cx(x)$. The following result, originally due to Harris and Sibuya \cite{harris_sibuya}, also yields this result and can be proven using   differential Galois theory \cite{singer_relations}.
\begin{prop}Let  $k$ be a differential  field with algebraically closed constants and $K$ a PV -field of $k$. Assume $y\neq 0,1/y \in K$  satisfy linear differential equations over $k$.  The $\frac{y'}{y}$ is algebraic over $k$.
\end{prop}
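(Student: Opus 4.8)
The plan is to translate the statement into one about the differential Galois group $G=\DGal(K/k)$ and its identity component $G^0$, and then to reduce it to a finiteness statement about lines in a finite-dimensional space. Write $u=y'/y$, and let $L_1,L_2\in k[\der]$ be operators with $L_1(y)=0$ and $L_2(1/y)=0$; set $V=\Soln_K(L_1(y)=0)$ and $W=\Soln_K(L_2(y)=0)$, which are finite-dimensional over $C_k$ by Corollary~\ref{wronskiancor}. For any $\sigma\in G$ we have $\sigma(y)\in V$ (since $\sigma$ fixes the coefficients of $L_1$) and $\sigma(1/y)=1/\sigma(y)\in W$. A direct computation gives $\sigma(u)-u=\der\log(\sigma(y)/y)$, so $\sigma(u)=u$ if and only if $\sigma(y)/y\in C_K=C_k$, i.e. $\sigma(y)\in C_k\,y$. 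By Corollary~\ref{fundcor}(3), $u$ is algebraic over $k$ exactly when $u\in K^{G^0}$, so the whole statement is equivalent to the assertion that $\sigma(y)\in C_k\,y$ for every $\sigma\in G^0$; equivalently, that the point $[y]\in\Px(V)$ is fixed by $G^0$, equivalently that the orbit $G\cdot[y]$ is finite.

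First I would isolate the finiteness I actually need. Consider $N=\{v\in V\mid v\neq0,\ 1/v\in W\}$, a cone stable under scaling by $C_k^*$. For $\sigma\in G^0$ we have $\sigma(y)\in V$ and $1/\sigma(y)=\sigma(1/y)\in W$, so the orbit $G^0\cdot[y]$ is contained in the image $\Px(N)\subseteq\Px(V)$. Since $G^0$ is connected, $G^0\cdot[y]$ is irreducible; hence if $\Px(N)$ is finite the orbit is a single point and we are done. Thus everything comes down to the following geometric lemma, which I expect to be the main obstacle: \emph{if $V,W$ are finite-dimensional $C_k$-subspaces of $K$, then only finitely many lines $C_k v\subseteq V$ have $1/v\in W$.}

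To prove this lemma I would use valuation theory. After replacing $k$ by the field $k_1$ generated over $C_k$ by bases $y_1,\dots,y_d$ of $V$ and $z_1,\dots,z_e$ of $W$ (a finitely generated extension of $C_k$ in which $C_k$ is algebraically closed), I can work in the function field $k_1/C_k$. For any valuation $\nu$ trivial on $C_k$ and any $v=\sum_i c_i y_i\in N$ one has $\min_i\nu(y_i)\le\nu(v)\le-\min_j\nu(z_j)$, because $v\in V$, $1/v\in W$, and the $c_i$ are constants for $\nu$. Hence $\operatorname{div}(v)$ is supported on the finite set of zeros and poles of the $y_i$ and $z_j$, with uniformly bounded multiplicities, so it takes only finitely many values. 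If $\Px(N)$ were infinite it would contain an irreducible curve, and along it $\operatorname{div}(v)$ would be constant on a positive-dimensional subfamily $\{v_b\}$; but then $v_b/v_{b_0}$ would be a connected family of units of $k_1$, hence constant, since the unit group of a function field is $C_k^*$ times a finitely generated (discrete) group by Rosenlicht's theorem. This forces $[v_b]$ to be constant, a contradiction. Therefore $\Px(N)$ is finite, $C_k\,y$ is $G^0$-stable, and $y'/y\in K^{G^0}=\tilde k$ is algebraic over $k$. The hard part is genuinely this last lemma --- controlling how the reciprocals of a moving family of elements of $V$ can stay inside the fixed finite-dimensional space $W$ --- and an alternative is to quote the analytic argument of Harris and Sibuya~\cite{harris_sibuya} or the differential-algebraic version in~\cite{singer_relations}.
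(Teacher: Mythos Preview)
Your argument is correct and takes a genuinely different route from the paper. The paper's proof passes to the case where $k$ is algebraically closed, identifies the Picard--Vessiot ring with the coordinate ring $k[G]$ via the torsor theorem, and then invokes Rosenlicht's result \cite{toroid} that an invertible regular function on an algebraic group is a constant times a character; this forces $\sigma(y)=c_\sigma y$ for every $\sigma\in G$, so $y'/y$ is $G$-fixed and lies in $k$. Your approach instead isolates a purely field-theoretic lemma --- that only finitely many lines $C_k v\subset V$ satisfy $1/v\in W$ --- and deduces from it that the connected group $G^0$ must fix $[y]$. The divisor argument you sketch works: on a normal projective model of $k_1/C_k$, the inequalities $\min_i\nu_D(y_i)\le \nu_D(v)\le -\min_j\nu_D(z_j)$ show that $b\mapsto\nu_D(v_b)$ is simultaneously lower and upper semicontinuous along any curve in $\mathbb P(N)$ (using both $v_b\in V$ and $1/v_b\in W$), hence constant; then $v_b/v_{b_0}$ has trivial divisor on a complete variety and is therefore in $C_k^*$, contradicting positive-dimensionality. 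One small correction: you do not actually need Rosenlicht's unit theorem here --- on a projective model the statement ``trivial divisor $\Rightarrow$ constant'' is elementary. What your approach buys is that the key step becomes a statement about arbitrary finite-dimensional subspaces of a field, with no differential algebra; what the paper's approach buys is a slightly sharper conclusion (all of $G$, not just $G^0$, scales $y$) and a conceptual explanation via group characters.
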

\begin{proof} (Outline) We may assume that $k$ is algebraically closed and that $K$ is the quotient field of a PV-ring $R$.  Corollary~\ref{torsorcor} implies that $R = k[G]$ where $G$ is an algebraic group whose $C_k$ points form  the Galois group of $R$ over $k$.  Since both  $y$ and $\frac{1}{y}$ satisfy linear differential equations over $k$, their orbits under the action of the Galois group span a {\em finite dimensional} vector space over $C_k$.  This in turn implies the orbit of that $y$ and $\frac{1}{y}$  induced by the the action of $G(k)$ on $G(k)$ by right multiplication  is finite dimensional.  The general theory of linear algebraic groups \cite{springer} tells us that this implies that   these elements must lie in $k[G]$. A result of Rosenlicht  \cite{magid_finite, toroid} implies that $y = a f$ where $a \in k$ and $f \in k[G]$ satisfies $f(gh) = f(g)f(h)$ for all $g, h \in G$.
In particular, for $\sigma \in G(C_k) = \Aut(K/k)$, we have $\sigma(f(x)) = f(x\sigma) = f(x)f(\sigma)$.  If $L \in k[\dd]$ is of minimal positive order such that $L(f) = 0$,  then $0 = \sigma(L(f)) = L(f\cdot f(\sigma))$ and a calculation shows that this implies that $f(\sigma) \in C_k$. \\[0.05in]
We therefore have that, for all $\sigma \in \Aut(K/k)$ there exists a $c_\sigma \in C_k$ such that  $\sigma(y) = c_\sigma y$.  This implies that $\frac{y'}{y}$ is left fixed by $\Aut(K/k)$ and so must be in $k$.  \end{proof}
If $y = \cos x$ and $\frac{1}{y} = \sec x$ both satisfy linear differential equations over $\bar{\Qx}(x)$, then the above result implies that $\tan x$ would be algebraic over $\bar{\Qx}(x)$.  One can show that this is not the case by noting that  a periodic function that is algebraic over $\bar{\Qx}(x)$ must be constant, a contradiction. Another proof can be given by startng with a putative minimal polynomial $P(Y) \in \Cx(x)[Y]$ and noting that $\frac{dP}{dx} + \frac{dP}{dY} Y' = \frac{dP}{dx} + \frac{dP}{dY} (Y^2+1)$ must be divisible by $P$.  Comparing coefficients, one sees that the coefficients of powers of $Y$ must be constant and so $\tan x$ would be constant.\\[0.2in]
\underline{\em Systems of linear partial differential equations.} Let $(k, \Delta = \{\dd_1, \ldots , \dd_m\})$ be a  partial differential field. One can define the ring of linear partial differential operators $k[\dd_1, \ldots , \dd_m]$ as the noncommutative polynomial ring in $\dd_1, \ldots , \dd_m$ where $\dd_i\cdot a = \dd_i(a) + a \dd_i$ for all $a \in k$. In this context, a differential module is a finite dimensional $k$-space that is a left $k[\dd_1, \ldots , \dd_m]$-module.  We have 
\begin{prop} Assume $k$ contains an element $a$ such that $\dd_i(a) \neq 0$ for some $i$.  There is a correspondence between \begin{itemize}
\item  differential modules,
\item  left ideals $I \subset k[\dd_1, \ldots , \dd_m]$ such that $\dim_k k[\dd_1, \ldots , \dd_m]/I < \infty$, and 
\item systems $\dd_i Y = A_iY, A\in \gl_n(k), i = 1, \ldots m $ such that  
\[ \dd_i(A_j) +A_iA_j = \dd_j(A_i) + A_jA_i\] i.e., integrable systems
\end{itemize}
\end{prop}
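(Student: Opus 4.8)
The plan is to make the differential module the hub: I will set up two correspondences, differential modules $\leftrightarrow$ integrable systems and differential modules $\leftrightarrow$ finite-codimension left ideals, and then read off the correspondence between systems and ideals as their composite. Only the second of these carries real content; the first is a basis computation.

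For modules and systems, given a differential module $(M,\dd_1,\ldots,\dd_m)$ of dimension $n$, I would choose a $k$-basis $e_1,\ldots,e_n$ and define $A_i \in \gl_n(k)$ by $\dd_i e_l = \sum_p (A_i)_{p,l}\,e_p$, exactly as in the construction of $A_M$ recalled earlier in the one-derivation case; this produces the system $\dd_i Y = A_i Y$. The content here is that the relations $\dd_i\dd_j = \dd_j\dd_i$, which are part of the data of a differential module, translate into a condition on the $A_i$: computing $\dd_i\dd_j e_l$ and $\dd_j\dd_i e_l$ from the defining formulas and equating coefficients yields precisely the displayed relation $\dd_i(A_j)+A_iA_j = \dd_j(A_i)+A_jA_i$. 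Conversely, starting from an integrable system I would put this module structure on $k^n$ by the same formulas; the Leibniz rule holds by construction, and the integrability condition is exactly what is needed to check that the operators $\dd_i$ commute, so $M = k^n$ is a bona fide differential module. A change of basis $B \in \GL_n(k)$ changes $A_i$ by the usual gauge transformation, so this is a correspondence up to equivalence of systems.

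For modules and ideals, write $D = k[\dd_1,\ldots,\dd_m]$. A left ideal $I$ with $\dim_k D/I < \infty$ gives a differential module $D/I$ directly: the defining relation $\dd_i \cdot a = \dd_i(a) + a\dd_i$ yields the Leibniz rule for the $k$-action on $D/I$, and the $\dd_i$ commute on $D/I$ because they already commute in $D$. For the opposite direction I must realize a given $M$ as $D/I$; this is equivalent to exhibiting a cyclic vector $v \in M$ with $Dv = M$, after which $I = \mathrm{Ann}_D(v)$ is the desired left ideal and $\dim_k D/I = \dim_k M < \infty$.

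The cyclicity of $M$ over $D$ is the one substantive point and the step I expect to be the main obstacle, since in several variables a module has no single distinguished generator. Here I would exploit the standing hypothesis: fix an index $i$ and an element $a \in k$ with $\dd_i(a)\neq 0$, and regard $M$ merely as an ordinary differential module over the field-with-derivation $(k,\dd_i)$, which is legitimate because $M$ is finite-dimensional over $k$. The Cyclic Vector Theorem applies to this single derivation precisely because its hypothesis $\dd_i(a)\neq 0$ is satisfied, and produces $v$ with $k[\dd_i]\,v = M$. Since $k[\dd_i] \subset D$, a fortiori $Dv = M$, so $v$ is already cyclic for the full ring of partial differential operators. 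This reduction is exactly where the assumption on $k$ is used, paralleling the one-variable remark that cyclic vectors disappear when the derivation is trivial; granting it, the three families of objects match up consistently through the central differential module.
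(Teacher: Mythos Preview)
The paper does not actually prove this proposition: it is stated and the reader is pointed to \cite{DAAG}, \cite{CaSi}, and Appendix~D of \cite{PuSi2003} for details. Your argument is correct and is essentially the standard one. The basis computation linking modules and integrable systems is routine, and your check that commutativity of the $\dd_i$ on $M$ is equivalent to the displayed integrability condition is accurate. The one substantive step is producing a cyclic vector for a partial differential module; your reduction---restrict to the single derivation $\dd_i$ for which $\dd_i(a)\neq 0$, invoke the ordinary Cyclic Vector Theorem over $(k,\dd_i)$, and observe that a $k[\dd_i]$-generator is \emph{a fortiori} a $k[\dd_1,\ldots,\dd_m]$-generator---is correct and is precisely where the hypothesis on $k$ enters. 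You are also right to note that the correspondences are up to isomorphism of modules, gauge equivalence of systems, and choice of cyclic vector for ideals, rather than literal bijections of sets.
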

If \[\dd_i Y = A_iY, A\in \Mn(k), i = 1, \ldots m\] is an integrable system, the solution space over any differential field is finite dimensional over constants.  Furthermore, one can define   PV extensions, differential Galois groups, etc. for such systems 
and develop a Galois theory whose groups are linear algebraic groups. For further details see \cite{DAAG, CaSi} and Appendix D of \cite{PuSi2003}.

\section{Local Galois Theory}\label{mfssec3} In the previous section, we showed that for equations with only regular singular points the analytically defined monodromy group determines the differential Galois group.  In this section we shall give an idea of what analytic information determines the Galois group for general singular points. We shall use the following notation:
\begin{itemize}
\item
${ \Cx[[x]]} = \{\sum_{i=0}^\infty a_i x^i \ | \ a_i \in \Cx\}, \ \ x'=1$
\item
${ \Cx((x))}$ = quotient field of $\Cx[[x]]$ = $\Cx[[x]][x^{-1}]$
\item${\Cx\{\{x\}\} } = \{f \in \Cx[[x]] \ | \ f \mbox{ converges in a neighborhood $\calO_f$ of $0$}\}$
\item 
${\Cx(\{x\}) }$ = quotient field of $\Cx\{\{x\}\} $ = $\Cx\{\{x\}\}[x^{-1}]$
\end{itemize}
Note that \[ \Cx(x) \subset \Cx(\{x\}) \subset \Cx((x))\]
We shall consider equations of the form
\begin{eqnarray} \label{eqnlocal1}
Y' = AY 
\end{eqnarray}
with  $ A \in \Mn(\Cx(\{x\})) \mbox{ or }\Mn(\Cx((x)))$
and discuss the form of solutions of such equations as well as the Galois groups relative to the base fields $\Cx(\{x\})$ and $ \Cx((x))$.  We start with the simplest case:\\[0.2in]
{\bf \underline{Ordinary points.}} We say that $x = 0$ is an {\em ordinary point} of equation~(\ref {eqnlocal1}) if $A \in \Mn(\Cx[[x]])$. The standard existence theory (see \cite{poole} for a classical introduction or \cite{hartman_book}) yields the following:
\begin{prop} If $A \in \Mn(\Cx[[x]])$ then there exists a unique $Y \in \GL_n(\Cx[[x]]$ such that $Y' = AY$ and $Y(0)$ is the identity matrix.  If $A \in \Mn(\Cx(\{x\}))$ then we furthermore have that $Y \in \GL_n(\Cx(\{x\}))$.
\end {prop}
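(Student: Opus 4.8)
The plan is to separate the formal (purely algebraic) assertion from the convergent (analytic) one, establishing existence, uniqueness and invertibility of the formal solution first, and then upgrading to convergence by Cauchy's method of majorants. First I would write $A = \sum_{i\ge 0} A_i x^i$ with $A_i \in \Mn(\Cx)$ and seek a solution $Y = \sum_{j\ge 0} Y_j x^j$ normalized by $Y_0 = I$. Substituting into $Y' = AY$ and comparing the coefficient of $x^n$ on each side yields the recursion $(n+1)Y_{n+1} = \sum_{i=0}^n A_i Y_{n-i}$. Since we work in characteristic $0$, dividing by $n+1$ is legitimate, so each $Y_{n+1}$ is determined by $Y_0, \ldots, Y_n$; thus the choice $Y_0 = I$ forces every $Y_j$ uniquely, proving existence and uniqueness of the formal solution in $\Mn(\Cx[[x]])$.

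Next I would verify invertibility in the formal case. Because $Y(0) = Y_0 = I$, the series $\det Y \in \Cx[[x]]$ has constant term $\det I = 1 \neq 0$, and a formal power series is a unit in $\Cx[[x]]$ precisely when its constant term is nonzero; hence $Y \in \GL_n(\Cx[[x]])$. (Alternatively one may note that $\det Y$ satisfies the scalar equation $(\det Y)' = \operatorname{tr}(A)\,\det Y$, so $\det Y = \exp(\int \operatorname{tr}(A))$ visibly has nonzero constant term.)

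The analytic core is the convergent case. Assume now that $A$ has convergent entries, i.e.\ $A \in \Mn(\Cx\{\{x\}\})$, fix a submultiplicative norm $\|\cdot\|$ on $\Mn(\Cx)$ with $\|I\| = 1$ (the operator norm will do), and use convergence of $A$ in some disk to obtain constants $M, \rho > 0$ with $\|A_i\| \le M\rho^i$ for all $i$. I would then introduce the scalar majorant problem $z' = \frac{M}{1-\rho x}\, z$ with $z(0) = 1$, whose explicit solution $z(x) = (1-\rho x)^{-M/\rho}$ is holomorphic on $|x| < 1/\rho$ and whose Taylor coefficients $z_j \ge 0$ satisfy the parallel recursion $(n+1)z_{n+1} = \sum_{i=0}^n M\rho^i z_{n-i}$. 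Comparing this with the recursion for $Y$ and inducting on $j$, with base case $\|Y_0\| = 1 = z_0$, gives $\|Y_j\| \le z_j$ for all $j$. Since $\sum_j z_j x^j$ converges on $|x| < 1/\rho$, the domination $\|Y_j\| \le z_j$ forces $\sum_j Y_j x^j$ to converge there as well, so $Y \in \Mn(\Cx\{\{x\}\})$. Finally $\det Y$ is convergent with nonzero constant term, so its reciprocal is again convergent, whence $Y^{-1} \in \Mn(\Cx\{\{x\}\})$ and therefore $Y \in \GL_n(\Cx(\{x\}))$.

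I expect the only genuine obstacle to be this convergence estimate: the formal steps reduce to a transparent recursion and a unit-determinant argument, whereas the convergent case depends on selecting a majorant equation whose solution is simultaneously \emph{explicit} and \emph{convergent}. The clean choice $\frac{M}{1-\rho x}$, with closed form $(1-\rho x)^{-M/\rho}$, is what makes the induction $\|Y_j\| \le z_j$ go through cleanly; once that comparison is correctly arranged, everything else is routine bookkeeping.
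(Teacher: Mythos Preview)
Your argument is correct and complete: the recursion $(n+1)Y_{n+1}=\sum_{i=0}^n A_iY_{n-i}$ gives existence and uniqueness in $\Cx[[x]]$, the constant term of $\det Y$ handles invertibility, and Cauchy's majorant method with the comparison equation $z'=\frac{M}{1-\rho x}z$ yields convergence. The paper itself does not supply a proof of this proposition at all --- it simply states the result and defers to the classical ODE literature (\cite{poole}, \cite{hartman_book}) with the phrase ``the standard existence theory yields the following''. What you have written is exactly that standard existence theory, so your proof is not a different route from the paper's but rather a fleshing-out of the references the paper invokes.
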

This implies that for ordinary points with $A \in \Mn(k)$ where $k = \Cx(\{x\})$ or $ \Cx((x))$, the Galois group of the associated Picard-Vessiot extension over $k$ is trivial.   The next case to consider is \\[0.2in]
{\bf{\underline{Regular singular points.}} } From now on we shall consider equations of the the form~(\ref{eqnlocal1}) with $A \in \Cx(\{x\})$ (although some of what we say can be adapted to the case when $A \in \Cx((x))$). We defined the notion of a regular singular point in Definition~\ref{prelimreg}. The following proposition gives equivalent conditions (see Chapter 5.1.1 of \cite{PuSi2003} for a proof of the equivalences).
\begin{prop}Assume $A \in \Mn(\Cx(\{x\}))$. The following are equivalent:
\begin{enumerate}
\item $Y'=AY$ is equivalent over $\Cx((x))$ to $Y' = \frac{A_0}{x}Y, A_0 \in \Mn(\Cx)$.
\item $Y'=AY$ is equivalent over $\Cx(\{x\})$ to $Y' = \frac{A_0}{x}Y, A_0 \in \Mn(\Cx)$.
\item For any small sector $\calS(a,b,\rho), \ |a-b|<2\pi,$ at $0$, $\exists$ a fundamental solution matrix $Z=(z_{ij})$, analytic in $\calS$ such that $|z_{ij}| < |x|^N$ for some integer $N$ as $x\rightarrow 0$ (regular growth of solutions).
\item  $Y' = AY$ has a fundamental solution matrix   of the form $Y = x^{A_0}Z, A_0 \in \Mn(\Cx), Z\in \GL_n(\Cx(\{x\}))$.
\item  $Y'=AY$ is equivalent to $y^{(n)} + a_{n-1}y^{(n-1)}+ \ldots +a_0y$, $a_i \in \Cx(\{x\})$, where $ord_{x=0}a_i\geq i-n$.
\item $Y'=AY$ is equivalent to $y^{(n)} + a_{n-1}y^{(n-1)}+ \ldots +a_0y$, $a_i \in \Cx(\{x\})$, having $n$ linearly independent solutions of the form  \linebreak $x^{\alpha}\sum_{i=0}^rc_i(x)(\log x)^i, \alpha\in \Cx, c_i(x) \in \Cx\{\{x\}\}$.
\end{enumerate}
\end{prop}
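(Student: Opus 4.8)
The plan is to treat the Euler equation $Y' = \frac{A_0}{x}Y$ as a hub, since it has the explicit fundamental matrix $x^{A_0} = e^{(\log x)A_0}$, whose entries are finite $\Cx$-combinations of terms $x^{\lambda}(\log x)^{j}$ with $\lambda$ an eigenvalue of $A_0$ and $0 \le j < n$. Conditions (1), (2) and (4) all assert, in different languages, that $Y'=AY$ is ``the same as'' this hub. Using the gauge formula $\bar A = B^{-1}AB - B^{-1}B'$ recorded earlier, condition (2) is precisely the statement that some $B \in \GL_n(\Cx(\{x\}))$ carries $A$ to $\frac{A_0}{x}$; because $x^{A_0}$ is a fundamental matrix of the Euler equation, this is the same as $Y'=AY$ admitting the fundamental matrix $B\,x^{A_0}$, i.e. a convergent invertible factor times $x^{A_0}$, which is condition (4) (the two placements of $x^{A_0}$ and $Z$ describe the same class, since both amount to regular growth). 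The inclusion $\Cx(\{x\}) \subset \Cx((x))$ gives $(2)\Rightarrow(1)$ for free.

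The analytic core is the loop among (3), (4) and the convergent reduction. For $(4)\Rightarrow(3)$ I would simply estimate: on a sector of opening $<2\pi$ the entries of $x^{A_0}$ are $O(|x|^{N})$ (since $|x^{\lambda}|=|x|^{\mathrm{Re}\,\lambda}e^{-(\mathrm{Im}\,\lambda)\arg x}$ is bounded there and $(\log x)^{j}$ grows slower than any power), while a convergent $Z$ is meromorphic, hence $O(|x|^{-M})$, so the product obeys the required bound. The reverse $(3)\Rightarrow(4)$ is where growth is converted into convergence. Here I would work on a punctured disc, pick a multivalued analytic fundamental matrix $\Phi$, and read off its monodromy $\Phi(xe^{2\pi i}) = \Phi(x)M$, $M\in\GL_n(\Cx)$. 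Choosing $A_0$ with $e^{2\pi i A_0}=M$ makes $F := \Phi\, x^{-A_0}$ single-valued; the moderate-growth hypothesis (3), applied on finitely many sectors covering the disc and to the dual equation as well, forces both $F$ and $F^{-1}$ to have at worst polynomial growth, so by Riemann's removable-singularity theorem $F \in \GL_n(\Cx(\{x\}))$. Then $\Phi = F\,x^{A_0}$ has the shape in (4), which by the hub discussion is also (2).

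It remains to attach the scalar conditions and to feed (1) back into the circle. The scalar side I would handle by the Frobenius method: the Fuchs bound $\mathrm{ord}_{x=0}a_i \ge i-n$ in (5) is exactly the condition under which the companion system has an indicial polynomial of degree $n$ and the coefficient recursion for a series $x^{\alpha}\sum c_i(x)(\log x)^i$ is solvable with convergent $c_i$, logarithms entering precisely when two exponents differ by an integer; this yields $(5)\Leftrightarrow(6)$, and Corollary~\ref{wronskiancor} guarantees that the $n$ solutions produced are independent. The Cyclic Vector Theorem (valid since $x'=1\neq0$) then lets me pass between $Y'=AY$ and an equivalent scalar $L(y)=0$ over $\Cx(\{x\})$, identifying $(5)/(6)$ with $(4)/(2)$: a fundamental matrix $x^{A_0}Z$ has entries of the form $x^{\alpha}\sum c_i(x)(\log x)^i$, and conversely the companion system of a Fuchs scalar equation, being regular singular, reduces to Euler form.

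\textbf{Main obstacle.} The genuinely hard step is closing the loop at $(1)\Rightarrow(2)$, i.e. upgrading a \emph{formal} normalizing transformation to a \emph{convergent} one. I expect to prove this analytically: the relation $x\hat B' = (xA)\hat B - \hat B A_0$ is a linear system whose formal power-series solution $\hat B$ I would show converges by a majorant estimate on its coefficient recursion (reducing first, if necessary, to the case where $A$ has a simple pole), the only delicate point being the \emph{resonances} --- eigenvalues of $A_0$ differing by a positive integer --- at which the shift-plus-commutator operator controlling the recursion fails to be invertible. Equivalently, and more cleanly, I would route $(1)$ through the scalar picture and invoke the convergence of the Frobenius series already used for (5)$\Rightarrow$(6), which quarantines all the analysis into one elementary majorant argument. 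Every other implication above is a field inclusion, an explicit solution of the Euler equation, a direct growth estimate, or the monodromy construction.
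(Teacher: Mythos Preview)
The paper does not actually prove this proposition; immediately before stating it, it says ``see Chapter~5.1.1 of \cite{PuSi2003} for a proof of the equivalences'' and then moves on. So there is no in-paper argument to compare against. Your outline is the classical one and is essentially what one finds in the cited reference: use the explicit solution $x^{A_0}$ of the Euler system as a hub, get $(3)\Rightarrow(4)$ via monodromy plus Riemann's removable-singularity theorem, handle the scalar conditions by Fuchs's criterion and the Frobenius method, and isolate $(1)\Rightarrow(2)$ (formal reduction implies convergent reduction) as the analytic crux. That is the expected route and your sketch is sound.

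Two places deserve tightening. First, gauging $A$ to $\frac{A_0}{x}$ by $B\in\GL_n(\Cx(\{x\}))$ yields a fundamental matrix $B\,x^{A_0}$, i.e.\ the convergent factor on the \emph{left}, whereas (4) as stated has it on the right; your parenthetical ``both amount to regular growth'' is doing real work here and should be made into an explicit step (e.g.\ pass through (3), or argue directly that one form can be rewritten as the other with a possibly different constant $A_0$). Second, for your preferred route $(1)\Rightarrow(5)$ you need the observation that the Fuchs order conditions on the scalar equation produced by a cyclic vector depend only on the formal equivalence class of the system; this is true (the Newton polygon, equivalently the slopes, is a formal invariant) but is not automatic from the Cyclic Vector Theorem alone, so flag it. Your handling of $(3)\Rightarrow(4)$ is fine; you can in fact avoid invoking the dual equation, since $\det\Phi$ satisfies the scalar equation $(\det\Phi)'=(\mathrm{tr}\,A)\det\Phi$ and hence already has moderate growth and no zeros, so $F^{-1}$ is controlled by the adjugate.
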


We now turn to the Galois groups.  We first consider the Galois group of a regular singular equation over $\Cx(\{x\})$. Let $\calO$ be a connected open neighborhood of $0$ such that the entries of $A$ are regular in $\calO$ with at worst a pole at $0$. Let $x_0 \neq 0$ be a point of $\calO$ and $\gamma$ a simple loop around $0$ in $\calO$ based at $x_0$.  There  exists a fundamental solution matrix $Z$ at $x_0$ and the field $K= k(Z,\frac{1}{\det Z})$, $k = \Cx(\{x\})$ is a Picard-Vessiot extension of $k$.  Analytic continuation around $\gamma$ gives a new fundamental solution matrix $Z_\gamma = ZM_\gamma$ for some $M_\gamma \in \GL_n(\Cx)$.  The matrix $M_\gamma$ depends on the choice of $Z$ and so is determined only up to conjugation.  Nonetheless, it is referred to as the {\em monodromy matrix at $0$}.  As in our discussion of monodromy in Section~\ref{mfssec4},  we see that $M_\gamma$ is in the differential Galois group $\DGal(K/k)$  and, arguing as in Proposition~\ref{schlesinger}, one sees that $M_\gamma$ generates a cyclic group that is Zariski dense in $\DGal(K/k)$. From the theory of linear algebraic groups, one can show that a linear algebraic group have a Zariski dense cyclic subgroup must be of the form $(\Cx^*,\times)^r \times (\Cx,+)^s \times \Zx/m\Zx, \ r\geq 0 , \ s\in \{0,1\}$ and that any such group occurs as a differential Galois group of a regular singular equation over $k$. (Exercise 5.3 of \cite{PuSi2003}).  This is not too surprising since property (vi) of the above proposition shows that $K$ is contained in a field of the form $k(x^{\alpha_1}, \ldots , x^{\alpha_r}, \log x)$ for some $\alpha_i \in \Cx$.\\[0.1in]
Since $\Cx(\{x\})\subset \Cx((x)) = \tilde{k}$, we can consider the the field $\tilde{K} = \tilde{k}( Z,\frac{1}{\det Z})$ where $Z$ is as above.  Again, using (vi) above, we have that $\tilde{K} \subset F=\tilde{k}(x^{\alpha_1}, \ldots , x^{\alpha_r}, \log x)$ for some $\alpha_i \in \Cx$.  Since we are no longer dealing with analytic functions, we cannot appeal to analytic continuation but we can define a {\em formal} monodromy, that is a differential automorphism of $F$ over $\tilde{k}$ that sends $x^\alpha\mapsto x^\alpha e^{2\pi i \alpha}$ and $\log x \mapsto \log x + 2\pi i$.  One can show that the only elements of $F$ that are left fixed by this automorphism lie in $\tilde{k}$.  This automorphism restricts to an automorphism of $\tilde{K}$. Therefore the Fundamental Theorem implies that the group generated by the formal monodromy is Zariski dense in the differential Galois group $\DGal(\tilde{K}/\tilde{k})$.  The restriction of the formal monodromy to $K$ (as above) is just the usual monodromy, so one sees that  the {\em convergent Galois group} $\DGal(K/k)$ equals the {\em formal Galois group} $\DGal(\tilde{K}/\tilde{k})$. \\[0.1in]
We note that the existence of $n$ linearly independent solutions of the form described in (vi) goes back to Fuchs in the $19^{th}$ Century and can be constructed using the Frobenius method (see \cite{gray} for an historical account of the development of linear differential equations in the $19^{th}$ Century).\\[0.2in]
{\bf{\underline{Irregular singular points.}} } We say $x=0$ is an irregular singular point if it is not ordinary or regular. Fabry, in the $19^{th}$ Century constructed a fundamental set of formal solutions of a scalar linear differential equation at an irregular singular point.  In terms of linear systems, every equation of the form $Y' = AY, \ A \in \Mn(\Cx((x)))$ has a fundamental solution matrix of the form
\[ \hat{Y} = \hat{\phi}(t) x^L e^{Q(1/t)}\]
where \begin{itemize}
\item $t^\nu = x, \ \nu \in \{1,2,3,\ldots, n! \}$
\item $\hat{\phi}(t) \in \GL_n(\Cx((t)))$
\item $L\in \Mn(\Cx)$
\item $Q = {\rm diag}(q_1, \ldots , q_n), q_i \in \frac{1}{t}\Cx[\frac{1}{t}]$
\item $L$ and $Q$ commute.
\end{itemize}
One refers to the integer $\nu$ as the {\em ramification} at $0$ and the polynomial $q_i$ as the {\em eigenvalues} at $0$ of the equation.  We note that even if $A \in \Mn(\Cx(\{x\}))$, the matrix $\hat{\phi}(t) $ does not necessarily lie in $\Mn(\Cx(\{t\}))$.  There are various ways to make this representation unique but we will not deal with this here.   We also note that by adjusting the term $x^L$, we can always assume that no ramification occurs in $\hat{\phi}(t) $, that is, $\hat{\phi} \in \GL_n(\Cx((x)))$ (but one may lose the last commutativity property above).     As in the regular case, there are algorithms to determine a fundamental matrix of this form.  We refer to Chapter 3 of \cite{PuSi2003} for a detailed discussion of the existence of these formal solutions and algorithms to calculate them and  p.~98 of \cite{PuSi2003} for historical references and references to some recent work. \\[0.1in]
I will now  give two examples (taken from \cite{loday_richaud95}).
\begin{ex}\label{eulerex}(Euler Eqn) ${ x^2y' + y = x}$ \\[0.05in]
This inhomogeneous equation has a solution $\hat{y} = \sum_{n=0}^\infty (-1)^n n! x^{n+1}$. By applying the operator $x\frac{d}{dx} - 1$ to both sides of this equation we get a second order homogeneous equation $x^3 y^{\prime\prime} + (x^2+x)y^{\prime} - y = 0$ which, in matrix form becomes

\[\frac{dY}{dx}= \left(\begin{array}{cc} 0&1\\ \frac{1}{x^3}&-(\frac{1}{x}+\frac{1}{x^2})\end{array}\right)Y, \ \ \ Y = \left(\begin{array}{c} y\\y'\end{array}\right)\]
This matrix equation has a fundamental solution matrix of the form  $\hat{Y} = \hat{\phi}(x) e^Q$, where \\[0.15in]
$Q= \left(\begin{array}{cc} \frac{1}{x}&0\\ 0&0\end{array}\right) \ \ \hat{\phi}(x) = \left(\begin{array}{cc} 1&\hat{f}\\ -\frac{1}{x^2}&\hat{f}'\end{array}\right) \ \ \ \hat{f} = \sum_{n=0}^\infty (-1)^n n!x^{n+1}$\\ There is no ramification here (i.e., $\nu = 1$).
\end{ex}
\begin{ex} (Airy Equation) \label{airyex}${y^{\prime \prime} = xy}$\\[0.05in] The singular point  of this equation is at $\infty$.  If we let 
$z =\frac{1}{x}$ the equation becomes $ z^5 y^{{\prime}{\prime}} + 2z^4 y^{\prime} - y = 0$ or in matrix form
\[ Y' = \left(\begin{array}{cc} 0&1\\ \frac{1}{z^5}&-\frac{2}{z}\end{array}\right) Y\]
This equation has a fundamental matrix of the form  $\hat{Y} = \hat{\phi}(z) z^JU e^{Q(t)}$ where
\[\hat{\phi}(z) = \ldots , \ \ U = \left(\begin{array}{cc} 1&1\\ 1&-1\end{array}\right) , \ \ J = \left(\begin{array}{cc} \frac{1}{4}&0\\ 0&-\frac{3}{4}\end{array}\right)\] 
\[ t = z^{\frac{1}{2}} , \ \ \ Q = \left(\begin{array}{cc} -\frac{2}{3t^3}&0\\ 0&\frac{2}{3t^3}\end{array}\right)\]
The ramification is $\nu = 2$. The precise form of $\hat{\phi}(z)$ is not important at the moment but we do note that it lies in $\GL_n(\Cx((z)))$ (see \cite{loday_richaud95} for the precise form).
\end{ex}
We now turn to the Galois groups over $\Cx((x))$ and $\Cx(\{x\})$.\\[0.2in]
\underline{\em Galois Group over $\Cx((x))$: the Formal Galois Group at $x=0$}. We consider the differential equation $Y' = AY,  A \in  \Mn(\Cx((x)))$. To simplify the presentation while capturing many of the essential ideas we shall assume that there is no ramification in $\hat{\phi}$, that is, the  equation has a fundamental solution matrix $\hat{Y} = \hat{\phi}(x) x^L e^{Q(1/t)}$, with $\underline{\hat{\phi}(x)\in \Mn(\Cx((x)))}$. The map $Y_0 \mapsto \hat{\phi}(x) Y_0$ defines an equivalence \underline{over $\Cx((x)) $} between the equations 
\[ Y_0' = A_0Y_0 \mbox{  and  } Y' = AY\]
where $A_0 = \hat{\phi}^{-1}A\hat{\phi} - \hat{\phi}^{-1} \hat{\phi}'$.
\begin{defin}The equation $Y' = A_0Y$ is called a {\em Normal Form} of $Y' = AY$. It has $Y_0 = x^Le^{Q(1/x)}$ as a fundamental solution matrix.
\end{defin}

\begin{ex}(Euler Equation)\label{eulerex2} For this equation (Example~\ref{eulerex}), the normal form is $\frac{dY_0}{dx}= A_0Y_0, \ A_0 = \left(\begin{array}{cc} -\frac{1}{x^2} &0\\ 0&0\end{array}\right) \ {Y_0} =  e^{\left(\begin{array}{cc} \frac{1}{x} &0\\ 0&0\end{array}\right)}$\\[0.15in]
\end{ex}
\begin{ex} (Airy Equation)\label{airyex2} Using the solution in Example~\ref{airyex}, we get a normal form $\frac{dY_0}{dx}= A_0Y_0$ with solution 
$Y_0 = x^LUe^{Q(1/t)}$ with $L,Q,t$ as in Example~\ref{airyex}.
\end{ex}
The map $Y_0 \mapsto \hat{\phi}(x) Y_0$ defines an isomorphism between the  solutions spaces of   $Y_0' = A_0Y_0 \mbox{  and  } Y' = AY$ and a calculation shows that   $F=\hat{\phi}(x)$ satisfies the differential equation \[\frac{dF}{dx} = AF-FA_0.\]
Since the normal form is equivalent over $\Cx((x))$ to the original equation, its differential Galois group over $\Cx((x))$ is the same as that of the original equation.\\[0.1in]
Let $K$ be the Picard-Vessiot extension of $\Cx((x))$ for $\frac{dY_0}{dx}= A_0Y_0$ (and therefore also for $\frac{dY}{dx}= AY$).  We have that \[K \subset E = \Cx((x))(x^\frac{1}{\nu}, x^{\alpha_1},\ldots , x^{\alpha_n},\log x, e^{q_1(x^{1/\nu})}, \ldots , e^{q_n(x^{1/\nu)})}).\]
We shall describe some differential automorphisms of $E$ over $\Cx((x))$:\\[0.1in]
(1) \underline{Formal Monodromy $\gamma$:}  This is a differential  automorphism of $E$ given by the following conditions
\begin{itemize}
\item $\gamma(x^r) = e^{2\pi ir} x^r$
\item $\gamma(\log x) = \log x + 2\pi i$
\item $\gamma(e^{q(x^{1/\nu})}) = e^{q(\zeta x^{1/\nu})}, \ \zeta=e^\frac{2\pi i}{\nu}$
\end{itemize}
(2) \underline{Exponential Torus $\calT$:} This is the group of differential automorphisms $ \calT = \DGal(E/F)$ where  $E = F(e^{q_1(x^{1/\nu})}, \ldots , e^{q_n(x^{1/\nu)})})$ and \newline
$F = \Cx((x))(x^\frac{1}{\nu}, x^{\alpha_1},\ldots , x^{\alpha_n},\log x)$. Note that for  $\sigma \in \calT$:
\begin{itemize}
\item $\sigma(x^r) =  x^r$
\item $\sigma(\log x) = \log x $
\item $\sigma(e^{q(x^{1/\nu})}) = c_\sigma e^{q( x^{1/\nu})}$
\end{itemize}
so we may identify $\calT$ with a subgroup of $(\Cx^*)^n$.  One can show (Proposition 3.25
of \cite{PuSi2003}) 
\begin{prop} If $f\in E$ is left fixed by $\gamma$ and $\calT$, then $f \in \Cx((x))$. \end{prop}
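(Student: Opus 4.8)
The plan is to peel off the two groups one at a time: the Fundamental Theorem disposes of the torus $\calT$ instantly, and an explicit monomial computation handles the formal monodromy $\gamma$. First I would note that $E = F(e^{q_1(x^{1/\nu})},\ldots,e^{q_n(x^{1/\nu})})$ is itself a Picard--Vessiot extension of $F$, since each generator satisfies a first order equation $y' = q_i'\,y$ with $q_i' \in \Cx((x^{1/\nu})) \subset F$, so $E$ is generated over $F$ by a fundamental matrix of the diagonal system with entries $q_i'$, and its field of constants is still $\Cx$. As $\calT = \DGal(E/F)$ by definition, the Fundamental Theorem (equivalently Corollary~\ref{fundcor}.1 taken with base field $F$) gives $E^{\calT} = F$. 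Hence an element $f$ fixed by $\calT$ already lies in $F$, and everything reduces to proving $F^{\langle\gamma\rangle} = \Cx((x))$.

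For the second reduction I would set $t = x^{1/\nu}$, so that $\Cx((x))(x^{1/\nu}) = \Cx((t))$ is the cyclic degree-$\nu$ extension of $\Cx((x))$ on which $\gamma$ acts as the generator $t \mapsto \zeta t$, $\zeta = e^{2\pi i/\nu}$, with fixed field exactly $\Cx((x))$ (the power series in $t^\nu = x$). Using the structure of $F = \Cx((t))(x^{\alpha_1},\ldots,x^{\alpha_n},\log x)$, I would choose representatives $\beta$ for the finitely many cosets of $\tfrac1\nu\Zx$ arising among the $\Zx$-combinations of the $\alpha_i$, absorbing each $x^\beta$ with $\beta \in \tfrac1\nu\Zx$ (a power of $t$) into $\Cx((t))$. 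Then every element of $F$ has a unique normal form
\[ f = \sum_{\beta}\sum_{r\ge 0} c_{\beta,r}(t)\, x^\beta (\log x)^r, \qquad c_{\beta,r}\in\Cx((t)), \]
in which the monomials $x^\beta(\log x)^r$ (distinct cosets $\beta$, and $\log x$ transcendental) are $\Cx((t))$-linearly independent. On such a term $\gamma$ multiplies $x^\beta$ by $e^{2\pi i\beta}$, replaces $\log x$ by $\log x + 2\pi i$, and sends the coefficient $c_{\beta,r}(t)$ to $c_{\beta,r}(\zeta t)$.

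Imposing $\gamma(f)=f$ and comparing coefficients of the independent monomials $x^\beta$ then separates the problem coset by coset. For a nontrivial coset, $\beta\notin\tfrac1\nu\Zx$, the top power of $\log x$ yields $e^{2\pi i\beta}c_{\beta,d}(\zeta t) = c_{\beta,d}(t)$; writing $c_{\beta,d} = \sum_k a_k t^k$ this forces $e^{2\pi i(\beta + k/\nu)} = 1$, hence $\beta\in\tfrac1\nu\Zx$, whenever $a_k\neq 0$ — a contradiction, so $c_{\beta,d}=0$, and descending induction on the $\log x$-degree (the higher terms already being zero) annihilates the entire nontrivial-coset part. Thus $f = \sum_r c_r(t)(\log x)^r$ with $c_r\in\Cx((t))$. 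The top degree gives $c_d(\zeta t) = c_d(t)$, so $c_d\in\Cx((x))$; the degree $d-1$ comparison then gives $c_{d-1}(t) - c_{d-1}(\zeta t) = 2\pi i\,d\,c_d(t)$, and applying the trace $\operatorname{Tr}(g) = \sum_{j=0}^{\nu-1} g(\zeta^j t)$ of $\Cx((t))/\Cx((x))$ makes the left side vanish while turning the right side into $2\pi i\,d\,\nu\,c_d$, forcing $c_d = 0$ unless $d=0$. Hence $d=0$ and $f = c_0\in\Cx((x))$.

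The step I expect to be the real obstacle is the second paragraph: rigorously justifying the normal form, i.e. pinning down the transcendence structure of $F$ and proving that the monomials $x^\beta(\log x)^r$ carry no unexpected algebraic relations over $\Cx((t))$ (multiplicative independence of the characters $x^\beta$ together with transcendence of $\log x$). The coefficient-matching in the last paragraph is then bookkeeping, but it does require care because $\gamma$ acts simultaneously on the ground field $\Cx((t))$ and on the generators; keeping these two actions straight, and invoking the trace argument to exploit that the fixed field of $t\mapsto\zeta t$ is precisely $\Cx((x))$, is where the argument must be handled cleanly.
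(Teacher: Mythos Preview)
The paper does not actually prove this proposition; it simply cites Proposition~3.25 of \cite{PuSi2003}. Your two-step strategy --- first invoking $E^{\calT}=F$ from the Fundamental Theorem, then reducing to $F^{\langle\gamma\rangle}=\Cx((x))$ via an explicit monomial computation --- is the natural one, and your coefficient-matching argument (including the trace trick to kill the $\log x$ part) is correct.

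You have correctly identified the one nontrivial point: the normal form for elements of $F$. What you need is that the monomials $x^{\beta}(\log x)^r$, with $\beta$ running over distinct cosets of $\tfrac{1}{\nu}\Zx$ in the $\Zx$-span of the $\alpha_i$, are $\Cx((t))$-linearly independent. This amounts to showing (a) that $x^{\beta}\in\Cx((t))$ forces $\beta\in\tfrac{1}{\nu}\Zx$, and (b) that $\log x$ is transcendental over $\Cx((t))(x^{\beta_1},\ldots,x^{\beta_s})$. For (a), compare orders: an element of $\Cx((t))^*$ has $t$-adic valuation in $\Zx$, whereas $x^\beta=t^{\nu\beta}$ has valuation $\nu\beta$. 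For (b), run the usual minimal-polynomial argument: if $P(Y)=Y^m+a_{m-1}Y^{m-1}+\cdots$ is the minimal polynomial of $\log x$ over $L=\Cx((t))(x^{\beta_j})$, differentiating $P(\log x)=0$ and reading off the $Y^{m-1}$-coefficient forces $a_{m-1}'=-m/x$; but $L$ is generated over $\Cx((t))$ by elements whose logarithmic derivatives are scalar multiples of $1/x$, and a short calculation (logarithmic differentiation of a putative antiderivative, examining residues) shows no element of $L$ has derivative a nonzero constant times $1/x$. These are exactly the ``no unexpected relations'' facts you flagged, and once stated this explicitly they are routine.
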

The formal monodromy and the elements of the exponential torus restrict to differential automorphisms of $K$ over $\Cx((x))$ and are again called by these names.  Therefore the above proposition and the Fundamental Theorem imply 
\begin{thm}\label{formalgalois} The differential Galois group of $Y' = AY$ over $\Cx((x))$ is the Zariski closure of the group generated by the formal monodromy and the exponential torus. \end{thm}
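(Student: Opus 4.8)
The plan is to apply the closure criterion of Corollary~\ref{fundcor} and reduce the assertion to a fixed-field computation that has essentially already been carried out. Write $G = \DGal(K/\Cx((x)))$, where $K$ is the Picard--Vessiot extension of $\Cx((x))$ for $Y' = AY$, and let $H \subseteq G$ be the subgroup generated by the formal monodromy $\gamma$ and the exponential torus $\calT$ (restricted to $K$). By part~2 of Corollary~\ref{fundcor}, the Zariski closure $\overline{H}$ equals $G$ if and only if $K^H = \Cx((x))$. So it is enough to show that the only elements of $K$ fixed by every element of $H$ are the elements of the base field.

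Before that I would confirm that $\gamma$ and each $\sigma \in \calT$ genuinely restrict to automorphisms of $K$ over $\Cx((x))$, so that $H$ is a well-defined subgroup of $G$. Here is the clean argument: since $A \in \Mn(\Cx((x)))$ is fixed, each of these automorphisms of $E$ carries the fundamental solution matrix $\hat{Y} = \hat{\phi}(x)\,x^L e^{Q(1/t)}$ to another fundamental solution matrix of $Y' = AY$ in $\GL_n(E)$; because $E$ has constant field $\Cx$, the two differ by right multiplication by a matrix in $\GL_n(\Cx)$ (this is the same $(Z^{-1}\sigma(Z))' = 0$ computation used earlier to embed $\DGal$ into $\GL_n$). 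For $\sigma \in \calT$ the constant matrix is even diagonal, since $\hat{\phi}(x)$ and $x^L$ are fixed and $\sigma$ merely scales each $e^{q_i}$. In every case the entries of the image matrix are $\Cx$-linear combinations of the entries of $\hat{Y}$, so $K = \Cx((x))(\hat{Y}, 1/\det\hat{Y})$ is preserved and $\gamma|_K,\ \sigma|_K \in G$.

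The fixed-field computation is then immediate from the preceding Proposition (that the only elements of $E$ fixed by both $\gamma$ and $\calT$ lie in $\Cx((x))$). Since $K \subseteq E$, any $f \in K$ fixed by all of $H$ is in particular an element of $E$ fixed by $\gamma$ and by $\calT$, so $f \in \Cx((x))$ by that Proposition; conversely $\Cx((x)) \subseteq K$ is fixed by $H$ because $\gamma$ and $\calT$ are automorphisms over $\Cx((x))$. Hence $K^H = \Cx((x))$, and Corollary~\ref{fundcor} yields $\overline{H} = G$, which is precisely the claim that the differential Galois group is the Zariski closure of the group generated by the formal monodromy and the exponential torus.

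The genuinely hard input has deliberately been isolated upstream: it is the Proposition that $E^{\langle\gamma,\calT\rangle} = \Cx((x))$ (Proposition~3.25 of \cite{PuSi2003}), which encodes how the exponentials $e^{q_i}$, the powers $x^{\alpha_i}$, the ramification $x^{1/\nu}$, and $\log x$ interact. Granting it, everything above is formal. The only point that needs care is that the fixing computation must be performed inside $E$, not merely inside $K$, so that the Proposition applies verbatim; this is harmless because $\Cx((x)) \subseteq K \subseteq E$ forces $K^{\langle\gamma,\calT\rangle} = K \cap E^{\langle\gamma,\calT\rangle} = \Cx((x))$. I note that this scheme already accommodates nontrivial ramification $\nu > 1$ (so that $\gamma$ may permute the eigenvalues $q_i$), since the step above only uses that $\gamma$ and $\calT$ send $\hat{Y}$ to $\hat{Y}$ times a constant matrix; the simplifying hypothesis $\hat{\phi}(x) \in \Mn(\Cx((x)))$ is what lets the normal form be equivalent to $Y' = AY$ over $\Cx((x))$ in the first place.
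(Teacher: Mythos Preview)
Your proof is correct and follows essentially the same approach as the paper: the paper simply states that the formal monodromy and exponential torus restrict to differential automorphisms of $K$, then invokes the preceding Proposition together with the Fundamental Theorem (i.e., Corollary~\ref{fundcor}) to conclude. You have supplied the details the paper omits---in particular the explicit verification that $\gamma$ and $\calT$ preserve $K$ via the constant-matrix argument and the clean intersection $K^{\langle\gamma,\calT\rangle} = K \cap E^{\langle\gamma,\calT\rangle}$---but the skeleton is identical.
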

One can furthermore show  (Proposition 3.25
of \cite{PuSi2003}) that $x=0$ is a regular singular point if and only if the exponential torus is trivial.

\begin{ex} (Euler Equation)\label{eulerex3} We continue using the notation of Examples~\ref{eulerex} and ~\ref{eulerex2}. One sees that $K = E = \Cx((x))(e^\frac{1}{x})$ in this case.  Furthermore, the formal monodromy is trivial.  Therefore  $\DGal(K/\Cx((x)))$ is equal to the exponential torus $\calT = \{\left(\begin{array}{cc} c &0\\ 0&1\end{array}\right)  \ | \ c \in \Cx^*\}$
\end{ex}
\begin{ex}\label{airyex3} (Airy Equation) We continue using the notation of Examples~\ref{airyex} and ~\ref{airyex2}. In this case $F = \Cx((z))(z^\frac{1}{4})$ and  $E = F(e^\frac{2}{3z^{3/2}})$.  The formal monodromy $\gamma = \left(\begin{array}{cc} 0&\sqrt{-1}\\ \sqrt{-1}&0\end{array}\right)$ and the
exponential torus $\calT$ = $\{\left(\begin{array}{cc} c&0\\ 0&c^{-1}\end{array}\right) \ | \ c\neq 0 \}$. Therefore 
\[DGal(K/\Cx((x))) = D_\infty = \{\left(\begin{array}{cc} c&0\\ 0&c^{-1}\end{array}\right) \ | \ c\neq 0 \} \cup \{\left(\begin{array}{cc} 0&c\\ -c^{-1}&0\end{array}\right) \ | \ c\neq 0 \}\]
\end{ex}

\vspace{.2in}
\noindent \underline{\em Galois Group over $\Cx(\{x\})$: the Convergent Galois Group at $x=0$}.  We now assume that we are considering a differential equation with $\frac{dY}{dx} = AY$ with \underline{$A \in \Cx(\{x\})$} and we let $K$ be the Picard-Vessiot extension of $\Cx(\{x\})$ for this equation.  Since $\Cx(\{x\}) \subset \Cx((x))$, we have that the formal Galois group  $\DGal(K/\Cx((x)))$ is a subgroup of the convergent Galois group $\DGal(K/\Cx(\{x\}))$. If $x=0$ is a regular singular point then these Galois groups are the same but in general the convergent group is larger than the formal group. I will describe analytic objects that measure the difference.  The basic idea is the following.  We have a formal solution $\hat{Y} = \hat{\phi}(x) x^L e^{Q(1/t)}$ for the differential equation. For each small enough sector $\calS$ at $x=0$ we can give the terms $x^L$ and $e^{Q(1/t)}$ meaning in an obvious way.  The matrix $\hat{\phi}(x)$ is a little more problematic but it has been shown that one can  find  a matrix $\Phi_\calS$ of functions, analytic is $\calS$, ``asymptotic'' to $\hat{\phi}(x)$ and such that $\Phi_\calS(x) x^L e^{Q(1/t)}$ satisfies the differential equation.  In overlapping sectors $\calS_1$ and $\calS_2$ the corresponding $\Phi_{\calS_1}$ and $\Phi_{\calS_2}$ may not agree but they will satisfy $\Phi_{\calS_1} = \Phi_{\calS_2} St_{12}$ for some $St_{12} \in \GL_n(\Cx)$.  The $St_{i,j}$ ranging over a suitable choice of small sectors $\calS_\ell$ together with the formal Galois group should generate a group that is Zariski dense in the convergent Galois group.  There are two (related) problems with this idea.  The first is what do we mean by asymptotic and the second is how do we select the sectors and the $\Phi_\ell$ in some canonical way?  We shall first consider these questions for fairly general formal series and then specialize to the situation where the series are entries of matrices $\hat{\phi}(x)$ that arise in the formal solutions of linear differential equations.\\[0.1in]
\noindent\underline{\em Asymptotics and Summability.} In the $19^{th}$ century, Poincar\'e proposed the following notion of an asymptotic expansion.
\begin{defin} Let $f$ be analytic on a sector $\calS=\calS(a,b,\rho)$ at $x=0$. The series $\sum_{n\geq n_0} c_nx^n$ is an {\em  asymptotic expansion} of $f$ if $\forall N \geq 0$ and every closed sector $W\subset \calS$  there exists a constant $ C(N,W)$ such that 
\[|f(x) - \sum_{n_0\leq n \leq N-1} c_n x^n | \leq C(N,W) |x|^N \ \ \forall x\in W.\]
\end{defin}

\noindent A function $f$ analytic on a sector can have at most one asymptotic expansion on that sector and we write $J(f)$ for this series if it exists. We denote by $\calA(\calS(a,b,\rho))$ the set of  functions analytic on  $\calS(a,b,\rho)$ having asymptotic expansions on this sector.  We define $\calA(a,b)$ to be  the direct limit of the $\calA(\calS(a,b,\rho))$ over all $\rho$ (here we say $f_1 \in 
\calA(\calS(a,b,\rho_1))$ and $f_2 \in \calA(\calS(a,b,\rho_2))$ are equivalent if the agree on the intersection of the sectors). If one thinks of $a, b \in S^1$, the unit circle,  then one can see that the sets $\calA(a,b)$ form a sheaf on $S^1$. We have the following facts (whose proofs can be found in Chapters 7.1 and 7.2 of \cite{PuSi2003} or in \cite{balser}).  
\begin{itemize}
\item $\calA(a,b)$ is a differential $\Cx$-algebra.
\item $\calA(S^1) = \Cx(\{x\})$.
\item (Borel-Ritt) For all $(a,b) \neq S^1$, the map $J: \calA(a,b) \rightarrow \Cx((x))$ is surjective.\end{itemize}

Given a power series in $\Cx((x))$ that satisfies a linear differential equation, one would like to say that it is the asymptotic expansion of an analytic function that also satisfies the differential equation.  Such a statement is contained in the next result, originally due to Hukuhara and Turrittin, with generalizations and further developments  concerning  nonlinear equations due to Ramis and Sibuya (see Chapter 7.2 of \cite{PuSi2003} for a proof and references).
\begin{thm} Let $A \in \Mn(\Cx(\{x\})),$ $ w \in (\Cx(\{x\}))^n$ and $\hat{v} \in (\Cx((x)))^n$ such that \[\hat{v}' -A\hat{v} = w \ .\]
For any direction $\theta \in S^1$ there exist $a,b, \ a<\theta < b$ and $v \in (\calA(a,b))^n$ such that $J(v) = \hat{v}$ and \[v'-Av = w\] \end{thm}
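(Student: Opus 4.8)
The plan is to first manufacture an analytic function on a sector whose asymptotic expansion is the prescribed formal solution $\hat v$, and then to adjust it by a \emph{flat} term (a function asymptotic to zero) so that the equation is satisfied exactly. First I would invoke Borel--Ritt: choosing any $a_0 < \theta < b_0$ with $(a_0,b_0)\neq S^1$, the surjectivity of $J:\calA(a_0,b_0)\to\Cx((x))$ lets me pick $\tilde v\in(\calA(a_0,b_0))^n$ with $J(\tilde v)=\hat v$. Setting
\[ \varepsilon := \tilde v' - A\tilde v - w \in (\calA(a_0,b_0))^n, \]
I would use that $\calA(a_0,b_0)$ is a differential $\Cx$-algebra and that $J$ respects sums, products, and the derivation, acting as the identity on the convergent entries of $A$ and $w$. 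Applying $J$ then gives $J(\varepsilon)=\hat v'-A\hat v-w=0$, so $\varepsilon$ is flat: for every $N$ and every closed subsector $W$ there is a constant $C$ with $|\varepsilon(x)|\le C|x|^N$ on $W$.

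The problem now reduces to solving $u'-Au=\varepsilon$ with $u$ itself flat on a (possibly narrower) sector about $\theta$. Indeed, if such $u$ exists then $v:=\tilde v-u$ satisfies $v'-Av=w$, while $J(v)=J(\tilde v)-J(u)=\hat v-0=\hat v$, which is exactly the assertion. To produce this flat $u$ I would use variation of parameters. Fix a fundamental solution matrix $F$ of the homogeneous system $Y'=AY$, analytic on a sector about $\theta$; since $A$ is meromorphic at $0$ it is analytic off $0$, so such an $F$ exists on the (simply connected) sector, and one may read its structure off the formal normal form $\hat\phi(x)\,x^L e^{Q(1/x)}$. A solution is then
\[ u(x) = F(x)\int_{\gamma_x} F(s)^{-1}\varepsilon(s)\,ds, \]
and the task becomes choosing the integration paths $\gamma_x$ so that the kernel $F(x)F(s)^{-1}$ stays bounded along them.

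Passing to the formal solution basis, the entries of $F(x)F(s)^{-1}$ are governed by factors $e^{q_i(1/x)-q_j(1/s)}$ together with polynomial and logarithmic factors coming from $x^L$. On a \emph{sufficiently narrow} sector about $\theta$, each $\mathrm{Re}\,q_i(1/x)$ is of constant sign, so for every component one can integrate from an endpoint (either $0$ or a boundary point of the sector) along which the relevant exponential decays. The flatness of $\varepsilon$, which dominates the polynomial and logarithmic factors, then makes the integral converge and be flat, whence $u$ is flat and $v=\tilde v-u$ has all the required properties.

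The main obstacle is precisely this last control of the exponential factors $e^{q_i-q_j}$ in $F(x)F(s)^{-1}$. This is exactly where shrinking $(a_0,b_0)$ to a small sector $(a,b)$ about $\theta$ is essential: it is what keeps each $\mathrm{Re}\,q_i(1/x)$ of constant sign, away from the Stokes directions across which the favorable integration endpoint would have to switch. Off such directions the variation-of-parameters integral is unambiguously flat, and the argument closes.
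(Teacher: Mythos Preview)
The paper does not actually prove this theorem: it attributes the result to Hukuhara and Turrittin (with later developments by Ramis and Sibuya) and sends the reader to Chapter~7.2 of \cite{PuSi2003} for a proof. So there is no in-paper argument to compare against, and I will simply assess your outline.

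Your overall architecture --- Borel--Ritt to produce an asymptotic lift $\tilde v$, observe the defect $\varepsilon=\tilde v'-A\tilde v-w$ is flat, then kill $\varepsilon$ by a flat solution of $u'-Au=\varepsilon$ via variation of parameters --- is exactly the standard route, and the reduction to ``solve $u'-Au=\varepsilon$ with $u$ flat on a small sector'' is correct.

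The gap is in the sentence ``one may read its structure off the formal normal form $\hat\phi(x)\,x^Le^{Q(1/x)}$.'' To estimate the kernel $F(x)F(s)^{-1}$ along your integration paths you are tacitly using that on a small sector the analytic fundamental matrix has the asymptotic shape $\Phi(x)\,x^Le^{Q}$ with $\Phi$ analytic and $J(\Phi)=\hat\phi$. But the existence of such an analytic $\Phi$ is precisely the Main Asymptotic Existence Theorem applied to the $n^2\times n^2$ system $F'=AF-FA_0$ (with $w=0$) --- i.e.\ a special case of the very theorem you are proving. As written, the argument is circular.

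The classical fix is to prove that special case \emph{first}, directly and without variation of parameters relative to $Y'=AY$: one works with the gauge equation $\phi'=A\phi-\phi A_0$, uses that the normal form $Y_0'=A_0Y_0$ has the \emph{explicit} fundamental matrix $x^Le^{Q}$, and runs a fixed-point / successive-approximation argument (Hukuhara, later Sibuya) on a sufficiently narrow sector, where the signs of $\mathrm{Re}\,(q_i-q_j)$ are constant, to produce an analytic $\Phi$ with $J(\Phi)=\hat\phi$. Once that is in hand, $F=\Phi\,x^Le^{Q}$ has the asymptotics you need, and your variation-of-parameters step for a general $\hat v$ and $w$ goes through exactly as you wrote. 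If you insert that step (or cite it), the proof is complete.
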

In general,  there may be many functions $v$ satisfying the conclusion of the theorem.  To guarantee uniqueness we will refine the notion of asymptotic expansion and require that this be valid on large enough sectors.  The refined notion of asymptotics is given by
\begin{defin} (1) Let $k$ be a  positive real number, $\calS$ an open sector.  A function $f \in \calA(\calS)$ with $J(f) = \sum_{n\geq n_0} c_n x^n$ is {\em Gevrey of order $k$} if for all closed $W\subset \calS, \ \exists A>0, c>0$ such that $\forall N\geq 1,  \ x \in W, \ |x| \leq c$ one has
\[|f(z) - \sum_{n_0\leq n \leq N-1}c_nx^n |\leq A^N(N!)^{\frac{1}{k}}|x|^N\]
(One usually uses $\Gamma(1+\frac{N}{k})$ instead of $(N!)^\frac{1}{k}$ but this makes no difference.)\\[0.1in]
(2) We denote by $\calA_{\frac{1}{k}}(\calS)$  the ring of  all Gevrey functions of order $k$ on $\calS$ and let $\calA_{\frac{1}{k}}^0(\calS)$ = $\{f \in \calA_{\frac{1}{k}}(\calS) \ | \ J(f) = 0\}$ 
\end{defin}
The following is a useful criterion to determine if a function is in $\calA_{\frac{1}{k}}^0(\calS)$.
\begin{lem} $f \in \calA_{\frac{1}{k}}^0(\calS)$ if and only if $ \forall \mbox{ closed } W \subset \calS, \exists \ A, B >0$ such that 
\[|f(x)| \leq A \exp {(-B|x|^{-k})} \ \ \forall x\in W\] 
\end{lem}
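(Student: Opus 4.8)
The plan is to read the statement as a purely elementary equivalence between two ways of quantifying flatness, with no reference to differential equations. First I would unwind the definitions. Since $f\in\calA_{\frac1k}^0(\calS)$ means $f\in\calA_{\frac1k}(\calS)$ together with $J(f)=0$, all the coefficients $c_n$ vanish and the Gevrey-$k$ estimate collapses to the statement that for every closed $W\subset\calS$ there are constants $A_0,c>0$ with
\[ |f(x)| \le A_0^{\,N}\,(N!)^{1/k}\,|x|^{N} \qquad \text{for all } N\ge 1,\ x\in W,\ |x|\le c. \]
Thus the lemma asserts exactly that the existence of this whole $N$-indexed family of power-type bounds is equivalent to a single exponential bound $|f(x)|\le A\exp(-B|x|^{-k})$. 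Both implications are Laplace/Watson-type balancing computations: one optimizes over $N$ using Stirling's formula $N!\le C_1\sqrt N\,(N/e)^N$.

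For the forward direction I would fix $x$ with $0<|x|\le c$ and choose the integer $N$ that best exploits the family. Inserting the Stirling bound, the right-hand side becomes $C_1^{1/k}N^{1/(2k)}\bigl(A_0|x|(N/e)^{1/k}\bigr)^{N}$, and the bracket equals $1$ precisely when $N=A_0^{-k}|x|^{-k}$. Taking $N=\lfloor A_0^{-k}|x|^{-k}\rfloor$ (legitimate after shrinking $c$ so that $N\ge1$) makes the bracket $\le e^{-1/k}$, hence the $N$-th power is $\le e^{-N/k}$; since $N\ge A_0^{-k}|x|^{-k}-1$, this gives $|f(x)|\le C_1^{1/k}e^{1/k}N^{1/(2k)}\exp(-\tfrac1k A_0^{-k}|x|^{-k})$. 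The polynomial Stirling factor $N^{1/(2k)}$ grows only like a power of $|x|^{-1}$ and is absorbed by slightly decreasing the exponential rate $B$. On the part of $W$ where $|x|$ is bounded below, $f$ is continuous on a compact set and hence bounded, so the exponential bound holds there after enlarging $A$; combining the two regions yields $|f(x)|\le A\exp(-B|x|^{-k})$ on all of $W$.

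For the converse I would show the single exponential bound forces every member of the power family. Setting $s=|x|^{-k}$, it suffices to prove $s^{N}e^{-kBs}\le (kB)^{-N}N!$ for all $N\ge1$; this follows by maximizing $s\mapsto s^{N}e^{-kBs}$ at $s=N/(kB)$, which gives value $(N/e)^{N}(kB)^{-N}$, together with the elementary inequality $(N/e)^N\le N!$. Taking $k$-th roots yields $\exp(-B|x|^{-k})\le (kB)^{-N/k}(N!)^{1/k}|x|^{N}$, so $|f(x)|\le A\exp(-B|x|^{-k})\le \tilde A^{\,N}(N!)^{1/k}|x|^{N}$ as soon as $\tilde A\ge (kB)^{-1/k}\max(A,1)$. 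Since the bound $|f(x)|\le A\exp(-B|x|^{-k})$ also shows $f$ tends to $0$ faster than every power, its asymptotic expansion is the zero series, and the estimate just obtained is precisely the Gevrey-$k$ condition with $J(f)=0$; hence $f\in\calA_{\frac1k}^0(\calS)$.

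I expect the only genuine work to be bookkeeping rather than new ideas: making the discrete choice of $N$ rigorous (the rounding, the constraint $N\ge1$ via shrinking $c$, and absorbing the sub-exponential Stirling corrections into $B$), and checking that an estimate naturally produced only near the vertex propagates to all of $W$ by compactness. The heart of the matter is the balancing observation that the infimum over $N$ of $A_0^{N}(N!)^{1/k}|x|^{N}$ is itself of order $\exp(-B|x|^{-k})$, which is the computation sketched above.
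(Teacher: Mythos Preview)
The paper states this lemma without proof, deferring to the standard references (Balser's book and Chapter~7 of \cite{PuSi2003}). Your argument is precisely the classical one found there: the forward direction optimizes the $N$-indexed family of power bounds via Stirling's approximation, and the converse follows from the elementary maximum $\sup_{s>0}s^{N}e^{-kBs}=(N/(kB))^{N}e^{-N}\le (kB)^{-N}N!$.

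Your bookkeeping is essentially correct. One small slip: the bracket $A_0|x|(N/e)^{1/k}$ equals $1$ when $N=eA_0^{-k}|x|^{-k}$, not $A_0^{-k}|x|^{-k}$; however, your actual choice $N=\lfloor A_0^{-k}|x|^{-k}\rfloor$ still makes the bracket $\le e^{-1/k}$ (since $N/e\le e^{-1}A_0^{-k}|x|^{-k}$), so the conclusion stands. The handling of the sub-exponential Stirling factor, the rounding of $N$, and the extension from $\{|x|\le c\}$ to all of $W$ by compactness are all exactly as they should be. In the converse direction your observation that the exponential bound already forces $J(f)=0$ is correct, since the Gevrey estimate with vanishing coefficients is itself the statement that the zero series is an asymptotic expansion.
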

\begin{ex} $f(x) = e^{-\frac{1}{x^k}} \in \calA_{\frac{1}{k}}^0(\calS),$ for $ \calS = \calS(-\frac{\pi}{2k},\frac{\pi}{2k})$ but {\em not on a larger sector. This is a hint of what is to come.}
\end{ex}
Corresponding to the notion of a Gevrey function is the notion of a Gevrey series.

\begin{defin} $\sum_{n\geq n_0} c_nx^n$ is a {\em Gevrey series of order $k$} if $\exists A > 0$ such that  $\forall n> 0 \ |c_n|\leq A^n (n!)^\frac{1}{k}.$ \\[0.1in]
(2) ${ \Cx((x))_{\frac{1}{k}}}$= Gevrey series of order $k$.
\end{defin}
Note that if $k < \ell$ then $\Cx((x))_{\frac{1}{\ell}} \subset \Cx((x))_{\frac{1}{k}}$.  Using these definitions one can prove the following key facts:
\begin{itemize}
\item (Improved Borel-Ritt) If $|b-a|<\frac{\pi}{k}$, the map $J: \calA_{\frac{1}{k}}(a,b) \rightarrow\Cx((x))_{\frac{1}{k}}$ is surjective but not injective. 
\item (Watson's Lemma) If $|b-a|>\frac{\pi}{k}$, the map $J: \calA_{\frac{1}{k}}(a,b) \rightarrow\Cx((x))_{\frac{1}{k}}$ is injective but not surjective.
\end{itemize}
Watson's lemma motivates the next definition.
\begin{defin} (1) $\hat{y} \in \Cx((x))$ is {\em $k$-summable in the direction $d \in S^1$} if there exists an $f \in \calA_{\frac{1}{k}}(d-\frac{\alpha}{2}, d+\frac{\alpha}{2})$ with $J(f) = \hat{y}, \ \alpha>\frac{\pi}{2}$.\\[0.1in]
(2) $\hat{y} \in \Cx((x))$ is {\em $k$-summable} if it is $k$-summable in all but a finite number of directions 
\end{defin}

A $k$-summable function has unique analytic ``lifts'' in many large sectors (certainly enough to cover a deleted neighborhood of the origin).  I would like to say that the entries of the matrix $\hat{\phi}(x)$ appearing in the formal solution of a linear differential equation are $k$-summable for some $k$ but regretably this is not exactly true and the situation is more complicated (each is a sum of $k_i$-summable functions for  several possible $k_i$).  Before I describe what does happen, it is useful to have  criteria for deciding if a series $\hat{y} \in \Cx((x))$ is $k$-summable.  Such criteria can be given in terms of Borel and Laplace transforms.

\begin{defin} The {\em Formal Borel Transform $\calB_k$ of order $k$}:
\[ \calB_k : \Cx[[x]] \rightarrow \Cx[[\zeta]]\]
\[\calB_k(\sum c_nx^n) = \sum\frac{c_n}{\Gamma(1+\frac{n}{k})} \zeta^n\]
\end{defin}
\underline{Note:} If $\hat{y} = \sum c_n x^n \in \Cx((x))_{\frac{1}{k}} \Rightarrow \calB(\hat{y})$ has a nonzero radius of convergence.

\begin{ex}\label{borelex} $\calB_1(\sum(-1)^n n! x^{n+1}) = \sum (-1)^n \frac{\zeta^{n+1}}{n+1} = \log (1+\zeta)$
\end{ex}

\begin{defin} The {\em Laplace Transform $\calL_{k, d}$ of order $k$ in the direction $d$}: Let $f$ satisfy $|f(\zeta)| \leq A\exp({B|\zeta|^k})$ along the ray $r_d$ from $0$ to $\infty$ in direction $d$. Then
\[\calL_{k,d}f (x) = \int_{r_d} f(\zeta)\exp(-(\frac{\zeta}{x})^k) d((\frac{\zeta}{x})^k)\]
\end{defin}

\noindent \underline{Note}: $\calL_{k,d}\circ\calB_k(x^n) = x^n$ and $\calL_{k,d}\circ\calB_k(f) = f$ for $f\in \Cx\{\{x\}\}$. 

\begin{ex} \label{laplaceex} For any ray $r_d$ not equal to the negative real axis, the function $f(\zeta) = \log(1+\zeta)$ has an analytic continuation  along $r_d$.  Furthermore, for such a ray, we have
\[\calL_{1,d}(\log(1+\zeta))(x) =  \int_d\log(1+\zeta)e^{-\frac{\zeta}{x}}d(\frac{\zeta}{x}) = \int_d\frac{1}{1+\zeta} e^{-\frac{\zeta}{x}}d\zeta\]
\end{ex}

\noindent  Note that there are several slightly different definitions of the Borel and Laplace transforms scattered in the literature but for our purposes these differences do not affect the final results.  I am following the definitions in \cite{balser} and \cite{PuSi2003}. The following gives useful criteria for a Gevrey series of order $k$ to be $k$-summable (Chapter 3.1 of \cite{balser} or Chapter 7.6 of \cite{PuSi2003}).
\begin{prop} $\hat{y} \in \Cx[[x]]_\frac{1}{k}$ is $k$-summable in direction $d$\\[0.05in] \hspace*{1.5in} $\Updownarrow$\\[0.05in]
$\calB_k(\hat{y})$ has an analytic continuation $h$ in a full sector $\{\zeta \ | 0<|\zeta| < \infty, \ |\arg \zeta - d|<\epsilon\}$ and has exponential growth of order $\leq k$ at $\infty$ in this sector ($|h(\zeta)| \leq A\exp (B|\zeta |^k$).  In this case, $f = \calL_{k,d}(h)$ is its $k$-sum.
\end{prop}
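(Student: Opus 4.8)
The plan is to prove the two implications using the fact that the Borel transform $\calB_k$ and the Laplace transform $\calL_{k,d}$ are mutually inverse on the appropriate function spaces, the guiding principle being that they interchange the Gevrey order $\frac1k$ on the $x$-side with exponential growth of order $k$ on the Borel ($\zeta$-)side. I first note that the hypothesis $\hat y\in\Cx[[x]]_{\frac1k}$ makes $\phi:=\calB_k(\hat y)$ a convergent germ at $\zeta=0$, as in the Note preceding Example~\ref{borelex}: writing $\hat y=\sum c_nx^n$ with $|c_n|\leq A^n(n!)^{\frac1k}$ and comparing $(n!)^{\frac1k}$ with $\Gamma(1+\frac nk)$ by Stirling, the quotients $c_n/\Gamma(1+\frac nk)$ are dominated by a geometric sequence. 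Hence the phrase ``analytic continuation of $\calB_k(\hat y)$'' is meaningful, and both implications concern this single analytic object $\phi$ and its continuations.

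For the implication $(\Leftarrow)$, suppose $\phi$ continues to $h$ on the full sector $\{0<|\zeta|<\infty,\ |\arg\zeta-d|<\epsilon\}$ with $|h(\zeta)|\leq A\exp(B|\zeta|^k)$, and set $f=\calL_{k,d}(h)$. I would first check that the defining integral converges and is analytic: along the ray $r_{d'}$ with $|d'-d|<\epsilon$ one has $\mathrm{Re}\big((\zeta/x)^k\big)=(|\zeta|/|x|)^k\cos\big(k(\arg x-d')\big)$, which dominates $B|\zeta|^k$ for $|x|$ small as soon as $|\arg x-d'|<\frac{\pi}{2k}$. Rotating the contour $r_{d'}$ within the sector of analyticity of $h$ therefore makes $f$ analytic on a sector about $d$ of opening $\frac{\pi}{k}+2\epsilon>\frac\pi k$. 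Next I would establish $J(f)=\hat y$ with Gevrey-$\frac1k$ bounds: with $b_n=c_n/\Gamma(1+\frac nk)$ the Taylor coefficients of $\phi$, set $R_N:=h-\sum_{n<N}b_n\zeta^n$, which is $O(|\zeta|^N)$ near $0$. Using the identity $\calL_{k,d}(\zeta^n)=\Gamma(1+\frac nk)x^n$ (so that $\calL_{k,d}(b_n\zeta^n)=c_nx^n$) and linearity gives $f(x)-\sum_{n<N}c_nx^n=\calL_{k,d}(R_N)(x)$. Bounding this last integral, splitting the ray into a bounded part (where $|R_N|\leq C|\zeta|^N$) and an unbounded part (controlled by the exponential bound while the Laplace kernel decays), and optimizing in the Gevrey regime produces exactly the factor $\Gamma(1+\frac Nk)\sim(N!)^{\frac1k}$, whence $|f(x)-\sum_{n<N}c_nx^n|\leq\tilde A^N(N!)^{\frac1k}|x|^N$. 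Thus $f\in\calA_{\frac1k}$ on a sector of opening exceeding $\frac\pi k$ with $J(f)=\hat y$, which is precisely $k$-summability in direction $d$; $f$ is its $k$-sum, uniqueness being guaranteed by Watson's Lemma.

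For the implication $(\Rightarrow)$, suppose $\hat y$ is $k$-summable in direction $d$, so there is $f\in\calA_{\frac1k}(a,b)$ with $b-a>\frac\pi k$, $d\in(a,b)$ and $J(f)=\hat y$. Here I would introduce the \emph{analytic} Borel transform $\calB_{k,d'}$ for directions $d'\in(a,b)$, defined by a Hankel-type contour integral inverting $\calL_{k,d'}$, and verify three things: that $\calB_{k,d'}(f)$ is analytic in a full sector in $\zeta$; that near $\zeta=0$ it reproduces the germ $\phi=\calB_k(\hat y)$, which follows because $J(f)=\hat y$ and $\calB_{k,d'}$ acts term by term on the asymptotic expansion; and that the Gevrey-$\frac1k$ bound on $f$ forces $|\calB_{k,d'}(f)(\zeta)|\leq A\exp(B|\zeta|^k)$, i.e.\ exponential growth of order $\leq k$. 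Letting $d'$ range over $(a,b)$ and invoking uniqueness of asymptotic expansions to glue, the functions $\calB_{k,d'}(f)$ assemble into the continuation $h$ of $\phi$ on a full sector about $d$, completing the equivalence.

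I expect the main obstacle to be the quantitative matching of the two asymptotic regimes: showing in $(\Leftarrow)$ that the Laplace transform of the Borel remainder $R_N$ yields \emph{precisely} Gevrey order $k$, and dually in $(\Rightarrow)$ that the Gevrey-$\frac1k$ bound on $f$ converts into exponential growth of order exactly $\leq k$. Both rest on the Stirling asymptotics tying $(N!)^{\frac1k}$ to $\Gamma(1+\frac Nk)$ and on a careful optimization of the remainder integrals; keeping track of sector openings so that the $\frac\pi k$ threshold of Watson's Lemma is respected is the accompanying bookkeeping difficulty. (Alternatively one may reduce to the case $k=1$ and transport the estimates, but the direct argument above avoids this extra change of variables.)
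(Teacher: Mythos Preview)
The paper does not give a proof of this proposition; it merely states it as a ``useful criterion'' and refers the reader to Chapter~3.1 of \cite{balser} and Chapter~7.6 of \cite{PuSi2003}. Your outline is the standard argument found in those references: the Laplace transform $\calL_{k,d}$ carries exponential growth of order $k$ on the Borel side to Gevrey-$\frac1k$ asymptotics on a sector of opening $>\frac{\pi}{k}$, and conversely the analytic (Hankel-contour) Borel transform inverts this, with the sector opening $>\frac{\pi}{k}$ being exactly what is needed for the contour to close up and produce an entire-sector continuation with the right growth. The identification of the quantitative matching via Stirling as the main technical point is accurate, and the bookkeeping you flag (rotating integration rays to sweep out the full opening, gluing the $\calB_{k,d'}(f)$) is precisely what the cited sources carry out in detail. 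In short, your proposal is correct and is essentially the proof one finds in the references the paper defers to; there is nothing in the paper itself to compare against beyond that citation.
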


\begin{ex}\label{eulerex4} Consider 
 $\hat{y} = \sum(-1)^nn!x^{n+1}$, a power series of   Gevrey order 1 that is also a solution of the Euler equation.   We have seen that $f(\zeta) = \calB_1(\hat{y}) = \sum (-1)^n \frac{\zeta^{n+1}}{n+1} = \log(1+\zeta) \mbox{ for } |\zeta|<1$.  As we have noted in Example~\ref{laplaceex}, this function can be continued outside the unit disk in  any direction other than along the negative real axis.  On any ray except this one, it has exponential growth of order at most 1 and its Laplace transform is
 \[y_d(x) = \calL_{1,d}(\log(1+\zeta))(x) = \int_d\log(1+\zeta)e^{-\frac{\zeta}{x}}d(\frac{\zeta}{x}) = \int_d\frac{1}{1+\zeta} e^{-\frac{\zeta}{x}}d\zeta . \]
We note that this function again satisfies the Euler equation.  To see this note that $y(x) = Ce^{\frac{1}{x}}$ is a solution of $x^2y'+y=0$ and so variation of constants gives us that $f(x) = \int_0^x  e^{\frac{1}{x} - \frac{1}{t}} \frac{dt}{t}$ is a solution of $x^2y' + y = x$.  for convenience let $d=0$ and define  a new variable $\zeta$ by $\frac{\zeta}{x} = \frac{1}{t} - \frac{1}{x}$ gives $f(x) = \int_{d=0}\frac{1}{1+\zeta} e^{-\frac{\zeta}{x}}d\zeta$. 
Therefore, for any ray except the negative real axis, we are able to canonically associate a function, analytic in a large sector around this ray, with the divergent series $\sum(-1)^nn!x^{n+1}$ and so this series is $1$-summable. Furthermore these functions will again satisfy the Euler equation. This is again a hint for what is to come.  \end{ex}

\vspace{.1in}
\noindent \underline{\em Linear Differential Equations and Summability.} It has been known since the early $20^{th}$ Century that a formal series solution of an analytic differential equation $F(x, y,y', \ldots, y^{(n)}) = 0$ must Gevrey of order $k$ for some $k$ (\cite{maillet}).  Regretably, even for linear equations, these formal solutions need not be $k$ summable (see \cite{loday_richaud95}). Nonetheless, if $\hat{y}$ is a vector of formal series that satisfy a linear differential equation it can be written as a sum $\hat{y} = \hat{y_1} + \ldots + \hat{y_t}$ where each $\hat{y_i}$ is $k_i$ summable for some $k_i$.  This result is the culmination of the work of several authors including Ramis, Malgrange,  Martinet, Sibuya and \'Ecalle (see \cite{loday_richaud95}, \cite{ramis93} and Chapter 7.8 of \cite{PuSi2003}). The $k_i$ that occur as well as the directions along which the $\hat{y_i}$ are not summable can be seen from the normal form of the differential equation.  We need the following definitions.  Let $Y' = AY, \ A\in \Mn(\Cx(\{x\}))$ and let $\hat{Y} = \hat{\phi}(x) x^L e^{Q(1/t)}$ be a formal solution with $t^\nu =x$ and $Q = {\rm diag}(q_1, \ldots , q_n), q_i \in \frac{1}{t}\Cx[\frac{1}{t}]$.

\begin{defin} (1) The rational number $k_i = \frac{1}{\nu}(\mbox{the degree of }q_i \mbox{ in } \frac{1}{t})$ is called a {\em slope} of $Y' = AY$.\\[0.05in]
(2)  A {\em Stokes direction $d$} for a $q_i = cx^{-k_i}+ \ldots$ is a direction such that $Re(cx^{-k_i}) = 0$, i.e., where $e^{q_i}$ changes behavior.\\[0.05in]
(3) Let $d_1, d_2$ be consecutive Stokes directions.  We say that $(d_1, d_2)$ is a {\em negative  Stokes pair} if $Re(cx^{-k_i}) <0$ for $\arg(x) \in (d_1,d_2)$.\\[0.05in]
(4) A {\em  singular direction} is the bisector of a negative Stokes pair.
\end{defin}
\begin{ex} For the Euler equation we have that $\nu = 1$ and $Q(x) = diag(\frac{1}{x},0)$.  Therefore the slopes are $\{0, 1\}$.  The Stokes directions are $\{\frac{\pi}{2}, \frac{3\pi}{2}\}$.  The pair $(\frac{\pi}{2}, \frac{3\pi}{2})$ is a negative Stokes pair and so $\pi$ is the only singular direction. \end{ex}
With these definitions we can state
\begin{thm} (Multisummation Theorem) Let $1/2<k_1<\ldots <k_r$ be the slopes of $Y'=AY$ and let $\hat{Y} \in  (\Cx((x)))^n$ satisfy this differential equation.  For all directions $d$ that are not singular directions, there exist $\hat{Y}_i \in \Cx((x))$ such that $\hat{Y} = \hat{Y}_1+\ldots+\hat{Y}_r$, each $\hat{Y}_i$ is $k_i$-summable in direction $d$ and, for $y_i $= the $k_i$-sum of $\hat{Y}_i $, $y_d = y_1+\ldots+y_r$ satisfies $y'=Ay$.
\end{thm}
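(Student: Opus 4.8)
The plan is to build the sum $y_d$ by an explicit (accelero-)summation operator applied to $\hat{Y}$, organized through the formal normal form. Since $x^L$ and $e^{Q(1/t)}$ already define an honest fundamental matrix of $Y_0' = A_0Y_0$ on every sector, the analytic content is carried by the formal gauge $\hat{\phi}(x)$, which solves the linear system $d\hat{\phi}/dx = A\hat{\phi} - \hat{\phi}A_0$; I would first reduce the summability of $\hat{Y}$ to that of the entries of $\hat{\phi}$. A preliminary estimate of Maillet type, read off from the normal form, shows that $\hat{Y}$ (and each entry of $\hat{\phi}$) is Gevrey of order $k_1$, the smallest slope, so that its formal Borel transform $\calB_{k_1}(\hat{Y})$ converges near $\zeta = 0$. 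The diagonal $Q = \mathrm{diag}(q_1,\ldots,q_n)$ then dictates exactly where trouble can occur: the only obstructions to continuing and estimating the Borel minors lie along the rays fixed by the differences $q_i - q_j$, that is, along the singular directions (the bisectors of negative Stokes pairs).

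Next I would decompose by level. Using the factorization of the associated operator along its Newton polygon, the formal solution splits as $\hat{Y} = \hat{Y}_1 + \cdots + \hat{Y}_r$, where $\hat{Y}_i$ gathers the contributions of natural level $k_i$ and is Gevrey of order $k_i$. For a single level the result is exactly the Borel--Laplace criterion stated above: given a series of pure level $k$ and a direction $d$ avoiding the singular directions, $\calB_k$ yields a germ that continues analytically along $r_d$ with exponential growth of order at most $k$, and $\calL_{k,d}$ of this continuation is the $k$-sum. Watson's Lemma identifies this sum as the unique Gevrey-$k$ function with asymptotic expansion $\hat{Y}_i$ on a sector of opening exceeding $\pi/k$, which is what makes $y_i$, and hence $y_d$, well defined.

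The essential difficulty, and the step I expect to be the main obstacle, is that when $r>1$ the levels interact, so no single Borel--Laplace pair produces $y_d$; one genuinely needs multisummation. Following \'Ecalle, I would realize the multisum by accelero-summation: apply $\calB_{k_1}$, then pass successively from level $k_i$ to level $k_{i+1}$ by an acceleration operator (a kernel integral interpolating Borel and Laplace), and finally apply a Laplace transform at the top level $k_r$. The work lies in proving, at each stage and for every non-singular $d$, that the current minor extends holomorphically along $r_d$ with exponential growth of precisely the order needed for the following integral to converge; the directions in which this estimate fails are exactly the singular directions, which is why they are excluded from the statement. Alternatively one can proceed cohomologically: produce sectorial lifts $\Phi_{\calS}$ asymptotic to $\hat{\phi}$ with sharp Gevrey bounds via Hukuhara--Turrittin and Ramis--Sibuya, note that on overlaps their differences are flat of the appropriate exponential order, and glue; the sharp control of levels is again the crux.

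Finally I would check that $y_d$ solves the equation. The multisummation operator in a non-singular direction $d$ is $\Cx$-linear, commutes with $d/dx$, is multiplicative for convergent factors, and reconstructs convergent series; applying it to the identity $\hat{Y}' = A\hat{Y}$ gives $y_d' = Ay_d$. Since by construction $y_d = y_1 + \cdots + y_r$ with $y_i$ the $k_i$-sum of $\hat{Y}_i$, and since the asymptotic relation $J(y_d) = \hat{Y}$ holds on a sufficiently wide sector, uniqueness from Watson's Lemma pins down $y_d$ and completes the proof.
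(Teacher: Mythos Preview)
The paper does not prove this theorem. Immediately after the statement, the text attributes the result to ``the work of several authors including Ramis, Malgrange, Martinet, Sibuya and \'Ecalle'' and refers the reader to \cite{loday_richaud95}, \cite{ramis93}, and Chapter~7.8 of \cite{PuSi2003}; no argument is given in the paper itself. So there is no proof here for your proposal to be compared against.

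That said, your outline is a fair sketch of what one actually finds in those references: reduction to the gauge $\hat{\phi}$ solving $d\hat{\phi}/dx = A\hat{\phi}-\hat{\phi}A_0$, the Maillet-type Gevrey estimate from the Newton polygon, splitting by level, and then either \'Ecalle's accelero-summation (iterated Borel/acceleration/Laplace) or the cohomological Ramis--Sibuya gluing of sectorial lifts with controlled flatness. You correctly identify the genuine technical crux---propagating the analytic continuation and exponential-growth bounds through each acceleration step away from the singular directions---and the closing step (that the multisummation operator is a differential $\Cx$-algebra morphism, so $y_d$ inherits $y_d'=Ay_d$ from $\hat{Y}'=A\hat{Y}$) is the standard one. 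As a plan this is sound; turning it into an actual proof is exactly the substantial work carried out in the cited sources, and the paper makes no pretense of reproducing it.
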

The condition that $1/2 <k_1$ is a needed technical assumption but, after a suitable change of coordinates, one can always reduce to this case.
We shall refer to the element $y_d$ as above as the {\em multisum of $\hat{Y}$ in the direction $d$}.

\vspace{.1in}

\noindent \underline{\em Stokes Matrices and the Convergent Differential Galois Group}. We are now in a position to describe the analytic elements which, together with the formal Galois group determine the analytic Galois group. Once again, let us assume that we have a differential equation $Y' = AY$ with $A \in \Mn(\Cx(\{x\}))$ and assume this has a formal solution $\hat{Y} = \hat{\phi}(x) x^L e^{Q(1/t)}$ with $\hat{\phi}(x)\in \GL_n(\Cx((x)))$.  The matrix $F=\hat{\phi}(x)$ is a formal solution of the $n^2\times n^2$ system 
\begin{eqnarray}\label{stokeseqn}\frac{dF}{dx} = AF-FA_0
\end{eqnarray}
where $Y' = A_0Y$ is the normal form of $Y' = AY$.
We will want to associate functions, analytic in sectors, to $\hat{\phi}(x)$ and therefore will  apply the Multisummation Theorem to \underline{equation~\ref{stokeseqn}} (and not to $Y'=AY$). One can show that the eigenvalues of equation~(\ref{stokeseqn}) are of the form $q_i-q_j$ where the $q_i,q_j$ are eigenvalues of $Y' = AY$.\\[0.1in]
\parbox{1.3in}{\includegraphics[width =.4\textwidth]{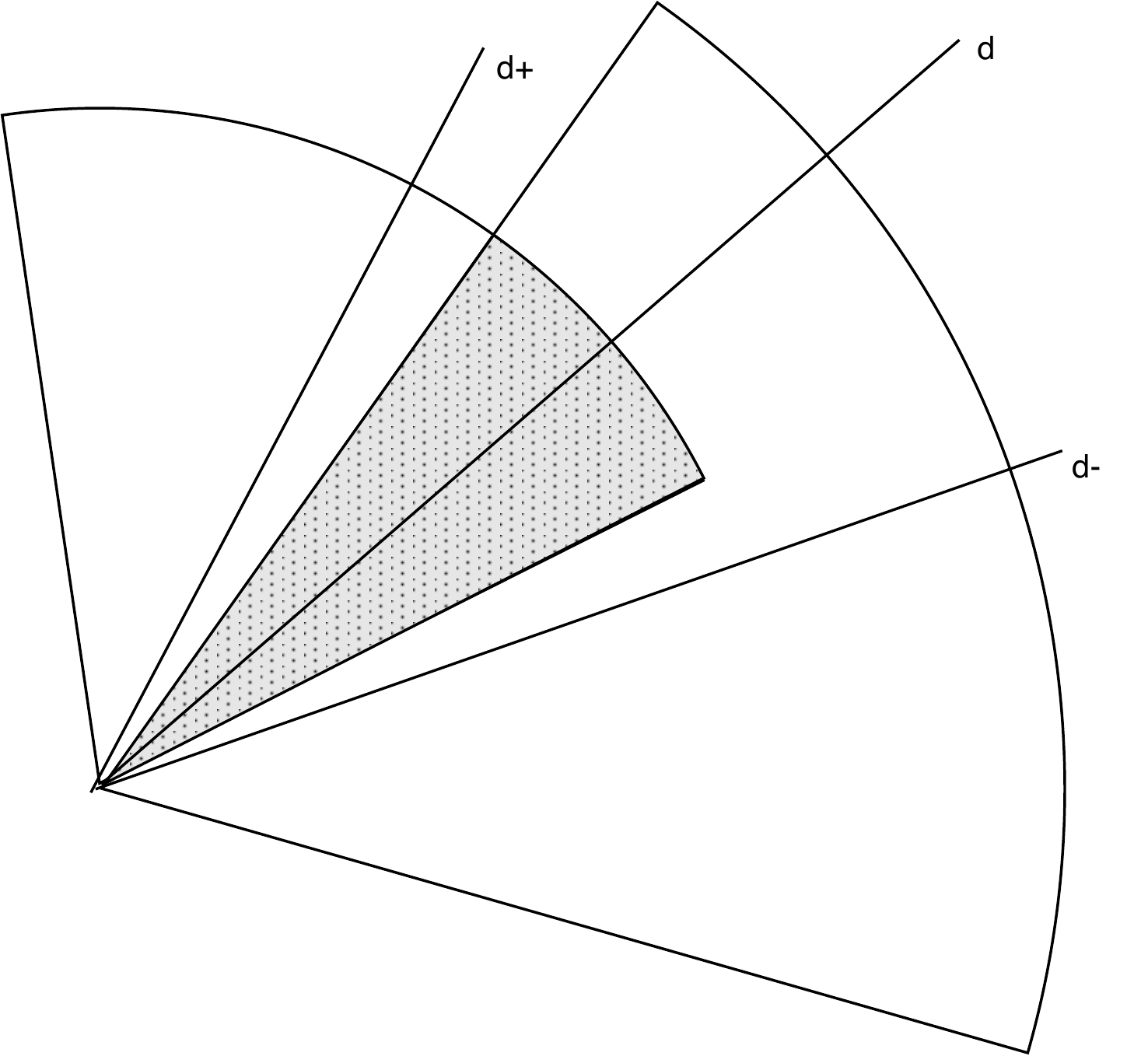}} \hspace{.4in} \parbox{2.5in}{Let  $d$ be a singular direction of $Y' = AY- YA_0$. Select $d^+$ and $d^-$ nearby so that the associated sectors overlap and contain $d$. Let $\phi^+, \phi^-$ be the associated multisums.  We must have $\phi^+ x^L e^{Q(1/t)}= \phi^-x^L e^{Q(1/t)}\cdot St_d$ for some $St_d \in \GL_n(\Cx)$ on the overlap of the regions (the shaded area in the figure). The matrix $St_d$ is independent of the  choice of $d^+$ and $d^-$.}
\begin{defin} The matrix $St_d$ is called a {\em Stokes matrix in the direction $d$.}\end{defin}
\begin{ex} (Euler equation)
\begin{itemize} 
\item Eigenvalues of $Y' = AY - YA_0$: $\{\frac{1}{x}, 0\}$
\item Singular direction of $Y' = AY - YA_0$: $d = \pi$  
\item $\hat{\phi}(x) = \left(\begin{array}{cc} 1&\hat{f}\\ -\frac{1}{x^2}&\hat{f}'\end{array}\right) \ \ \ \hat{f} = \sum_{n=0}^\infty (-1)^n x^{n+1}$
\end{itemize}
For $d+$ and $d^-$ close to the negative real axis, we have \[f^+ = \int_{d^+}\frac{1}{1+\zeta} e^{-\frac{\zeta}{x}}d\zeta \mbox{ and }
f^- = \int_{d^-}\frac{1}{1+\zeta} e^{-\frac{\zeta}{x}}d\zeta.\]
Since $ f^+ - f^- = 2\pi i \mbox{ Res}_{\zeta= -1} \left(\frac{e^{-\frac{\zeta}{x}}}{1+\zeta} \right)= 2\pi i$, we have 
 \[\left(\begin{array}{cc} 1&f^+\\ -\frac{1}{x^2}&(f^+)'\end{array}\right) = \left(\begin{array}{cc} 1&f^-\\ \-\frac{1}{x^2}&(f^-)'\end{array}\right) \left(\begin{array}{cc} 1&2\pi i\\ 0& 1\end{array}\right)\]
so  $St_{\pi} = \left(\begin{array}{cc} 1&2\pi i\\ 0& 1\end{array}\right)$
\end{ex}
The following result is due to Ramis \cite{ramis85a, ramis85b, ramis85c, martinet_ramis}. Another proof appears in   the work of Loday-Richaud \cite{loday_richaud}.  An exposition of this result and its proof appears in Chapter 8 of \cite{PuSi2003}. 
\begin{thm} The convergent differential Galois group $\DGal(K/\Cx(\{x\}))$ is the Zariski closure of the group generated by \begin{enumerate}
\item the formal differential Galois group $\DGal(K/\Cx((x)))$, and
\item The collection of Stokes matrices $\{St_d\}$ where $d$ runs over the set of singular directions of the polynomials $\{q_i-q_j\}$, $q_i,q_j$ eigenvalues of $Y' = AY$.
\end{enumerate}
\end{thm}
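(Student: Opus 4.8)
The plan is to identify the convergent Galois group $G = \DGal(K/\Cx(\{x\}))$ with the Zariski-closed group $H$ generated by the formal group $\hat G := \DGal(K/\Cx((x)))$ and the Stokes matrices $\{St_d\}$ by combining the inclusion $H \subseteq G$ with a density argument. Since $H$ is already Zariski closed, it suffices to show $H\subseteq G$ and that $K^H = \Cx(\{x\})$; the latter forces $\overline H = G$ by Corollary~\ref{fundcor}(2), whence $H=G$. The inclusion $\hat G\subseteq G$ is immediate from $\Cx(\{x\})\subset\Cx((x))$, as any automorphism fixing the larger field a fortiori fixes the smaller.

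First I would realize each Stokes matrix inside $G$. Fix a singular direction $d$ of some $q_i-q_j$ and nonsingular $d^\pm$ nearby. Applying the Multisummation Theorem to equation~(\ref{stokeseqn}), the multisums $\phi^\pm$ of $\hat\phi$ give fundamental solution matrices $Y_{d^\pm}=\phi^\pm x^L e^{Q(1/t)}$ of $Y'=AY$, analytic on overlapping sectors and both asymptotic to $\hat Y$. Realizing $K$ inside a differential field of such sectorial functions, the assignment $Y_{d^-}\mapsto Y_{d^-}St_d = Y_{d^+}$ defines a differential automorphism $\sigma_d$. It fixes $\Cx(\{x\})$ because a convergent series equals its own asymptotic expansion on both sectors, and it is differential because both $Y_{d^\pm}$ solve the equation; thus $\sigma_d\in G$ with associated matrix $St_d$.

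Next I would prove $K^H=\Cx(\{x\})$, the analogue for irregular points of Proposition~\ref{schlesinger}. Let $f\in K$ be fixed by $H$. Invariance under $\hat G$ and Corollary~\ref{fundcor}(1), applied to the formal extension (whose constants $\Cx$ are algebraically closed), give $f\in\Cx((x))$; write $f=\hat f$. For each nonsingular $d$, multisummation yields an analytic $f_d$ on a sector about $d$ with $J(f_d)=\hat f$. Crossing a singular direction $d^*$, the sectorial lifts on the two sides are related by the Stokes automorphism $\sigma_{d^*}$, so the hypothesis $\sigma_{d^*}(f)=f$ forces $f_{d^-}=f_{d^+}$ on their overlap. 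Hence the lifts glue to a single-valued analytic function on a full punctured disk about $0$, carrying $\hat f$ as asymptotic expansion and therefore of moderate growth. A single-valued function with moderate growth at an isolated singularity is meromorphic there, by the Cauchy and Liouville arguments used in Proposition~\ref{schlesinger}, so $\hat f$ converges and $f\in\Cx(\{x\})$.

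The main obstacle is the analytic input: one must know that $\hat\phi$ is genuinely multisummable along every nonsingular direction, with lifts that solve the equation, carry the prescribed asymptotics, and jump across singular directions by \emph{exactly} the matrices $St_d$ and by nothing else. This is precisely the Multisummation Theorem, whose proof (Ramis, Martinet, Malgrange, Sibuya, \'Ecalle) is the deep ingredient; granted it, realizing the $St_d$ in $G$ and running the gluing argument are comparatively formal. The remaining delicate point is bookkeeping: one must arrange the several embeddings of $K$ into sectorial function fields compatibly, so that $\hat G$ and all the $\sigma_d$ act on one and the same copy of $K$, and so that the Stokes data account for \emph{all} the multivaluedness of its elements.
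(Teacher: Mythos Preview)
The paper does not prove this theorem; it only states it and refers to Ramis \cite{ramis85a, ramis85b, ramis85c, martinet_ramis}, Loday-Richaud \cite{loday_richaud}, and Chapter~8 of \cite{PuSi2003} for proofs. Your outline is precisely the standard argument found in those references: show the Stokes automorphisms lie in the convergent group, then prove density via $K^H=\Cx(\{x\})$ by first using $\hat G$-invariance to force $f\in\Cx((x))$ and then using Stokes-invariance to glue the sectorial multisums into a single-valued germ. You have also correctly identified that the Multisummation Theorem is the deep analytic input and that the compatible realization of $K$ in the various sectorial function fields is where the care is needed.

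One small sharpening of your final step: rather than routing through ``moderate growth $\Rightarrow$ meromorphic'' via Cauchy--Liouville, you can cite directly the fact the paper records just before the Hukuhara--Turrittin theorem, namely $\calA(S^1)=\Cx(\{x\})$. Once the sectorial lifts glue to a function on a full punctured disk with asymptotic expansion $\hat f$, that fact immediately gives $\hat f\in\Cx(\{x\})$. Your version is correct, just slightly less direct.
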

\begin{exs} (1) (Euler equation) 
\begin{itemize} 
\item The formal differential Galois group:  \[\{ \left(\begin{array}{cc} c&0\\ 0& 1\end{array}\right) \ | \ c\neq 0\}\]
\item Stokes matrix: $\left(\begin{array}{cc} 1&2\pi i\\ 0& 1\end{array}\right)$
\end{itemize}
Therefore the convergent differential Galois group is \[\{ \left(\begin{array}{cc} c&d\\ 0& 1\end{array}\right) \ | \ c\neq 0, d\in \Cx\}.\]
(2) (Airy equation)
\begin{itemize} 
\item The formal differential Galois group: \[D_\infty =  \{ \left(\begin{array}{cc} c&0\\ 0& c^{-1}\end{array}\right), \left(\begin{array}{cc} 0&c\\ -c^{-1}& 0\end{array}\right) \ | \ c\neq 0\}\]
\item The Stokes matrices: \[St_0 =  \left(\begin{array}{cc} 1&*\\\ 0& 1\end{array}\right), St_{\frac{2\pi}{3}} = \left(\begin{array}{cc} 1& 0\\ *& 1\end{array}\right), St_{\frac{4\pi}{3}} = \left(\begin{array}{cc} 1&*\\\ 0& 1\end{array}\right)\]
(see Example 8.15 of \cite{PuSi} for the calculation of the Stokes matrices.)
\end{itemize}
Therefore the convergent differential Galois group is  $\SL_2(\Cx)$
\end{exs}
We highly recommend the book \cite{ramis93} of Ramis, the papers of Loday-Richaud \cite{loday_richaud90, loday_richaud95} and the papers of Varadarajan \cite{varadarajan_mero, varadarajan} for further introductions to this analytic aspect of linear differential equations.  One can find references to the original papers in these works as well as in \cite{PuSi2003}.

\section{Algorithms}\label{mfssec4}  In this section we shall consider the question: Can we compute differential Galois groups and their properties? We shall not give a complete algorithm to determine the Galois groups although one exists - see \cite{Hrushovski}.  This general algorithm has not been implemented at present and, in its present form seems not very efficient.  We will  rather give a flavor of the various techniques that are presently implemented and used.\\[0.1in]
\noindent{\underline{\bf Categories motivate algorithms}}.  We will show how results in a categorical setting concerning representations of groups lead to algorithms to determine Galois groups.  We begin with an example from the Galois theory of polynomials.
\begin{ex}\label{galoisex1} Let \[P(Y) = Y^3 + bY + c = 0  \in \Qx[Y].\]
The usual Galois group $G$ of this equation is a subgroup of the full permutation group $\calS_3$ on 3 elements. We will describe how to compute $G$. If  $P(Y) = (Y + \alpha)(Y^2 + \beta Y + \gamma), \alpha, \beta, \gamma \in \Qx$, then $G$ is the Galois group of $Y^2 + \beta Y + \gamma$, which we assume we know how to compute.  Therefore we will assume that $P(Y)$ is irreducible over $\Qx$.  This implies that $G$ acts transitively on the roots $\{\rho_1, \rho_2, \rho_3\}$ of $P$.  From group theory we know that the only transitive subgroups of $\calS_3$ are $\calA_3$, the alternating group on 3 elements, and $\calS_3$. To decide which we have, consider the following associated polynomial

\[Z^2 +4b^3+27c^2 = (Z+\delta)(Z-\delta) \ \ \delta = (r_1-r_2)(r_2-r_3)(r_3-r_1).\]
One then has that $G = \calA_3$ if $Z^2 +4b^3+27c^2$ factors over $\Qx$ and 
 $G = \calS_3$ if $Z^2 +4b^3+27c^2$ irreducible over $\Qx$.
 \end{ex}
 In the above example we determined the Galois group by considering factorization properties of the polynomial and associated polynomials. Why does this work?\\[0.1in]
 To answer this question, let us consider a square-free polynomial $f(Y) \in \Qx[Y]$ with roots $r_1, \ldots r_n \in \bar{\Qx}$, the algebraic closure of $Q$.  Let $G$ be the Galois group of $f$ over $\Qx$. The group $G$ acts as a group of permutations on the set $S= \{r_1, \ldots , r_n\}$.   We have that $G$ acts transitively on $S$ if and only if the polynomial $f$ is irreducible.  More generally, $G$ leaves a subset $\{r_{i_1}, \ldots , r_{i_t}\}$ invariant if and only if the polynomial $g(Y) = \prod_{j=1}^t (Y-r_{i_j}) \in \Qx[Y]$.  Therefore the orbits of 
 $G$ in the set $S$ correspond to irreducible factors of $f$.  In fact, for any $h \in \Qx[Y]$ having all its roots in the splitting field of $f$, $G$-orbits in the set of roots correspond to irreducible factors of $h$.  In the above example, we were able to distinguish between groups because they have different orbits in  certain ``well-selected'' sets and we were able to see this phenomenon via factorization properties of associated polynomials. The key is that, in general,
\begin{itemize}
\item[ ] {\em one can distinguish between finite groups by looking at the sets on which they act.}
\end{itemize}
 This strange looking statement can be formalized in the following way (see Appendix B1 of \cite{PuSi2003} for details). For any finite set, let $\Perm(S)$ be the group of permutations of $S$. One constructs a category $\Perm_G$ of $G$-sets whose objects are  pairs $(S, \rho)$, where $\rho:G \rightarrow \Perm(S)$ is the map defining an action of $G$ on $S$.  The morphisms of this category are just maps between $G$-sets that commute with the action of $G$. One has the forgetful functor $\omega:\Perm_G \rightarrow \Sets$ which takes $(S,\rho)$ to $S$.  One then has the formalization of the above statement as: one can recover the group from the  pair $(\Perm_G, \omega)$. A consequence of this is that given two groups $H\subset G$ there is a set $S$ on which $G$ (and therefore $H$) acts so that the orbit structures with respect to these two groups are different. \\[0.1in]
 These  facts lead to the following strategy to compute Galois groups of a polynomial $f \in \Qx[X]$.  For each pair of groups $H\subset G \subset \calS_n$ we construct a polynomial $f_{H,G} \in \Qx[X]$ whose roots  can be expressed in terms of the roots of $f$ and such that $H$ and $G$ can be distinguished by their orbit structures on the roots of $f_{H,G}$ (by the above facts, we know that there is a set that distinguishes $H$ from $G$  and one can show that this set can be constructed from the roots of $f$). The factorization properties of $f_{H,G}$ will then distinguish $H$ from $G$.  This is what was done in Example~\ref{galoisex1}.\\[0.1in]
 To generalize the above ideas to calculating differential Galois groups of linear differential equations, we must first see what determines a linear algebraic group.  In fact, just as finite groups are determined by their {\em permutation} representations, linear algebraic groups are determined by their {\em linear} representations.  To make this more precise, let $G$ be a linear group algebraic group defined over an algebraically closed field $C$.  I will denote by $\Rep_G$ the category of linear representations of $G$.  The objects of this category are pairs $(V,\rho)$ where $V$ is a finite dimensional vector space over $C$ and $\rho:G \rightarrow \GL_n(V)$ is a representation of $G$.  The morphisms $m:(V_1,\rho_1) \rightarrow (V_2,\rho_2)$ are linear maps $m:V_1\rightarrow V_2$ such that $m\circ\rho_1 = \rho_2\circ m$.  One can define subobjects, quotients, duals, direct sums and tensor products in obvious ways (see Appendix B2 and B3 of \cite{PuSi2003} for details).  One again has a forgetful functor $\omega:\Rep_G \rightarrow \Vect_C$ (where $\Vect_C $ is the category of vector spaces over $C$) given by $\omega(V,\rho) = V$.   In analogy to the finite group case, we have Tannaka's Theorem, that says that we can recover $G$ from the pair $(\Rep_G, \omega)$.  \\[0.1in]
Let  $\calT$ be a category with a notion of quotients, duals, direct sums, tensor products and a functor $\tilde{\omega}:\calT \rightarrow \Vect_C$.  An example of such a category is the category of differential modules $\calD$ over a differential field $k$ with algebraically closed constants and $\tilde{\omega}(M) = \ker(\dd,K\otimes M)$ (see the discussion following Definition~\ref{fundmatrixdef}) where $K$ is a differential field large enough to contain solutions of the differential equations corresponding to the modules in $\calD$. One can give axioms that guarantee that $\tilde{\omega}(\calT)$ ``='' $\omega(\Rep_G)$ for some linear algebraic group $G$ (see \cite{deligne_milne}, \cite{deligne_tannakian} or Appendix  B3 of \cite{PuSi2003}). Such a pair $(\calT, \tilde{\omega})$ is called a {\em neutral tannakian category}.  Applying this to a subcategory of $\calD$, one has:
\begin{thm}\label{tannakathm} Let $(k, \dd)$ be a differential field with algebraically closed constants, let $Y' = AY$ be a linear differential equation over $(k,\dd)$, let $M_A$ be the associated differential module and $K$ the associated Picard-Vessiot extension.  Let $\calT = \{\{M_A\}\}$ be the smallest subcategory of $\calD$ containing $M_A$ and closed under the operations of taking submodules, quotients, duals, direct sums , and tensor products.  For $N \in \{\{M_A\}\}$, let $\tilde{\omega}(N) = \ker(\dd,N\otimes K)$.\\[0.05in]
Then $(\calT, \tilde{\omega})$ is a neutral tannakian category and the image of $\tilde{\omega}$ is the category of vector spaces on which $\DGal(K/k)$ acts.
\end{thm}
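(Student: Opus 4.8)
The plan is to establish the two assertions in turn: first that $(\calT,\tilde{\omega})$ meets the axioms of a neutral tannakian category, so that a linear algebraic group can be reconstructed from it, and then that this reconstructed group is exactly $\DGal(K/k)$. First I would record that the full category $\calD$ of differential modules over $k$ is a rigid abelian $C_k$-linear tensor category. It is abelian because a $k$-linear map commuting with $\dd$ has $\dd$-stable kernel and image, so kernels and cokernels are again differential modules; the formulas of Table~\ref{tab1} supply $\otimes$, $\oplus$, duals $M^{*}$ and the unit $\bfone_k=(k,\dd)$, and the evaluation and coevaluation maps exhibiting $M^{*}$ as a dual are checked directly from those formulas. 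Crucially ${\rm End}(\bfone_k)=C_k$, which is algebraically closed by hypothesis. Since $\calT=\{\{M_A\}\}$ is by definition a full subcategory closed under all of these operations, it inherits this structure.

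Next I would study the fibre functor $\tilde{\omega}(N)=\ker(\dd,K\otimes_k N)$. It is $C_k$-linear, and it is a tensor functor since $\tilde{\omega}(\bfone_k)=C_k$ and, for trivialized modules over $K$, $\ker(\dd,-)$ of a tensor product is the tensor product of the $\ker(\dd,-)$'s. The decisive point is the dimension identity $\dim_{C_k}\tilde{\omega}(N)=\dim_k N$ for every $N\in\calT$, from which exactness and faithfulness follow. This holds because $K$ is a Picard--Vessiot field for $M_A$: it contains a fundamental solution matrix, so $K\otimes_k M_A$ is a trivial differential module over $K$, and triviality is preserved under $\otimes,\oplus,(-)^{*}$ and passes to subquotients — the latter by the solution-dimension bound of Lemma~\ref{dependence} and Corollary~\ref{wronskiancor}, using $C_K=C_k$ so that no new constants appear. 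Hence $K\otimes_k N$ is trivial for every $N\in\calT$, giving the dimension equality. By the Tannakian reconstruction theorem, $(\calT,\tilde{\omega})$ is a neutral tannakian category, so $G:=\Aut^{\otimes}(\tilde{\omega})$ is a linear algebraic group over $C_k$ and $\tilde{\omega}$ induces an equivalence $\calT\xrightarrow{\sim}\Rep_G$; in particular the image of $\tilde{\omega}$ is precisely the category of $C_k$-vector spaces carrying a $G$-action.

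It then remains to identify $G$ with $\DGal(K/k)$. Each $\sigma\in\DGal(K/k)$ acts on $K\otimes_k N$ through the first factor and commutes with $\dd$, so it restricts to an automorphism of $\tilde{\omega}(N)$, naturally in $N$ and compatibly with $\otimes$; this defines a homomorphism $\DGal(K/k)\to G$. For injectivity, on $V:=\tilde{\omega}(M_A)$ the action is, in a solution basis, the representation $\sigma\mapsto m_\sigma$ arising from $\sigma(Z)=Zm_\sigma$ for a fundamental matrix $Z$; if $\sigma$ acts trivially on $V$ then $m_\sigma$ is the identity, so $\sigma$ fixes every entry of $Z$ and therefore fixes $K$, which is generated over $k$ by those entries together with $1/\det Z$.

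The hard part, and where I expect the real content to sit, is surjectivity, i.e.\ the equality $\DGal(K/k)=G$ inside $\GL(V)$. Here I would invoke Chevalley's theorem: a closed subgroup $H\subseteq G$ equals $G$ provided every $H$-stable line in each construction $V^{\otimes a}\otimes(V^{*})^{\otimes b}$ and its subquotients is already $G$-stable. Suppose $\DGal(K/k)\subsetneq G$; then some such construction $W=\tilde{\omega}(N)$, with $N\in\calT$, carries a $\DGal(K/k)$-stable line $L$ that is not $G$-stable. But the differential Galois correspondence between differential submodules of $N$ and $\DGal(K/k)$-stable subspaces of $\tilde{\omega}(N)$ — the module version of Proposition~\ref{invsubprop}, which follows from the Fundamental Theorem — shows that $L=\tilde{\omega}(N')$ for a differential submodule $N'\subseteq N$. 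Under the equivalence $\tilde{\omega}\colon\calT\xrightarrow{\sim}\Rep_G$, subobjects correspond to $G$-subrepresentations, so $L$ is $G$-stable, a contradiction. Therefore the comparison map is an isomorphism, $G=\DGal(K/k)$, and the image of $\tilde{\omega}$ is the category of vector spaces on which $\DGal(K/k)$ acts, as asserted.
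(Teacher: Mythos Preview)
The paper does not actually supply a proof of this theorem: it states the result and refers the reader to \cite{deligne_milne}, \cite{deligne_tannakian}, and Appendix~B of \cite{PuSi2003} for the Tannakian formalism, then moves directly to discussing its implications. So there is no in-paper argument to compare against.

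That said, your outline is the standard one and is essentially what those references carry out. The verification that $\calD$ is rigid abelian $C_k$-linear with ${\rm End}(\bfone_k)=C_k$, that $\calT$ inherits this, and that $\tilde{\omega}$ is an exact faithful tensor functor via the dimension equality $\dim_{C_k}\tilde{\omega}(N)=\dim_k N$ (triviality of $K\otimes_k N$ for all $N\in\calT$) is exactly right. Your identification of $\Aut^{\otimes}(\tilde{\omega})$ with $\DGal(K/k)$ is also the correct strategy: the map $\DGal(K/k)\to G$ and its injectivity are straightforward, and your surjectivity argument via Chevalley together with the module form of Proposition~\ref{invsubprop} (the correspondence between $\DGal$-invariant subspaces and differential submodules) is the standard route. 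One small point worth making explicit is that Proposition~\ref{invsubprop} as stated in the paper is phrased for scalar operators, so you are tacitly using its module version; this is routine but should be flagged. Otherwise the proposal is sound and matches the approach of the cited sources.
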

This theorem and Tannaka's Theorem have the following implications:
\begin{itemize}
\item One can construct all representations of $\DGal(K/k)$ by:
\begin{itemize}
\item using the constructions of linear algebra (submodules, quotients, duals, direct sums, tensor products) on $M_A$, and
\item taking the solution spaces of the corresponding differential equations.
\end{itemize}
\item $\DGal(K/k)$ is determined by knowing which subspaces of these solution spaces are $\DGal(K/k)$-invariant.
\end{itemize}
At first glance, it seems that one would need to consider an infinite number of differential modules but in many cases, it is enough to consider only a finite number of modules constructed from $M_A$. Regretably, there are groups (for example, any group of the form $C^*\oplus \ldots \oplus C^*$) for which on needs an infinite number of representations to distinguish it from other groups.  Nonetheless, there are many groups where a finite number of representations suffice. Furthermore, Proposition~\ref{invsubprop} states that the $\DGal$-invariant subspaces of the solution space of a differential module $N$ correspond to factors of the associated scalar equation $L_N(Y) = 0$. Therefore, in many cases, one can give criteria for a differential equation to have a given group as Galois group in terms of factorization properties of certain associated differential operators. We shall give examples for second order equations.  We begin with a definition.\\[0.1in]
Let $k$ be a differential field with algebraically closed constants $C$, let $L = \dd^2 - s \in k[\dd]$, and let $K$ be the Picard-Vessiot extension corresponding to $L(Y) = 0$. Let $\{y_1, y_2\}$ be a basis for the solution space $V$ of $L(Y) = 0$ in $K$ and let $V_m = C-\mbox{span of }\{y_1^m, y_1^{m-1}y_2, \ldots , y_2^m\}$.  One can see that $V_m$ is independent of the selected basis of $V$ and that it is invariant under the action of $\DGal(K/k)$.  As in the proof of Proposition~\ref{invsubprop} one can show that this implies that $V_m$ is precisely the solution space of a scalar linear differential equation over $k$.
\begin{defin} \label{scalarsym}The $m^{th}$ symmetric power $L^{\miprod m}$ of $L$ is the monic operator whose solution space is $V_m$.
\end{defin}
One has that the order of $L^{\miprod m}$ is $m+1$.  To see this it is enough to show that the $y_1^{m-i}y_2^i, \ 0\leq i \leq m$ are linearly independent over $C$.  Since any homogeneous polynomial in two variables factors over$C$, if we had  $0 = \sum c_iy_1^{m-i}y_2^i = \prod(a_iy_1+b_iy_2) $, then $y_1$ and $y_2$ would be linearly dependent over $C$.  One can calculate $L^{\miprod m}$ by starting with $y^m$ and formally differentiating it $m$ times.  One uses the relation $y'' -sy=0$ to replace $y^{(j)}, \ j>1$ with combinations of $y$ and $y'$.  One then has $m+2$ expressions $y^m, (y^m)', \ldots , (y^m)^{(m+1)}$ in the $m+1$ ``indeterminates'' $y^{m-i}(y')^i, \ 0 \leq i \leq m$.  These must be linearly dependent and one can find a dependence $(y^m)^{(m+1)} + b_m(y^m)^{(m)} + \ldots + b_0(y^m) = 0$.  One can show that $L^{\miprod m} = \dd^{m+1} + b_m \dd^m + \ldots + b_0$ (see Chapter 2.3 of \cite{PuSi2003}).

\begin{ex}$m=2$:\\
\parbox{1.8in}{$\begin{array}{ccc}
y^2 & = &y^2\\
(y^2)' & = & 2yy'\\
(y^2)'' & = & 2(y')^2 + 2sy^2\\
(y^2)''' & = & 8syy'+ 2s'y^2
\end{array}$} $\Rightarrow$ \ \ \  $L^{\miprod 2}(y) = y'''-4sy'-2s'y $
\end{ex}
The vector spaces $V_m$ correspond to representations of $\DGal(K/k)$ on the $m^{th}$ symmetric power of $V$  and are well understood for many groups \cite{fulton_harris} (the $m^{th}$ symmetric power of a vector space is the vector space with a {\em basis} formed by the  monomials of degree $m$ in a basis of $V$).   Using representation theory, one can show:
\begin{prop} Let $L(y) = y''-sy$ and let $G = \DGal(K/k)$. Assume $L$ and $L^{\miprod 2}$ are  irreducible.
\begin{itemize} 
\item If $L^{\miprod 3}$ is reducible then  $ \ G/\pm I \simeq A_4$.
\item If $L^{\miprod 3}$ is reducible and $L^{\miprod 4}$ is irreducible then $\ G/\pm I \simeq S_4$.  
\item If  $L^{\miprod 4}$ is reducible, and $L^{\miprod 6}$ is irreducible, then  $\Rightarrow \ G/\pm I \simeq A_5$. 
\item If $L^{\miprod 6}$is  irreducible then $ \ G\simeq \SL_2(C)$. 
\end{itemize}
\end{prop}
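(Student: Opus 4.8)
The plan is to translate the whole statement into representation theory and then reduce to the classification of algebraic subgroups of $\SL_2(C)$. The first step is to note that $L = \dd^2 - s$ has no first-order term, so its companion matrix has trace $0$; consequently any fundamental matrix $Z$ satisfies $(\det Z)' = 0$, whence $\det Z \in C$ and $G = \DGal(K/k) \subseteq \SL_2(C)$. Write $V = \Soln_K(L(y)=0)$ for the standard $2$-dimensional $G$-module. By Definition~\ref{scalarsym} the solution space of $L^{\miprod m}$ is $V_m = \Sym^m V$, and by Proposition~\ref{invsubprop} the proper monic right factors of $L^{\miprod m}$ correspond bijectively to the proper nonzero $G$-invariant subspaces of $\Sym^m V$. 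Hence ``$L^{\miprod m}$ reducible'' means exactly ``$\Sym^m V$ reducible as a $G$-module,'' and the proposition becomes a question about which symmetric powers of the standard representation of a subgroup of $\SL_2(C)$ stay irreducible.

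Second, I would invoke the classical classification (Klein; in this setting Kovacic) of algebraic subgroups of $\SL_2(C)$ up to conjugacy: those contained in a Borel (triangularizable), those contained in the normalizer $N(T)$ of a maximal torus $T$ but not in $T$ itself (imprimitive, the largest being the infinite dihedral group $N(T)$), the three primitive finite groups whose images in ${\rm PSL}_2(C)$ are $A_4$, $S_4$, $A_5$, and $\SL_2(C)$ itself. The hypothesis that $L$ is irreducible says $V$ is an irreducible $G$-module, ruling out the triangularizable groups. The hypothesis that $L^{\miprod 2}$ is irreducible says the $3$-dimensional module $\Sym^2 V$ is irreducible; but every $G \subseteq N(T)$ fixes the weight-zero line $\langle e_1 e_2\rangle$ inside $\Sym^2 V$, so $\Sym^2 V$ is reducible for all imprimitive $G$. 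This leaves only the four primitive possibilities: $G/\{\pm I\} \simeq A_4, S_4, A_5$, or $G = \SL_2(C)$. (In the finite cases one also checks that $A_4, S_4, A_5$ have no faithful $2$-dimensional representation, so the central extension by $\SL_2$ does not split and $-I \in G$, which is why the quotient $G/\{\pm I\}$ is meaningful.)

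Third, I would separate the four survivors by computing, for each, the least $m$ with $\Sym^m V$ reducible. For $G = \SL_2(C)$, $\Sym^m V$ is the irreducible representation of highest weight $m$ for every $m$, so it is \emph{always} irreducible; in particular $L^{\miprod 6}$ is irreducible, which is the signature of $\SL_2$. For the three binary polyhedral groups I would read off the decompositions from their character tables, using $\dim \Sym^m V = m+1$ and that $-I$ acts by $(-1)^m$. The binary tetrahedral group has irreducible degrees $1,1,1,2,2,2,3$, so $\Sym^2 V$ (degree $3$) is irreducible while $\Sym^3 V$ (degree $4$) cannot be, giving: $L^{\miprod 3}$ reducible $\Rightarrow G/\{\pm I\}\simeq A_4$. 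The binary octahedral group has degrees $1,1,2,2,2,3,3,4$, so $\Sym^3 V$ (degree $4$) is still irreducible but $\Sym^4 V$ (degree $5$) is not, giving: $L^{\miprod 3}$ irreducible and $L^{\miprod 4}$ reducible $\Rightarrow G/\{\pm I\}\simeq S_4$. The binary icosahedral group has degrees $1,2,2,3,3,4,4,5,6$, so $\Sym^4 V$ and $\Sym^5 V$ (degrees $5,6$) are irreducible while $\Sym^6 V$ (degree $7$) is not, giving: $L^{\miprod 4}$ irreducible and $L^{\miprod 6}$ reducible $\Rightarrow G/\{\pm I\}\simeq A_5$. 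Reading $L^{\miprod 3}, L^{\miprod 4}, L^{\miprod 6}$ in turn produces exactly the four stated alternatives.

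The main obstacle is the genuine representation-theoretic input in the third step. The \emph{reducibility} assertions for the finite groups are free, since a module whose dimension exceeds the largest irreducible degree of the group must split; but the \emph{irreducibility} assertions ($\Sym^3 V$ for the octahedral and icosahedral groups, $\Sym^4 V$ and $\Sym^5 V$ for the icosahedral group, and all $\Sym^m V$ for $\SL_2$) are what make the criteria sharp, and these do not follow from dimension alone. I would establish them by computing $\langle \chi_{\Sym^m V}, \chi_{\Sym^m V}\rangle = 1$ from the characters of $V$ on the three binary groups (equivalently via the Molien series of Klein's invariants), and, for $\SL_2$, from highest-weight theory. Carefully justifying the subgroup classification and these inner-product computations is where essentially all the work lies; the rest is the formal dictionary of the first two steps.
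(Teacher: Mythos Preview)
The paper does not actually prove this proposition: the sentence immediately preceding it is ``Using representation theory, one can show:'' and nothing further is supplied. Your outline is precisely the representation-theoretic argument the paper is gesturing at --- reduce to $G\subseteq\SL_2(C)$ via the missing first-order term, invoke the Klein/Kovacic classification of algebraic subgroups of $\SL_2$, eliminate the Borel and infinite-dihedral cases using irreducibility of $V$ and $\Sym^2V$, and then separate the four primitive survivors by the smallest $m$ for which $\Sym^mV$ becomes reducible. So in spirit you are doing exactly what the paper intends, only in much greater detail.

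One point worth flagging: the conditions you derive in your third paragraph do not match the proposition as printed. The paper's second bullet has $L^{\miprod 3}$ \emph{reducible} (which would already force $A_4$ by the first bullet), and the third bullet has $L^{\miprod 4}$ reducible with $L^{\miprod 6}$ irreducible. Your argument produces the logically coherent sequence --- $L^{\miprod 3}$ irreducible and $L^{\miprod 4}$ reducible for $S_4$; $L^{\miprod 4}$ irreducible and $L^{\miprod 6}$ reducible for $A_5$ --- and this is indeed what the character-table dimensions force. The printed statement appears to contain typos; your proof establishes the corrected version.
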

In order to implement these criteria, one needs good algorithms to factor linear operators.  Such algorithms exist and I refer to Chapter 4.2 of \cite{PuSi2003} for descriptions of some algorithms that factor linear operators as well as further references to the extensive literature. The calculation of symmetric powers and factorization of operators has been implemented in {\sc Maple} in the DEtools package.\\[0.1in]
One can develop similar criteria for higher order operators.  Given a differential operator $L$ of order $n$ with $\{y_1, \ldots, y_n\}$ being a basis of the solution space $V$ in some Picard-Vessiot extension, one can consider the $C$-space $V_m$ of 
homogeneous polynomials of degree $m$ in the $y_i$.  This space has dimension at most $\left(\begin{array}{c}m+n-1\\ n-1\end{array}\right)$ but may have dimension that is strictly smaller and so is not isomorphic to the $m^{th}$ symmetric power of $V$ (although it is isomorphic to a quotient).  This complicates matters but this situation can be dealt with by casting things directly in terms of differential modules, forming the symmetric powers of differential modules and developing algorithms  to find submodules of differential modules.  This is explained in Chapter 4.2 of \cite{PuSi2003}.  Examples of criteria for third order equations are given in Chapter 4.3.5 of \cite{PuSi2003} where further references are given as well.\\[0.2in]
\noindent\underline{\bf Solving $L(y) = 0$ in terms of exponentials, integrals and}\\ \underline{\bf algebraics}. I shall give a formal definition that captures the meaning of this term.
\begin{defin} Let $k$ be a differential field, $L \in k[\dd]$, and $K$ be the associated  Picard-Vessiot extension.  \\[0.05in]
1) A {\em liouvillian tower over $k$} is a tower of differential  fields $ k= K_0\subset \ldots \subset K_m$  with $K \subset K_m$ and, for each $i, 0\leq i < m$, $K_{i+1} = K_i(t_i)$, with
\begin{itemize} 
\item $t_i$ algebraic over $K_i$, or
\item $t_i' \in K_i$, i.e., $t_i = \int u_i, u_i \in K_i$, or
\item $t_i'/t_i \in K_i$, i.e., $t_i = e^{\int u_i} , u_i \in K_i$
\end{itemize}
2) An element of liouvillian tower is said to be {\em liouvillian over k}. \\[0.05in]3)  $L(y)= 0$ {\em  is solvable in terms of liouvillian functions} if the associated PV-extension $K$ lies in a liouvillian tower over $k$.
\end{defin}
Picard and Vessiot stated the following Galois theoretic criteria and this was given a formal modern proof by Kolchin \cite{kolchin48, DAAG}. 

\begin{thm}\label{liouvthm} Let $k,L,$ and $K$ be as above. $L(y) = 0$ is solvable in terms of liouvillian functions if and only if the identity component (in the Zarski topology) of $\DGal(K/k)$ is solvable.
\end{thm}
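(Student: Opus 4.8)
The plan is to prove the two implications separately, using the Lie--Kolchin theorem on triangularizability of connected solvable linear algebraic groups as the geometric engine, and the Fundamental Theorem together with the torsor description $R\simeq k[G]$ as the dictionary between field-theoretic and group-theoretic statements.

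For the implication ``$G^0$ solvable $\Rightarrow$ liouvillian'', I would first dispose of the disconnected part. By Corollary~\ref{fundcor}(3) the fixed field $\tilde k=K^{G^0}$ is the algebraic closure of $k$ in $K$; it is a finite algebraic extension (corresponding to the finite group $G/G^0$), and adjoining its generators is an allowed algebraic step in a liouvillian tower. Over $\tilde k$ the Galois group is exactly the connected solvable group $G^0$, so it suffices to treat the case $G=G^0$ connected solvable. Here the Lie--Kolchin theorem (see \cite{springer}) provides a basis of the solution space $V=\Soln_K(L(y)=0)$ in which every element of $G$ is upper triangular, i.e.\ a complete $G$-invariant flag $0=V_0\subset V_1\subset\cdots\subset V_n=V$. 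Writing the corresponding fundamental matrix $Z=(z_{ij})$ in triangular form, each diagonal entry satisfies $\sigma(z_{ii})=\chi_i(\sigma)\,z_{ii}$ for a character $\chi_i$ of $G$, so $z_{ii}'/z_{ii}$ is fixed by $G$ and hence lies in $\tilde k$ by Corollary~\ref{fundcor}(1); thus $z_{ii}=e^{\int u_i}$ with $u_i\in\tilde k$, an exponential of an integral. The off-diagonal entries are then recovered by successive quadratures from the equations coming from $Z'=A_LZ$. This exhibits $K$ inside a liouvillian tower, and Proposition~\ref{invsubprop} identifies the flag with a factorization of $L$ into first-order operators.

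For the converse ``liouvillian $\Rightarrow$ $G^0$ solvable'', assume $K\subset K_m$ for a liouvillian tower over $k$. The first preparatory step is to arrange that the tower introduces no new constants, so that $C_{K_m}=C$; this is standard, since over a field with algebraically closed constants each of the three allowed steps can be taken without enlarging the constant field. As before I would reduce to $G$ connected by replacing $k$ with $\tilde k=K^{G^0}$, over which $K$ is still liouvillian with (connected) group $G^0$. The heart of the argument is then the following lemma, proved by induction on $\dim G$: \emph{if $G$ is connected and $K\neq k$ is liouvillian over $k$, then $G$ has a proper closed normal subgroup $N$ with $\dim N<\dim G$ and $G/N$ abelian.} Granting a liouvillian element $u\in K$ with $u'\in k$ or $u'/u\in k$, one computes for $\sigma\in G$ that $\sigma(u)-u\in C$ in the first case and $\sigma(u)/u\in C$ in the second; these define nontrivial homomorphisms $G\to(C,+)$ or $G\to(C^{*},\times)$ (nontrivial because $u\notin k=K^{G}$ by Corollary~\ref{fundcor}(1)), whose kernel $N$ is closed, normal, and of codimension one, since the only connected subgroups of $(C,+)$ and $(C^{*},\times)$ are the trivial one and the whole group. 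The Fundamental Theorem then makes $F=K^{N}$ a Picard--Vessiot extension of $k$ with $\DGal(F/k)=G/N$ abelian, while $K/F$ is Picard--Vessiot with connected group $N^{0}$ of strictly smaller dimension and remains liouvillian over $F$; by induction $N^{0}$ is solvable, and since $[G,G]\subseteq N$ this forces $G$ to be solvable.

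The main obstacle is the production, in the converse direction, of a single liouvillian element $u$ lying \emph{inside} $K$ itself rather than in the ambient tower $K_m$, together with the control of the constant field. This ``internalization'' of the tower is exactly where the torsor structure $R\simeq k[G]$ and the homogeneity of Picard--Vessiot extensions (Corollary~\ref{torsorcor} and the proposition preceding it) are essential: they guarantee that a Picard--Vessiot extension contained in a liouvillian extension is itself liouvillian, so that the logarithmic-derivative or derivative generator can be taken in $K$ and fed into the character construction above. Everything else is bookkeeping with the Galois correspondence and the elementary structure of the one-dimensional groups $(C,+)$ and $(C^{*},\times)$.
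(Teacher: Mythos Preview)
The paper does not actually prove this theorem: it attributes the result to Kolchin and cites \cite{kolchin48, DAAG}, then only states the Lie--Kolchin theorem as the key input and moves on to the related Proposition~\ref{propline}. So there is no paper proof to compare against in detail; what follows is an assessment of your argument on its own terms.

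Your forward direction (``$G^{0}$ solvable $\Rightarrow$ liouvillian'') is correct and is exactly the standard reduction via Corollary~\ref{fundcor}(3) followed by Lie--Kolchin and successive quadratures; this matches the paper's remark that the theorem ``depends on'' Lie--Kolchin.

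The converse has a genuine gap, and you have correctly located it yourself: you need an element $u\in K\setminus k$ with $u'\in k$ or $u'/u\in k$, but the liouvillian tower only hands you such a $t_0$ in $K_1$, not in $K$. Your proposed fix---invoking the torsor isomorphism $R\simeq k[G]$ to conclude that ``a Picard--Vessiot extension contained in a liouvillian extension is itself liouvillian''---is essentially a restatement of what you are trying to prove, not an argument for it; nothing in Corollary~\ref{torsorcor} produces such an element. The standard Kolchin argument does \emph{not} internalize the tower into $K$. Instead one argues by induction on the length $m$ of the tower, forming the compositum $K\cdot K_1$: this is again Picard--Vessiot over $K_1$ (after arranging no new constants), sits in a liouvillian tower of length $m-1$ over $K_1$, so by induction $\DGal(K\cdot K_1/K_1)^{0}$ is solvable; since $\DGal(K_1/k)$ is abelian (finite, or inside $G_a$, or inside $G_m$), the group $\DGal(K\cdot K_1/k)$ has solvable identity component, and $G$ is a quotient of it. That base-change/compositum step is the missing idea in your sketch.
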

Recall that a group $G$ is solvable if there exists a tower of subgroups $G = G_0 \supset G_1 \supset \ldots \supset G_m = \{e\}$ such that $G_{i+1}$ is normal in $G_{i}$ and $G_{i+1}/G_i$ is abelian.
This above theorem depends on the 
\begin{thm} (Lie-Kolchin Theorem) Let $C$ be an algebraically closed field. A Zariski connected solvable group $G \subset \GL_n(C)$ is conjugate  to a groups of triangular matrices.  In particular,  $G$ leaves a one dimensional subspace invariant.
\end{thm}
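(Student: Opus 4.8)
The plan is to reduce the whole statement to the single assertion that a connected solvable $G$ has a common eigenvector in $C^n$, i.e.\ leaves some one-dimensional subspace invariant. Granting that, full triangularizability follows by induction on $n$: if $v$ is a common eigenvector and $L=\langle v\rangle$, then $G$ acts on the quotient $C^n/L$ as a subgroup of $\GL_{n-1}(C)$ which is again connected (continuous image of a connected group) and solvable (quotient of a solvable group), so by induction it stabilizes a complete flag there. Pulling that flag back to $C^n$ and adjoining $L$ produces a complete $G$-stable flag, and in an adapted basis $G$ is upper triangular. So the entire content is the existence of one common eigenvector.

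To produce that eigenvector I would induct on the derived length of $G$. Let $G'=[G,G]$; the key structural inputs I would invoke from the theory of linear algebraic groups are that $G'$ is a closed \emph{connected} subgroup of strictly smaller derived length. By the inductive hypothesis $G'$ itself has a common eigenvector, so if for each character $\chi\colon G'\to C^*$ we set $V_\chi=\{v\in C^n : g'v=\chi(g')v \text{ for all } g'\in G'\}$, then some $V_\chi\neq 0$. Since nonzero weight spaces for distinct characters are linearly independent, only finitely many $V_\chi$ are nonzero.

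Next I would let $G$ permute this finite set of weights: using that $G'$ is normal in $G$, the rule $(g\cdot\chi)(g')=\chi(g^{-1}g'g)$ is a well-defined character, and $g$ carries $V_\chi$ isomorphically onto $V_{g\cdot\chi}$. Because $G$ is connected, its image in the (finite) symmetric group on the weights is a connected subgroup of a finite group, hence trivial; thus every $V_\chi$ is $G$-stable. Fix a nonzero one, $W=V_\chi$. On $W$ each $g'\in G'$ acts by the scalar $\chi(g')$; but $G'$ is generated by commutators, so its image in $\GL(W)$ consists of determinant-one operators, and a scalar $\lambda\cdot\mathrm{id}_W$ with $\dim W=d$ and $\det=1$ satisfies $\lambda^d=1$. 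Hence $\chi(G')$ is a finite group of roots of unity; being the image of the connected group $G'$, it is trivial. Therefore $G'$ acts trivially on $W$ and $G$ acts on $W$ through the abelian quotient $G/G'$. A commuting family of operators over the algebraically closed field $C$ always has a common eigenvector, and that vector, lying in $W\subseteq C^n$, is a common eigenvector for all of $G$.

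The hard part will be the two places where connectedness is genuinely used, and I would flag them as the crux: first, the passage ``a connected group acting on a finite set of weights acts trivially,'' which forces the weight spaces to be $G$-invariant, and second, the matching argument that the connected group $\chi(G')$ inside the finite group of roots of unity must collapse. These are exactly the steps that fail for a general (finite) solvable group, so the proof must exploit connectedness precisely there; by contrast the reduction to an eigenvector, the finiteness of the weight set, and the final commuting-operators step are routine linear algebra and elementary character theory.
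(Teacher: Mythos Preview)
The paper does not actually prove the Lie-Kolchin Theorem; it is stated as a classical result and then used, with no proof supplied. So there is no ``paper's own proof'' to compare against. Your argument is nonetheless a correct and standard proof of the theorem --- essentially the one found in the standard references on linear algebraic groups (e.g.\ Humphreys or Springer, which the paper cites). The reduction to a single common eigenvector, the induction on derived length, the weight-space permutation argument, and the use of connectedness to kill the finite permutation action and the finite image $\chi(G')$ are all sound and are exactly the places where connectedness must enter. The only point worth flagging for completeness is that the fact ``$[G,G]$ is closed and connected'' is itself a nontrivial input from the theory of linear algebraic groups; you correctly identify it as something to invoke rather than prove, which is appropriate here.
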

I shall show how the Lie-Kolchin Theorem can be strengthened resulting in a strengthened version of Theorem~\ref{liouvthm} that leads to an algorithm to decide if a given linear differential equation (over $C(x)$) can be solved in terms of liouvillian functions. I begin with

\begin{prop} \label{propline} Let $k$ be a differential field, $L\in k[\dd]$, $K$ the corresponding Picard-Vessiot extensions and $G$ the differential Galois group of $K$ over $k$. The following are equivalent:\begin{enumerate}
\item  $L(y) = 0$ has a liouvillian solution $\neq 0$.
\item $G $ has a subgroup $H, |G:H| = m < \infty$ such that $H$ leaves a one dimensional subspace invariant.
\item  $L(y) = 0$ has a soln $z\neq 0$ such that $z'/z$ algebraic over $k$ of degree $\leq m$.
\end{enumerate}
\end{prop}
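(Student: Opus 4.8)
The plan is to establish the cycle of implications $(3)\Rightarrow(1)\Rightarrow(2)\Rightarrow(3)$, of which two directions are short. For $(3)\Rightarrow(1)$, if $z\neq 0$ solves $L(y)=0$ and $u=z'/z$ is algebraic over $k$, then $u$ is liouvillian (being algebraic) and $z=e^{\int u}$ exhibits $z$ inside the liouvillian tower $k\subset k(u)\subset k(u)(z)$, so $L$ has a nonzero liouvillian solution. For $(2)\Rightarrow(3)$, suppose $H\leq G$ has $[G:H]=m<\infty$ and leaves invariant a line $C_k z\subset V$ with $z\neq 0$ a solution; writing $\sigma z=c_\sigma z$ for $\sigma\in H$ gives $\sigma(z'/z)=z'/z$, so $u=z'/z$ lies in $K^H=K^{\overline H}$. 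Since $[G:\overline H]\leq m$, the closed subgroup $\overline H$ contains $G^0$, whence $K^{\overline H}\subseteq K^{G^0}=\tilde k$ by Corollary~\ref{fundcor}(3); thus $u$ is algebraic over $k$ of degree at most $[G:\overline H]\leq m$, which is exactly $(3)$.

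The substance is $(1)\Rightarrow(2)$. First I would reduce to a connected Galois group by replacing $k$ with $\tilde k=K^{G^0}$, over which $K$ is Picard--Vessiot with group $G^0$ (connected, with the same algebraically closed constants); the given solution $\eta$ remains liouvillian over $\tilde k$ because $\tilde k$ is algebraic, hence liouvillian, over $k$. If I can produce $z\neq 0$ in $V$ with $G^0(C_k z)=C_k z$, then its stabilizer $H=\{\sigma\in G:\sigma(C_kz)=C_kz\}$ contains $G^0$ and so has finite index $[G:H]\leq[G:G^0]<\infty$ in the original group, giving $(2)$.

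So assume $G=G^0$ is connected and $\eta\neq 0$ is a liouvillian solution. I would let $W\subseteq V$ be the smallest $G$-invariant subspace containing $\eta$; by Proposition~\ref{invsubprop} it is the solution space of a right factor $L_W$ of $L$, whose Picard--Vessiot field is $k\langle W\rangle\subseteq K$ with Galois group the image $G|_W$ (connected, since $G$ is). Now $W$ is the $C_k$-span of the Galois conjugates $\{\sigma\eta:\sigma\in G\}$. Granting that each $\sigma\eta$ is again liouvillian over $k$ (the key point, below), every generator of $k\langle W\rangle$ lies in a liouvillian tower; amalgamating finitely many such towers shows $k\langle W\rangle$ lies in a single liouvillian tower, i.e.\ $L_W(y)=0$ is solvable in terms of liouvillian functions. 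Kolchin's Theorem~\ref{liouvthm} then forces the identity component of $G|_W$ to be solvable; as $G|_W$ is connected it is solvable, and the Lie--Kolchin Theorem yields a one-dimensional $G|_W$-invariant subspace $C_k z\subseteq W\subseteq V$. Pulling back along $G\to G|_W$ gives $\sigma(C_kz)=C_kz$ for all $\sigma\in G$, completing the connected case.

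The main obstacle is the claimed Galois-stability of the liouvillian property: that $\eta\in K$ liouvillian over $k$ implies $\sigma\eta$ liouvillian over $k$ for every $\sigma\in\DGal(K/k)$. The three tower operations are visibly preserved by any differential automorphism, since $\sigma$ carries $t$ algebraic over $F$ to $\sigma t$ algebraic over $\sigma F$, and likewise sends $t'\in F$ to $(\sigma t)'\in\sigma F$ and $t'/t\in F$ to $(\sigma t)'/(\sigma t)\in\sigma F$; hence the set of elements liouvillian over $k$ is stable under any differential automorphism of a sufficiently large extension. To deploy this for $\sigma\in\DGal(K/k)$ I would embed $K$ into a differentially closed (universal) extension $U$ of $k$ with constant field $C_k$, extend $\sigma$ to an automorphism of $U$ over $k$, and observe that it maps liouvillian towers to liouvillian towers and therefore sends $\eta$ to a liouvillian element $\sigma\eta$. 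Making this extension step precise---so that the abstract automorphism of $K$ genuinely acts on the ambient liouvillian closure---is the delicate part; everything else is bookkeeping with the Fundamental Theorem and the two cited theorems.
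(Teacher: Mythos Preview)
Your argument is correct and follows the same route as the paper's outline: $(3)\Rightarrow(1)$ is clear, $(2)\Rightarrow(3)$ goes via the Galois correspondence applied to $z'/z$, and $(1)\Rightarrow(2)$ reduces to a liouvillian Picard--Vessiot extension, invokes Theorem~\ref{liouvthm} to get $G^0$ solvable, and then applies Lie--Kolchin to obtain a $G^0$-invariant line. The paper compresses your careful treatment of the Galois-stability of liouvillian elements into the single clause ``one can reduce this to the case where all solutions are liouvillian,'' deferring the justification to Chapter~4.3 of \cite{PuSi2003}; you have correctly identified this as the one genuinely delicate step, and your proposed resolution (extend $\sigma$ to a universal extension, or equivalently enlarge the tower to one that is itself normal over $k$) is the standard one.
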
 
\begin{proof}(Outline; see Chapter 4.3 of \cite{PuSi2003}) (iii) $\Rightarrow $(i): Clear.\\[0.05in]
(i) $\Rightarrow $(ii): One can reduce this to the case where all solutions are liouvillian. In this case, the Lie-Kolchin Theorem implies that the identity component $G^0$ of $G$ leaves a one dimensional subspace invariant.\\[0.05in]
(ii) $\Rightarrow $(iii): Let $V=Soln(L)$ and $v \in V$ span  an $H$-invariant line. We then have that  $\forall \sigma \in H, \exists c_\sigma \in C_k$ such that $\sigma(v) = c_\sigma v$.  This implies that $\forall \sigma \in H, \sigma(\frac{v'}{v}) = \frac{(cv)'}{cv} = \frac{v'}{v}$. Therefore, the Fundamental Theorem implies that $ \frac{v'}{v} \in E=$ fixed field of $H$.  One can show that $[E:k] =  |G:H| = m$ so $\frac{v'}{v}$ is algebraic over $k$ of degree at most $m$.\end{proof}
In order to use this result, one needs a bound on the integer $m$ that can appear in (ii) and (iii) above.  This is supplied by the following group theoretic result (see Chapter 4.3.1 of \cite{PuSi2003} for references).
\begin{lem} There exists a function $I(n)$ such that if $G \subset \GL_n(\Cx), H\subset G$ satisfies
\begin{enumerate}
\item $|G:H|<\infty$ and
\item $H$ leaves a one dimension subspace invariant
\end{enumerate}
then there exists a subsgroup $\tilde{H} \subset G$ such that
\begin{enumerate}
\item $|G:\tilde{H}| < I(n)$ and
\item $\tilde{H}$ leaves a one dimensional subspace invariant.
\end{enumerate}
\end{lem}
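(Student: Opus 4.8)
The plan is to pass from the finite-index hypothesis to a finite orbit of lines, extract an abelian normal subgroup from its simultaneous eigenstructure, and then run an induction on $n$ whose only non-inductive case is handled by Jordan's theorem together with the theory of finite projective representations.

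First I would reduce to a convenient normal subgroup. Since $H$ has finite index and fixes the line $\ell$, the $G$-orbit $\{\ell_1,\dots,\ell_r\}$ of $\ell$ is finite, and $N:=\bigcap_i \mathrm{Stab}_G(\ell_i)$ --- the kernel of the permutation action $G\to\Sym(\{\ell_i\})$ --- is normal in $G$ of finite index and fixes \emph{every} $\ell_i$. Let $V'=\mathrm{span}(\ell_1,\dots,\ell_r)$, a $G$-invariant subspace of dimension $d\le n$; since it suffices to produce a line in $V'$ fixed by a subgroup of bounded index, I may replace $\Cx^n$ by $V'$ and assume the orbit spans, so $d=n$. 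Choosing a basis of $V'$ from among the $\ell_i$ shows that each element of $N$ is diagonal in that basis; hence $N$ acts on $V'$ through a group of commuting diagonalizable matrices, and $V'$ splits into $N$-weight spaces $V'=\bigoplus_{j=1}^s W_j$ with $s\le n$, the line $\ell$ lying in one of them, say $W=W_1$. Because $N$ is normal, $G$ permutes the $W_j$.

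This sets up an induction on $n$ (with $I(1)=1$). If $s\ge 2$, every $W_j$ is a proper subspace, so $\dim W\le n-1$; the subgroup $G_1\le G$ stabilizing each $W_j$ has index at most $s!\le n!$, it contains $N$, and $N\le G_1$ still fixes $\ell\subset W$ with finite index. Applying the inductive hypothesis to $G_1$ acting on $W$ yields $\tilde H\le G_1$ of index at most $I(n-1)$ fixing a line, whence $[G:\tilde H]\le n!\,I(n-1)$. The genuinely new case, and the hard part, is $s=1$: then $N$ acts on $V'$ by scalars, so $\bar G:=G|_{V'}$ is finite modulo scalars and its image $F$ in $\mathrm{PGL}_n(\Cx)$ is a \emph{finite} group. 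Here no dimension drop is available, so I would invoke Jordan's theorem: $F$ has a normal abelian subgroup $\bar A$ of index at most $J(n)$. The subtlety is that an abelian subgroup of $\mathrm{PGL}_n$ need not fix a line (Heisenberg-type projective representations are irreducible), so I cannot simply take the preimage of $\bar A$. To get around this I would use the commutator pairing $\beta(\bar a,\bar b)=[a,b]\in\Cx^\ast$ on the preimage $A\subset\GL_n(\Cx)$ of $\bar A$: this is a well-defined alternating bilinear form on the finite abelian group $\bar A$, its radical $R$ has abelian (hence simultaneously diagonalizable, line-fixing) preimage, and the nondegenerate pairing on $\bar A/R$ forces $\lvert \bar A/R\rvert$ to be a perfect square $t^2$ with $t\le n$ (the dimension of the associated irreducible projective representation, which embeds in $V'$). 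Thus $R$ has index at most $J(n)\,n^2$ in $F$, and its preimage in $G$ is a subgroup of index at most $J(n)\,n^2$ fixing a line.

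Setting $I(n)=\max\{\,n!\,I(n-1),\ J(n)\,n^2\,\}$ closes the induction. I expect the main obstacle to be exactly the single-weight-space case: the reduction to a finite projective group is easy, but converting Jordan's \emph{abelian-mod-scalars} output into an honestly abelian, line-fixing subgroup of controlled index requires the symplectic/commutator-pairing analysis above. It is worth noting that the Heisenberg examples (where a line-fixing subgroup must have index $\sim n$) show that $I(n)$ necessarily grows with $n$, so there is no hope of an absolute constant and the $n$-dependence in the statement is essential.
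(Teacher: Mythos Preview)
The paper does not actually prove this lemma: it simply states the result, records the bound $I(n)\le n^{2n^2+2}$ together with the exact small values $I(2)=12$ and $I(3)=360$, and points to Chapter~4.3.1 of \cite{PuSi2003} for references. So there is no argument in the text to compare yours against; you are supplying strictly more than the paper does.

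Your argument is sound. The reduction to a normal diagonal subgroup $N$ and the inductive step via the $N$-weight-space decomposition are routine; the real content, as you correctly isolate, is the single-weight-space case $s=1$. Two small points to make precise there. First, when you form the preimage $A$ of $\bar A$ you want it inside $G$, not merely inside $\GL_n$, since the conclusion requires $\tilde H\subset G$; your index bounds are unaffected. Second, the claim that the preimage of the radical $R$ (namely $Z(A)$) is \emph{simultaneously} diagonalizable uses that each element of $A$ is individually diagonalizable: this holds because $\bar A$ is finite, so some power of every $a\in A$ is scalar and the minimal polynomial of $a$ divides $x^k-\lambda$, which has simple roots. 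With that in hand, any $A$-irreducible subspace of $V'$ carries an irreducible projective representation of $\bar A/R$ whose commutator form is the nondegenerate $\beta$, hence has dimension $t=\sqrt{|\bar A/R|}$ by the finite Stone--von~Neumann theorem; since such a subspace sits inside $V'$, $t\le n$ and $[G:Z(A)]\le J(n)\,n^2$ as you assert. The resulting recursion $I(n)=\max\{\,n!\,I(n-1),\,J(n)\,n^2\,\}$ is not sharp (it overshoots $I(2)=12$, for instance), but the lemma only asks for existence of some $I(n)$, so this is immaterial.
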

In general we know that $I(n) \leq n^{2n^2+2}$.  For small values of $n$ we have exact values, ({\em e.g.,} I(2) = 12, I(3) = 360). This clearly allows us to deduce the following corollary to   Proposition~\ref{propline}.
\begin{cor} Let $k, L, K$ be as in Propostion~\ref{propline} with the order of $L$ equal to $n$.  The following are equivalent 
\begin{enumerate}
\item  $L(y) = 0$ has a liouvillian solution $\neq 0$.
\item $G $ has a subgroup $H, |G:H| \leq I(n)$ such that $H$ leaves a one dimensional subspace invariant.
\item  $L(y) = 0$ has a soln $z\neq 0$ such that $z'/z$ algebraic $/k$ of deg $\leq I(n)$.
\end{enumerate}
\end{cor}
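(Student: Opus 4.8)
The plan is to derive the corollary purely formally from Proposition~\ref{propline} together with the group-theoretic Lemma furnishing the function $I(n)$; no new analytic or differential-algebraic input is needed. I would prove the three conditions equivalent by establishing the cycle $(1)\Rightarrow(2)\Rightarrow(3)\Rightarrow(1)$, at each arrow quoting the corresponding implication of Proposition~\ref{propline} and inserting the index bound $I(n)$ where required.

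For $(1)\Rightarrow(2)$, which is the only step where anything actually happens, I would argue as follows. Assuming $L(y)=0$ has a nonzero liouvillian solution, the implication (i)$\Rightarrow$(ii) of Proposition~\ref{propline} produces a subgroup $H\subset G$ of some finite index $m=|G:H|$ that stabilizes a one-dimensional subspace of the solution space. A priori $m$ is unbounded, so this does not yet give condition (2). Here I would invoke the preceding Lemma: since $G\subset\GL_n(\Cx)$ and $H$ has finite index and fixes a line, there is a (possibly different) subgroup $\tilde{H}\subset G$ with $|G:\tilde{H}|<I(n)$ that still leaves a one-dimensional subspace invariant. Taking this $\tilde{H}$ in place of $H$ yields (2) with the uniform bound $|G:\tilde{H}|\le I(n)$.

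The remaining two arrows are immediate. For $(2)\Rightarrow(3)$ I would feed the subgroup $H$ with $|G:H|\le I(n)$ into the implication (ii)$\Rightarrow$(iii) of Proposition~\ref{propline}; its proof shows that if $v$ spans the $H$-invariant line then $v'/v$ lies in the fixed field of $H$, whose degree over $k$ equals $|G:H|$, so $z=v$ is a nonzero solution with $z'/z$ algebraic over $k$ of degree at most $|G:H|\le I(n)$. For $(3)\Rightarrow(1)$ I would simply note, as in Proposition~\ref{propline}, that a solution $z\neq0$ with $z'/z$ algebraic over $k$ is liouvillian, since it is obtained by exponentiating the integral of an algebraic (hence liouvillian) element.

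I do not expect a genuine obstacle, since all the substantive work --- the Lie--Kolchin argument behind Proposition~\ref{propline} and the uniform index estimate behind $I(n)$ --- has already been carried out. The only point demanding care is bookkeeping of the index: one must check that replacing $H$ by $\tilde{H}$ preserves the line-stabilizing property (guaranteed by the Lemma) and that the degree bound in (3) is exactly the index $|G:H|$ rather than something larger, which the proof of (ii)$\Rightarrow$(iii) in Proposition~\ref{propline} confirms.
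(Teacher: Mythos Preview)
Your proposal is correct and matches the paper's approach exactly. The paper does not spell out a proof at all; it simply says ``This clearly allows us to deduce the following corollary to Proposition~\ref{propline}'', and your argument---applying the Lemma to upgrade the finite-index subgroup from Proposition~\ref{propline}(ii) to one of index at most $I(n)$, then reading off (iii) with the same bound---is precisely the deduction intended.
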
 

This last result leads to several algorithms to decide if a differential equation $L(y) = 0$, $L \in C(x)[\dd], C$ a finitely generated subfield of $\Cx$, has a liouvillian solution.  The one presented in \cite{singer_liouvillian} (containing ideas going back to \cite{boulanger}) searches for a putative minimal polynomial $P(u) = a_mu^m + a_{m-1}u^{m-1} + \ldots + a_0, \ a_i \in \Cx[x]$ of an element $u = z'/z$ where $z$ is a solution of $L(y)=0$ and $m \leq I(n)$.  This algorithm shows how the degrees of the $a_i$ in $x$ can be bounded in terms of information calculated at each singular point of $L$.  Once one has degree bounds, the actual coefficients $a_i$ can be shown to satisfy a system of polynomial equations and one can (in theory) use various techniques ({\em e.g.,} Gr\"obner bases) to solve these.  Many improvements  and new ideas have been given  since then  (see Chapter 4 of \cite{PuSi2003}).  We shall present  criteria that form the basis of one method, describe what one needs to do to use this in general and give details for finding liouvillian solutions of second order differential equations.

\begin{prop} (1) Let $G$ be a subgroup of $\GL_n$.  There exists a subgroup $H \subset G, |G:H| \leq I(n)$ with $H$ leaving a one dimensional space invariant if and only if $G$ permutes a set of at most $I(n)$ one dimensional subspaces.\\[0.05in]
(2) Let $k,L,K,G$ be as in Propostion~\ref{propline} with the order of $L$ equal to $n$. Then $L(y) = 0$ has a liovuillian solution if and only if $G \subset \GL_n$ permutes a set of at most $I(n)$ one dimensional subspaces.
\end{prop}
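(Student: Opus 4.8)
The plan is to prove (1) by a direct orbit–stabilizer argument and then deduce (2) by chaining (1) with the Corollary to Proposition~\ref{propline} established above. For the forward direction of (1), suppose there is a subgroup $H\subset G$ with $[G:H]\leq I(n)$ that leaves a line $\ell$ (a one-dimensional subspace) invariant. I would consider the $G$-orbit $G\cdot\ell=\{g\ell : g\in G\}$ together with the stabilizer $\mathrm{Stab}_G(\ell)=\{g\in G : g\ell=\ell\}$. Since $H$ fixes $\ell$ we have $H\subseteq\mathrm{Stab}_G(\ell)$, whence $[G:\mathrm{Stab}_G(\ell)]\leq[G:H]\leq I(n)$; by orbit–stabilizer the orbit $G\cdot\ell$ consists of at most $I(n)$ lines, and $G$ permutes this set by construction.

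Conversely, suppose $G$ permutes a set $\mathcal{S}$ of one-dimensional subspaces with $|\mathcal{S}|\leq I(n)$. Then $\mathcal{S}$ is a union of $G$-orbits, and I would pick a single orbit $\mathcal{O}\subseteq\mathcal{S}$, choose any line $\ell\in\mathcal{O}$, and set $H=\mathrm{Stab}_G(\ell)$. By definition $H$ leaves $\ell$ invariant, and orbit–stabilizer gives $[G:H]=|\mathcal{O}|\leq|\mathcal{S}|\leq I(n)$, producing exactly the required subgroup. The one point demanding care here is that the $G$-action on $\mathcal{S}$ need not be transitive, so one must restrict to a single orbit before applying orbit–stabilizer; the bound survives because any one orbit is no larger than all of $\mathcal{S}$.

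For (2) I would simply compose the equivalences already in hand. The Corollary to Proposition~\ref{propline} asserts that, for $L$ of order $n$, the existence of a nonzero liouvillian solution of $L(y)=0$ is equivalent to $G$ possessing a subgroup $H$ with $[G:H]\leq I(n)$ that leaves a one-dimensional subspace invariant. Part (1) of the present proposition shows this latter condition is in turn equivalent to $G$ permuting a set of at most $I(n)$ one-dimensional subspaces. Combining the two equivalences yields (2). The argument is essentially formal once (1) is established; the proof presents no genuine obstacle beyond the non-transitivity observation noted above, since the heavy lifting (the bound $I(n)$ and the liouvillian criterion itself) has already been supplied by the preceding lemma and corollary.
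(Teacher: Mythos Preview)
Your proof is correct and follows essentially the same orbit--stabilizer argument as the paper: take the $G$-orbit of the $H$-invariant line for the forward direction, and take the stabilizer of a line in the permuted set for the converse, then invoke the preceding corollary for (2). Your explicit remark about restricting to a single orbit in the non-transitive case is a nice clarification that the paper leaves implicit.
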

\begin{proof} (1) Let $\ell$ be a one dimensional space left invariant by $H$.  The orbit of $\ell$ under the action of $G$ has dimension at most $|G:H|$.  Now, let $\ell$ be a one dimensional space whose orbit under $G$ is at most $I(n)$ and let $H$ be the stabiliizer of $\ell$.  We then have $|G:H| \leq I(n)$. (2) is an immediate consequence of (1) and the previous proposition. 
\end{proof}
To apply this result, we need the following definition:
\begin{defin} Let $V$ be a vector space of dimension $n$ and $t \geq 1$ be an integer.  The $t^{th}$ symmetric power $\Sym^t(V)$of $V$  is the quotient of the $t-fold$ tensor product $V^{\otimes t}$ by the subspace generated by elements  $v_1\otimes \ldots \otimes v_t - v_{\pi(1)}\otimes \ldots \otimes v_{\pi(t)}$, for all $ v_i \in V$ and $\pi$ a permutation.
\end{defin}
We denote by $v_1v_2\cdots v_t$ the image of $v_1\otimes \ldots \otimes v_t$ in $\Sym^t(V)$. If $e_1, \ldots ,e_n$ is a basis of $V$ and $1\leq t \leq n$, then $\{e_{i_1}e_{i_2}\cdots e_{i_t} \ | \ i_1 <i_2<\ldots  < i_t\}$ is a basis of $\Sym^t(V)$. Furthermore, if $G \subset \GL_n(V)$, then $G$ acts on $\Sym^t(V)$ as well via the formula $\sigma(v_1\cdots v_t) = \sigma(v_1)\cdots \sigma(v_t), \sigma \in G$. 
\begin{defin} (1) An element $w \in \Sym^t(V)$ is {\em decomposable} if $w = w_1\cdots w_t$ for some $w_i \in V$.\\[0.05in]
(2) A one dimensional subspace $\ell \subset \Sym^t(V)$ is {\em decomposable} if $\ell$ is spanned by a decomposable vector.
\end{defin} 
We note that if we fix a basis $\{e_1, \ldots , e_n\}$ of $V$ then an element \[\sum_{i_1<\ldots <i_t} c_{i_1\ldots i_t}e_{i_1}\cdots e_{i_t}\] is decomposable if and only if the $c_{i_1\ldots i_t}$ satisfy a system of equations called the {\em Brill equations} (\cite{GKZ}, p.~120,140).  
Using these definitions, it is not hard to show
\begin{prop} (1) A group $G \subset \GL_n(V)$ permutes a set of $t$ one dimensional subspaces if and only if $G$ leaves  a decomposable one dimensional subspace of $\Sym^t(V)$ invariant.\\[0.05in]
(2) Let $k,L,K,G$ be as in Propostion~\ref{propline} with the order of $L$ equal to $n$. Then $L(y) = 0$ has a liovuillian solution if and only if for some $t \leq I(n)$ $G$ leaves invariant a one dimensional subspace of $\Sym^t(\Soln(L))$.
\end{prop}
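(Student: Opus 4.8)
The plan is to prove (1) directly from unique factorization in the symmetric algebra, and then obtain (2) by feeding (1) into the immediately preceding proposition (the one characterizing liouvillian solvability by the permutation of at most $I(n)$ one dimensional subspaces). For the forward direction of (1), I would start from a $G$-stable set of lines $\ell_1,\ldots,\ell_t$, choose a nonzero $v_i \in \ell_i$ for each $i$, and form the decomposable vector $w = v_1 v_2 \cdots v_t \in \Sym^t(V)$. Given $\sigma \in G$ there is a permutation $\pi$ with $\sigma(\ell_i) = \ell_{\pi(i)}$, hence scalars $c_i$ with $\sigma(v_i) = c_i v_{\pi(i)}$; since multiplication in $\Sym^t(V)$ is commutative, $\sigma(w) = \prod_i \sigma(v_i) = \left(\prod_i c_i\right)\prod_i v_{\pi(i)} = \left(\prod_i c_i\right) w$, so the decomposable line $Cw$ is $G$-invariant.

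For the reverse direction, I would begin with a $G$-invariant decomposable line $\ell = Cw$, write $w = w_1 \cdots w_t$ with $w_i \in V$ nonzero, and note that for each $\sigma \in G$ there is a scalar $\lambda_\sigma$ with $\sigma(w_1)\cdots \sigma(w_t) = \lambda_\sigma\, w_1 \cdots w_t$. Here I would invoke the fact that the symmetric algebra $\Sym(V)$ is a polynomial ring $C[e_1,\ldots,e_n]$, hence a unique factorization domain in which each nonzero linear form is irreducible; a factorization of a degree-$t$ form into linear factors is therefore unique up to order and nonzero scalars. Consequently the multiset of lines $\{C\sigma(w_1),\ldots,C\sigma(w_t)\}$ coincides with $\{Cw_1,\ldots,Cw_t\}$, so every $\sigma$ permutes the (at most $t$) distinct lines among the $Cw_i$, which is exactly the assertion of (1). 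The crux, and the only non-formal point, is precisely this appeal to unique factorization in $\Sym(V)$; the rest is bookkeeping with scalars, and I expect that to be the main obstacle only in the sense of stating it cleanly rather than in any genuine difficulty.

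Finally, (2) follows by assembling the pieces. By the preceding proposition, $L(y)=0$ has a nonzero liouvillian solution if and only if $G \subset \GL_n$ permutes a set of at most $I(n)$ one dimensional subspaces of $V = \Soln(L)$. Applying part (1) with that value of $t$ converts the permutation condition into the existence, for some $t \le I(n)$, of a $G$-invariant decomposable one dimensional subspace of $\Sym^t(\Soln(L))$, which is the stated criterion. I would emphasize that it is the decomposability — detected concretely by the Brill equations — that makes the resulting test effective, since it reduces the search to a one dimensional invariant subspace of an explicitly computable symmetric power.
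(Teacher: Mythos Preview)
Your argument is correct and is exactly the standard route; the paper itself gives no proof here, merely remarking that ``it is not hard to show'' the proposition. Your use of unique factorization in $\Sym(V)\cong C[e_1,\ldots,e_n]$ to recover the multiset of lines from a decomposable tensor is precisely the expected step, and your deduction of (2) from (1) together with the preceding proposition is the intended assembly.

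One small point worth flagging: your proof of (2) yields a $G$-invariant \emph{decomposable} one dimensional subspace of $\Sym^t(\Soln(L))$, whereas the proposition as stated omits the word ``decomposable'' in (2). That omission appears to be a slip in the paper's statement --- the algorithm described immediately afterward explicitly requires checking decomposability of the invariant line, and for general $n$ an arbitrary invariant line in $\Sym^t$ need not correspond to a permuted set of lines in $V$. So your version of (2), with ``decomposable'' included, is the correct one and is what the surrounding text actually uses.
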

Note that $\Sym^t(V)$ is constructed using tensor products and quotients.  When we apply this construction to differential modules, we get
\begin{defin} Let $k$ be a differential field and $M$ a differential module over $k$.
The $t^{th}$ symmetric power $\Sym^t(M)$of $M$  is the quotient of the $t$-fold tensor product $M^{\otimes t}$ by the $k$-subspace generated by elements  $v_1\otimes \ldots \otimes v_t - v_{\pi(1)}\otimes \ldots \otimes v_{\pi(t)}$, for all $ v_i \in V$ and $\pi$ a permutation (note that this subspace is a differential submodule as well).
\end{defin}
 At the end of Section 1.2, I defined the solution space of a $k$-differential module $M$ in a differential field $K \supset k$ to be $\Soln_K(M) = \ker(\dd, K\otimes_k M)$.  From our discussion of tannakian categories, we have that \linebreak $\Soln_K(\Sym^t(M)) = \Sym^t(\Soln_K(M))$.  This latter fact suggests the following algorithm to decide if a linear differential equation has a liouvillian solution (we continue to work with scalar equations although everything generalizes easily to systems). Let $M_L$ be the differential module associated to the equation $L(y) = 0$.  For each $t \leq I(n)$
 \begin{itemize}
 \item Calculate $N_t = \Sym^t(M)$ and find all one dimensional submodules.
 \item Decide if any of these is decomposable as a subspace of $N_t$.
 \end{itemize}
The tannakian formalism implies that a one dimensional submodule of $N_t$ is decomposable if and only if its solution space is a decomposable subspace $G$-invariant subspace of $\Sym^t(\Soln_K(M_L))$. \\[0.1in]
Much work has been done on developing algorithms for these two step. The problem of finding one dimensional submodules of a differential module was essentially solved in the $19^{th}$ and early $20^{th}$ Centuries (albeit in a different language).  More recently work of Barkatou, Bronstein van Hoeij, Li, Weil, Wu, Zhang and others has produced good algorithms to solve this problem (see \cite{hoeij_weil}, \cite{hruw}, \cite{lswz} and Chapter 4.3.2 of \cite{PuSi2003} for references). As noted above, the second problem can be solved (in principal) using the Brill equations (see \cite{singer_ulmer_linearforms} and Chapter 4.3.2 of \cite{PuSi2003} for a fuller discussion).  Finally, one can modify the above to produce the liouvillian solutions as well. \\[0.1in]
Before I explicate these ideas further in the context of second order equations, I will say a few words concerning finding invariant one dimensional subspaces of solution spaces of linear scalar equations.  
\begin{prop} Let $k$ be  a differential field with algebraically closed constants $C$. $L \in k[\dd]$, $K$ the Picard-Vessiot extension of $k$ for $L(Y) = 0$ and $G$ the differential Galois group of $K$ over $k$.   An element $z$ in the  solution space $V$ of $L(Y) = 0$ spans a one dimensional $G$-invariant subspace if and only if 
  $u=z'/z$ is left fixed by $G$. In this case, $\dd-u$ is a right divisor of $L$.  Conversely if $\dd - u, u \in k$ is a right divisor of $L$, then there exists a solution $z \in K$ such that $z'/z = u$, in which case, $z$ spans a $G$-invariant space.
\end{prop}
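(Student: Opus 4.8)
The plan is to treat the proposition as four linked assertions and dispatch them using the Galois correspondence (Corollary~\ref{fundcor}), right Euclidean division in $k[\dd]$ (Lemma~\ref{euclid}), and the factor/invariant-subspace dictionary (Proposition~\ref{invsubprop}).

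First I would prove the forward implication of the equivalence. Suppose the line $Cz \subset V$ is $G$-invariant. Since $G$ acts $C$-linearly on $V$ and $Cz$ is a one dimensional $C$-subspace, for each $\sigma \in G$ there is a constant $c_\sigma \in C$ with $\sigma(z) = c_\sigma z$. Because $\sigma$ is a differential automorphism it commutes with $\dd$, and because $c_\sigma$ is a constant it passes through $\dd$; hence
\[ \sigma(u) = \sigma\!\left(\frac{z'}{z}\right) = \frac{\sigma(z)'}{\sigma(z)} = \frac{(c_\sigma z)'}{c_\sigma z} = \frac{c_\sigma z'}{c_\sigma z} = u, \]
so $u$ is fixed by $G$. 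For the reverse implication, assume $\sigma(u) = u$ for all $\sigma \in G$ and set $w_\sigma = \sigma(z)/z$. Using $z' = uz$ together with $\sigma(z') = \sigma(uz) = u\,\sigma(z)$, a direct computation gives $w_\sigma' = (\sigma(z)'z - \sigma(z)z')/z^2 = (u\,\sigma(z)z - \sigma(z)uz)/z^2 = 0$, so $w_\sigma \in C_K = C$ and $\sigma(z) = w_\sigma z$. Thus $Cz$ is $G$-invariant.

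Next, under the standing hypothesis that $u$ is $G$-fixed, Corollary~\ref{fundcor}(1) forces $u \in k$, so $\dd - u \in k[\dd]$ and right division makes sense. I would invoke Lemma~\ref{euclid} to write $L = Q\cdot(\dd - u) + R$ with $R \in k$ (an order-zero remainder), and then evaluate the operator identity on $z$: since $(\dd - u)(z) = z' - uz = 0$, we get $0 = L(z) = Q\big((\dd-u)(z)\big) + Rz = Rz$. As $z \neq 0$ this yields $R = 0$, so $\dd - u$ is a right divisor of $L$.

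Finally, for the converse I would let $\dd - u$ with $u \in k$ be a right factor, $L = Q\cdot(\dd - u)$, and lean on Proposition~\ref{invsubprop}: right factors of $L$ correspond to $G$-invariant subspaces of $V$, and the dimension count in its proof shows that the solution space in $K$ of any right factor has $C$-dimension equal to the order of that factor. Applied to the order-one factor $\dd - u$, this produces a nonzero $z \in K$ with $(\dd - u)(z) = 0$, i.e. $z'/z = u$. Because $u \in k$ is automatically fixed by $G$, the reverse implication already established shows that $Cz$ is $G$-invariant, completing the argument. I expect the only real subtlety to be this last existence claim: the content is not merely that $\dd - u$ has a solution in some extension, but that a solution already lives inside the Picard-Vessiot field $K$, and it is precisely the dimension bookkeeping of Proposition~\ref{invsubprop} (via Corollary~\ref{wronskiancor}) that guarantees this.
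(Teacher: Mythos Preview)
Your proposal is correct and follows essentially the same route as the paper. The paper's proof is a single sentence invoking Proposition~\ref{invsubprop} and its proof applied to first-order factors; your argument simply unpacks that reference explicitly---the direct computation that $\sigma(z'/z)=z'/z$ when $Cz$ is $G$-stable is the one-dimensional instance of the determinant calculation in the proof of Proposition~\ref{invsubprop}, and your Euclidean-division step and appeal to the dimension count $\dim_{C}\Soln_K(L_0)=\ord(L_0)$ are exactly what that proposition supplies.
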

\begin{proof} This follows from Proposition~\ref{invsubprop} and its proof applied to first order factors of $L$.
\end{proof}
Therefore, to find one dimensional $G$-invariant subspaces of the solution space of $L(Y) = 0$, we need to be able to find elements $u \in k$ such that $z=e^{\int u}$ satisfies $L(z) = 0$.  This is discussed in detail in Chapter 4.1 of \cite{PuSi2003} and I will give a taste of the idea behind a method to do this for $L \in \Cx(x)$ assuming $L$ has only regular singular points.  \\[0.2in]
Let $u \in \Cx(x)$ satisfy $L(e^{\int u}) = 0$ and let $x= \alpha$ be a pole of $u$. The asumption that $x_0$ is at worst a regular singular point implies that  $e^{\int u}$ is dominate by a power of $(x-\alpha)$ near $\alpha$. We must therefore have that $u$ has a pole of order at most 1 at $x = x_0$ and so $z = e^{\int u} = (x-\alpha)^a h(x)$ where $ a \in \Cx$ and $h$ is analytic near $\alpha \in \Cx$.  To determine $a$, we write
\[L(Y) = Y^{(n)} + (\frac{b_{n-1}}{(x-\alpha)} + \mbox{ h.o.t.})Y^{(n-1)}+ \ldots + (\frac{b_0}{(x-\alpha)^{n-1}}+ \mbox{ h.o.t.})Y.\]
Substituting $Y = (x-\alpha)^a(c_0+c_1(x-\alpha)+\mbox{ h.o.t.})$ and setting the coefficient of $(x-\alpha)^{a-n}$ equal to zero, we have
\[c_0(a(a-1)\ldots (a-(n-1)) + b_{n-1}(a(a-1)\ldots (a-(n-2)) +\ldots +b_0)) = 0\] or
 \[a(a-1)\ldots (a-(n-1)) + b_{n-1}(a(a-1)\ldots (a-(n-2)) +\ldots +b_0 = 0
\]
This latter equation is called the {\em indicial equation} at $x = \alpha$ and its roots are called the {\em exponents at $\alpha$}.  If $\alpha$ is an ordinary point, the $b_{n-1} = \ldots = b_0 = 0$ so $a \in \{0, \ldots , n-1\}$.  One can also define exponents at $\infty$.  We therefore have that 
\[ y = \prod_{\alpha_i = \mbox{ finite sing. pt. }}(x-\alpha_i)^{a_i} P(x)\]
where the $a_i$ are exponents at $\alpha_i$ and $-\sum a_i -\deg P$ is an exponent at $\infty$. The $a_i$ and the degree of $P$ are therefore determined up to a finite set of choices.  Note that $P$ is a solution of $\tilde{L}(Y) = \prod(x-\alpha_i)^{-a_i}L(\prod(x-\alpha_i)^{a_i}Y)$. Finding polynomial solutions of 
$\tilde{L}(Y) = 0 $ of a fixed degree  can be done by substituting a polynomial of that degree with undetermined coefficients and equating powers of $x$ to reduce this to a problem in linear algebra.\\[0.2in]
\noindent \underline{\bf Liouvillian Solutions of Second Order Equations.} The method \linebreak outlined above can be simplified for second order equations $L(Y) = Y'' - sY$ because of several facts that we summarize below (see Chapter 4.3.4 of \cite{PuSi2003}). The resulting algorithm is essentially the algorithm presented by Kovacic in \cite{kovacic86} but put in the context of the general algorithm mentioned above.  Kovacic's algorithm predated and motivated much of the  work on liouvillian solutions of general linear differential equation presented above. 
\begin{itemize}
\item It can be shown that the fact that no $Y'$ term appears implies that the differential Galois group must be a subgroup of $\SL_2$ (Exercise 1.35.5, p.~27 \cite{PuSi2003}).  
\item  An examination of the algebraic subgroups of $\SL_2$ implies that $\SL_2$ has no subgroup of finite index leaving a one dimensional subspace invariant and any proper algebraic subgroup of $\SL_2$ has a subgroup of index $1,2,4,6,$ or $12$ that leaves a one dimensional subspace invariant ({\em c.f.,} \cite{kovacic86}).
\item As shown in the paragraph following Definition~\ref{scalarsym}, the dimension of the solution space of $L^{\miprod t}$ is the same as the dimension of $\Sym^t(\Soln(L))$ and so these two spaces are the same.
\item Any element $z = \sum_{i=0}^t y_1^{t-i} y_2^i \in  \Sym^t(\Soln(L))$ can be written as a product $\prod_{i=0}^t(c_iy_1+d_iy_2)$ and so all elements of $\Sym^t(\Soln(L))$ are decomposable.
\end{itemize}
Combining these facts with the previous results we have

\begin{center}
  $L(y) = 0$ has a nonzero liouvillian solution
  
  $\Updownarrow$
  
The differential Galois group   $G$ permutes $1,2,4,6$ or $12$   one dimensional subspaces of  in $Sol(L)$
 
  $\Updownarrow$
  
 For $t =1,2,4,6,\mbox{ or } 12$, $G$ leaves invariant  a  line in $Sol(L^{\miprod t})$
 
  $\Updownarrow$
  
   For $t =1,2,4,6,\mbox{ or } 12$, $L^{\miprod t}$ has a solution $z$ such that $u = z'/z \in k$\end{center}
 So, to check if $Y'' - sY=0$ has a nonzero liouvillian solution (and therefore, by variation of parameters, only liouvillian solutions), one needs to check this last condition.  
 \begin{ex} \[L(Y) = Y'' + \frac{3(x^2-x+1)}{16(x-1)^2x^2}Y\]
 This is an equation with only regular singular points. One can show that $L(Y)=0$ has no nonzero solutions $z$ with $z'/z \in \Cx(x)$.  The second symmetric power is 
 \[L^{\miprod 2}(Y) = Y''+3/4\,{\frac { \left( {x}^{2}-x+1 \right)} {{x}^{2} \left( x-1 \right)^2 }}Y'-3/8\,{\frac { \left( 2\,{x}^{3}-3\,{x}^{2}+5\,x-2 \right)}{{x}^{3} \left( x-1\right)^3  
}} Y. \]
 The singular points of this equation are at $0,1$ and $\infty$.  The exponents there are\\[0.05in]
 \hspace*{.3in} At $0: \ \{1,\frac{1}{2}, \frac{3}{2}\}$\\
\hspace*{.3in} At $1: \ \{1,\frac{1}{2}, \frac{3}{2}\}$\\
\hspace*{.3in} At $\infty: \ \{-1,-\frac{1}{2}, -\frac{3}{2}\}$\\[0.1in]
For $Y = x^{a_0}(x-1)^{a_1}P(x)$, try $a_0 = \frac{1}{2}, a_1 = 1, \  \deg P = 0 $. One sees that  $Y= x^{\frac{1}{2}}(x-1)$ is a solution of $L^{\miprod 2}(y) = 0$. Therefore $L(Y) = 0$ is solvable in terms of liouvillian functions.
\end{ex}
For second order equations one can also easily see how to modify the above to find liouvillian solutions when they exist.  Assume that one has found a nonzero element $u \in k$ such that $L^{\miprod m}(Y)=0$ has a solution $z$ with $z'/z = u$.  We know that 
\[z = y_1\cdot \ldots \cdot y_m, \ \ \   y_1,\ldots,y_m \in Soln(L) \ . \]  Since $\sigma(z)\in C\cdot z$ for all $\sigma \in G$, we have that for any $\sigma \in Gal(L)$, there exists a permutation $\pi$ and constants $c_i$ such that $\sigma(y_i) = c_i y_{\pi(i)}$. Therefore, for $v_i = \frac{y_i'}{y_i}$ and $\sigma \in G$ we have 
$\sigma(v_i) = v_{\pi(i)}$, {\em i.e.,} $G$ permutes the $v_i$.  Let
\[P(Y)  = \prod_{i=1}^m(Y- v_i) = Y^m+a_{m-1}Y^{m-1}+ \frac{a_{m-2}}{2!}Y^{m-2}+\ldots + \frac{a_0}{m!}.\] The above reasoning implies that the $a_i \in k$ and  \[a_{m-1} = -(v_1+\ldots + v_m) = -(\frac{y_1'}{y_1}+\ldots + \frac{y_m'}{y_m}) = -\frac{(y_1\cdot \ldots\cdot y_m)'}{y_1\cdot \ldots\cdot y_m} = -\frac{z'}{z} = -u\]
The remaining $a_i$ can be calculated from $a_{m-1} = -u$ using the following fact from \cite{bmw}, \cite{kovacic86} and \cite{ulmer_weil} (see also Chapter 4.3.4 of \cite{PuSi2003}).
\begin{lem} Using the notation above, we have \[a_{i-1} = -a_i'-a_{m-i}a_i-(m-i)(i+1)sa_{i+1}, \ \ i=m-1,\ldots,0\] where $a_{-1} = 0.$
\end{lem}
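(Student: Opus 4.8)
The plan is to convert the $x$-derivative of $P$ into a purely algebraic expression in $Y$ by exploiting the single differential relation each $v_i$ satisfies, and then to read off the recurrence coefficient by coefficient. The key input is that, since $y_i'' = s\,y_i$, the logarithmic derivative $v_i = y_i'/y_i$ obeys the Riccati equation $v_i' = s - v_i^2$. Because the normalized coefficients $a_{m-j}/j!$ of $P(Y) = \prod_{i=1}^m (Y - v_i)$ are, up to sign and factorials, the elementary symmetric functions of the $v_i$, differentiating $P$ in $x$ and re-expressing the outcome through the $v_i'$ should close up into a relation among $P$, $\partial_Y P$ and $\partial_x P$ alone, whence the recurrence falls out.

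First I would differentiate $P(Y)=\prod_i (Y - v_i)$ with respect to $x$, obtaining $\partial_x P = -\sum_i v_i'\,P/(Y-v_i)$, and substitute the Riccati relation to get $\partial_x P = -\sum_i (s - v_i^2)\,P/(Y-v_i)$. The $s$-part is immediate since $\sum_i P/(Y-v_i) = \partial_Y P$. For the $v_i^2$-part I would use the polynomial division $v_i^2/(Y - v_i) = Y^2/(Y-v_i) - Y - v_i$, which removes all explicit dependence on the individual $v_i$ and yields the master identity
\[\partial_x P = (Y^2 - s)\,\partial_Y P - (mY + u)\,P,\]
where $u = \sum_i v_i = -a_{m-1}$ is exactly the quantity figuring in the statement. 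This single identity is the heart of the argument: its right-hand side is polynomial in $Y$ with coefficients built from $s$, $u$ and the $a_j$, so no symmetric-function manipulations beyond $\sum_i v_i = u$ are required.

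Finally I would write $P = \sum_{j=0}^m (a_{m-j}/j!)\,Y^{m-j}$ with $a_m = 1$ and compare, for each fixed $k$, the coefficient of $Y^{m-k}$ on the two sides of the master identity. The left side $\partial_x P$ supplies the derivative term $a_i'$; the piece $(Y^2-s)\,\partial_Y P$ links level $k$ to its neighbours, with $Y^2\partial_Y P$ (combined with $-mY\,P$ from the last factor) producing the $a_{i-1}$ contribution and $-s\,\partial_Y P$ producing the $s\,a_{i+1}$ contribution, while $-u\,P$ contributes the undifferentiated, $s$-free term $-u\,a_i$, with $u = -a_{m-1}$. Reindexing by $i = m-k$ and clearing the factorials $1/j!$ is what turns the raw coefficients into the stated ones and, in particular, generates the combinatorial factor $(m-i)(i+1)$ on the $s\,a_{i+1}$ term. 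I expect the factorial bookkeeping, together with the polynomial-division step for $v_i^2/(Y-v_i)$, to be the only delicate part; everything else is formal, and the terminal condition $a_{-1}=0$ should emerge as the consistency of the constant-term ($k=m$) equation.
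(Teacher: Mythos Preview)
The paper does not supply its own proof of this lemma; it is simply quoted from \cite{bmw}, \cite{kovacic86} and \cite{ulmer_weil}. Your approach is correct and is in fact the standard argument found in those references: the Riccati relation $v_i' = s - v_i^2$ together with the algebraic identity $v_i^2 = Y^2 - (Y-v_i)(Y+v_i)$ yields exactly the master identity
\[
\partial_x P \;=\; (Y^2 - s)\,\partial_Y P \;-\; (mY + u)\,P,\qquad u=\sum_i v_i=-a_{m-1},
\]
and comparing the coefficient of $Y^i$ on both sides (with $P=\sum_{i=0}^m \frac{a_i}{(m-i)!}Y^i$, $a_m=1$) gives, after multiplying through by $(m-i)!$, precisely
\[
a_{i-1} \;=\; -a_i' \;+\; a_{m-1}\,a_i \;-\; (m-i)(i+1)\,s\,a_{i+1}.
\]
Your outline handles the only subtle points (the polynomial division for $v_i^2/(Y-v_i)$ and the factorial bookkeeping producing $(m-i)(i+1)$) correctly, and the case $i=0$ indeed gives the constraint $a_{-1}=0$ rather than a new coefficient. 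One remark: your derivation shows that the middle term is $+a_{m-1}a_i$, whereas the statement as printed reads $-a_{m-i}a_i$; this appears to be a typographical slip in the paper, and the formula you obtain is the one actually used in \cite{kovacic86} and \cite{ulmer_weil}.
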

In particular we can find a nonzero polynomial satisfied by $y'/y$ for some nonzero solution of $L(y) = 0$.  This gives a liouvillian solution and variation of parameters yields another.
\begin{ex} We continue with the example \[L(Y) = Y'' + \frac{3(x^2-x+1)}{16(x-1)^2x^2}Y\]
We have that $y = x^{\frac{1}{2}}(x-1)$ is a soln of $L^{\miprod 2}(Y) = 0$.
Let \[a_1 = -\frac{y'}{y} = -(\frac{1}{2x}+\frac{1}{x-1})= -\frac{3x-1}{2x(x-1)},\] then if $v$ is a root of
\[P(Y) = Y^2 - \frac{3x-1}{2x(x-1)}Y+ \frac{9x^2-7x+1}{16x^2(x-1)^2}\]
we have $y =e^{\int v}$
satisfies $L(y) = 0$.  This yields \[\sqrt[4]{x-x^2}\sqrt{1+\sqrt{x}}\mbox{ and} \sqrt[4]{x-x^2}\sqrt{1-\sqrt{x}}\]
as solutions. $DGal$= $D_4$ = symmetries of a square.
\end{ex}
Another approach to second order linear differential equations was discovered by Felix Klein.  It has been put in modern terms by Dwork and Baldassari and made more effective recently by van Hoeij and Weil (see \cite{HWKlein} for references).

\vspace{.1in}

\noindent\underline{\bf Solving in Terms of Lower Order Equations.}  Another way of defining the notion of ``solving in terms of liouvillian functions'' is to say that a 
linear differential equation can be solved in terms of solutions of first order equations $Y' + aY = b$ and algebraic functions. With this in mind it is natural to make the following definition.
\begin{defin} Let $k$ be a differential field, $L \in k[\dd]$ and $K$ the associated Picard-Vessiot extension.  The equation $L(Y) = 0$ is {\em solvable in terms of lower order linear equations} if there exists a tower $k = K_0 \subset K_1 \subset \ldots \subset K_m$ with $K \subset K_m$ and for each $i, 0\leq i < m$, $K_{i+1} = K_i<t_i>$ ($K_{i+1}$ is generated as a differential field by $t_i$ and $K_i$), with

\begin{itemize} 
\item $t_i$ algebraic over $K_i$, or\
\item  $ L_i(t_i) = b_i\mbox{ for some }L_i\in K_i[\dd] \ ord(L_i) <  ord(L),  \ b_i \in K_{i} $
\end{itemize}
\end{defin}
One has the following characterization of this property in terms of the differential Galois group.
\begin{thm}(\cite{singer_fano}) Let $k,L,K$ be as above with $\ord(L) = n$.  The equation $L(Y) = 0$ is \underline{not} solvable in terms of lower order linear equations if and only if the lie algebra of $\DGal(K/k)$ is simple and has no faithful representations of dimension less than $n$.
\end{thm}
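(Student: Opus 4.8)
\emph{Proof proposal.} The plan is to translate the statement into the language of the Tannakian dictionary of Theorem~\ref{tannakathm} together with the Fundamental Theorem, working throughout with $\mathfrak{g} = \operatorname{Lie}(\DGal(K/k))$ and its tautological faithful action on the $n$-dimensional solution space $V = \Soln_K(L)$. The guiding principle is that a sub-extension of $K$ generated by solving an equation of order $d$ corresponds, via Theorem~\ref{tannakathm} and Proposition~\ref{invsubprop}, to a $d$-dimensional representation of $G$, and that $K$ is recovered (up to algebraic extensions) from such a representation exactly when the representation is faithful on $\mathfrak{g}$. Thus ``solvable in terms of lower order equations'' should be equivalent to building $V$, up to finite data, out of $G$-representations of dimension $<n$, and the whole theorem becomes a statement about minimal faithful representations of $\mathfrak{g}$ and about the ideals of $\mathfrak{g}$.

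First I would prove the constructive direction: if $\mathfrak{g}$ is not simple or admits a faithful representation of dimension $<n$, then $L$ is solvable in lower order. If $W$ is a faithful $\mathfrak{g}$-representation with $\dim W = d < n$, then by Theorem~\ref{tannakathm} there is a differential module in $\{\{M_L\}\}$ with solution space $W$, hence (Cyclic Vector Theorem) a scalar operator $L_W$ of order $d$ whose Picard--Vessiot field $K_W \subseteq K$ has Galois group $G/\ker(G\to\GL(W))$; since $W$ is faithful on $\mathfrak{g}$ this kernel has trivial Lie algebra, so $K/K_W$ is algebraic and $k \subset K_W \subset K$ is a tower of the required type. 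If instead $\mathfrak{g}$ is not simple, I split $G$ along a characteristic normal subgroup: when $\mathfrak{g}$ is solvable this is Theorem~\ref{liouvthm} (liouvillian, hence first-order, solvability, which is lower order once $n\ge 2$); otherwise a proper nonzero characteristic ideal (the radical, or a $G$-stable sum of simple factors) yields a connected $H \triangleleft G$, and by the Fundamental Theorem $K \supset K^H \supset k$ are Picard--Vessiot with groups $H$ and $G/H$ of strictly smaller dimension, on which I recurse; the representation-dimension bookkeeping discussed below is what guarantees the orders stay $<n$.

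For the converse I argue by contraposition: assuming $L$ is solvable in lower order and $\mathfrak{g}$ is simple, I produce a faithful representation of $\mathfrak{g}$ of dimension $<n$. I first replace the given tower $k = K_0 \subset \cdots \subset K_m \supseteq K$ by a tower of Picard--Vessiot extensions, so that each step has a relative Galois group $H_i$ which is either finite or acts faithfully on the $p_i$-dimensional homogeneous solution space of an order-$p_i$ equation with $p_i < n$ (for an inhomogeneous step the extra affine direction lies in the radical of $H_i$, so the \emph{semisimple} part of $\operatorname{Lie}(H_i)$ still acts faithfully in dimension $\le p_i < n$). Intersecting with $K$ gives a filtration $G = G_0 \supseteq \cdots \supseteq G_m = \{e\}$ whose successive relative groups are subquotients of the $H_i$; passing to Lie algebras and using that $\mathfrak{g}$ is simple forces $\mathfrak{g}$ to appear as a subquotient of a single $\operatorname{Lie}(H_i)$. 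The key technical device is then a Levi-type lemma: \emph{a semisimple Lie algebra that is a subquotient of a Lie algebra $\mathfrak{s}$ possessing a faithful representation of dimension $d$ embeds into a Levi subalgebra of $\mathfrak{s}$, hence has a faithful representation of dimension $\le d$.} Applied with $d = p_i < n$ this yields the desired faithful representation of $\mathfrak{g}$ of dimension $<n$, contradicting the hypothesis.

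The main obstacle is exactly this converse direction, and specifically the two intertwined points of bookkeeping: reducing an arbitrary lower-order tower to a \emph{normal} tower of Picard--Vessiot extensions without losing the bound $p_i < n$, and controlling the minimal faithful representation dimensions through both the recursion and the subquotient steps. The delicate case is an inhomogeneous step of order exactly $n-1$, where the naive solution space has dimension $n$; it is handled precisely by observing that the inhomogeneous (affine) part is solvable and hence invisible to the simple algebra $\mathfrak{g}$, with the Levi lemma the mechanism that makes this rigorous. Everything else is a combination of the Fundamental Theorem, Theorem~\ref{tannakathm}, the Lie--Kolchin Theorem, and standard structure theory of algebraic Lie algebras.
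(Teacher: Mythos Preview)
The paper does not prove this theorem at all: it is stated with a citation to \cite{singer_fano} and immediately followed by remarks about algorithms for orders $3$ and higher. There is therefore no paper proof to compare your proposal against.

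On the substance of your proposal: the overall architecture---Tannakian dictionary, faithful representations controlling the transcendence degree up to algebraic extensions, and the Levi-subquotient lemma for the converse---is the right one and is indeed how the argument in \cite{singer_fano} is organised. Your converse direction is essentially complete; the Levi lemma you isolate (a simple subquotient of $\mathfrak{s}$ embeds in a Levi factor of $\mathfrak{s}$, hence inherits any faithful representation of $\mathfrak{s}$) is correct and is exactly the device that handles the inhomogeneous step.

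The genuine soft spot is the forward direction in the case ``$\mathfrak{g}$ not simple but with no faithful representation of dimension $<n$''. You write that recursion on $\dim G$ together with ``representation-dimension bookkeeping'' finishes it, but this is where the work lies. Splitting along a normal $H$ gives PV steps $K^{H}/k$ and $K/K^{H}$ with groups $G/H$ and $H$; the induction hypothesis applies to \emph{some} equations generating these steps, but you must still show those equations can be taken of order $<n$, and a quotient $\mathfrak{g}/\mathfrak{h}$ does not automatically inherit a faithful representation of dimension $\le n$ from $\mathfrak{g}$. In \cite{singer_fano} this is handled not by blind recursion but by a structural case analysis: if $\mathfrak{g}$ has nonzero radical one peels it off (liouvillian steps), and if $\mathfrak{g}$ is semisimple but not simple one uses that an irreducible faithful $V$ is an external tensor product $V_1\otimes\cdots\otimes V_r$ of representations of the simple factors, each $V_i$ of dimension $<n$, so the simple factors (hence $K$) are reached via equations of order $\dim V_i<n$. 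Filling in that case analysis is what your ``bookkeeping'' placeholder must become.
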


If $\ord(L) = 3$, then an algorithm to determine if this equation is solvable in terms of lower order linear equations is given in \cite{singer_second}.  Refinements of this algorithm and extensions to higher order $L$ are given in \cite{hoeij_banach}, \cite{hoeij07}, \cite{nguyen}, \cite{nguyen_vdp}, \cite{person}. 
 \section{Inverse Problems}\label{mfssec5}  In this section we consider the following problem:\\[0.05in]
 {\em Given a differential field $k$, characterize those linear differential algebraic groups that appear as differential Galois groups of Picard-Vessiot extensions over $k$.}\\[0.05in]
  We begin with considering the inverse problem over fields of constants.\\[0.1in]
 \noindent \underline{\bf Differential Galois groups over $\Cx$.}  Let $L \in \Cx[\dd]$ be a linear differential operator with constant coefficients. All solutions of $L(Y) = 0, \ L \in \Cx[\dd]$ are of the form $\sum_iP_i(x)e^{\alpha_i x}$ where $P_i(x) \in \Cx[x]$, $\alpha_i \in \Cx, x' = 1$.  This implies that the associated Picard-Vessiot $K$ is a subfield of a field $E = \Cx(x, e^{\alpha_1 x}, \ldots , e^{\alpha_t x})$.  One can show that $\DGal(E/k) = (\Cx, +)\times((\Cx^*,\cdot)\times \ldots \times (\Cx^*, \cdot))$.  The group $\DGal(K/\Cx)$ is a quotient
  of this latter group and one can show that it therefore must be of the form 
$(\Cx, +)^a\times(\Cx^*,\cdot)^b, a= 0,1, \ b \in \Nx.$

\noindent One can furthermore characterize in purely group theoretic term the groups that appear as differential Galois groups over $\Cx$.  Note that \[(\Cx,+) \simeq \{\left( \begin{array}{cc} 1& a\\ 0 & 1 \end{array}\right) \ | \ a\in \Cx \}\]
and that all these elements are unipotent matrices ({\em i.e.}, $(A-I)^m=0$ for some $m\neq 0$). This motivates the following definition
\begin{defin} If $G$ is a linear differnetial algebraic group, the {\em unipotent radical $R_u$ of $G$} is the largest normal subgroup of $G$ all of whose elements are unipotent.
\end{defin}
Note that for a group of the form $G = \Cx\times(\Cx^*\times \ldots \times \Cx^*)$   we have that $R_u(G) =\Cx$.  Using facts about linear algebraic groups (see \cite{humphreys} or\cite{springer}) one can characterize those linear algebraic groups that appear as differential Galois groups over an  algebraically closed field of constants $C$ as
\begin{prop} A linear algebraic group $G$ is a differential Galois group of a Picard-Vessiot extension of $(\Cx,\dd), \dd c = \forall c\in \Cx$ if and only if $G$ is connected, abelian and $R_u(G) \leq 1$.
\end{prop}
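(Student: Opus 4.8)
The plan is to reduce everything to the structure theory of connected commutative linear algebraic groups and then match quotients (for necessity) against explicit constant-coefficient systems (for sufficiency). Recall that over the algebraically closed field $\Cx$ of characteristic $0$ every connected commutative linear algebraic group decomposes as a direct product $G \simeq (\Cx,+)^m \times (\Cx^*)^b$ of its unipotent radical $R_u(G)\simeq(\Cx,+)^m$ (a vector group, because the characteristic is $0$) with a maximal torus $(\Cx^*)^b$ (see \cite{humphreys},\cite{springer}). Thus the asserted hypothesis ``connected, abelian, $\dim R_u(G)\le 1$'' is literally the statement ``$G\simeq(\Cx,+)^m\times(\Cx^*)^b$ with $m\in\{0,1\}$,'' and this reformulation is what I would prove equivalent to realizability.

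\textbf{Necessity.} Let $K$ be a Picard--Vessiot extension of $(\Cx,\dd)$ for $Y'=AY$ with $A\in\gl_n(\Cx)$ constant. Since $\Cx$ is algebraically closed, its algebraic closure inside $K$ is $\Cx$ itself, so Corollary~\ref{fundcor}(3) gives $K^{G^0}=\Cx=K^{G}$ and hence $G=G^0$ is connected. Next I would use the matrix exponential $e^{xA}$, a fundamental solution matrix whose entries are $\Cx$-combinations of $x^{j}e^{\alpha_i x}$ with $\alpha_i$ the eigenvalues of $A$; therefore $K\subseteq E:=\Cx(x,e^{\alpha_1 x},\ldots,e^{\alpha_t x})$, and $E$ is itself the Picard--Vessiot extension of the constant system whose solutions are $x$ and the $e^{\alpha_i x}$. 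I would then compute $\DGal(E/\Cx)$ directly: $(\sigma x)'=1$ forces $\sigma(x)=x+c_\sigma$, and $(\sigma(e^{\alpha_i x})/e^{\alpha_i x})'=0$ forces $\sigma(e^{\alpha_i x})=d_{i,\sigma}e^{\alpha_i x}$ with $d_{i,\sigma}\in\Cx^*$, so $\sigma\mapsto(c_\sigma,d_{1,\sigma},\ldots,d_{t,\sigma})$ is an injective homomorphism of $\DGal(E/\Cx)$ into $(\Cx,+)\times(\Cx^*)^t$. Being connected (again by Corollary~\ref{fundcor}(3)) and commutative, $\DGal(E/\Cx)\simeq(\Cx,+)^a\times T$ with $T$ a torus, and $a\le 1$ since its unipotent radical injects into the single factor $(\Cx,+)$. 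Finally $G=\DGal(K/\Cx)$ is a quotient of $\DGal(E/\Cx)$ by the Fundamental Theorem, hence commutative; and since a surjective homomorphism preserves Jordan type and so maps $R_u(\DGal(E/\Cx))$ onto $R_u(G)$, we get $\dim R_u(G)\le a\le 1$.

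\textbf{Sufficiency.} Conversely, given $G\simeq(\Cx,+)^m\times(\Cx^*)^b$ with $m\in\{0,1\}$, I would build the obvious system. For the torus factor choose $\alpha_1,\ldots,\alpha_b\in\Cx$ that are $\Qx$-linearly independent and take $Y'=\mathrm{diag}(\alpha_1,\ldots,\alpha_b)Y$, with Picard--Vessiot extension $\Cx(e^{\alpha_1 x},\ldots,e^{\alpha_b x})$; the $\Qx$-independence means there are no multiplicative relations, pinning the group to $(\Cx^*)^b$. If $m=1$, adjoin the block $Y'=\left(\begin{smallmatrix}0&1\\0&0\end{smallmatrix}\right)Y$, whose Picard--Vessiot extension is $\Cx(x)$ with group $(\Cx,+)$. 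Taking the direct sum gives the compositum $\Cx(x,e^{\alpha_1 x},\ldots,e^{\alpha_b x})$; because $x$ is transcendental over, and the chosen exponentials algebraically independent over, $\Cx$, the transcendence-degree formula of Corollary~\ref{torsorcor}(1) yields $\dim G=m+b$. Combined with the embedding $G\hookrightarrow(\Cx,+)^m\times(\Cx^*)^b$ obtained exactly as in the necessity part, a closed connected subgroup of full dimension $m+b$ must be the whole ambient group, so $G\simeq(\Cx,+)^m\times(\Cx^*)^b$ as required.

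\textbf{Main obstacle.} The one genuine point in both directions is the precise determination of $\DGal(E/\Cx)$, i.e.\ the ``no hidden relations'' fact that distinct exponentials $e^{\beta x}$ are algebraically independent over $\Cx(x)$ apart from the forced multiplicative relations coming from $\Zx$-linear dependencies among the exponents. For necessity this is what bounds the unipotent radical by $1$; for sufficiency it is what prevents the product from collapsing. Everything else is the structure theorem for connected commutative groups together with the Fundamental Theorem and the torsor dimension count, so the cleanest packaging is to phrase the algebraic-independence input as the transcendence-degree computation of $E/\Cx$ and feed it through Corollary~\ref{torsorcor}.
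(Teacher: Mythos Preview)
Your argument is correct and follows essentially the same route the paper takes: embed the PV-field in $E=\Cx(x,e^{\alpha_1x},\dots,e^{\alpha_tx})$, identify $\DGal(E/\Cx)$ with $(\Cx,+)\times(\Cx^*)^t$, and then read off the structure of $G$ as a quotient using the decomposition of connected commutative groups. The paper leaves the sufficiency direction implicit (it simply invokes ``facts about linear algebraic groups'' from \cite{humphreys},\cite{springer}), whereas you spell out the explicit constant-coefficient realization via $\Qx$-independent exponents and the Jordan block, which is a welcome addition but not a different idea.
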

All of the above holds equally well for any algebraically closed field $C$.\\[0.2in]
\noindent \underline{\bf Differential Galois groups over $\Cx(x), x'=1$.} The inverse problem for this field was first solved by C.~and M.~Tretkoff, who showed 
\begin{thm}\label{ttthm}\cite{tretkoff79} Any linear algebraic group is  a differential Galois group of a Picard-Vessiot extension of $\Cx(x)$.
\end{thm}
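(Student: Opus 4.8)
The plan is to realize $G$ as a differential Galois group by prescribing its elements as the monodromy of a regular singular system and then invoking Schlesinger's density theorem (Proposition~\ref{schlesinger}). Fix an embedding $G \subset \GL_n(\Cx)$. The crucial first step is to produce finitely many elements $g_1, \dots, g_r \in G$ whose generated subgroup is Zariski dense in $G$. Granting this, I would choose $r+1$ distinct points $p_0, p_1, \dots, p_r$ on the Riemann sphere and set $X = \Px^1 \setminus \{p_0, \dots, p_r\}$; the fundamental group $\pi_1(X, x_0)$ is free on generators $\gamma_1, \dots, \gamma_r$, the loop about $p_0$ being $(\gamma_1 \cdots \gamma_r)^{-1}$. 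Since the group is free there is no relation to satisfy, so the assignment $\gamma_i \mapsto g_i$ defines a representation $\rho : \pi_1(X,x_0) \to \GL_n(\Cx)$.

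The key algebraic input is that $G$ contains a finitely generated Zariski-dense subgroup. To see this, write $G^0$ for the identity component and recall from the characteristic-zero structure theory that $G^0$ is generated as an algebraic group by finitely many one-dimensional connected subgroups $U_1, \dots, U_m$, each isomorphic to $(\Cx,+)$ or $(\Cx^*,\cdot)$. Each $U_i$ contains a cyclic Zariski-dense subgroup: for $(\Cx,+)$ any nonzero element works, since its only closed subgroups are $0$ and the whole group; for $(\Cx^*,\cdot)$ any element of infinite order works, the proper closed subgroups being the finite groups of roots of unity. Picking such a generator $u_i \in U_i$, the closed subgroup $\overline{\langle u_1, \dots, u_m\rangle}$ contains every $U_i$ and hence equals $G^0$. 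Adjoining coset representatives for the finite group $G/G^0$ then yields finitely many elements $g_1, \dots, g_r$ generating a subgroup dense in $G$.

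To convert $\rho$ into a differential equation I would invoke the weak Riemann--Hilbert correspondence: any representation of $\pi_1(X,x_0)$ is the monodromy representation of some system $Y' = AY$ with $A \in \gl_n(\Cx(x))$, all of whose singular points (contained in $\{p_0, \dots, p_r\}$) are regular singular. By construction the monodromy group of this system is $\langle g_0,g_1, \dots, g_r\rangle = \langle g_1, \dots, g_r\rangle$, which is Zariski dense in $G$. Because the system has only regular singular points, Proposition~\ref{schlesinger} applies: letting $K$ be the associated Picard--Vessiot field over $\Cx(x)$, the monodromy group is Zariski dense in $\DGal(K/\Cx(x))$. Since the monodromy group is also contained in the (Zariski-closed) group $\DGal(K/\Cx(x))$, comparing closures gives $\DGal(K/\Cx(x)) = \overline{\langle g_1, \dots, g_r\rangle} = G$, which proves the theorem.

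The main obstacle is the Riemann--Hilbert realization step, which is the one genuinely analytic ingredient: producing a system over $\Cx(x)$ with prescribed monodromy requires the topology and analytic continuation available over $\Cx$, and this is where the choice of base field is essential (the dense-subgroup lemma, by contrast, needs only that $\Cx^*$ contain an element of infinite order). I would emphasize that one does \emph{not} need the strong Fuchsian form of Riemann--Hilbert, whose failure is the content of Bolibruch's counterexamples, but only the regular singular version, which is classical and always solvable; this weaker form is exactly what is needed to trigger Schlesinger's theorem and thereby pin down the Galois group.
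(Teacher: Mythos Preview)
Your argument is correct and follows essentially the same route as the paper's: produce a finitely generated Zariski-dense subgroup of $G$, realize it as the monodromy of a regular singular system via the weak form of Hilbert's $21^{\mathrm{st}}$ problem, and apply Proposition~\ref{schlesinger}. The paper simply asserts the existence of the dense finitely generated subgroup, whereas you supply a structural argument via one-dimensional subgroups of $G^0$ and coset representatives for $G/G^0$; your handling of the puncture count (using $r+1$ points so that $\pi_1$ is genuinely free on $r$ generators) is also a bit more careful than the paper's outline.
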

Their proof (which I will outline below) depends on the solution of Hilbert's $21^{st}$ Problem.  This problem has a weak and strong form. Let $\calS = \{\alpha_1, \ldots , \alpha_m, \infty \} \subset S^2$, $\alpha_0 \in S^2-\calS$ and $\rho:\pi_1(S^2-\calS, \alpha_0)\rightarrow \GL_n(\Cx)$ be a homomorphism.\\[0.1in]
\noindent \underline{Weak Form} Does there exist $A\in \Mn(\Cx(x))$ such that 
\begin{itemize}
\item $Y'=AY$ has only regular singular points, and
\item The monodromy representation of $Y'=AY$ is $\rho$.\end{itemize}
\noindent \underline{Strong Form} Do there exist $A_i \in \GL_n(\Cx)$ such that the monodromy rep of 
\[Y' = (\frac{A_1}{x-\alpha_1} + \ldots +  \frac{A_m}{x-\alpha_m})Y\]
is $\rho$.\\[0.1in]
We know that a differential equation $Y' = AY$ that has a regular singular point is locally equivalent to one with a simple pole.  The strong form of the problem insists that we find an equation that is {\em globally} equivalent to one with only simple poles.  A solution of the strong form of the problem of course yields a solution of the weak form.  Many special cases of the strong form were solved before a counterexample to the general case was found by Bolibruch (see \cite{beauville}, \cite{anosov_bolibruch} or Chapters 5 and 6 of \cite{PuSi2003} for a history and exposition of results and \cite{BMM2006} for more recent work and generalizations).  
\begin{enumerate}\item Let  $\gamma_1, \ldots ,\gamma_m$ be generators of $\pi_1(S^2-\calS, \alpha_0)$, each enclosing just one $\alpha_i$. If some $\rho(\gamma_i)$ is diagonalizable, then the answer is yes. (Plemelj)
\item  If all $\rho(\gamma_i)$ are sufficiently close to $I$, the answer is yes. (Lappo-Danilevsky)
\item If $n =2$, the answer is yes. (Dekkers)
\item  If $\rho$ is irreducible, the answer is yes. (Kostov, Bolibruch)
\item  Counterexample for $n = 3, |\calS| = 4$ and complete characterization for $n = 3,4$ (Bolibruch, Gladyshev)
\end{enumerate}
In a sense, the positive answer to the weak from follows from Plemelj's result above - one just needs to add an additional singular point $\alpha_{m+1}$ and let $\gamma_{m+1}$ be the identity matrix.   A modern approach to give a positive answer to the weak form was given  by R\"ohrl and later Deligne (see \cite{deligne_LNM}).  We now turn to \\[0.1in]
\noindent {\em Proof of Theorem~\ref{ttthm}} (outline, see \cite{tretkoff79} of Chapter 5.2 or \cite{PuSi2003} for details)  Let $G$ be a linear algebraic group.  One can show that there exist $g_1, \ldots, g_m \in G$ that generate a Zariski dense subgroup $H$.   Select points $\calS = \{\alpha_1, \ldots \alpha_m\}\subset S^2$ and define $\rho:\pi_1(S^2-\calS, \alpha_0) \rightarrow H$ via $\rho(\gamma_i) = g_i$. Using the solution of the weak form of Hilbert's $21^{st}$, there exist  $A \in \Mn(\Cx(x))$ such that $Y' = AY$ has regular singular points and monodromy gp $H$.  Schlesinger's Theorem,  Theorem~\ref{schlesinger}, implies that the differential Galois group of this equation   is the Zariski closure of $H$.\hfill $\square$\\[0.1in] 
The above result leads to the following two  questions:
\begin{itemize}
\item What is the minimum number of singular points (not necessarily regular singular points) that a linear differential system must have to realize a given group as its Galois group?
\item Can we realize all linear algebraic group as differential Galois groups over $C(x), x'=1$ where $C$ is an arbitrary algebraically closed  field?
\end{itemize}
To answer the first question, we first will consider\\[0.1in]
\noindent \underline{\bf Differential Galois groups over $\Cx(\{x\})$.}
To characterize which groups occur as differential Galois groups over  this field, we need the following definitions.
\begin{defin} Let $C$ be an algebraically closed field.\\[0.05in]
(1) A {\em torus} is a linear algebraic group isomorphic to $(C^*, \cdot)^r$ for some $r$.\\[0.05in]
(2) If $G$ is a linear algebraic group then we define $L(G)$ to be the group generated by all tori in $G$.  This is a normal, Zariski closed subgroup of $G$ (see \cite{ramis_inverse} and Chapter 11.3 of \cite{PuSi2003}).
\end{defin}
\begin{exs} (1) If $G$ is reductive and connected ({\em e.g.}, tori, $\GL_n, \SL_n$), then $L(G) = G$ (A group $G$ is reductive if $R_u(G)= \{I\}$). \\[0.05in]
(2) If  $G= (C,+)^r, r \geq 2$, then $L(G) = \{I\}$. 
\end{exs}
Ramis \cite{ramis_inverse} showed the following (see also Chapter 11.4 of \cite{PuSi2003}).
\begin{thm} A linear algebraic group $G$ is a differential Galois group of a Picard-Vessiot extension of $\Cx(\{x\})$ if and only if $G/L(G)$ is the Zariski closure of a cyclic group.
\end{thm}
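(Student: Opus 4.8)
The plan is to prove both directions from the two density theorems already established: over $\Cx((x))$ the formal Galois group $\DGal(K/\Cx((x)))$ is the Zariski closure of the group generated by the formal monodromy $\gamma$ and the exponential torus $\calT$, while the convergent group $G=\DGal(K/\Cx(\{x\}))$ is the Zariski closure of the group generated by $\gamma$, $\calT$ and the Stokes matrices $\{St_d\}$. The whole argument then reduces to locating $\calT$ and the $St_d$ relative to $L(G)$, together with an analytic realization statement.

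\textbf{Necessity.} First, the exponential torus $\calT$, being a torus, lies in $L(G)$ by the definition of $L(G)$. The crucial point is that every Stokes matrix also lies in $L(G)$. Here I would use that $\log St_d$ decomposes into components lying in the eigenspaces of the adjoint action of $\calT$ associated with the eigenvalue differences $q_i-q_j$ active in direction $d$, and that each such character $q_i-q_j$ is nontrivial on $\calT$ since $q_i\neq q_j$. The mechanism is an elementary computation in an ``$ax+b$''-type subgroup: a one-dimensional subtorus $T_1\subset\calT$ on which a root $\alpha$ is nontrivial, together with the corresponding root group $U_\alpha$, generates a solvable group in which every element of $U_\alpha$ is a product of an element of $T_1$ and an element of a unipotent conjugate $gT_1g^{-1}$ of $T_1$; since conjugates of tori are tori, this places $U_\alpha$, and hence $St_d$, inside $L(G)$. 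As $G$ is the Zariski closure of $\langle\calT,\gamma,\{St_d\}\rangle$ and the images of $\calT$ and of the $St_d$ are trivial in $G/L(G)$, the quotient $G/L(G)$ is the Zariski closure of the image of the single element $\gamma$, i.e. the Zariski closure of a cyclic group.

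\textbf{Sufficiency.} Conversely, given $G$ with $G/L(G)$ topologically cyclic, I would construct local data realizing $G$. The group-theoretic input is to produce a maximal torus $T\subseteq G$, an element $\gamma\in N_G(T)$ projecting to a topological generator of $G/L(G)$, and finitely many unipotent elements $u_\ell$, each lying in a nontrivial root space of $T$, so that $\overline{\langle T,\gamma,\{u_\ell\}\rangle}=G$; this uses the structure of $L(G)$ as a group generated by tori. One then encodes $T$ as an exponential torus by choosing eigenvalues $q_1,\dots,q_n\in x^{-1/\nu}\Cx[x^{-1/\nu}]$ whose $\Cx$-span has the correct rank and whose differences furnish the required roots, arranges the ramification $\nu$ and the factor $x^L$ so that the associated formal monodromy is $\gamma$, and assigns the $u_\ell$ to Stokes matrices in suitably chosen singular directions. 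Finally I would invoke the Birkhoff--Malgrange--Sibuya realization theorem underlying the Stokes theory developed above: for any formal normal form and any admissible family of Stokes matrices there is a convergent equation $Y'=AY$, $A\in\Mn(\Cx(\{x\}))$, with exactly that normal form and those Stokes matrices. By the two density theorems its convergent Galois group is then $\overline{\langle T,\gamma,\{u_\ell\}\rangle}=G$.

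The main obstacle is the sufficiency direction, and specifically the group-theoretic step: showing that a group $G$ with $G/L(G)$ topologically cyclic always admits a generating configuration consisting of one torus $T$, a normalizing element $\gamma$ covering a generator of the cyclic quotient, and unipotent root-group elements generating the remainder of $L(G)$ --- all chosen compatibly with the constraint that a genuine formal monodromy must act on an exponential torus by permuting the characters via $q_i\mapsto q_i(\zeta x^{1/\nu})$ with $\zeta=e^{2\pi i/\nu}$. Verifying this compatibility, and that the resulting $q_i$, $L$ and Stokes data form admissible input for the realization theorem, is the delicate part; the analytic realization itself can be quoted from the summation and Stokes theory established earlier.
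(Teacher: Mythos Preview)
Your proposal is correct and follows exactly the approach the paper attributes to Ramis. Note that the paper does not actually prove this theorem: it states the result with a citation to \cite{ramis_inverse} and Chapter~11.4 of \cite{PuSi2003}, adding only the one-sentence gloss that ``his characterization of local Galois groups in terms of formal monodromy, exponential torus and Stokes matrices yields a group of this type and conversely any such group can be realized as a local Galois group.'' Your write-up is a faithful expansion of precisely that sentence: for necessity you place $\calT$ and the Stokes matrices inside $L(G)$ (the ``$ax+b$'' trick showing that a unipotent root-group element lies in the group generated by tori is the standard mechanism here), leaving only the image of $\gamma$ to generate $G/L(G)$; for sufficiency you reverse-engineer local data $(T,\gamma,\{u_\ell\})$ and invoke the Malgrange--Sibuya/Birkhoff realization to produce a convergent equation with the prescribed normal form and Stokes matrices. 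Your honest flagging of the delicate compatibility between the choice of $\gamma$ and the permutation action on the $q_i$ is appropriate --- that is indeed where the work lies in Ramis's argument, and it is handled in the references the paper cites rather than in the paper itself.
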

Any linear algebraic group $G$ with $G/G^0$ cyclic and $G$ reductive satisfies these criteria while $G= (C,+)^r, r \geq 2$ does not.  Ramis showed that his characterization of local Galois groups in terms of formal monodromy, exponential torus and Stokes matrices yields a group of this type and conversely any such group can be realized as a local Galois group.  Ramis was furthermore able to use analytic patching techniques to get a global version of this result.
\begin{thm} A linear algebraic group $G$ is a differential Galois group of a Picard-Vessiot extension of $\Cx(x)$  with at most $r-1$ regular singular points and one (possibly) irregular singular point if and only if $G/L(G)$ is the Zariski closure of a group generated by $r-1$ elements.
\end{thm}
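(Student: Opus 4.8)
The plan is to derive both directions from two inputs already available in the local theory together with Ramis's global \emph{density theorem}, which asserts that for $Y' = AY$ over $\Cx(x)$ with singular points $p_1,\dots,p_r$ the global group $\DGal$ is the Zariski closure of the subgroup generated by the local monodromies at the regular singular points and by the formal monodromy, the exponential torus and the Stokes matrices at each irregular point. The two structural facts I would invoke are: (i) the local theorem stated just above, that $G_{\mathrm{loc}}/L(G_{\mathrm{loc}})$ is the Zariski closure of the cyclic group generated by the image of the formal monodromy; and (ii) the companion fact that the exponential torus $\calT$ and the Stokes matrices $St_d$ all lie in $L(G_{\mathrm{loc}})$ (the torus because it \emph{is} a torus, the Stokes matrices because each sits in a one-parameter unipotent subgroup normalized by $\calT$, and Ramis shows the group these sweep out lies in $L$). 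Since a torus in a subgroup is a torus in the ambient group, $G_{\mathrm{loc}}\subseteq G$ gives $L(G_{\mathrm{loc}})\subseteq L(G)$.

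For necessity, suppose $G=\DGal(K/\Cx(x))$ is realized with regular singular points $p_1,\dots,p_{r-1}$ and one irregular point $p_r$. By the density theorem $G$ is the Zariski closure of $\langle M_1,\dots,M_{r-1},\hat{\gamma},\calT,\{St_d\}\rangle$, where $M_i$ is the monodromy at $p_i$. Passing to $G/L(G)$ and using (ii), the images of $\calT$ and of every $St_d$ die, so $G/L(G)$ is the Zariski closure of $\langle \bar M_1,\dots,\bar M_{r-1},\bar{\hat{\gamma}}\rangle$. The saving of one generator comes from the relation $\gamma_1\cdots\gamma_r=1$ in $\pi_1(\Px^1\smallsetminus\{p_1,\dots,p_r\})$: the genuine topological monodromy $M_r$ at $p_r$ is the formal monodromy composed with the Stokes factors, so modulo $L(G)$ one has $\bar M_r=\bar{\hat{\gamma}}$, while the relation forces $\bar M_r=(\bar M_1\cdots\bar M_{r-1})^{-1}$. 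Hence $\bar{\hat{\gamma}}$ is redundant and $G/L(G)$ is the Zariski closure of the group generated by the $r-1$ elements $\bar M_1,\dots,\bar M_{r-1}$.

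For sufficiency, assume $G/L(G)$ is the Zariski closure of $\langle \bar g_1,\dots,\bar g_{r-1}\rangle$ and lift to $g_1,\dots,g_{r-1}\in G$. First I would construct the local datum at a single chosen point $p_r$: using the local realization theorem I would produce a germ of an irregular connection whose local group $G_{\mathrm{loc}}$ satisfies $L(G_{\mathrm{loc}})=L(G)$, so that the entire normal subgroup $L(G)$ is produced locally through the exponential torus and the Stokes matrices. Next, at $r-1$ further points $p_1,\dots,p_{r-1}$ I would place regular singularities carrying prescribed monodromies $g_1,\dots,g_{r-1}$. The analytic heart is then to patch the marked formal/Stokes structure at $p_r$ to the monodromy data at the $p_i$ into a single global meromorphic connection on $\Px^1$; this is done by solving the associated non-abelian Malgrange--Sibuya gluing problem, as in Ramis's analytic patching. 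Applying the density theorem to the result, its global group is the Zariski closure of $\langle L(G),g_1,\dots,g_{r-1}\rangle$, which equals $G$ since the $g_i$ already generate a Zariski dense subgroup of $G/L(G)$ and $L(G)$ is normal; counting the $r$ singular points yields the stated bound.

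The hard part is this sufficiency construction, and within it the analytic patching: one must glue a purely local irregular germ to several regular singular germs without creating unwanted monodromy and while keeping the global group no larger than $G$, which requires the full Malgrange--Sibuya/Birkhoff machinery together with a choice of germ at $p_r$ making $L(G_{\mathrm{loc}})$ exactly $L(G)$. Establishing the structural lemma (ii), that the Stokes matrices lie in $L(G)$, is the other delicate point, and it is precisely what makes the image of the formal monodromy the only surviving generator contributed by the irregular point.
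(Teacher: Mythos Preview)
The paper does not actually prove this theorem; it is stated as a result of Ramis, with the only indication of method being the sentence ``Ramis was furthermore able to use analytic patching techniques to get a global version of this result'' together with the remark that the local theorem rests on his characterization via formal monodromy, exponential torus and Stokes matrices. Your outline is precisely the strategy behind Ramis's proof (as developed in detail in Chapter 11 of \cite{PuSi2003}): necessity via the global density theorem plus the structural fact that the exponential torus and Stokes matrices lie in $L(G)$, with the $\pi_1$ relation $\gamma_1\cdots\gamma_r=1$ absorbing the formal monodromy; sufficiency via local realization of $L(G)$ at one irregular point and analytic (Malgrange--Sibuya) patching to attach the $r-1$ prescribed regular monodromies. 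So your approach is the same as the one the paper attributes to Ramis, only fleshed out considerably further than the paper itself does.

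One small point worth tightening in the sufficiency direction: when you patch, the topological monodromy at $p_r$ is forced to be $(g_1\cdots g_{r-1})^{-1}$, so the local germ at $p_r$ cannot simply have $G_{\mathrm{loc}}=L(G)$ in general --- its formal monodromy must represent the class $(\bar g_1\cdots\bar g_{r-1})^{-1}$ in $G/L(G)$. This is harmless, since the group $\overline{\langle L(G),\,(g_1\cdots g_{r-1})^{-1}\rangle}$ still has $L$ equal to $L(G)$ and cyclic quotient, hence is locally realizable by the local theorem; but your write-up should make this compatibility with the $\pi_1$ relation explicit rather than leaving it to the patching step.
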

\begin{exs} (1) We once again have that any linear algebraic group is a differential Galois group over $\Cx(x)$ since such a group satisfies the above criteria for some $r$.\\[0.05in]
(2) $\SL_n$ can be realized as a differential Galois group of a linear differential equation over $\Cx(x)$ with only one singular point.\\[0.05in]
(3) One needs  $r$ singular points to realize the group $(\Cx,+)^{r-1}$ as a differential Galois group over $\Cx(x)$.
\end{exs}
\noindent \underline{\bf Differential Galois groups over $C(x)$, $C$ an algebraically closed} \\ \underline{\bf field.} The proofs of the above results depend on analytic techniques over $\Cx$ 
that do not necessarily apply to general algebraically closed fields $C$ of characteristic zero.  The paper \cite{singer_moduli} examines the question of the existence of ``transfer principles'' and shows that for certain groups $G$, the existence of an equation over $\Cx(x)$ having differential Galois group $G$ over implies the existence of an equation over $C(x)$ having differential Galois group $G$ over $C(x)$ but these results do not apply to all groups. Algebraic proofs for various groups over $C(x)$ (in fact over any differential field finitely generated over its algebraically closed field of constants) have been given over the years by Bialynicki-Birula (nilpotent groups), Kovacic (solvable groups), Mitschi/Singer(connected groups) (see \cite{MiSi:JA} for references).  Finally, Hartmann \cite{hartmann2005} showed that any linear algebraic group can be realized over $C(x)$ (see also \cite{oberlies}).  An exciting recent development is the announcement of a new proof by Harbater and Hartmann \cite{HH1,HH2} of this fact based on a generalization of algebraic patching techniques.  I will give two examples of a constructive technique (from \cite{MiSi:JA}) that allows one to produce explicit equations for connected linear algebraic groups. This technique is based on (see Proposition 1.31 of \cite{PuSi2003})
\begin{prop} Let $Y'=AY$ be a differential equation over a differential field $k$ with algebraically closed constants $C$.  Let $G,H$ be linear algebraic subgroups of $\GL_n(C)$ with lie algebras $\calG,\calH$. Assume $G$ is connected.\\[0.05in]
1) Let  $Y' = AY$ is a differential equation with   $A \in \calG\otimes_Ck$  and assume that $G$ is connected. Then the differential Galois group of this equation  is conjugate to a subgroup of $G$.\\[0.05in]
(2) Assume $k = C(x)$, that $A \in \calG\otimes_CC(x)$ and that the differential Galois group of $Y' = AY$ is  $H\subset G$, where $H$ is assumed to be connected.  Then there exists $B \in G(C(x))$ such that $\tilde{A} = B^{-1}AB - B^{-1}B \in \calH\otimes_CC(x)$, that is $Y'=AY$ is equivalent to an equation $Y' = \tilde{A}Y$ with $\tilde{A}\in \calG\otimes_CC(x)$ and the equivalence is given by an element of 
$G(C(x))$.
\end{prop}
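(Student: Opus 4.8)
The plan is to handle the two parts separately, using throughout the torsor description of the Picard--Vessiot ring recalled above; the genuinely substantial content is part~(2), where the hypotheses $k=C(x)$ and the connectedness of $H$ enter through the triviality of $H$-torsors. For \textbf{part (1)}, I would first realize the Picard--Vessiot ring inside the universal ring $S=k[X_{ij},1/\det X]$ with derivation fixed by $X'=AX$, and let $I(G)\subset C[X_{ij},1/\det X]$ be the ideal defining $G$ over $C$. The key check is that the extended ideal $J=I(G)\cdot S$ is a \emph{differential} ideal. Writing $A=\sum_l f_l M_l$ with $f_l\in k$ and $M_l\in\calG$, and using that $p\in I(G)$ has constant coefficients, one computes
\[ p' \;=\; \sum_{i,j}(AX)_{ij}\,\frac{\partial p}{\partial X_{ij}} \;=\; \sum_{l} f_l\sum_{i,j}(M_lX)_{ij}\,\frac{\partial p}{\partial X_{ij}}. \]
Since $\exp(tM_l)\in G$ for $M_l\in\calG$, each inner sum again vanishes on $G$, hence lies in $I(G)$, so $p'\in J$. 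Extending scalars from $C$ to $k$ keeps $I(G)$ proper, so $J$ is proper and lies in some maximal differential ideal $M$; as $C$ is algebraically closed I may take $R=S/M$ as the Picard--Vessiot ring. Then the image $Z$ of $X$ satisfies $I(G)$, i.e. $Z\in G(R)$, and for $\sigma\in\DGal(R/k)$ one gets $m_\sigma=Z^{-1}\sigma(Z)\in G(R)\cap\GL_n(C)=G$. Thus $\DGal$ sits in $G$ for this model, and is conjugate to a subgroup of $G$ in general.

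For \textbf{part (2)}, now $\DGal=H$, and by the torsor proposition $R=k[V]$ for an $H$-torsor $V\subseteq\GL_n$ defined over $k$. Part~(1) lets me arrange $Z\in G(R)$, so the generic point of $V$ lies in the closed set $G$; as $R$ is a domain this forces $V\subseteq G$. The crucial input is that $H$ is connected and $k=C(x)$ is a $C_1$-field, so by the proposition of Steinberg recalled above the torsor $V$ is trivial, hence has a $k$-rational point $B\in V(k)\subseteq G(C(x))$; this $B$ is the gauge matrix. Because $V$ is a left coset $BH$ of $H$ and $B\in\GL_n(k)$, left multiplication by $B^{-1}$ identifies $V$ with $H$ over $k$, so $\tilde Z:=B^{-1}Z\in H(R)$ is the generic point of $H$. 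A direct computation gives $\tilde Z'=\tilde A\tilde Z$ with $\tilde A=B^{-1}AB-B^{-1}B'$, and $B\in G(C(x))$ is exactly the asserted transformation.

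It remains to verify $\tilde A\in\calH\otimes_C k$. For every $p$ in the (constant-coefficient) defining ideal of $H$, differentiating the relation $p(\tilde Z)=0$ yields $\sum_{i,j}\frac{\partial p}{\partial X_{ij}}(\tilde Z)\,(\tilde A\tilde Z)_{ij}=0$ in $R$. Since $\tilde Z$ is the generic point of $H$, this is the identity $\sum_{i,j}\frac{\partial p}{\partial X_{ij}}(g)(\tilde Ag)_{ij}=0$ valid for all $g\in H$; specializing to $g=e$ gives
\[ \sum_{i,j}\frac{\partial p}{\partial X_{ij}}(e)\,\tilde A_{ij}=0, \]
and these are precisely the $C$-linear equations cutting out the Lie algebra $\calH=T_eH$. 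As $\tilde A$ has entries in $k$, this says $\tilde A\in\calH\otimes_C k$, completing the argument.

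I expect the \textbf{main obstacle} to be the core of part~(2): producing the $k$-rational gauge matrix $B$, which genuinely relies on the triviality of the $H$-torsor over $C(x)$ (Steinberg, via Tsen's theorem that $C(x)$ is $C_1$) and is exactly where both $k=C(x)$ and the connectedness of $H$ are indispensable. The subsequent verification that this same $B$ forces $\tilde Z$ into $H$, so that the logarithmic-derivative computation lands $\tilde A$ in $\calH\otimes_C k$, is then a bookkeeping matter once the generic-point interpretation of $\tilde Z$ is in hand.
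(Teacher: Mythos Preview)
Your argument is correct and follows exactly the route one would expect: for (1) you verify that the ideal of $G$ generates a differential ideal via the left-invariance computation, and for (2) you invoke the torsor description together with Steinberg's triviality result over $C(x)$ to produce the $k$-rational gauge $B\in G(C(x))$, then read off $\tilde A\in\calH\otimes_C k$ from the generic-point identity. The paper does not give its own proof of this proposition but simply cites Proposition~1.31 of \cite{PuSi2003}; your write-up is essentially that standard proof, with the torsor-triviality input being precisely the Steinberg proposition that the paper quotes earlier in Section~\ref{mfssec2}.
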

These results allow us to formulate the following strategy to construct differential equations with differential Galois group over $C(x)$ a given connected group $G$.  Select $A \in \calG\otimes_CC(x)$ such that
\begin{enumerate}
\item the differential Galois group of $Y' = AY$ over $C(x)$ is connected, and
\item for any proper connected linear algebraic subgroup $H$ of $G$, there is no $B \in G(C(x))$ such that $B^{-1}AB - B^{-1}B' \in \calH\otimes_CC(x)$.
\end{enumerate}
\noindent We will assume for convenience that $C \subset \Cx$. To insure that the differential Galois group is connected, we will select an element $A \in \calG\otimes_CC[x]$, that is a matrix with {\em polynomial entries}.  This implies that there is a fundamental solution matrix whose entries are entire functions.  Since these functions and their derivatives generate the associated Picard-Vessiot extension $K$, any element of $K$ will be a function with at worst poles on the complex plane.  If $E, C(x)\subset E \subset K$ is the fixed field of $G^0$, then the elements of $E$ are algebraic functions that can only be ramified at the singular points of $Y' = AY$, that is, only at $\infty$.  But such an algebraic function must be rational so $E = C(x)$ and therefore $G = G^0$.\\[0.1in]
The next step is to select an $A\in \calG\otimes_CC[x]$ satisfying condition (ii) above. This is the heart of \cite{MiSi:JA} and we shall only give two examples.
\begin{ex}$G = C^*\times C^* =$ \[\{\left( \begin{array}{cc} a&0\\0&b\end{array}\right) \ | \ ab\neq 0\}\]
If $\calG$ is the lie algebra of $G$ then  $\calG\otimes_CC[x] =  $\[\{\left( \begin{array}{cc} f_1&0\\0&f_2\end{array}\right) \ | \ f_1,f_2 \in C[x]\}\]
The proper closed subgroups of $G$ are of the form
\[G_{m,n} = \{\left( \begin{array}{cc} c&0\\0&d\end{array}\right) \ | \ c^nd^m=1\}\]
$m,n$ integers, not  both zero.  The connected subgroups are those $G_{m,n}$ with $m,n$ relatively prime.  If $\calG_{m,n}$ is the lie algebra of $G_{m,n}$ then  $\calG_{m,n}\otimes_CC(x) =$\[\{\left( \begin{array}{cc} g_1&0\\0&g_2\end{array}\right) \ | \ mg_1+ng_2=0 \}\]
We need to find $A = \left( \begin{array}{cc} f_1&0\\0&f_2\end{array}\right), f_1,f_2 \in  C[x] $ such that for any $B=
\left( \begin{array}{cc} u&0\\0&v\end{array}\right), u,v\in C(x)$, $BAB^{-1} -BB^{-1}\notin\calG_{m,n}\otimes_CC(x)$, that is
\[m(f_1-\frac{u'}{u}) + n(f_2-\frac{v'}{v}) = 0 \Rightarrow m= n = 0.\]
It is sufficient to find $f_1,f_2 \in C[x]$ such that 
\[mf_1+nf_2 = \frac{h'}{h} \mbox{ for some } h \in C(x) \Rightarrow m = n = 0\]
Selecting $f_1 = 1$ and $f_2 = \sqrt{2}$ will suffice so
\[Y' = \left( \begin{array}{cc} 1&0\\0&\sqrt{2}\end{array}\right)Y\]
has Galois group $G$.
\end{ex}
\begin{ex} $ G = {\rm SL}_2$.  Its lie algebra is  ${\rm sl}_2$  = trace $0$ matrices. Let
\[ A = A_0+xA_1 = \left( \begin{array}{cc} 0&1\\ 1&0\end{array}\right) + x \left( \begin{array}{cc} 1&0\\0&-1\end{array}\right)\]
It can be shown that any proper subalgebra of ${\rm sl}_2$ is solvable and therefore leaves a one dimensional subspace invariant.  A  calculation shows that no $U'U^{-1}+UAU^{-1}$ leaves a line invariant (see \cite{MiSi:JA}) for details) and so this equation must have Galois group $\SL_2$.
\end{ex}

This latter example can be generalized to any  semisimple linear algebraic group $G$. Such a group can be realized as the differential Galois group of an equation of the form $Y' = (A_0+xA_1)Y$  where $A_0$ is a sum of generators of the root spaces and $A_1$ is a sufficiently general element of the Cartan subalgebra.  In general, we have
\begin{thm} Let $G$ be a connected lin alg gp. defined over alg closed field $C$.  One can construct $A_i \in \Mn(C)$ and $A_{\infty} \in C[x]$ such that for distinct $\alpha_i \in C$
\[Y' = (\frac{A_1}{x-\alpha_1} + \ldots +\frac{A_d}{x-\alpha_d} + A_{\infty})Y\]
has Galois group $G$.
\end{thm}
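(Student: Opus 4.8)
The plan is to reduce the theorem to the two-part criterion laid out in the strategy above and then to build the matrix $A = A_\infty + \sum_i \frac{A_i}{x-\alpha_i}$ one structural piece of $G$ at a time. By the Proposition, if I choose $A \in \calG \otimes_C C(x)$ then the differential Galois group of $Y' = AY$ is automatically conjugate into $G$; it then suffices to arrange that (i) this group is connected and (ii) no \emph{proper} connected subgroup $H \subsetneq G$ admits a $B \in G(C(x))$ with $B^{-1}AB - B^{-1}B' \in \calH \otimes_C C(x)$. Indeed, if the realized group were a proper connected $H$, then part (2) of the Proposition would supply exactly such a $B$, contradicting (ii); so (i) and (ii) together force the group to be all of $G$.

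To choose $A$, I would exploit the structure of a connected group in characteristic zero. Writing the Levi decomposition $G = R_u(G) \rtimes L$ with $L$ reductive, and $L$ an almost-direct product of a semisimple group $S$ and a central torus $T$, I would distribute the three factors among the coefficients of $A$. The semisimple part $S$ is handled exactly as in the $\SL_2$ example and its announced generalization: I put into $A_\infty$ a term $A_0 + x A_1$, where $A_0$ is a sum of root-vector generators spanning the root spaces of $S$ and $A_1$ is a sufficiently generic element of a Cartan subalgebra. The torus $T$ is handled as in the first example, by adjoining to $A_\infty$ constant diagonal entries whose eigenvalues are linearly independent over $\Qx$, so that no nontrivial character relation can force a drop into a proper subtorus. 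Finally the unipotent radical $R_u(G)$ is produced by the finite singular points: I would take the residues $A_i$ to be nilpotent elements of $\mathrm{Lie}(R_u(G))$, chosen together with the reductive piece so that their mutual brackets and the adjoint action of the reductive part generate all of $\mathrm{Lie}(R_u(G))$.

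Connectedness, condition (i), I would secure essentially by the polynomial-entries argument already given: when the only singularity is at $\infty$ the fundamental matrix is entire, every element of the Picard--Vessiot field is meromorphic on $\Cx$, and the fixed field of the identity component consists of algebraic functions unramified except at $\infty$, hence rational, so the group equals its identity component. Where finite regular singular points are genuinely needed for $R_u(G)$, I would keep their residues $A_i$ nilpotent, so that the local monodromy at each $\alpha_i$ is unipotent and thus lies in a connected subgroup; this keeps the fixed field of the identity component unramified over $C(x)$ and therefore rational, again yielding connectedness.

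The hard part will be verifying condition (ii): that $A$ cannot be gauge-transformed inside $G(C(x))$ into $\calH \otimes_C C(x)$ for any proper connected $H$. This is the genuinely delicate, Lie-algebraic heart of the construction, exactly as in the $\SL_2$ example, where one must check that no $U'U^{-1} + UAU^{-1}$ leaves a line invariant. In general it reduces to a genericity statement: the coefficients of $A$ must generate $\calG$ strongly enough---through their values, their residues, and the brackets these produce---that no proper subalgebra stable under the admissible changes of basis can contain the transformed matrix. Establishing this uniformly across the semisimple, toral and unipotent blocks, and across all proper connected subgroups $H$ simultaneously, is where the real work of \cite{MiSi:JA} lies; the structural choices above are designed precisely so that this verification can be carried out factor by factor.
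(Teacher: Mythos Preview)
Your reduction via the Proposition to conditions (i) and (ii) is exactly the paper's strategy, and your connectedness argument is sound: nilpotent residues $A_i$ give unipotent local monodromies at the $\alpha_i$, these lie in $G^0$, so $K^{G^0}$ is unramified at each $\alpha_i$; being then ramified over at most the single point $\infty$, it must equal $C(x)$. You also correctly flag condition (ii) as the genuine content, which the paper itself does not prove but defers to \cite{MiSi:JA}.

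Where your sketch departs from the construction in \cite{MiSi:JA} is the organizing decomposition. You build $A$ around the Levi decomposition $G=R_u(G)\rtimes L$, putting the reductive Levi into $A_\infty$ and the unipotent radical into the finite residues. But the remark immediately following the theorem records that in \cite{MiSi:JA} the number $d$ of finite poles equals the number of generators of a Zariski-dense subgroup of $G/L(G)$, where $L(G)$ is the normal subgroup generated by \emph{all} tori in $G$ --- not the Levi factor. These do not coincide: for $G=G_a\rtimes G_m$ one has $R_u(G)=G_a$ but $L(G)=G$, so \cite{MiSi:JA} takes $d=0$ (a purely polynomial $A$), whereas your scheme would spend a finite pole on the $G_a$ part. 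In general $L(G)$ contains the Levi factor and can absorb substantial pieces of $R_u(G)$; routing the construction through $L(G)$ is precisely what allows everything torus-generated to be realized at the single irregular point $\infty$, reserving finite regular poles only for the genuinely non-toral quotient.

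This does not make your Levi-based plan wrong --- it is plausibly completable and would yield the qualitative theorem --- but it is a different route, would typically produce a larger $d$ than necessary, and the verification of condition (ii) would have to be organized around a different catalogue of proper connected subgroups than the one actually used in \cite{MiSi:JA}.
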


Furthermore, the number $d$ in the above result coincides with the number of generators of a Zariski dense subgroup of $G/L(G)$ and the degree of the polynomials in $A_\infty$ can be bounded in terms of the groups as well.  For non-connected groups \cite{MiSi:TOU} gives a construction to realize solvable-by-finite groups and \cite{CMS05} gives a construction to realize certain semisimple-by-finite groups. Finally, many linear algebraic groups can be realized as differential Galois groups of members of classical families of differential equations \cite{BBH, BH,  katz_calculations, katz_exponential, katz_rigid, duval_mitschi, mitschi89, mitschi96}.\\[0.1in]
To end this section, I will mention a general inverse problem.  We know that any Picard-Vessiot ring is the coordinate ring of a torsor for the differential Galois group. 
One can ask if every coordinate ring of a torsor for a linear algebraic group can be given the structure of a PIcard-Vessiot ring.  To be more precise, let $k$ be a differential field withe derivation $\dd$ and algebraically closed constants $C$.  Let $G$ be a linear algebraic group defined over $C$ and $V$ a $G$-torsor defined over $k$.\\[0.1in]
{\em Does there exist a derivation on $R=k[V]$ extending $\dd$ such that $R$ is a Picard-Vessiot ring with differential Galois group $G$ and such that the Galois action of $G$ on $R$ corresponds to the action  induced by $G$ on the torsor $V$?} \\[0.1in]
When $k = C(x)$, this has can be answered affirmatively due to the work of \cite{hartmann}.  For a general $k$, finitely generated over $C$, I do not know the answer. For certain groups and fields, Juan and Ledet have given positive answers, see \cite{J2, JL1, JL2, JL3}. \\[0.1in]
Finally, the inverse problem over $\Rx(x)$ has been considered by \cite{dyck_inverse}.

\section{Families of Linear Differential Equations}\label{mfssec6}
In this section we will consider a family of parameterized linear differential equations and discuss how the differential Galois group depends on the parameter.  I will start by describing the situation for polynomials and the usual Galois groups. Let $C$ be a field and $\overline{C}$ its algebraic closure\\[0.1in]
 Let $G \subset \calS_m$ = fixed group of permutations and 
${\calP(n,m, G)} =$
\[ \{ P = \sum_{i=0}^n\sum_{j=0}^m a_{i,j}x^iy^j \ | a_{i,j}  \in C, Gal(P/C(x)) = G\}\subset C^{(n+1)(m+1)}\]
To describe the structure of $\calP(n,m, G)$, we need the following definition
\begin{defin} A set $S \subset \overline{C}^N$ is {\em $C$-constructible} if it is the finite union of sets \[\{a \in \overline{C}^N\ | \ f_1(a) = \ldots =  f_t(a) = 0, g(a) \neq 0\}\]
where $f_i, g \in C[X_1,\ldots ,X_N]$.

\end{defin}
For example, any Zariski closed set defined over $C$ is $C$-constructible and a subset $S \subset \overline{C}$ is $\overline{C}$-constructible if and only if it is finite or cofinite.  In particular, $\Qx$ is not $\Cx$-constructible in $\Cx$.
We have the following results (see \cite{vdDR} and the references given there)
\begin{thm}$\calP(m,n,G)$ is $\Qx$-constructible.\end{thm}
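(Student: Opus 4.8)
The plan is to rewrite the defining condition ``$\mathrm{Gal}(P/C(x))=G$'' as a finite Boolean combination of factorization statements about a fixed family of auxiliary (resolvent) polynomials whose coefficients depend polynomially, over $\Qx$, on the $a_{i,j}$, and then to check that each such factorization statement cuts out a $\Qx$-constructible set by elimination theory. Throughout I read ``$\mathrm{Gal}(P/C(x))$'' as the Galois group of the splitting field of $P$ over $\overline C(x)$ (the geometric Galois group), as a subgroup of $\calS_m$ acting on the $m$ roots and taken up to conjugacy; this is forced, since over a non-closed $C$ the arithmetic analogue already fails to be $\Qx$-constructible (for $m=2$, $y^2-a$ has full arithmetic group exactly when $a$ is a nonsquare, and the nonsquares are not $\Qx$-constructible). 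First I would carve out the locus on which the problem makes sense. With $N=(n+1)(m+1)$ and $P$ viewed as a polynomial in $y$ over $\overline C(x)$, the condition that $P$ have $y$-degree exactly $m$ is that some $a_{i,m}\neq 0$, and the condition that $P$ be separable is that $\mathrm{disc}_y(P)\in\overline C[x]$ be nonzero, i.e. that some one of its $x$-coefficients be nonzero. Both are complements of $\Qx$-closed conditions (the relevant polynomials in the $a_{i,j}$ have integer coefficients), hence $\Qx$-constructible, and $\calP(n,m,G)$ lies inside their intersection.

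Next comes a group-theoretic reduction, in the spirit of the paper's discussion of distinguishing groups by the sets on which they act. A subgroup $\Gamma\leq\calS_m$ is determined up to conjugacy by its orbit lengths on the coset spaces $\calS_m/H$ as $H$ runs over the finitely many conjugacy classes of subgroups of $\calS_m$ (a concrete form of the fact, recalled in Appendix B1 of \cite{PuSi2003}, that a finite group is determined by the sets on which it acts). For each such $H$ and a separating invariant $\Theta$ with stabilizer exactly $H$, form the resolvent
\[R_{H,\Theta}(T)=\prod_{\sigma H\in\calS_m/H}\bigl(T-\Theta(r_{\sigma(1)},\dots,r_{\sigma(m)})\bigr),\]
where $r_1,\dots,r_m$ are the roots of $P$. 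Its coefficients are symmetric in the $r_i$, hence $\Zx$-polynomials in the elementary symmetric functions of the roots; those are in turn the coefficients of $P$ divided by its leading $y$-coefficient, so after multiplying $R_{H,\Theta}$ by a power of $\sum_i a_{i,m}x^i$ one gets a polynomial in $\overline C[x][T]$ whose coefficients are $\Qx$-polynomials in the $a_{i,j}$ and in $x$ (this clearing of denominators does not change the factorization type in $T$ over the field $\overline C(x)$). When $R_{H,\Theta}$ is separable, the degrees of its irreducible factors over $\overline C(x)$ are exactly the $\Gamma$-orbit lengths on $\calS_m/H$, so ``$\Gamma$ conjugate to $G$'' becomes a prescribed list of factorization types for the finite family $\{R_{H,\Theta}\}$. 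To make this uniform over the family I would take $\Theta_t=\sum_i t^{\,i-1}X_i$ with an auxiliary parameter $t$, since for every $a$ in the separable locus some value of $t$ renders all the resolvents separating and separable, and then existentially quantify over $t$ at the very end.

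The technical heart is the constructibility lemma: if $Q(T)=Q_a(x,T)\in\overline C[x][T]$ has coefficients that are $\Qx$-polynomials in $a$ (and polynomials in $x$), then for any prescribed factorization type the set of $a$ for which $Q$ factors that way over $\overline C(x)$ is $\Qx$-constructible. By Gauss's lemma, factoring over $\overline C(x)$ is factoring in $\overline C[x][T]$; and since $x$-degrees add under multiplication, every factor has $x$-degree at most $\deg_x Q$. Hence ``$Q=F\cdot G$ with $\deg_T F=e$'' is the solvability of an explicit system of $\Zx$-polynomial equations in finitely many unknowns, namely the bounded-degree coefficients of $F$ and $G$, together with the parameters $a$. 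By Chevalley's theorem the projection of this $\Qx$-closed set to the $a$-coordinates is $\Qx$-constructible, and prescribing a full factorization type (including irreducibility of each piece, which is the negation of the existence of a finer factor) is a finite Boolean combination of such conditions. Applying the lemma to each $R_{H,\Theta_t}$, intersecting with the separable/degree-$m$ locus, forming the finite Boolean combination dictated by the orbit-length data of $G$, and finally projecting away $t$ (again $\Qx$-constructibility is preserved under projection), shows that $\calP(n,m,G)$ is $\Qx$-constructible.

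I expect the main obstacle to be the two places where finiteness must be secured so that elimination applies: bounding the $x$- and $T$-degrees of the factors of each resolvent so that each factorization condition is a genuinely finite-dimensional system, and handling separability of the resolvents uniformly over the whole family rather than for a single $a$ (the role of the Tschirnhaus parameter $t$). A secondary point to keep straight is that every elimination must be run over $\Qx$; this is legitimate because each polynomial that appears---the discriminant, the symmetric functions, the product defining $R_{H,\Theta}$, and the equations $Q=FG$---has integer coefficients, so the Chevalley steps return sets defined over $\Qx$ rather than merely over $C$.
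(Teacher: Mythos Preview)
The paper does not give its own proof of this theorem: it simply states the result and refers the reader to \cite{vdDR} (van den Dries--Ribenboim) ``and the references given there.'' So there is no in-paper argument to compare against; the relevant benchmark is the cited source, whose method is model-theoretic (quantifier elimination in ACF applied to a first-order formulation of ``the Galois group is $G$'').

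Your approach is a correct, more classical and explicit route to the same conclusion. The skeleton---cut out the locus where $P$ has $y$-degree $m$ and is separable; encode ``Galois group conjugate to $G$'' via a finite battery of resolvent factorization types; reduce each factorization condition to solvability of a bounded-degree polynomial system; project by Chevalley---is sound, and you are right to flag that the statement only makes sense for the geometric Galois group over $\overline C(x)$. Your use of the table-of-marks principle (a subgroup of $\calS_m$ is determined up to conjugacy by its orbit structure on all $\calS_m/H$) is exactly what is needed, and the degree bounds for factors in $\overline C[x][T]$ via Gauss's lemma are the right way to make the elimination finite-dimensional. Compared with the model-theoretic proof, yours is longer to write out but entirely elementary and, in principle, effective: one could actually list the defining equations and inequations. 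The model-theoretic argument packages all of this into a single appeal to quantifier elimination, trading explicitness for brevity.

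One small point to tighten: when you introduce the Tschirnhaus parameter $t$ and ``existentially quantify over $t$ at the very end,'' you should state explicitly why this is safe---namely, that for every $a$ in the separable locus the set of bad $t$ (those for which some resolvent acquires repeated roots) is a proper Zariski-closed subset, so a good $t$ always exists, and that for any good $t$ the factorization type of each resolvent is the \emph{same} (being determined by the $\Gamma$-orbit structure, not by $t$). With that said, the projection over $t$ genuinely computes the desired set rather than an over- or under-approximation.
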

From this we can furthermore deduce
\begin{cor}\label{finiteinverse} Any finite group $G$ is a Galois group over $\overline{\Qx}(x)$.\end{cor}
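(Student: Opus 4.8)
The plan is to combine the constructibility statement of the preceding theorem with a classical realization result over $\Cx(x)$ and a transfer (Lefschetz) principle. First I would embed the abstract finite group $G$ into a symmetric group: setting $m = |G|$ and using the regular representation gives a transitive embedding $G \hookrightarrow \calS_m$. It then suffices to produce, for some $n$, a point of $\calP(n,m,G)$ with coordinates in $\overline{\Qx}$, since such a point is exactly a polynomial $P = \sum_{i,j} a_{i,j} x^i y^j$ with $a_{i,j} \in \overline{\Qx}$ and $\mathrm{Gal}(P/\overline{\Qx}(x)) = G$, which is precisely the realization claimed.

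Second, I would establish that $\calP(n,m,G)$ is nonempty over $\Cx$ for a suitable $n$. Because every finite quotient of the topological fundamental group of $\Px^1(\Cx)$ with finitely many points removed is realized by a branched covering (the Riemann existence theorem), $G$ occurs as the Galois group of some finite Galois extension $K/\Cx(x)$. Since $\Cx(x)$ has characteristic $0$, $K$ is generated by a primitive element, whose minimal polynomial, after clearing denominators, is a polynomial $P_0 = \sum a_{i,j} x^i y^j$ with $a_{i,j} \in \Cx$ and $\mathrm{Gal}(P_0/\Cx(x)) = G$. Thus $P_0$ is a $\Cx$-point of $\calP(n,m,G)$ for the appropriate degree $n$.

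Third, and this is the crux, I would descend from $\Cx$ to $\overline{\Qx}$. By the preceding theorem, $\calP(n,m,G)$ is cut out inside affine space by finitely many polynomial equations and inequations with coefficients in $\Qx \subseteq \overline{\Qx}$, and these same defining data describe the $C$-points for every field $C \supseteq \Qx$. The assertion that this constructible set has a point is an existential first-order sentence over $\Qx$; since $\overline{\Qx}$ and $\Cx$ are both algebraically closed fields of characteristic $0$, and the theory of such fields is complete (the Lefschetz principle), the sentence holds in $\Cx$ if and only if it holds in $\overline{\Qx}$. Equivalently, by the Nullstellensatz, solvability of the defining system over the algebraically closed field $\overline{\Qx}$ is detected by an ideal-membership condition that is unchanged under the extension $\overline{\Qx} \subseteq \Cx$. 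Hence the nonempty $\Cx$-locus forces a nonempty $\overline{\Qx}$-locus, yielding the required $P$ over $\overline{\Qx}(x)$.

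I expect the main obstacle to be the careful phrasing of this last transfer step: one must be sure that constructibility over $\Qx$ is genuinely uniform in the field $C$, so that the same finitely many $f_i, g \in \Qx[X]$ define $\calP(n,m,G)$ for every $C$ and the $\Cx$-point and the sought $\overline{\Qx}$-point refer to one and the same variety. This uniformity is exactly what the preceding theorem supplies, and granted it, the descent is immediate from completeness of $\mathrm{ACF}_0$.
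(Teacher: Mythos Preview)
Your argument is correct and follows essentially the same route as the paper: embed $G$ in a symmetric group, realize it over $\Cx(x)$ via the Riemann existence theorem to obtain a $\Cx$-point of $\calP(n,m,G)$, and then use the $\Qx$-constructibility of $\calP(n,m,G)$ together with the Nullstellensatz (equivalently, completeness of $\mathrm{ACF}_0$) to descend to a $\overline{\Qx}$-point. The paper phrases the transfer step simply as ``Hilbert's Nullstellensatz implies it will have a point in $\overline{\Qx}^{(n+1)(m+1)}$,'' but your more careful discussion of uniformity in $C$ is exactly the content needed to justify that one-line invocation.
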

\begin{proof} (Outine)  For $G \subset \calS_m$, one can construct an $m$-sheeted normal covering of the Riemann Sphere with this as the group of deck transformations, \cite{tret71}.  The Riemann Existence Theorem implies that this Riemann Surface is an algebraic curve and that the the Galois group of its function field over $\Cx(x)$ is $G$. Therefore for some $n$, $\calP(n,m, G)$ has a point in $C^{(n+1)(m+1)}$.  Hilbert's Nullstellensatz implies it will have a point in $\overline{\Qx}^{(n+1)(m+1)}$ and therefore that $G$ is a Galois group over $\overline{\Qx}(x)$.
\end{proof}
\noindent\underline{\bf Differential Galois Groups of Families of Linear Differential}\linebreak \underline{\bf  Equations.}
I would like to apply the same strategy to linear differential equations. We begin by fixing a linear algebraic group $G \subset \GL_m$ and set ${ \calL(m,n,G)} = $
\[ \{ L = \sum_{i=0}^n\sum_{j=0}^m a_{i,j}x^i\dd^j \ | a_{i,j}  \in \Cx, Gal(L/\Cx(x)) = G\}\subset \Cx^{(n+1)(m+1)}\]
Regrettably, this set is not necessarily a $\Qx$ or even a $\Cx$-constructible set.
\begin{ex}\label{famex1} We have seen that the Galois group of ${ \frac{dy}{dx} - \frac{\alpha}{x} y}$ is $ \GL_1(\Cx)$ if and only if $\alpha \notin \Qx$.  If $\calL(1,1,GL_1)$ were $\Cx$-constructible then 
\[\calL(1,1,GL_1) \cap \{ x\dd - \alpha  \ | \alpha \in \Cx\} = \Cx \backslash \Qx\] would be constructible.  Therefore, even the  set $\calL(1,1,GL_1)$ is not $\Cx$-constructible.
\end{ex}
\begin{ex}\label{famex2} Consider the family \[\frac{d^2y}{dx^2} -( \alpha_1 +\alpha_2)\frac{dy}{dx}  +\alpha_1\alpha_2 y = 0 \hspace{.3in} G = \Cx^* \oplus \Cx^* \]

This equation has solution $\{e^{\alpha_1 x}, e^{\alpha_2 x}\}$. Assume $\alpha_2 \neq 0$.  
If $\alpha_1/\alpha_2 \notin \Qx \mbox{ then }DGal = \Cx^* \oplus \Cx^*$ while if  $\alpha_1/\alpha_2 \in \Qx \mbox{ then } DGal = \Cx^*$. Therefore, $\calL(2,0,\Cx^* \oplus \Cx^*)$ is not  $\Cx$-constructible.
\end{ex}

Recall that  for $L \in \Cx(x)[\dd] $ and   $x_0$ a singular point $ L(y) = 0$ has a fundamental set of solutions:
\[y_i = (x-\alpha)^{\rho_i} e^{P_i(1/t)}(\sum_{j=0}^{s_i} b_{ij}(\log(x-x_0)^j)\]
where $\rho_i \in \Cx$,  $t^\ell = (x-x_0)$ for some $\ell \leq n!$, $P_i$ are polynomials without constant term, $b_{ij} \in \Cx[[x-x_0]]$.  
\begin{defin} The set $\calD_{x_0}= \{ \rho_1, \ldots , \rho_n, P_1, \ldots , P_n\}$ is called the {\em local data of $L$} at $x_0$.
\end{defin}
In Example~\ref{famex1}, $0$ and $\infty$ are the singular points and at both of these $\{\alpha\}$ is the local data.  In Example~\ref{famex2}, $\infty$ is the only singular point and the local data is $\{P_1 = \frac{\alpha_1}{x}, P_2 = \frac{\alpha_2}{x}\}$.  In both cases, the parameters allow us to vary the local data.  One would hope that by fixing the local data, parameterized families of linear differential equations with fixed Galois group $G$ would be constructible.  This turns out to be true for many {\em but not all} groups $G$.
\begin{defin} Let $\calD =  \{ \rho_1, \ldots , \rho_r, P_1, \ldots , P_s\}$ be a finite set with $\rho_i \in \Cx$ and $P_i$ polynomials without constant terms. We define \[ \calL(m,n,\calD,G) = \{L \in \calL(m,n,G) \ | \ \calD_{a} \subset \calD \mbox{ for all sing pts } a  \in S^2\}\]
\end{defin} 
Note that we do not fix the position of the singular points; we only fix the local data at putative singular points.  Also note that $\calL(m,n,\calD) 
= \{ L= \sum_{i=0}^n\sum_{j=0}^m a_{i,j}x^i\dd^j \ | a_{i,j}  \in \Cx, \calD_{x_0} \subset \calD, \mbox{ for all sing pts } x_0\}$ is a constructible set.  To describe the linear algebraic groups for which $\calL(m,n,\calD,G)$ is a constructible set, we need the following definitions. 
\begin{defin} Let $G$ a linear algebraic group  defined over $C$, an algebraically closed field  and let $G^0$ be the identity component.\\[0.05in]
1) A {\em character $\chi$ } is a polynomial homomorphism $\chi:G \rightarrow C^*$.\\[0.05in]
2) {\em $KerX(G^0)$} is defined to be the intersection of kernels of all characters $\chi: G^0 \rightarrow C^*$.
\end{defin}
\begin{exs} 1) If $G$ is finite, then $KerX(G^0)$ is trivial.\\[0.05in]
2) If $G^0$ is semisimple or unipotent, then $KerX(G^0) = G^0$.
\end{exs}
One can show \cite{singer_moduli} that $KerX(G^0)$ is the smallest normal subgroup such that $G^0/KerX(G^0)$ is a torus.  In addition it is the subgroup generated by all unipotent elements of $G^0$. Furthermore, $KerX(G^0)$ is normal in $G$ and we have
\[1\rightarrow G^0/KerX(G^0)\rightarrow G/KerX(G^0)\rightarrow G/G^0 \rightarrow 1\]
Finally, $G^0/KerX(G^0)$ is abelian, so we have that  $G/G^0$ acts on $G^0/KerX(G^0)$.

\begin{thm}\label{diffcons}\cite{singer_moduli} Assume that the action of $G/G^0$  on $G^0/KerX(G^0)$ is trivial. Then $\calL(m,n,\calD,G)$ is constructible.
\end{thm}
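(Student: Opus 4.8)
The plan is to work entirely inside the set $\calL(m,n,\calD)$, which we are given is constructible, and to show that the further condition $\DGal(L/\Cx(x)) = G$ cuts out a constructible subset. First I would stratify $\calL(m,n,\calD)$ constructibly by the number and local data of the singular points (every local datum lies in the fixed finite set $\calD$), so that on each stratum the singular configuration, the exponents $\rho_i$ and the exponential parts $P_i$ are of fixed type. By Ramis' theorem the local differential Galois group at each singular point is the Zariski closure of the group generated by its formal monodromy, exponential torus and Stokes matrices, and the global group is the Zariski closure of these together with the topological monodromy. Fixing $\calD$ fixes the eigenvalue data of the formal monodromies and the embedding type of the exponential tori, so only the topological connection data varies algebraically across a stratum.

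The second step separates the detection of $\DGal(L)$ into a \emph{rigid} part and a \emph{toric} part. Using Theorem~\ref{tannakathm} together with Proposition~\ref{invsubprop}, the $\DGal(L)$-invariant structure of any construction $N$ built from $M_L$ by duals, tensor products, symmetric powers and subquotients is read off from the factorization of the associated scalar operator, equivalently from $\dim_{\Cx}\Soln_{\Cx(x)}(N)$. Because the relevant degree bounds for rational solutions and factors are controlled by the exponents, which are bounded by $\calD$, these dimensions are \emph{uniformly} computable and hence constructible functions of the coefficients of $L$ on each stratum. The subgroup $KerX(G^0)$ is generated by unipotent elements and so admits no nontrivial character, while $G/G^0$ is finite; such groups are distinguished from their proper subgroups by \emph{finitely many} such constructions (as in the tannakian and permutation discussions of Section~\ref{mfssec4}). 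Matching this finite package of invariant-dimension data to that predicted by $G$ is therefore a constructible condition, and it pins $\DGal(L)$ down exactly up to the quotient torus $T = G^0/KerX(G^0)$.

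The third step handles $T$, which is precisely the source of the non-constructibility witnessed by Examples~\ref{famex1} and~\ref{famex2}: deciding a torus needs infinitely many representations and is sensitive to $\Qx$-linear relations among exponents. The point is that each character $\chi\colon G^0\to\Cx^*$ corresponds to a rank-one subquotient whose local invariants are exactly combinations of the $\rho_i$ and $P_i$ appearing in $\calD$. Since $\calD$ is fixed, the lattice of $\Qx$-relations among these data, which determines the rank of $T$ and which characters are trivial, is the \emph{same} for every $L$ in the stratum. Hence the condition that $G^0/KerX(G^0)$ be $T$ with the prescribed character lattice is not merely constructible but constant on the stratum, so it contributes no obstruction. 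Combining the constructible condition of step two with this constant condition of step three shows that $\calL(m,n,\calD,G)$ meets each stratum in a constructible set, and hence is constructible.

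The role of the hypothesis, and the main obstacle, both live at the interface of steps two and three. The trivial-action hypothesis says that in the extension $1\to T\to G/KerX(G^0)\to G/G^0\to 1$ the finite group $G/G^0$ acts trivially on $T$; this is what guarantees that the torus directions are not permuted or twisted by the component group, so that the finite tannakian package (which only sees $G/G^0$ and $KerX(G^0)$) and the fixed character-lattice data of $T$ may be imposed \emph{independently}. Without it, a character of $G^0$ need not be $G$-invariant, the rank-one subquotients detecting $T$ no longer have local invariants pinned by $\calD$ alone, and the torus detection becomes $\Qx$-sensitive again. I expect the hard part to be the rigorous justification of step three, namely showing that fixing $\calD$ genuinely determines $T$ together with its embedding, i.e. that the topological monodromy cannot introduce extra toric directions varying non-constructibly across a stratum; it is exactly here that the triviality of the action must be used.
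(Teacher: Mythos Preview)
The paper does not actually prove this theorem; it is stated with a citation to \cite{singer_moduli} and followed only by examples, a corollary, and the remark that the hypothesis is necessary. So there is no in-paper argument to compare against directly, only the strategy of the cited source.

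Your three-step architecture does match that strategy: stratify $\calL(m,n,\calD)$ by which elements of the fixed finite set $\calD$ occur at which singular points; observe that on each such stratum the torus quotient $T=G^0/KerX(G^0)$ of the Galois group is determined by the $\Qx$-relations among the now-fixed exponents and exponential parts; and detect the remaining piece (the extension of $G/G^0$ by $KerX(G^0)$) via finitely many tannakian tests whose degree bounds are controlled by $\calD$. You also locate the role of the hypothesis correctly: if $G/G^0$ acted nontrivially on $T$, a character of $G^0$ would not be $G$-stable, the rank-one subquotients that detect $T$ would no longer have local data pinned by $\calD$ alone, and the toric and finite detection problems would become entangled.

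Two places where the sketch needs real work before it is a proof. In step two, the claim that groups with trivial character group on the identity component are ``distinguished from proper subgroups by finitely many constructions'' is correct but not a consequence of the tannakian formalism by itself; one needs an explicit bound (in \cite{singer_moduli} this comes via Chevalley's theorem realizing a subgroup as the stabilizer of a line in a specific representation) on which tensor constructions suffice, together with the argument that the relevant solution-space dimensions are constructible once $\calD$ bounds the exponents. In step three, your assertion that the torus condition is ``constant on the stratum'' is right only after the stratification is refined to fix exactly which elements of $\calD$ occur at each singular point; you should say explicitly that this refinement is still finite (hence constructible) and, more importantly, that it is the \emph{global} Galois group whose torus quotient you must control, not merely the local ones. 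The honest sentence at the end of your proposal is on target: showing that the global topological monodromy contributes no extra toric direction beyond what $\calD$ predicts is precisely the content that uses the trivial-action hypothesis, and it is the substantive part of the proof in \cite{singer_moduli}.
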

\begin{exs} The groups $G$ that satisfy the hypothesis of this theorem include all finite groups, all connected groups and all groups that are either semisimple or unipotent.  An example of a group that does not satisfy the hypothesis is the group
$\Cx^*\rtimes \{1,-1\}$ where $-1$ sends $c$ to $c^{-1}$. \end{exs} 
For groups satisfying the hypothesis of this theorem we can prove a result similar to Corollary~\ref{finiteinverse}.

\begin{cor}  Let $G$ be as in Theorem~\ref{diffcons}, defined over $\overline{\Qx}$.  Then $G$ can be realized as a differential Galois group over $\overline{\Qx}(x)$.
\end{cor}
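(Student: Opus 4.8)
The plan is to imitate the proof of Corollary~\ref{finiteinverse}, replacing the Riemann Existence Theorem by the inverse-problem results over $\Cx(x)$ and the constructibility of $\calP(m,n,G)$ by that of $\calL(m,n,\calD,G)$. First I would use Theorem~\ref{ttthm} (together with the refinements sketched in Section~\ref{mfssec5}) to realize $G$, viewed as a group defined over $\overline{\Qx}\subset\Cx$, as the differential Galois group of some operator $L_0\in\Cx(x)[\dd]$. Reading off the exponents and polynomial parts at the finitely many singular points of $L_0$ yields a finite local-data set $\calD$, and for suitable $m,n$ the operator $L_0$ is then a $\Cx$-point of $\calL(m,n,\calD,G)$, so this set is nonempty over $\Cx$.

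Next I would descend to $\overline{\Qx}$. By Theorem~\ref{diffcons} the hypothesis on the action of $G/G^0$ guarantees that $\calL(m,n,\calD,G)$ is constructible; provided $\calD$ and the defining equations of $G\subset\GL_m$ have coefficients in $\overline{\Qx}$, this constructible description has coefficients in $\overline{\Qx}$, i.e. $\calL(m,n,\calD,G)$ is $\overline{\Qx}$-constructible. A $\overline{\Qx}$-constructible set that is nonempty over $\Cx$ is already nonempty over $\overline{\Qx}$: it is a finite union of sets $\{f_1=\cdots=f_t=0,\ g\neq 0\}$ with $f_i,g\in\overline{\Qx}[X]$, and consistency of such a system is detected by the Nullstellensatz over the algebraically closed field $\overline{\Qx}$ exactly as over $\Cx$ (equivalently, $\mathrm{ACF}$ of characteristic $0$ is model complete, so the existential sentence asserting nonemptiness transfers from $\Cx$ down to $\overline{\Qx}$). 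Any such $\overline{\Qx}$-point is an operator $L\in\overline{\Qx}(x)[\dd]$ with $\DGal(L/\overline{\Qx}(x))=G$, which is the desired conclusion.

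The step I expect to be the main obstacle is guaranteeing that the local data $\calD$ can be taken with entries in $\overline{\Qx}$. This is essential and not automatic: $\calL(m,n,G)$ itself is not constructible (Examples~\ref{famex1} and~\ref{famex2}), so fixing $\calD$ is unavoidable, yet Theorem~\ref{ttthm} builds $L_0$ from a Fuchsian equation whose exponents are $\tfrac{1}{2\pi i}\log\lambda$ for $\lambda$ an eigenvalue of a monodromy generator, and these are transcendental unless $\lambda$ is a root of unity. To force algebraic local data I would not insist on matching a prescribed monodromy but instead exploit the freedom in the realization: directions in which the local monodromy can be taken quasi-unipotent contribute rational exponents, while the torus directions can be realized directly with algebraic irrational exponents, exactly as $y'=\tfrac{\alpha}{x}y$ with $\alpha\in\overline{\Qx}\setminus\Qx$ realizes $\GL_1$ over $\overline{\Qx}(x)$. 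Equivalently, one may invoke the explicit constructions of Section~\ref{mfssec5}, which produce residue matrices $A_i\in\Mn(\overline{\Qx})$ and a polynomial part $A_\infty\in\overline{\Qx}[x]$; the exponents and the eigenvalues $q_i$ of the normal form are then automatically in $\overline{\Qx}$, so $\calD\subset\overline{\Qx}$ and the descent above applies.
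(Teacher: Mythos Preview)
Your proposal is correct and follows essentially the same strategy as the paper's outline proof: realize $G$ over $\Cx(x)$ via Theorem~\ref{ttthm}, arrange for the local data to lie in $\overline{\Qx}$, then use the $\overline{\Qx}$-constructibility of $\calL(m,n,\calD,G)$ and the Nullstellensatz to descend. The paper handles the algebraic-local-data step by citing a ``small modification'' of Theorem~\ref{ttthm} from \cite{singer_moduli}, whereas you spell out two concrete approaches; note that your alternative (b) via the explicit constructions of Section~\ref{mfssec5} only covers connected $G$ as stated there, so for the full class of groups in Theorem~\ref{diffcons} you would need the monodromy-freedom argument (a) or the cited modification.
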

	\begin{proof} (Outline) Let $G$ be as in Theorem~\ref{diffcons}, defined over $\overline{\Qx}$. We know that $G$ can be realized as the differential Galois group of a linear differential equation over $\Cx(x)$, {\em e.g.,} Theorem~\ref{ttthm}. A small modification of Theorem~\ref{ttthm} allows one to assume that the local data of such an equation is defined over $\overline{\Qx}$ (see \cite{singer_moduli}). Therefore for some $n,m, \calD$, $\calL(m,n,\calD, G)$ has a point in ${\Cx}^{ (n+1)(m+1)}$.  The Hilbert Nullstellensatz now implies that $\calL(m,n,\calD, G)$ has a point in $\overline{\Qx}^{(n+1)(m+1)}$. Therefore  $G \subset \GL_m$ is a differential Galois gp over $\overline{\Qx}(x)$. \end{proof}
The above proof ultimately depends on analytic considerations.  We again note that Harbater and Hartman \cite{HH1,HH2} have given a direct algebraic proof showing all linear algebraic groups defined over $\overline{\Qx}$ can be realized as a differential Galois group over $\overline{\Qx}(x)$..\\[0.1in]
The   hypotheses of Theorem~\ref{diffcons} are needed.  In fact one can construct (see \cite{singer_moduli}, p.~384-385)) a two-parameter family of second order linear differential equations $L_{a,t}(Y)=0$ such that 
\begin{itemize}
\item $L_{a,t}(Y)=0$ has $4$ regular singular points $0,1,\infty, a$ and exponents {\em independent of the parameters $a,t$},
\item for all values of the parameters, the differential Galois group is a subgroup of $\Cx^*\rtimes \{1,-1\}$ where $-1$ sends $c$ to $c^{-1}$.
\item there is an elliptic curve $ \calC :y^2 = f(x)$, $f$ cubic with $(f,f')=1$ such that if $(a,y(a))$ is a point of finite order on $\calC$ then there exists a unique $t$ such that the differential Galois group of $L_{a,t}(Y)=0$ is finite and conversely, the differential Galois group being finite implies that $(a,y(a))$ is a point of finite order on $\calC$.
\end{itemize}
One already sees in the Lam\'e equation 
\[L_{n,B,e}(y) = f(x) y''+\frac{1}{2}f'(x)y' -(n(n+1)x+B)y = 0\]
where $f(x) = 4x(x-1)(x-e)$, that the finiteness of the differential Galois group depends on relations between the fourth singular point $e$ and the other parameters.  For example, Brioschi showed  that  if $n+\frac{1}{2} \in \Zx, $ then there exists a  $P \in \Qx[u,v]$ such that $L_{n,B,e}(y)=0$ has finite differential Galois group if and only if $P(e, B) = 0$.  Algebraic solutions of the Lam\'e equation are a continuing topic of interest, see \cite{bald81,bald87,dwork90,lict,maier2004,BvdW2004,LPU}.\\[0.1in]
Finally, the results of \cite{singer_moduli} have been recast and generalized in 
 \cite{berken02, berken04}. In particular the conditions of Theorem~\ref{diffcons} are shown to be necessary and sufficient and existence and properties of moduli spaces are examined (see also Chapter 12 of \cite{PuSi2003}).\\[0.2in]
 \noindent\underline{\bf Parameterized Picard-Vessiot Theory.} In the previous paragraphs, I examined how differential Galois groups depend algebraically on parameters that may appear in families of differential equations.  Now I will  examine the differential dependence and give an introduction to \cite{CaSi} were  a Galois theory is developed that measures this.  To make things concrete, let us consider parameterized differential equations of the form
 \begin{eqnarray}\label{parameqn1}\frac{\der Y}{\der x}& = &A(x, t_1, \ldots , t_m) Y \ \  \ \  A\in \Mn(\Cx(x, t_1, \ldots , t_m)).\end{eqnarray}
If $x=x_0, 
t_1 = \tau_1, \ldots , t_m = \tau_m$
is a point where the entries of $A$ are analytic, standard existence theory yields a solution $Y = (y_{i,j}(x, t_1, \ldots , t_m)),$ with  \ \ $y_{i,j}$ analytic near  $x=x_0,  t_1 = \tau_1, \ldots , t_m = \tau_m$.  Loosely speaking, we want to define a Galois group that is the group of transformations that preserve all algebraic relations among $x,t_1, \ldots , t_m, $ the $y_{i,j}$  and  the derivatives of the $y_{i,j}$ and their derivatives with respect to {\em all the variables $x,t_1, \ldots ,t_m$}.\\[0.1in]
 To put this in an algebraic setting, we let $k = \Cx(x, t_1, \ldots , t_m)$ be a (partial) differential field with derivations $\Delta = \{\der_0, \der_1, \ldots , \der_m\}, \der_0 = \frac{\der}{\der x}, 
\ \der_i = \frac{\der}{\der t_i} \ i=1, \ldots m $.  We let \[K = k(y_{1,1}, \ldots , y_{n,n},  \ldots ,\der_1^{n_1}\der_2^{n_2}\cdots \der_m^{n_m}y_{i,j}, \ldots) = k\langle y_{1,1}, \ldots ,y_{n,n}\rangle_\Delta.\]  Note that the fact that the $y_{i,j}$ appear as entries of a solution of (\ref{parameqn1}) implies that this field is 
stable under the derivation $\dd_0$. We shall define the parameterized Picard-Vessiot group  (PPV-group) $\DGal_\Delta (K/k)$ to be the group of $k$-automorphisms $\sigma$ of $K$ that commute with all $\der_i, \ i= 0, \ldots , m$.
\begin{ex} \label{paramex1}Let $n=1, m = 1, k = \Cx(x,t), \Delta = \Delta = \{\der_x, \der_t\} $ and \[\frac{\der y}{\der x} = \frac{t}{x} y.\] This equation has $y = x^t = e^{t\log x}$ as a solution.  Differentiating with respect to $t$ and $x$ shows that $K = \Cx(x,t, x^t, \log x)$.   Let $\sigma \in DGal_\Delta(K/k)$.\\[0.1in]
\noindent {\em Claim 1}: $\sigma(x^t) = a x^t, \  a \in K,  \ \der_x(a) = 0$ (and so $a \in \Cx(t)$). To see this note that $\der_x \sigma(x^t) = \frac{t}{x} \sigma(x^t)$ so $\der_x(\sigma(x^t)/x^t) = 0$.\\[0.05in]
\noindent {\em Claim 2}: $\sigma(\log x) = \log x + c, \ c \in \Cx$. To see this, note that  $\der_x \sigma(\log x) = \frac{1}{x}$ so $\der_x(\sigma(\log x) -\log x) = 0$. Since $\der_t(\log x) = 0$ we also have that $\der_t(\sigma(\log x) -\log x) = 0$.\\[0.05in]
\noindent {\em Claim 3}: $\sigma(x^t) = a x^t, \der_x a = 0, \ \der_t a = c a, \ c\in \Cx$.  To see this note that $\sigma(\der_tx^t) = \sigma(\log x \  x^t) = (\log x +c)ax^t = a\log x \  x^t + cax^t$ and  $\der_t\sigma(x^t) = \der_t(ax^t) = (\der_ta)x^t + a\log x \  x^t$.\\[0.05in]
Therefore, \begin{eqnarray*}
\DGal_\Delta(K/k) &=& \{ a \in k^* \ | \ \der_xa = 0 , \ \der_t(\frac{\der_t a}{a}) = 0 \}\\
 & = &\{ a \in k^* \ | \ \der_xa = 0 , \ \der_t^2(a)a - (\der_t(a))^2 = 0 \}
\end{eqnarray*}

\end{ex}
\vspace{.1in}

\noindent Remarks: 
  (1) Let $k_0 = \ker \dd_x = \Cx(t)$. This is a $\dd_t$-field. The PPV-group $\DGal_\Delta(K/k)$ can be identified with a certain subgroup of $\GL_1(k_0)$, that is, with a group of  matrices whose entries satisfy a differential equations.  This is an example of a linear differential algebraic group (to be defined more fully below).\\[0.05in]
 (2) In fact, using partial fraction decompositions, one can see that 
 \[ \{ a \in k^* \ | \ \der_xa = 0 , \ \der_t(\frac{\der_t a}{a}) = 0 \} = \Cx^*\]
  This implies that for any $\sigma \in \DGal_\Delta(K/k), \ \sigma(\log x) = \log x$.  If we want a Fundamental Theorem for this new Galois theory, we will want to have enough automorphisms so that any element $ u \in K\backslash k$ is moved by some automorphism.  This is not the case here.  \\[0.05in]
  (3) If we specialize $t$ to be an element $\tau \in \Cx$, we get a linear differential equation whose monodromy group is generated by the map $y \mapsto e^{2\pi \tau}y$.  One can think of the map $ y \mapsto e^{2\pi t}y$ as the ``parameterized monodromy map''.  I would like to say that this map lies in the PPV-group and that this map generates a group that is, in some sense, dense in the PPV-group.  We cannot do this yet. \\[0.1in]
Remarks (2) and (3) indicate that the PPV group does not have enough elements.  When we considered the Picard-Vessiot theory, we insisted that the constants be algebraically closed.  This guaranteed  that (among other things)   the differential Galois group (defined by polynomial equations)  had enough elements.  Since our PPV-groups will be defined by differential equations, it seems natural to require that the field of $\dd_0$-constants is ``differentially closed''.  More formally,
\begin{defin} Let $k_0$ be a differential field with derivations $\Delta_0 = \{\der_1, \ldots , \der_m\}$. We say that
$k_0$ is {\em differentially closed} if any system of polynomial differential equations \[f_1(y_1, \ldots , y_n) = \ldots
= f_r(y_1, \ldots , y_n) = 0, g(y_1, \ldots , y_n) \neq 0\] with coefficients in $k_0$ that has a solution in some $\Delta_0-$differential extension field has a solution in $k_0$.
\end{defin}
Differentially closed fields play the same role in differential algebra as algebraically closed fields play in algebraic geometry and share many (but not all) of the same general properties and, using Zorn's Lemma, they are not hard to construct. They have been studied by Kolchin (under the name of constrained closed differential fields \cite{constrained}) and by many logicians \cite{marker2000}.   We need  two more definitions before we state the main facts concerning parameterized Picard-Vessiot theory.
\begin{defin} Let $k_0$ be a differential field with derivations $\Delta_0 = \{\der_1, \ldots , \der_m\}$.\\[0.05in]
(1) A set $X \subset k_0^n$ is {\em Kolchin closed} if it is the zero set  of a system of differential polynomials  over $k_0$. \\[0.05in]
(2) A {\em linear differential algebraic group} is a subgroup $G \subset\GL_n(k_0)$ that is Kolchin closed in $\GL_n(k_0) \subset k_0^{n^2}$.
\end{defin}
Kolchin closed sets form the closed sets of a topology called the Kolchin topology. I will therefore use topological language (open, dense, etc) to describe their properties. Examples and further description of linear differential algebraic groups are given after the following definition and result.
\begin{defin} Let $k$ be a differential field with derivations $\Delta = \{\der_0, \der_1, \ldots , \der_m\}$ and let
\begin{eqnarray}\label{PVex2}
\der_0Y = AY, \ \ A \in \Mn(k)
\end{eqnarray}
A {\em Parameterized Picard-Vessiot extension (PPV-extension)} of $k$ for (\ref{PVex2}) is a $\Delta$-ring $R \supset k$ such that $R$ is a simple $\Delta$-ring ({\em i.e.}, its only $\Delta$-ideals are $(0)$ and $R$) and $R$ is generated as a $\Delta$-ring by the entries of some $Z \in \GL_n(R)$ and $\frac{1}{\det Z}$ where $\der_0Z = AZ$. A {\em PPV-field} is defined to be the quotient field of a PPV-ring.
\end{defin}
As in the usual Picard-Vessiot theory, such rings exist and are domains.  Under the assumption that $k_0 = \ker \dd_0$ is a differentially closed $\Delta_0 = \{\der_1, \ldots , \der_m\}$-field, PPV-rings can be shown to be unique (up to appropriate isomorphism).  

\begin{thm} Let $k$ be a differential field with derivations $\Delta = \{\der_0, \der_1, \ldots , \der_m\}$ and assume that $k_0 = \ker \dd_0$ is a differentially closed $\Delta_0 = \{\der_1, \ldots , \der_m\}$-field. Let $K$ be the PPV-field for an equation \[\der_0Y = AY, \ \ A \in \Mn(k).\] \\[0.05in]
(1) The group $\DGal_\Delta(K/k)$ (called the PPV-group) of $\Delta$-differential $k$-automorphisms of $K$ has the structure of a linear differential algebraic group.\\[0.05in]
(2) There is a Galois correspondence between $\Delta$-subfields $E$ of $K$ containing $k$ and Kolchin $\Delta_0$ closed subgroups $H$ of $\DGal_\Delta(K/k)$ with normal subgroups $H$ corresponding to field $K$ that are again PPV-extensions of $k$. 
\end{thm}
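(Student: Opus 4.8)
The plan is to develop the parameterized theory in lockstep with the ordinary Picard--Vessiot development given earlier, systematically replacing the standing hypothesis ``$C_k$ algebraically closed'' by ``$k_0 = \ker\der_0$ differentially closed as a $\Delta_0$-field'', and replacing Zariski-closed conditions (ordinary polynomial equations in the matrix entries) by Kolchin-closed conditions (differential-polynomial equations in the derivations $\Delta_0$ of those entries). Granting the stated existence, domain property, and uniqueness of PPV-rings (the last using differential closedness of $k_0$), the first preparatory step is the analogue of the constants lemma: the ring of $\der_0$-constants of $K$ equals $k_0$. One argues exactly as in the unparameterized case, using that $R$ is $\Delta$-simple and that $k_0$ is differentially closed, that passage to $K$ creates no new $\der_0$-constants.

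For part (1), I would fix a fundamental solution matrix $Z \in \GL_n(R)$ with $\der_0 Z = AZ$. For $\sigma \in \DGal_\Delta(K/k)$, since $\sigma$ fixes $k$ and commutes with $\der_0$, the matrix $\sigma(Z)$ again satisfies $\der_0(\sigma(Z)) = A\,\sigma(Z)$; a short computation then gives $\der_0(Z^{-1}\sigma(Z)) = 0$, so $M_\sigma := Z^{-1}\sigma(Z)$ lies in $\GL_n(\ker\der_0) = \GL_n(k_0)$. The crucial difference from the ordinary theory is that the entries of $M_\sigma$ need not be annihilated by $\der_1,\dots,\der_m$, so $M_\sigma$ lands in $\GL_n(k_0)$ rather than in $\GL_n$ of a field of total constants. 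The map $\sigma \mapsto M_\sigma$ is an injective group homomorphism $\DGal_\Delta(K/k)\hookrightarrow \GL_n(k_0)$. To see the image is Kolchin-closed, write $R = k\{Y_{ij},1/\det Y\}_\Delta/\mathfrak{q}$ with $\mathfrak{q}$ the maximal $\Delta$-ideal, and identify $\DGal_\Delta(K/k)$ with those $M\in\GL_n(k_0)$ for which right-substitution $Y\mapsto YM$ preserves $\mathfrak{q}$. Because applying the derivations $\der_1,\dots,\der_m$ to an element of $\mathfrak{q}$ brings in the $\Delta_0$-derivatives of the entries of $M$, the condition $M\cdot\mathfrak{q}\subseteq\mathfrak{q}$ unwinds into the vanishing of finitely many differential polynomials in the entries of $M$ over $k_0$ (finiteness coming from the basis theorem for differential ideals). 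This exhibits the image as a linear differential algebraic group, and it is precisely the appearance of the $\der_i M$ that upgrades ``Zariski-closed'' to ``Kolchin-closed''.

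For part (2) I would follow the classical pattern. To a Kolchin-closed $H\subseteq\DGal_\Delta(K/k)$ assign $K^H$, and to an intermediate $\Delta$-field $k\subseteq E\subseteq K$ assign $\DGal_\Delta(K/E)$. The ingredients are: (i) $K$ is a PPV-extension of every intermediate $\Delta$-field $E$, immediate since $Z$ still generates $K$ as a $\Delta$-ring over $E$ and $\der_0 Z = AZ$; (ii) the ``enough automorphisms'' lemma $K^{\DGal_\Delta(K/k)} = k$, which combined with (i) applied to $E$ yields $K^{\DGal_\Delta(K/E)} = E$; and (iii) for Kolchin-closed $H$ one has $\DGal_\Delta(K/K^H) = H$, one inclusion being clear and the reverse following from a Kolchin-density argument together with (ii), the parameterized analogue of Corollary~\ref{fundcor}(2). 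These give the order-reversing bijection. For the normality statement, a Kolchin-closed $H\trianglelefteq\DGal_\Delta(K/k)$ is normal iff $K^H$ is stable under all of $\DGal_\Delta(K/k)$; a restriction-of-automorphisms argument then shows this is equivalent to $K^H$ being itself a PPV-extension of $k$, with $\DGal_\Delta(K^H/k)\simeq\DGal_\Delta(K/k)/H$.

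The technical heart, and the step I expect to be hardest, is the ``enough automorphisms'' lemma $K^{\DGal_\Delta(K/k)} = k$. In the ordinary theory the corresponding fact rests on the torsor description of the PV-ring and on having algebraically closed constants, so that closed subvarieties carry enough rational points. Here one must instead show that the differential algebraic group $\DGal_\Delta(K/k)\subset\GL_n(k_0)$ has enough $k_0$-points to move every element of $K\setminus k$, and this is exactly what differential closedness of $k_0$ supplies: given $u\in K\setminus k$, a consistency argument produces a finite system of differential equations over $k_0$ describing an automorphism with $\sigma(u)\neq u$, which has a solution in some $\Delta_0$-extension and hence, by differential closedness, already in $k_0$. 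Making this specialization precise while simultaneously controlling that the $\der_0$-constants do not grow is the crux; once it is established, the Galois correspondence goes through on the classical template essentially verbatim, with ``Zariski closure'' everywhere replaced by ``Kolchin closure''.
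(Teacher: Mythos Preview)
The paper does not actually prove this theorem: it is stated as a result imported from \cite{CaSi}, and the exposition moves immediately to examples and applications. So there is no ``paper's own proof'' to compare against; your outline is being measured against a citation, not an argument.

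That said, your sketch tracks closely the strategy of \cite{CaSi} and of the classical development in Section~\ref{mfssec2}: embed $\DGal_\Delta(K/k)$ in $\GL_n(k_0)$ via $\sigma\mapsto Z^{-1}\sigma(Z)$, show the image is cut out by differential-polynomial conditions coming from a maximal $\Delta$-ideal, and run the Galois correspondence on the template of the Fundamental Theorem with Kolchin closure replacing Zariski closure. Your identification of the ``enough automorphisms'' step as the crux, and of differential closedness of $k_0$ as the substitute for algebraic closedness of $C_k$, is exactly right. The one place your outline is thin is the torsor interpretation: in the classical case the paper deduces the Fundamental Theorem from the fact that the PV-ring is the coordinate ring of a $G$-torsor (Corollary~\ref{torsorcor} and the surrounding discussion), and the parameterized version in \cite{CaSi} likewise passes through a differential-algebraic torsor; you gesture at this but do not make it explicit. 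For a full proof you would need to develop the Kolchin-topology analogue of that geometric picture rather than only the formal bijection.
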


In the context of this theorem we can recast the above example. Let $k_0, \Cx(t) \subset k_0$ be a differentially closed $\dd_t$-field and $k = k_0(x)$ be  a $\Delta = \{\dd_x, \dd_t\}$ field where $\dd_x(k_0) = 0, \dd_x(x) = 1, \dd_t(x) = 0$.  One can show that the field $K = k<x^t>_\Delta = k(x^t, \log )$ is a PPV-field for the equation $\dd_xY = \frac{t}{x}Y$ with PPV group as described above.  Because $k_0$ is differentially closed, this PPV group has enough elements so that the Galois correspondence holds. Furthermore the element $e^{2\pi it} \in k_0$ is an element of this group and, in fact, generates a Kolchin dense subgroup.\\[0.1in]
Remarks: (1) In general, let $\dd_xY = AY$ is a parameterized equation with $A \in \Cx(x,t_1, \ldots, t_m)$.  For each value of $\bar{t}$ of $t = (t_1, \ldots, t_m)$ at which the entries of $A$ are defined and generators $\{\gamma_i\}$ of the Riemann Sphere punctured at the poles of $A(x, \bar{t}_1, \ldots \bar{t}_m)$, we can define monodromy matrices $\{M_i(\bar{t})\}$. For all values of $t$ in a sufficiently small open set, the entries  of $M_i$ are analytic functions of $t$ and one can show that these matrices belong to the PPV-group.  If the equation $\dd_xY = AY$ has only regular singular points, then one can further show that these matrices generate a Kolchin dense subgroup of the PPV group.\\[0.05in]
(2) One can consider parameterized families of linear partial differential equations as well. Let $k$ be a $\Delta = \{\dd_0, \ldots , \dd_s, \dd_{s+1}, \ldots ,\dd_m\}$ and let 
\[\dd_iY = A_iY, \ A_i\in \Mn(k), \ i = 0, \ldots, s\]
be a system of differential equations with $\dd_i(A_j) +A_iA_j = \dd_j(A_i) + A_jA_i$.  One can develop a parameterized differential Galois theory for these as well. \\[0.05in]
(3) One can develop a theory of parameterized liouvillian functions and show that one can solve parameterized linear differential equations in terms of these if and only if the PPV group is solvable-by-finite.\\[0.1in]
To clarify and apply the PPV theory, we will describe some facts about linear differential algebraic groups.\\[0.1in]
\noindent\underline{\em Linear differential algebraic groups.}  These groups were introduced by Cassidy \cite{cassidy1}. A general theory of differential algebraic groups was initiated by Kolchin \cite{kolchin_groups} and developed by Buium, Hrushovsky, Pillay, Sit and others (see \cite{CaSi} for references). We give here some examples and results. Let $k_0$ be a differentially closed $\Delta_0 = \{\dd_1, \ldots ,\dd_m\}$ differential field.
\begin{exs}\label{diffgpexs} (1) All linear algebraic groups are linear differential algebraic groups.\\[0.05in]
(2) Let $C = \cap_{i=1}^m \ker\der_i$ and let $G(k_0)$ be a linear algebraic 
 group defined over $k_0$.  Then $G(C)$ is a linear {\em differential} algebraic group (just add 
 $\{\der_1y_{i,j} = \ldots =\der_my_{i,j} = 0\}_{i,j=1}^n$ to the defining equations!).  \\[0.05in]
 (3) Differential subgroups of \[G_a(k_0) = (k_0,+)
 = \{\left(\begin{array}{cc}  1&z\\0&1\end{array}\right) \ | \ z\in k \}. \]
  The linear differential  subgroups  of this group are all of the form
 \[G_a^{\{L_1, \ldots , L_s\}} = \{ z \in k_0 \ | \ L_1(z)= \ldots =L_s(z) = 0\}\]
 \noindent where the $L_i$ are linear differential operators in $k_0[\der_1, \ldots, \der_m]$.   When $m=1$ we may always take $s=1$. \\[0.2in]
 For example, if $m=1,$ \[G_a^\der = \{z \in k_0 \ | \ \der z = 0\} = G_a(C)\]
(4) Differential subgroups of $G_m(k_0) = (k_0^*,\times)
 = \GL_1(k_0)$.  Assume  $m=1, \Delta_0 = \{\der\}$ (for the general case, see \cite{cassidy1}).  The linear differential  subgroups are:
  \begin{itemize}
\item finite and cyclic, or
\item $G_m^{L} = \{ z \in k_0^* \ | \ L(\frac{\der z}{z})=  0\}$ where the $L$ is a linear differential operator in $k_0[\der]$.
\end{itemize}
For example the PPV group of Example~\ref{paramex1},  $G_m^\der = \{z \in k_0 \ | \ \der (\frac{\der z}{z}) = 0\}$ is of this form. 
The above characterization follows from the exactness of
\[(1)\longrightarrow G_m(C) \longrightarrow G_m(k_0) \stackrel{z\mapsto \frac{\der z}{z}}{\longrightarrow}G_a(k_0) \longrightarrow (0)\]
and the characterization of differential subgroups of $G_a$.
Note that in the usual theory of linear algebraic groups there is no surjective homomorphism from $G_m$ to $G_a$.\\[0.05in]
(5) A result of Cassidy \cite{cassidy1} states: if  $m = 1$, $\Delta_0= \{\dd\}$ and $H$ is a Zariski dense proper differential subgroup of $\SL_n(k_0)$, then there is an element  $g \in \SL_n(k_0)$ such that $gHg^{-1} = \SL_n(C)$, where $C = \ker \dd$.  This has been generalized by Cassidy and, later, Buium to the following.
Let $H$ be a Zariski dense proper differential subgroup of $G$,  a simple linear algebraic group, defined over $\Qx$.  Then there  exists $\Dx = \{D_1, \ldots D_s \}\subset k\Delta_0 = k-\mbox{span of } \Delta_0$ such that
  \begin{enumerate}
  \item $[D_i,D_j] = 0$ for $1\leq i,j\leq s$ and
  \item There exists a $ g \in G(k) \mbox{ such that } gHg^{-1} = G(C_\Dx)$, where $C_\Dx = \cap_{i=1}^s \ker D_i$.
 \end{enumerate}
\end{exs}

\vspace{0.15in}

\noindent \underline{\em Inverse Problem.} In analogy to the Picard-Vessiot theory, the question I will discuss is: Given a differential field $k$, which linear differential algebraic groups appear a PPV-groups over $k$?  Even in simple cases the complete answer is not known and there are some surprises.\\[0.1in]
Let $\Delta = \{\dd_x, \dd_t\}, \Delta_0 = \{\dd_x\}$, $k_0$ a $\dd_t$-differentially closed $\dd_t$-field and $k = k_0(x)$ with $\dd_0(k_0) = 0, \dd_x(x) = 1$, an $\dd_t(x) = 0$. Let $C\subset k_0$ be the $\Delta$-cnstants of $k$. I will show that all differential subgroups of $G_a = (k_0, +)$ appear as PPV-groups over this field but that the full group $G_a$ does not appear as a PPV-group \cite{CaSi}.  \\[0.1in]
One can show that the task of describing the differential subgroups of $G_a$ that appear as PPV-groups  is equivalent to describing which linear differential algebraic groups appear as PPV-groups for equations of the form 
\[\frac{dy}{dx} = a(t,x), \ a(t,x) \in k_0(x)\]
Using partial fraction decomposition, one sees that a solution of such an equation is of the form
\[y = R(t,x)+\sum_{i=1}^s a_i(t) \log(x-b_i(t)), R(t,x) \in k_0(x), a_i(t), b_i(t) \in k_0\]
and the associated PPV-extension is of the form $K = k(y)$.  If $H$ is the PPV-group of $K$ over $k$ and $\sigma \in H$, then one shows that 
\[\sigma(y) = R(t,x)+\sum_{i=1}^s a_i(t) \log(x-b_i(t)) + \sum_{i=1}^sc_ia_i(t)\]
for some $c_i \in C$.  Let $L\in k_0[\frac{d}{dt}]$ have solution space spanned by the $a_i(t)$ (note that $L \neq 0$). For any $\sigma \in H$,  $\sigma$ can be identified with an element of 
    \[H = \{z\in G_a(k_0) \ | \ L(z)=0.\}\]
    Conversely, any element of $H$ can be shown to induce a differential automorphism of $K$, so $H$ is the PPV-group.  Therefore $H$ must be a \underline{proper} subgroup of $G_a$ and not all of $G_a$.  One can furthermore show that all proper differential algebraic subgroups of $G_a$ can appear in this way. \\[0.1in]
 A distinguishing feature of proper subgroups of $G_a$ is that they contain a finitely generated Kolchin dense subgroup whereas $G_a$ does not have such a subgroup.  A possible conjecture is that a linear differential algebraic group $H$ is a PPV-group over $k$ if and only if $H$ has a finitely generated Kolchin dense subgroup.  If $H$ has such a finitely generated subgroup, a parameterized solution of Hilbert's $22^{nd}$ problem  shows that one can realize this group as a PPV-group.  If one could establish a parameterized version of Ramis's theorem that the formal monodromy, exponential torus and Stokes generate a a dense subgroup of the differential Galois group (something of independent interest), the other implication of this conjecture would be true.\\[0.1in]
I will now construct a field over which one can realize $G_a$ as a PPV-group.  Let $k$ be as above and let $F = k(\log x, x^{t-1}e^{-x})$. The {\em incomplete Gamma function}  \[\gamma(t,x) = \int_0^x s^{t-1}e^{-s} ds\]  satisfies
\[\frac{d \gamma}{dx} = x^{t-1}e^{-x} \]
over $F$.  The PPV-group over $k$ is $G_a(k_0)$ because $\gamma, \frac{d\gamma}{dt}, \frac{d^2\gamma}{dt^2}, \ldots $ are algebraically independent over
$k$ \cite{JRR} and so there are no relations to preserve. Over $k= k_0(x)$, $\gamma(t,x)$ satisfies
\[\frac{d^2\gamma}{dx^2}- \frac{t-1-x}{x} \  \frac{d\gamma}{dx} = 0\]
and the PPV-group is 
\begin{eqnarray*}
H& =& \{ \left( \begin{array}{cc} 1&a\\0&b \end{array}\right) \ | \ a \in k_0, b\in k_0^*, \der_t(\frac{\der_tb}{b} )=0\}\\
  &=& G_a(k_0) \rtimes G_m^{\der_t}, \ G_m^{\der_t} = \{ b\in k_0^* \ | \ \der_t(\frac{\der_t b}{b}) = 0\end{eqnarray*}
 
\vspace{0.15in}
\noindent\underline{\em Isomonodromic families.}  The parameterized Picard-Vessiot theory can be used to characterize isomonodromic families of linear differential equations with regular singular points, that is, families of such equations where the monodromy is independent of the parameters.  
\begin{defin} Let $k$ be a $\Delta = \{\dd_0, \ldots, \dd_m\}$-differential field and let $A \in \gl_n(k)$.  We say that $\dd_0Y=AY$ is {\em completely integrable} if there exist $A_i \in \gl_n(k), i=0,1,\ldots ,m$ with $A_0 = A$ such that \[\dd_jA_i - \dd_iA_j = A_jA_i - A_i A_j \mbox{ for all } i,j = 0, \ldots m\]
\end{defin}
The nomenclature is motivated by the fact that the latter conditions on the $A_i$ are the usual integrability conditions on the system of differential equations $\dd_iY = AY, i = , \ldots ,m$.  
\begin{prop}\cite{CaSi} Let $k$ be a $\Delta = \{\dd_0, \ldots, \dd_m\}$-differential field and let $A \in \gl_n(k)$.  Assume that $k_0 = \ker \dd_0$ is $\Delta_0=\{\dd_1, \ldots, \dd_m\}$-differentially closed field and let $K$ be the PPV-field of $k$ for $\dd_0Y=AY$ with $A\in \gl_n(k)$. Let $C = \cap_{i=0}^m \ker\dd_i$.\\[0.05in]
There exists a linear algebraic group $G$, defined over $C$, such that \\ $\DGal_\Delta(K/k)$ is conjugate to $G(C)$ over $k_0$ if and only if $\dd_0Y=AY$ is completely integrable.  If this is the case, then $K$ is a Picard-Vessiot extension corresponding to this integrable system.
\end{prop}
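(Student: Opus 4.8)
The plan is to prove both implications by fixing a fundamental solution matrix $Z\in\GL_n(R)$ of $\dd_0Y=AY$ (where $R$ is the PPV-ring and $K=\mathrm{Frac}(R)$) and studying the logarithmic derivative matrices $B_i:=(\dd_iZ)Z^{-1}\in\gl_n(K)$ for $i=1,\ldots,m$, together with $B_0=A$. The argument rests on two facts from the parameterized theory stated in the Fundamental Theorem above, both valid because $k_0$ is $\Delta_0$-differentially closed: that $\ker(\dd_0,K)=k_0$, and that $K^{\DGal_\Delta(K/k)}=k$. I also use that $C=\cap_{i=0}^m\ker\dd_i$ is the field of $\Delta_0$-constants of $k_0$, hence algebraically closed, so that the \emph{ordinary} Picard--Vessiot theory applies over $C$.

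First I would treat the direction \emph{constant Galois group $\Rightarrow$ integrable}. By hypothesis there is $P\in\GL_n(k_0)$ conjugating $\DGal_\Delta(K/k)$ to $G(C)\subseteq\GL_n(C)$; replacing $Z$ by $ZP$ (which preserves $\dd_0Z=AZ$, since $\dd_0P=0$) I may assume $\sigma(Z)=ZM_\sigma$ with $M_\sigma\in\GL_n(C)$, so $\dd_iM_\sigma=0$ for all $i$. Then a one-line computation gives
\[
\sigma(B_i)=\dd_i(\sigma Z)\,(\sigma Z)^{-1}=\dd_i(ZM_\sigma)\,(ZM_\sigma)^{-1}=(\dd_iZ)Z^{-1}=B_i,
\]
so each $B_i$ is fixed by the whole PPV-group, whence $B_i\in\gl_n(k)$ by $K^{\DGal_\Delta(K/k)}=k$. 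Setting $A_i:=B_i$ we have $\dd_iZ=A_iZ$ for all $i=0,\ldots,m$, and equating $\dd_j\dd_iZ$ with $\dd_i\dd_jZ$ (they agree since the $\dd$'s commute) and cancelling the invertible $Z$ yields exactly $\dd_jA_i-\dd_iA_j=A_jA_i-A_iA_j$. Thus the system is completely integrable.

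The converse, \emph{integrable $\Rightarrow$ constant group}, is the more delicate direction and is where I expect the main obstacle. Given the $A_i$, I set $W_i:=\dd_iZ-A_iZ$ and compute, using the integrability relation $\dd_0A_i=\dd_iA+AA_i-A_iA$, that $\dd_0W_i=AW_i$; hence $\dd_0(Z^{-1}W_i)=0$, so $C_i:=Z^{-1}W_i$ lies in $\gl_n(k_0)$ because $\ker(\dd_0,K)=k_0$. I must then produce a single gauge $P\in\GL_n(k_0)$ straightening all the $A_i$ at once: writing $\tilde Z=ZP$, the requirement $\dd_i\tilde Z=A_i\tilde Z$ is equivalent to $\dd_iP=-C_iP$. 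The crux is that this auxiliary system is itself completely integrable over $k_0$ --- comparing $\dd_i\dd_jZ$ with $\dd_j\dd_iZ$ shows that integrability of the $A_i$ forces $\dd_iC_j-\dd_jC_i=C_jC_i-C_iC_j$, which is precisely the integrability condition for $\dd_iP=-C_iP$ --- so, since $k_0$ is $\Delta_0$-differentially closed, this system has an invertible solution $P\in\GL_n(k_0)$.

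With such a $P$ one has $\dd_i\tilde Z=A_i\tilde Z$ for every $i$, so $k[\tilde Z,(\det\tilde Z)^{-1}]$ is already stable under all $\dd_i$; since $\tilde Z=ZP$ with $P\in\GL_n(k)$, this ring equals the $\Delta$-ring $R$, which is therefore an ordinary Picard--Vessiot ring for the integrable system $\{\dd_iY=A_iY\}$ over the algebraically closed constants $C$. Its Galois group is $G(C)$ for a linear algebraic group $G$ over $C$, realized via $\sigma(\tilde Z)=\tilde ZM_\sigma$ with $M_\sigma\in\GL_n(C)$; relative to the original $Z$ this group is $P\,G(C)\,P^{-1}$, so $\DGal_\Delta(K/k)$ is conjugate to $G(C)$ over $k_0$. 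In both directions $R$ is exactly the Picard--Vessiot ring of the integrable system, which gives the final assertion.
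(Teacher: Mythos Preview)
The paper does not supply its own proof of this proposition; it is stated with a citation to \cite{CaSi} and immediately followed by a corollary. Your argument is correct and is essentially the standard proof (and indeed the one found in \cite{CaSi}): pass to logarithmic derivatives $B_i=(\dd_iZ)Z^{-1}$, use the Galois correspondence $K^{\DGal_\Delta(K/k)}=k$ for one direction, and for the converse use $\ker(\dd_0,K)=k_0$ together with the $\Delta_0$-differential closedness of $k_0$ to solve the auxiliary integrable system $\dd_iP=-C_iP$ inside $\GL_n(k_0)$.

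Two small points worth making explicit for a clean write-up. First, in the forward direction you should say a word about why changing basis by $P\in\GL_n(k_0)$ is legitimate at the level of the PPV-ring: since $P\in\GL_n(k)\subset\GL_n(R)$, the matrix $ZP$ is again a fundamental matrix in $R$ and $R$ is still generated over $k$ as a $\Delta$-ring by its entries and the inverse of its determinant. Second, the assertion that the integrable system $\dd_iP=-C_iP$ has an invertible solution in $k_0$ is exactly an instance of the definition of ``differentially closed'' (finitely many differential equations $\dd_iP+C_iP=0$ together with the single inequation $\det P\neq0$, known to be consistent because a PV-extension of $k_0$ for this integrable system provides a solution); you invoke this correctly, but it deserves to be stated as the precise place where the hypothesis on $k_0$ is used in that direction. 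With these clarifications your proof is complete.
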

Let $k = \Cx(x, t_1, \ldots , t_n)$ and $A \in \gl_n(k)$.  For $t = (t_1, \ldots , t_m)$ in some sufficiently small open set $\calO \subset \Cx^{n}$, one can select generators $\{\gamma_i(t_i, \ldots, t_m)\}$ of the fundamental group of the Riemann Sphere minus the (parameterized) singular points  such that the monodromy matrices $\{M_i(t_1, \ldots, t_m)\}$   depend analytically on the parameters.  One says that the differential equation $\dd_xY = AY$ is {\em isomonodromic} if for some sufficiently small open set $\calO$, the $M_i$ are independent of $t$.  In \cite{CaSi} we deduce the following corollary from the above result.
\begin{cor}  Let $k, A, \calO$ be as above and assume that $\dd_xY = AY$ has only regular singular points for $t \in \calO$.  Then $\dd_xY=AY$ is isomonodromic if and only if the PPV-group is conjugate to a linear algebraic group $G(\Cx) \subset \GL_n(\Cx).$
\end{cor}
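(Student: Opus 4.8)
The plan is to reduce the corollary to the Proposition immediately preceding it, which already characterizes complete integrability of $\der_0Y=AY$ by the condition that $\DGal_\Delta(K/k)$ be conjugate over $k_0$ to a constant group $G(C)$. Since here $k=\Cx(x,t_1,\ldots,t_m)$ has total constants $C=\bigcap_{i=0}^m\ker\der_i=\Cx$, that conclusion is exactly ``conjugate to $G(\Cx)\subset\GL_n(\Cx)$''. It therefore suffices to prove the classical equivalence that $\der_xY=AY$ is isomonodromic on $\calO$ if and only if it is completely integrable, and then to invoke the Proposition (read as a biconditional). One bookkeeping point I would dispatch first: the Proposition assumes $k_0=\ker\der_0$ is $\Delta_0$-differentially closed, whereas $\Cx(t_1,\ldots,t_m)$ is not. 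I would fix a small polydisc $\calO$, work inside the $\Delta$-field of functions rational in $x$ with coefficients meromorphic on $\calO$, and then pass to a differential closure of the corresponding $\{\der_1,\ldots,\der_m\}$-field; because $\Cx$ is algebraically closed this introduces no new total constants, so $C$ stays $\Cx$ and the two readings of ``conjugate to $G(\Cx)$'' coincide.

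For the forward direction I would argue as in isomonodromic deformation theory. Choose a fundamental solution matrix $Z(x,t)$ depending analytically on $t\in\calO$ and set $B_i=(\der_iZ)Z^{-1}$ for $i=1,\ldots,m$. Analytic continuation of $Z$ around a loop $\gamma_j$ sends $Z\mapsto ZM_j$, hence $\der_iZ\mapsto(\der_iZ)M_j+Z\,\der_iM_j$; the isomonodromy hypothesis $\der_iM_j=0$ gives $B_i\mapsto B_i$, so each $B_i$ is single-valued in $x$. Since all singular points are regular, $Z$ and $Z^{-1}$ have at worst moderate growth near them, so $B_i$ is single-valued with moderate growth at each singular point and holomorphic elsewhere; by the Cauchy/Liouville argument already used in the proof of Proposition~\ref{schlesinger}, $B_i$ is meromorphic on $\Px^1$ and hence rational in $x$, with coefficients analytic in $t$. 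Setting $A_i:=B_i$, the matrix $Z$ is a common solution of $\der_0Z=AZ$ and $\der_iZ=A_iZ$; applying $\der_j$ to $\der_iZ=A_iZ$, subtracting the symmetric relation, and cancelling the invertible $Z$ yields $\der_jA_i-\der_iA_j=A_jA_i-A_iA_j$, i.e.\ complete integrability.

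The converse is the easy direction. If matrices $A_i\in\gl_n(k)$ with $A_0=A$ realize complete integrability, the integrable system has a common invertible local solution $Z$ with $\der_iZ=A_iZ$ for all $i$, and each $A_i$ is single-valued (rational) in $x$. Continuing $Z$ around $\gamma_j$ gives $Z\mapsto ZM_j$; applying $\der_i$ and using that $\der_iZ=A_iZ$ is preserved under continuation (as $A_i$ is single-valued) forces $Z\,\der_iM_j=0$, whence $\der_iM_j=0$ and the monodromy is constant. Combining both directions with the preceding Proposition gives the corollary. As a cross-check, the forward implication also follows from the parameterized Schlesinger density statement recorded after the Proposition: for regular singular equations the monodromy matrices generate a Kolchin-dense subgroup of the PPV-group, so constant monodromy exhibits a Kolchin-dense subgroup of constant matrices, forcing the PPV-group into $G(\Cx)$.

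The step I expect to be the main obstacle is the rationality claim in the forward direction, namely that $B_i=(\der_iZ)Z^{-1}$ actually lands in $\gl_n$ of the base field. Single-valuedness is immediate from constant monodromy, but upgrading it to meromorphy on $\Px^1$, while controlling the behaviour at the possibly $t$-dependent singular points, is exactly where the regular-singularity hypothesis is essential and is the technical heart of the matter. A secondary, purely field-theoretic nuisance is that the coefficients of $B_i$ are a priori only analytic, not rational, in $t$; this is why I would run the entire argument over the meromorphic/differentially closed base described above rather than over $\Cx(x,t_1,\ldots,t_m)$ itself, which is precisely the setting in which the preceding Proposition is stated.
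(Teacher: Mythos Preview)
Your proposal is correct and follows exactly the route the paper indicates: the paper does not spell out a proof here but says the corollary is deduced in \cite{CaSi} from the preceding Proposition, i.e.\ via the classical equivalence (for regular singular families) between isomonodromy and complete integrability, which is precisely the argument you give. Your handling of the base-field issue (enlarging from $\Cx(t_1,\ldots,t_m)$ to meromorphic functions on $\calO$ and then to a differential closure, so that the Proposition applies and $C$ remains $\Cx$) is the right way to reconcile the analytic construction of the $B_i$ with the algebraic hypotheses of the Proposition.
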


\vspace{0.1in}

\noindent \underline{\em Second order  parameterized equations}. A classification of the linear differential subgroups of $\SL_2$ allows one to classify parameterized systems of second order linear differential equations. To simplify the discussion we will consider equations of the form 
\[\frac{\dd Y}{\dd x} = A(x,t)Y\]
that depend on only one parameter and satisfy $A \in \sll_n(\Cx(x,t))$.  \\[0.1in]
\noindent Let $H$ be a linear differential algebraic subgroup of $\SL_2(k_0)$ (where $k_0$ is a $\dd_t$-differentially closed field containing $\Cx(t)$ and $\ker \dd_t = \Cx$)  and $\bar{H}$ its Zariski closure.  If $\bar{H} \neq \SL_2$, then one can show that $\bar{H}$ and therefore $H$ is solvable-by-finite \cite{kovacic86,cassidy1}.  Furthermore, Cassidy's result (Example~\ref{diffgpexs}(3)) states that if  $\bar{H} = \SL_2$ but $H \neq \SL_2$, then $H$ is conjugate to $\SL_2(\Cx)$.  From this discussion and the discussion in the previous paragraphs one can conclude the following.
\begin{prop} Let $k$ and $\frac{\dd Y}{\dd x} = AY$ be as above.  Assume that for some small open set of values of $t$ that this equation has only regular singular points.  Then either 
\begin{itemize}
\item the PPV group is $\SL_2$, or
\item the equation is solvable in terms of parameterized exponentials, integrals and algebraics, or
\item the equation is isomonodromic.
\end{itemize}
\end{prop}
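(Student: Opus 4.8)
The plan is to run a short trichotomy on the size of the PPV-group $H = \DGal_\Delta(K/k)$, where $K$ is the PPV-field of $\dd_x Y = AY$ over $k$, using its Zariski closure $\bar H$ as the organizing invariant. First I would record the basic constraint: because $A \in \sll_2(\Cx(x,t)) \subset \sll_2(k)$, the observation that a vanishing trace forces the Galois group into the special linear group shows that $H$ is a linear differential algebraic subgroup of $\SL_2(k_0)$. Writing $C = \ker\dd_t = \Cx$ for the field of constants, the three alternatives in the statement will correspond exactly to the three possibilities $\bar H \neq \SL_2$, $H = \SL_2(k_0)$, and $H$ proper but Zariski dense in $\SL_2(k_0)$.

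Suppose first $\bar H \neq \SL_2$. Then $\bar H$ is a proper algebraic subgroup of $\SL_2$, hence solvable-by-finite, so $H \subset \bar H$ is solvable-by-finite as well; this is precisely the fact recalled (from \cite{kovacic86, cassidy1}) in the paragraph preceding the statement. Invoking the parameterized liouvillian theory --- by which a parameterized linear equation is solvable in terms of parameterized exponentials, integrals and algebraics exactly when its PPV-group is solvable-by-finite (Remark (3) above) --- I would conclude that the equation lands in the second alternative.

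Now suppose $\bar H = \SL_2$. If moreover $H = \SL_2(k_0)$, the PPV-group is $\SL_2$ and we are in the first alternative. Otherwise $H$ is a proper yet Zariski-dense differential subgroup of $\SL_2(k_0)$, and Cassidy's theorem (the $n=2$, $\Delta_0 = \{\dd_t\}$ case recalled before the statement) produces $g \in \SL_2(k_0)$ with $gHg^{-1} = \SL_2(C) = \SL_2(\Cx)$. Thus $H$ is conjugate over $k_0$ to the constant group $\SL_2(\Cx) = G(\Cx)$ for the linear algebraic group $G = \SL_2$. Since the hypothesis supplies only regular singular points for $t$ in a small open set, the isomonodromy corollary applies verbatim and yields that $\dd_x Y = AY$ is isomonodromic, the third alternative.

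The main obstacle is this last subcase, and specifically the clean alignment of the two imported results that feed it. Cassidy's dichotomy only delivers conjugacy to $\SL_2(\Cx)$ once one has isolated $H$ as \emph{proper} and Zariski dense, so the real content is the correct trichotomy rather than any single computation; the bookkeeping point worth checking is that the group Cassidy produces is literally of the form $G(C)$ with $C = \ker\dd_t = \Cx$, so that the hypothesis ``$H$ conjugate to $G(\Cx) \subset \GL_n(\Cx)$'' of the isomonodromy corollary is met on the nose. Everything analytic is already packaged inside that corollary, whose sole standing assumption --- regular singular points over a small open set of parameter values --- is exactly the hypothesis of the proposition, so no new analysis is required beyond assembling the cases.
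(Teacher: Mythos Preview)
Your proof is correct and follows essentially the same trichotomy the paper uses: split on whether $\bar H \neq \SL_2$ (solvable-by-finite, hence the parameterized liouvillian case), $H = \SL_2(k_0)$, or $H$ proper and Zariski dense (Cassidy's result conjugates $H$ to $\SL_2(\Cx)$, and the isomonodromy corollary under the regular-singular-points hypothesis finishes). The paper packages this reasoning in the paragraph immediately preceding the proposition rather than as a formal proof, but the content and order of the argument are the same as yours.
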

The notion of `solvable in terms of  parameterized exponentials, integrals and algebraics' is analogous to the similar notion in the usual Picard-Vessiot theory and is given a formal definition in \cite{CaSi}, where it is also shown that this notion is equivalent to the PPV-group being solvable-by-finite. 
  
\section{Final Comments}

I have touched on only a few of the aspects of the Galois theory of linear differential equations.  I will indicate here some of the other aspects and give pointers to some of  the current literature.\\[0.1in]
Kolchin himself generalized this theory to a theory of {\em strongly normal} extension fields \cite{DAAG}.  The Galois groups in this theory are arbitrary algebraic groups. Recently Kovacic \cite{kov03,kov06} has recast this theory in terms of groups schemes and differential schemes.  Umemura \cite{u1,u2,u3,u4,u5,u6} has developed a Galois theory of differential equations for general nonlinear equations.  Instead of Galois groups, Umemura uses Lie algebras to measure the symmetries of differential fields.  Malgrange \cite{malgrange_galois1, malgrange_galois2} has proposed a Galois theory of differential equations where the role of the Galois group is replaced by certain groupoids.  This theory has been expanded and applied by Cassale \cite{cas1,cas2,cas3}. Pillay \cite{pillaygalois2, pillaygalois3, pillaygalois1, pillay2004, pillaygalois4} develops a Galois theory where the Galois groups can be arbitrary {\em differential} algebraic groups and, together with Bertrand, has used these techniques to generalize Ax's work on Schanuel's conjecture (see \cite{PilBert, bert06}). Landesman \cite{Landesman} has generalized Kolchin's theory of strongly normal extensions to a theory of strongly normal parameterized extensions.  The theory of Cassidy and myself presented in Section~\ref{mfssec6} can be viewed as a special case of this theory and many of the results of \cite{CaSi} can be derived from the theories of Umemura and Malgrange as well. \\[0.1in]
 A Galois Theory of linear {\em difference} equations has also been developed, initally by Franke (\cite{franke63} and several subsequent papers), Bialynicki-Birula \cite{BB2} and more fully by van der Put and myself \cite{PuSi}. Andr\'e \cite{andre_galois} has presented a Galois theory that treats both the difference and differential case in a way that allows one to see the differential case as a limit of the difference case.   Chatzidakis, Hrushovski and Kamensky have used model theoretic tools to develop Galois theory of difference equations (see \cite{ChHr1}, \cite{kamensky} and \cite{CHS07} for a discussion of connections with the other theories). Algorithmic issues have been considered by Abramov, Barkatou, Bronstein, Hendriks, van Hoeij, Kauers, Petkovsek, Paule, Schneider, Wilf, M. Wu and Zeilberger and there is an extensive literature on the subject for which I will refer to MathSciNet and the ArXiV. Recently  work by Andr\'e, Di Vizio, Etingof, Hardouin, van der Put, Ramis,  Reversat, Sauloy, and Zhang,  has been done on understanding the Galois theory of $q$-difference equation (see \cite{divi1, divi2,divi3, etingof, vdp_reversat, sauloy_galois, sau1, sau2, sau3, sau4, hard_it}).  Fields with both derivations and automorphisms have been studied from a model theoretic point of view by Bustamente \cite{bustamente} and  work of Abramov,  Bronstein, Li, Petkovsek, Wu, Zheng and myself \cite{abramov_petkovsek, AbBrPe95,BronPet, BrPe94, blw, lswz,  wu05} consider algorithmic questions for mixed differential-difference systems. The Picard-Vessiot theory of linear differential equations has been used by Hardouin \cite{hardouin06} to study the differential properties of solutions of difference equations. A Galois theory, with linear differential algebraic groups as Galois groups, designed to measure the differential relations among solutions of  difference equation,  has been recently developed and announced in \cite{sing_oberwol}. \\[0.1in]
 Finally,  I have not touched on the results and enormous literature concerning 
 the hypergeometric equations and their generalizations as well as arithmetic properties of differential equations.  For a taste of the current research, I refer the reader to \cite{sst} and \cite{hyper07} for the former and \cite{dwork_theory} for the latter.  
  













\newcommand{\SortNoop}[1]{}\def\cprime{$'$}



\begin{thebibliography}{99}

\bibitem[AB94]{anosov_bolibruch}
D.~V. Anosov and A.~A. Bolibruch.
\newblock {\em {The Riemann-Hilbert Problem}}.
\newblock Vieweg, Braunschweig, Wiesbaden, 1994.

\bibitem[ABB04]{dwork_theory}
A. Adolphson, F. Baldassarri, P. Berthelot, N. Katz, and
  F. Loeser, editors.
\newblock {\em Geometric aspects of {D}work theory. {V}ol. {I}, {II}}.
\newblock Walter de Gruyter GmbH \& Co. KG, Berlin, 2004.
\newblock Lectures from the Dwork trimester dedicated to Bernard Dwork held at
  the University of Padova, Padova, May--July 2001.

\bibitem[ABP95]{AbBrPe95}
S.A. Abramov, M.~Bronstein, and M.~Petkov\v{s}ek.
\newblock On polynomial solutions of linear operator equations.
\newblock In A.~H.~M. Levelt, editor, {\em Proceedings of the 1995
  International Symposium on Symbolic and Algebraic Computation (ISSAC'95)},
  pages 290--296. ACM Press, 1995.

\bibitem[ADV04]{divi3}
Y. Andr{\'e} and L. Di~Vizio.
\newblock {$q$}-difference equations and {$p$}-adic local monodromy.
\newblock {\em Ast\'erisque}, 296:55--111, 2004.
\newblock Analyse complexe, syst\`emes dynamiques, sommabilit\'e des s\'eries
  divergentes et th\'eories galoisiennes. I.

\bibitem[And01]{andre_galois}
Y. Andr{\'e}.
\newblock Diff\'erentielles non commutatives et th\'eorie de {G}alois
  diff\'erentielle ou aux diff\'erences.
\newblock {\em Ann. Sci. \'Ecole Norm. Sup. (4)}, 34(5):685--739, 2001.

\bibitem[AP94]{abramov_petkovsek}
S.A. Abramov and M.~Petkov\v{s}ek.
\newblock {D'Alembert} solutions of linear differential and difference
  equations.
\newblock In J.~{von zur Gathen}, editor, {\em Proceedings of the 1994
  International Symposium on Symbolic and Algebraic Computation (ISSAC'94)},
  pages 169--180. ACM Press, 1994.
  
  \bibitem[Aud01]{audin}
M.~Audin.
\newblock {\em {Les Syst\`emes Hamiltoniens et leur Int\'egrabilit\'e}},
  volume~8 of {\em {Cours Sp\'ecialis\'es}}.
\newblock Soci\'et\'e Math\'ematique de France, Paris, 2001.


\bibitem[Bal81]{bald81}
F.~Baldassarri.
\newblock On algebraic solutions of {L}am\'e's differential equation.
\newblock {\em J. Differential Equations}, 41(1):44--58, 1981.

\bibitem[Bal87]{bald87}
F.~Baldassarri.
\newblock Algebraic solutions of the {L}am\'e equation and torsion of elliptic
  curves.
\newblock In {\em Proceedings of the Geometry Conference (Milan and Gargnano,
  1987)}, volume~57, pages 203--213 (1989), 1987.

\bibitem[Bal94]{balser}
W.~Balser.
\newblock {\em {From Divergent Power Series to Analytic Functions}}, volume
  1582 of {\em Lecture Notes in Mathematics}.
\newblock Springer-Verlag, Heidelberg, 1994.

\bibitem[BB62]{BB2}
A. Bialynicki-Birula.
\newblock On the {G}alois theory of fields with operators.
\newblock {\em Amer. J. Math.}, 84:89--109, 1962.

\bibitem[BBH88]{BBH}
F.~Beukers, D.~Brownawell, and G.~Heckman.
\newblock Siegel normality.
\newblock {\em Annals of Mathematics}, 127:279 -- 308, 1988.

\bibitem[Bea93]{beauville}
A.~Beauville.
\newblock Monodromie des syst\`emes diff\'erentiels \`a p\^oles simples sur la
  sph\`ere de {Riemann} (d'apr\`es {A. Bolibruch}).
\newblock {\em Ast\'erisque}, 216:103--119, 1993.
\newblock {S\'eminaire Bourbaki}, No. 765.

\bibitem[Ber02]{berken02}
M. Berkenbosch.
\newblock Moduli spaces for linear differential equations.
\newblock In {\em Differential equations and the Stokes phenomenon}, pages
  15--33. World Sci. Publishing, River Edge, NJ, 2002.

\bibitem[Ber04]{berken04}
M. Berkenbosch.
\newblock {\em Algorithms and Moduli Spaces for Differential Equations}.
\newblock Ph.d. thesis, {University of Gr\"oningen}, 2004.

\bibitem[Ber06]{bert06}
D.~Bertrand.
\newblock Schanuel's conjecture for non-constant elliptic curves over function
  fields.
\newblock to appear in {\em Model Theory and Applications}, Cambridge
  University Press, 2006.

\bibitem[Ber07]{PilBert}
D.~Bertrand.
\newblock Schanuel's conjecture over function fields and differential galois
  theory {(joint with Anand Pillay)}.
\newblock to appear Oberwohlfach Reports, 2007.

\bibitem[Beu92]{beukers}
F.~Beukers.
\newblock {Differential Galois Theory}.
\newblock In M.~Waldschmidt {et al}, editor, {\em {From Number Theory to
  Physics (Les Houches, 1989)}}, pages 413--439, Berline, 1992.
  Springer-Verlag.

\bibitem[BH89]{BH}
F.~Beukers and G.~Heckman.
\newblock Monodromy for the hypergeometric function {$_nF_{n-1}$}.
\newblock {\em Inventiones Mathematicae}, 95:325--354, 1989.

\bibitem[BLW05]{blw}
M. Bronstein, Z. Li, and M. Wu.
\newblock {Picard-Vessiot extensions for linear functional systems}.
\newblock In Manuel Kauers, editor, {\em Proceedings of the 2005 International
  Symposium on Symbolic and Algebraic Computation (ISSAC 2005)}, pages 68--75.
  ACM Press, 2005.

\bibitem[BMM06]{BMM2006}
A.~A. Bolibruch, S.~Malek, and C.~Mitschi.
\newblock On the generalized {R}iemann-{H}ilbert problem with irregular
  singularities.
\newblock {\em Expo. Math.}, 24(3):235--272, 2006.

\bibitem[BMW97]{bmw}
M.~Bronstein, T.~Mulders, and J.-A. Weil.
\newblock On symmetric powers of differential operators.
\newblock In W.~K\"uchlin, editor, {\em Proceedings of the 1997 International
  Symposium on Symbolic and Algebraic Computation (ISSAC'97)}, pages 156--163.
  ACM Press, 1997.

\bibitem[Bou98]{boulanger}
A.~Boulanger.
\newblock Contribution \`a l'\'etude des \'equations lin\'eaires homog\`enes
  int\'egrables alg\'ebriquement.
\newblock {\em Journal de l'\'Ecole Polytechnique, Paris}, 4:1 -- 122, 1898.

\bibitem[BP94]{BrPe94}
M.~Bronstein and M.~Petkovsek.
\newblock On {Ore} rings, linear operators and factorization.
\newblock {\em Programming and Computer Software}, 20(1):14--26, 1994.

\bibitem[BP96]{BronPet}
M.~Bronstein and M.~Petkovsek.
\newblock An introduction to pseudo-linear algebra.
\newblock {\em Theoretical Computer Science}, 157:3--33, 1996.

\bibitem[BvdW04]{BvdW2004}
F. Beukers and A. van~der Waall.
\newblock Lam\'e equations with algebraic solutions.
\newblock {\em J. Differential Equations}, 197(1):1--25, 2004.

\bibitem[Cas72]{cassidy1}
P.~J. Cassidy.
\newblock Differential algebraic groups.
\newblock {\em Amer. J. Math.}, 94:891--954, 1972.

\bibitem[Cas06a]{cas1}
G. Casale.
\newblock Feuilletages singuliers de codimension un, groupo\"\i de de {G}alois
  et int\'egrales premi\`eres.
\newblock {\em Ann. Inst. Fourier (Grenoble)}, 56(3):735--779, 2006.

\bibitem[Cas06b]{cas2}
G. Casale.
\newblock Irr\'eductibilit\'e de la premi\`ere \'equation de {P}ainlev\'e.
\newblock {\em C. R. Math. Acad. Sci. Paris}, 343(2):95--98, 2006.

\bibitem[Cas07]{cas3}
G. Casale.
\newblock The {G}alois groupoid of {P}icard-{P}ainlev\'e {VI} equation.
\newblock In {\em Algebraic, analytic and geometric aspects of complex
  differential equations and their deformations. Painlev\'e hierarchies}, RIMS
  K\^oky\^uroku Bessatsu, B2, pages 15--20. Res. Inst. Math. Sci. (RIMS),
  Kyoto, 2007.

\bibitem[CH99]{ChHr1}
Z. Chatzidakis and E.Hrushovski.
\newblock Model theory of difference fields.
\newblock {\em Trans. Amer. Math. Soc.}, 351(8):2997--3071, 1999.

\bibitem[CHS07]{CHS07}
Z. Chatzidakis, C. Hardouin, and M.~F. Singer.
\newblock {On the Definition of Difference Galois Groups}.
\newblock To appear in the Proceedings of the Newton Institute 2005 semester
  `Model theory and applications to algebra and analysis' ; also {\tt
  arXiv:math.CA/0705.2975}, 2007.

\bibitem[CK02]{churchill-kovacic}
R.~C. Churchill and J.~J. Kovacic.
\newblock Cyclic vectors.
\newblock In {\em Differential algebra and related topics (Newark, NJ, 2000)},
  pages 191--218. World Sci. Publ., River Edge, NJ, 2002.

\bibitem[CMS05]{CMS05}
W.~J. Cook, C. Mitschi, and M.~F. Singer.
\newblock On the constructive inverse problem in differential {G}alois theory.
\newblock {\em Comm. Algebra}, 33(10):3639--3665, 2005.

\bibitem[Cop34]{cope1}
F.~Cope.
\newblock Formal solutions of irregular linear differential equations, {I}.
\newblock {\em American Journal of Mathematics}, 56:411--437, 1934.

\bibitem[Cop36]{cope2}
F.~Cope.
\newblock Formal solutions of irregular linear differential equations, {II}.
\newblock {\em American Journal of Mathematics}, 58:130--140, 1936.

\bibitem[CS06]{CaSi}
P.~J. Cassidy and M.~F. Singer.
\newblock Galois theory of parameterized differential equations and linear
  differential algebraic groups.
\newblock In D.~Bertrand, B.~Enriquez, C.~Mitschi, C.~Sabbah, and R.~Schaefke,
  editors, {\em Differential Equations and Quantum Groups}, volume~9 of {\em
  IRMA Lectures in Mathematics and Theoretical Physics}, pages 113-- 157. EMS
  Publishing House, 2006.

\bibitem[Del70]{deligne_LNM}
P.~Deligne.
\newblock {\em Equations {D}iff\'erentielles \`a {P}oints {S}inguliers
  {R}\'eguliers}, volume 163 of {\em Lecture Notes in Mathematics}.
\newblock Springer-Verlag, Heidelberg, 1970.

\bibitem[Del90]{deligne_tannakian}
P. Deligne.
\newblock Cat\'egories tannakiennes.
\newblock In P.~Cartier {et al.}, editor, {\em Grothendieck Festschrift, Vol.
  2}, pages 111--195. Birkh\"auser, 1990.
\newblock Progress in Mathematics, Vol. 87.

\bibitem[DM82]{deligne_milne}
P. Deligne and J. Milne.
\newblock Tannakian categories.
\newblock In P.~Deligne {et al.}, editor, {\em {Hodge Cycles, Motives and
  Shimura Varieties}}, pages 101--228, 1982.
\newblock Lecture Notes in Mathematics, Vol. 900.

\bibitem[DM89]{duval_mitschi}
A.~Duval and C.~Mitschi.
\newblock Groupe de {Galois} des \'equations hyperg\'eom\'etriques confluentes
  g\'en\'eralis\'ees.
\newblock {\em C. R. Acad. Sci. Paris}, 309(1):217--220, 1989.

\bibitem[{\SortNoop{Dries}}dDR79]{vdDR}
L. {\SortNoop{Dries}}van~den Dries and P. Ribenboim.
\newblock Application de la th\'eorie des mod\`eles aux groupes de {G}alois de
  corps de fonctions.
\newblock {\em C. R. Acad. Sci. Paris S\'er. A-B}, 288(17):A789--A792, 1979.

\bibitem[DV02]{divi1}
L. Di~Vizio.
\newblock Arithmetic theory of {$q$}-difference equations: the {$q$}-analogue
  of {G}rothendieck-{K}atz's conjecture on {$p$}-curvatures.
\newblock {\em Invent. Math.}, 150(3):517--578, 2002.

\bibitem[DV04]{divi2}
L. Di~Vizio.
\newblock Introduction to {$p$}-adic {$q$}-difference equations (weak
  {F}robenius structure and transfer theorems).
\newblock In {\em Geometric aspects of Dwork theory. Vol. I, II}, pages
  615--675. Walter de Gruyter GmbH \& Co. KG, Berlin, 2004.

\bibitem[DVRSZ03]{sau3}
L.~Di~Vizio, J.-P. Ramis, J.~Sauloy, and C.~Zhang.
\newblock \'{E}quations aux {$q$}-diff\'erences.
\newblock {\em Gaz. Math.}, 96:20--49, 2003.

\bibitem[Dwo90]{dwork90}
B. Dwork.
\newblock Differential operators with nilpotent {$p$}-curvature.
\newblock {\em Amer. J. Math.}, 112(5):749--786, 1990.

\bibitem[Dyc05]{dyckerhoff}
T. Dyckerhoff.
\newblock {Picard-Vessiot Extensions over Number Fields}.
\newblock Diplomarbeit, {Fakult\"at f\"ur Mathematik und Informatik der
  Universit\"at Heidelberg}, 2005.

\bibitem[Dyc07]{dyck_inverse}
T.~Dyckerhoff.
\newblock The inverse problem of differential {Galois} theory over the field
  ${\mathbb C}(z)$.
\newblock to appear Oberwohlfach Reports, 2007.

\bibitem[Eti95]{etingof}
P.~I. Etingof.
\newblock Galois groups and connection matrices of {$q$}-difference equations.
\newblock {\em Electron. Res. Announc. Amer. Math. Soc.}, 1(1):1--9
  (electronic), 1995.

\bibitem[FH91]{fulton_harris}
W.~Fulton and J.~Harris.
\newblock {\em {Representation theory. A first course}}, volume 129 of {\em
  Graduate Texts in Mathematics}.
\newblock Springer, New York, 1991.

\bibitem[Fra63]{franke63}
C.~H. Franke.
\newblock {Picard-Vessiot} theory of linear homogeneous difference equations.
\newblock {\em Transactions of the AMS}, 108:491--515, 1963.

\bibitem[GKZ94]{GKZ}
I.M. Gelfand, M.M. Kapranov, and A.~V. Zelevinsky.
\newblock {\em Discriminants, Resultants and Multidimensional Determinants}.
\newblock Birkh\"auser, Boston, Basel, Stuttgart, 1994.

\bibitem[Gra00]{gray}
J.~J. Gray.
\newblock {\em Linear {D}ifferential {E}quations and {G}roup {T}heory from
  {R}iemann to {P}oincar\'e}.
\newblock Birkh\"auser, Boston, Basel, Stuttgart, second edition, 2000.

\bibitem[Har64]{hartman_book}
P. Hartman.
\newblock {\em Ordinary differential equations}.
\newblock John Wiley \& Sons Inc., New York, 1964.

\bibitem[Har02]{hartmann}
J.~Hartmann.
\newblock {\em On the inverse problem in differential {Galois} theory}.
\newblock Thesis, University of Heidelberg, 2002.
\newblock (available at {\tt www.ub.uni-heidelberg.de/archiv/3085}).

\bibitem[Har05]{hartmann2005}
J. Hartmann.
\newblock On the inverse problem in differential {G}alois theory.
\newblock {\em J. Reine Angew. Math.}, 586:21--44, 2005.

\bibitem[Har06]{hardouin06}
Charlotte Hardouin.
\newblock Hypertranscendance et groupes de galois aux diff\'erences.
\newblock {\tt arXiv:math.RT/0702846v1}, 2006.

\bibitem[Har07a]{HH1}
D.~Harbater.
\newblock Patching over fields {(joint with Julia Hartmann)}.
\newblock to appear Oberwohlfach Reports, 2007.

\bibitem[Har07b]{hard_it}
C.~Hardouin.
\newblock Iterative q-difference {Galois} theory.
\newblock to appear Oberwohlfach Reports, 2007.

\bibitem[Har07c]{HH2}
J.~Hartmann.
\newblock Differential {Galois} groups and patching {(joint with David
  Harbater)}.
\newblock to appear in Oberwohlfach Reports, 2007.

\bibitem[{\SortNoop{Hoeij}}H02]{hoeij_banach}
M. {\SortNoop{Hoeij}}van~Hoeij.
\newblock Decomposing a $4^{th}$ order linear differential equation as a
  symmetric product.
\newblock In T.~Crespo and Z.~Hajto, editors, {\em {Differential Galois Theory}}, volume~58 of {\em Banach
  Center Publications}, pages 89--96. Institute of Mathematics, Polish Academy
  of Sciences, Warszawa, 2002.

\bibitem[{\SortNoop{Hoeij}}H07]{hoeij07}
M.~{\SortNoop{Hoeij}}van~Hoeij.
\newblock Solving third order linear differential equations in terms of second
  order equations.
\newblock In C.W. Brown, editor, {\em Proceedings of ISSAC2007}, pages
  355--360. ACM Press, 2007.

\bibitem[{\SortNoop{Hoeij}}HW05]{HWKlein}
M.~{\SortNoop{Hoeij}}van~Hoeij and J.-A. Weil.
\newblock Solving second order linear differential equations with {K}lein's
  theorem.
\newblock In {\em ISSAC'05}, pages 340--347 (electronic). ACM, New York, 2005.

\bibitem[Hru02]{Hrushovski}
E.~Hrushovski.
\newblock Computing the {Galois} group of a linear differential equation.
\newblock In  T.~Crespo and Z.~Hajto, editors,{\em {Differential Galois Theory}}, volume~58 of {\em Banach
  Center Publications}, pages 97--138. Institute of Mathematics, Polish Academy
  of Sciences, Warszawa, 2002.

\bibitem[HRUW99]{hruw}
M.~{\SortNoop{Hoeij}}van Hoeij, J.-F. Ragot, F.~Ulmer, and {J.-A.} Weil.
\newblock Liouvillian solutions of linear differential equations of order three
  and higher.
\newblock {\em Journal of Symbolic Computation}, 28(4/5):589--610, 1999.

\bibitem[HS85]{harris_sibuya}
W.A. Harris and Y.~Sibuya.
\newblock The reciprocals of solutions of linear ordinary differential
  equations.
\newblock {\em Adv. in Math.}, 58:119--132, 1985.


\bibitem[Hum75]{humphreys}
J.~Humphreys.
\newblock {\em {Linear Algebraic Groups}}.
\newblock Graduate Texts in Mathematics. Springer-Verlag, New York, 1975.

\bibitem[HUY07]{hyper07}
R.-P. Holzapfel, A.M. Uluda{\u{g}}, and M. Yoshida, editors.
\newblock {\em Arithmetic and geometry around hypergeometric functions}, volume
  260 of {\em Progress in Mathematics}.
\newblock Birkh\"auser Verlag, Basel, 2007.
\newblock Lecture notes of a CIMPA Summer School held at Galatasaray
  University, Istanbul, June 13--25, 2005.

\bibitem[HvdP95]{hendriks_vdp}
P.~A. Hendriks and M.~van~der Put.
\newblock Galois action on solutions of a differential equation.
\newblock {\em Journal of Symbolic Computation}, 19(6):559 -- 576, 1995.

\bibitem[HW96]{hoeij_weil}
M.~{\SortNoop{Hoeij}}van Hoeij and J.~A. Weil.
\newblock An algorithm for computing invariants of differential {Galois}
  groups.
\newblock {\em Journal of Pure and Applied Algebra}, 117/118:353--379, 1996.

\bibitem[Jac37]{jacobson}
N.~Jacobson.
\newblock Pseudo-linear transformations.
\newblock {\em Annals of Mathematics}, 38:484--507, 1937.

\bibitem[JL07a]{JL2}
L.~Juan and A.~Ledet.
\newblock Equivariant vector fields on non-trivial {${\rm SO}_n$}-torsors and
  differential galois theory.
\newblock Technical report, Texas Tech University, 2007.
\newblock to appear in J. of Algebra.

\bibitem[JL07b]{JL1}
L.~Juan and A.~Ledet.
\newblock On generic differential {${\rm SO}_n$}-extensions.
\newblock Technical report, Texas Tech University, 2007.
\newblock to appear in Proc. AMS.

\bibitem[JL07c]{JL3}
L.~Juan and A.~Ledet.
\newblock On picard-vessiot extensions with group {${\rm PGL}_3$}.
\newblock Technical report, Texas Tech University, 2007.

\bibitem[JRR95]{JRR}
J. Johnson, G.~M. Reinhart, and L.~A. Rubel.
\newblock Some counterexamples to separation of variables.
\newblock {\em J. Differential Equations}, 121(1):42--66, 1995.

\bibitem[Jua07]{J2}
L.~Juan.
\newblock Generic picard-vessiot extensions for connected-by-finite groups.
\newblock {\em J. of Algebra}, 312:194--2006, 2007.

\bibitem[Kam06]{kamensky}
M. Kamensky.
\newblock Definable groups of partial automorphisms.
\newblock Preprint; {\tt http://arxiv.org/abs/math.LO/0607718}, 2006.

\bibitem[Kap76]{kaplansky}
I. Kaplansky.
\newblock {\em {An Introduction to Differential Algebra}}.
\newblock Hermann, Paris, second edition, 1976.

\bibitem[Kat87a]{katz}
N.~Katz.
\newblock A simple algorithm for cyclic vectors.
\newblock {\em American Journal of Mathematics}, 109:65--70, 1987.

\bibitem[Kat87b]{katz_calculations}
N. Katz.
\newblock On the calculation of some differential {Galois} groups.
\newblock {\em Inventiones Mathematicae}, 87:13--61, 1987.

\bibitem[Kat90]{katz_exponential}
N.~Katz.
\newblock {\em {Exponential Sums and Differential Equations}}, volume 124 of
  {\em Annals of Mathematics Studies}.
\newblock Princeton University Press, Princeton, 1990.

\bibitem[Kat96]{katz_rigid}
N.~Katz.
\newblock {\em {Rigid Local Systems}}, volume 139 of {\em Annals of Mathematics
  Studies}.
\newblock Princeton University Press, Princeton, 1996.

\bibitem[Kol48]{kolchin48}
Ellis~R. Kolchin.
\newblock Algebraic matric groups and the {Picard-Vessiot} theory of
  homogeneous linear ordinary differential equations.
\newblock {\em Annals of Mathematics}, 49:1--42, 1948.

\bibitem[Kol68]{kolchin_ostrowski}
E.~R. Kolchin.
\newblock Algebraic groups and algebraic dependence.
\newblock {\em American Journal of Mathematics}, 90:1151--1164, 1968.

\bibitem[Kol74]{constrained}
E.~R. Kolchin.
\newblock Constrained extensions of differential fields.
\newblock {\em Advances in Math.}, 12:141--170, 1974.

\bibitem[Kol76]{DAAG}
E.~R. Kolchin.
\newblock {\em {Differential Algebra and Algebraic Groups}}.
\newblock Academic Press, New York, 1976.

\bibitem[Kol85]{kolchin_groups}
E.~R. Kolchin.
\newblock {\em Differential algebraic groups}, volume 114 of {\em Pure and
  Applied Mathematics}.
\newblock Academic Press Inc., Orlando, FL, 1985.

\bibitem[Kov86]{kovacic86}
J.J.~Kovacic.
\newblock An algorithm for solving second order linear homogeneous differential
  equations.
\newblock {\em Journal of Symbolic Computation}, 2:3--43, 1986.

\bibitem[Kov03]{kov03}
J.~J. Kovacic.
\newblock The differential {G}alois theory of strongly normal extensions.
\newblock {\em Trans. Amer. Math. Soc.}, 355(11):4475--4522 (electronic), 2003.

\bibitem[Kov06]{kov06}
J.~J. Kovacic.
\newblock Geometric characterization of strongly normal extensions.
\newblock {\em Trans. Amer. Math. Soc.}, 358(9):4135--4157 (electronic), 2006.

\bibitem[Lan66]{Lang_trans}
S.~Lang.
\newblock {\em {Introduction to Tanscendental Numbers}}.
\newblock Addison Wesley, New York, 1966.

\bibitem[Lan06]{Landesman}
P. Landesman.
\newblock {\em Generalized Differential Galois Theory}.
\newblock Ph.d. thesis, {Graduate Center, The City University of New York},
  2006.

\bibitem[Li{\c{t}}02]{lict}
R. Li{\c{t}}canu.
\newblock Counting {L}am\'e differential operators.
\newblock {\em Rend. Sem. Mat. Univ. Padova}, 107:191--208, 2002.

\bibitem[LR90]{loday_richaud90}
M.~Loday-Richaud.
\newblock Introduction \`a la multisommabilit\'e.
\newblock {\em Gazette des Math\'ematiciens, SMF}, 44:41--63, 1990.

\bibitem[LR94]{loday_richaud}
M.~Loday-Richaud.
\newblock Stokes phenomenon, multisummability and differential {Galois} groups.
\newblock {\em {Annales de l'Institut Fourier}}, 44(3):849--906, 1994.

\bibitem[LR95]{loday_richaud95}
M.~Loday-Richaud.
\newblock Solutions formelles des syst\`emes diff\'erentiels lin\'eaires
  m\'eromorphes et sommation.
\newblock {\em Expositiones Mathematicae}, 13:116--162, 1995.

\bibitem[LSWZ06]{lswz}
Z. Li, M.~F. Singer, M. Wu, and D. Zheng.
\newblock {A Recursive Method for Determining the One-Dimensional Submodules of
  Laurent-Ore Modules}.
\newblock In Jean-Guillaume Dumas, editor, {\em Proceedings of the 2006
  International Symposium on Symbolic and Algebraic Computation (ISSAC 2006)},
  pages 200--208. ACM Press, 2006.

\bibitem[LvdPU05]{LPU}
F.~Loray, M.~van~der Put, and F.~Ulmer.
\newblock The {Lam\'e} family of connections on the projective line.
\newblock Technical report, IRMAR, 2005.

\bibitem[Mag76]{magid_finite}
A.~Magid.
\newblock Finite generation of class groups of rings of invariants.
\newblock {\em Proc. Amer. Math. Soc.}, 60:45--48, 1976.

\bibitem[Mag94]{magid}
A. Magid.
\newblock {\em {Lectures on Differential Galois Theory}}.
\newblock University Lecture Series. American Mathematical Society, 1994.
\newblock Second Edition.

\bibitem[Mai03]{maillet}
E.~Maillet.
\newblock Sur les s\'eries divergentes et les \'equations diff\'erentielles.
\newblock {\em Ann. \'Ec. Norm. Sup.}, pages 487--518, 1903.

\bibitem[Mai04]{maier2004}
R.~S. Maier.
\newblock Algebraic solutions of the {L}am\'e equation, revisited.
\newblock {\em J. Differential Equations}, 198(1):16--34, 2004.

\bibitem[Mal01]{malgrange_galois2}
B. Malgrange.
\newblock Le groupo\"\i de de {G}alois d'un feuilletage.
\newblock In {\em Essays on geometry and related topics, Vol. 1, 2}, volume~38
  of {\em Monogr. Enseign. Math.}, pages 465--501. Enseignement Math., Geneva,
  2001.

\bibitem[Mal02]{malgrange_galois1}
B. Malgrange.
\newblock On nonlinear differential {G}alois theory.
\newblock {\em Chinese Ann. Math. Ser. B}, 23(2):219--226, 2002.
\newblock Dedicated to the memory of Jacques-Louis Lions.

\bibitem[Mar00]{marker2000}
D. Marker.
\newblock Model theory of differential fields.
\newblock In {\em Model theory, algebra, and geometry}, volume~39 of {\em Math.
  Sci. Res. Inst. Publ.}, pages 53--63. Cambridge Univ. Press, Cambridge, 2000.

\bibitem[Med05]{bustamente}
R.~Bustamente Medina.
\newblock {\em The\'eorie des mod\`eles des corps diff\'erentiellement clos
  avec un automorphisme g\'ene\'erique}.
\newblock Ph.d. thesis, {Universit\'e Paris 7}, 2005.

\bibitem[Mit89]{mitschi89}
C.~Mitschi.
\newblock Matrices de {Stokes} et groupe de {Galois} des \'equations
  hyperg\'eom\'etriques confluentes g\'en\'eralis\'ees.
\newblock {\em Pacific Journal of Mathematics}, 138(1):25--56, 1989.

\bibitem[Mit96]{mitschi96}
C.~Mitschi.
\newblock Differential {Galois} groups of confluent generalized hypergeometric
  equations: An approach using {Stokes} multipliers.
\newblock {\em Pacific Journal of Mathematics}, 176(2):365-- 405, 1996.

\bibitem[MP97]{pillaygalois1}
D. Marker and A. Pillay.
\newblock Differential {G}alois theory. {III}. {S}ome inverse problems.
\newblock {\em Illinois J. Math.}, 41(3):453--461, 1997.

\bibitem[MR89]{martinet_ramis}
J.~Martinet and J.-P. Ramis.
\newblock Th\'eorie de {Galois} diff\'erentielle et resommation.
\newblock In E.~Tournier, editor, {\em {Computer Algebra and Differential
  Equations}}, pages 115--214. Academic Press, 1989.


\bibitem[MR99]{morales}
J.J. Morales-Ruiz.
\newblock {\em {Differential Galois Theory and Non-Integrability of Hamiltonian
  Systems}}.
\newblock Birkh\"auser, Basel, 1999.

\bibitem[MS96]{MiSi:JA}
C.~Mitschi and M.~F. Singer.
\newblock Connected linear groups as differential {Galois} groups.
\newblock {\em Journal of Algebra}, 184:333--361, 1996.

\bibitem[MS02]{MiSi:TOU}
C.~Mitschi and M.~F. Singer.
\newblock Solvable-by-finite groups as differential {Galois} groups.
\newblock {\em Ann. Fac. Sci. Toulouse}, XI(3):403 -- 423, 2002.

\bibitem[MvdP03a]{mvdp1}
B.~H. Matzat and M. van~der Put.
\newblock Constructive differential {G}alois theory.
\newblock In {\em Galois groups and fundamental groups}, volume~41 of {\em
  Math. Sci. Res. Inst. Publ.}, pages 425--467. Cambridge Univ. Press,
  Cambridge, 2003.

\bibitem[MvdP03b]{mvdp2}
B.~H. Matzat and M. van~der Put.
\newblock Iterative differential equations and the {A}bhyankar conjecture.
\newblock {\em J. Reine Angew. Math.}, 557:1--52, 2003.

\bibitem[Ngu07]{nguyen}
K.A. Nguyen.
\newblock On d-solvability for linear differential equations.
\newblock {Preprint,University of Groningen}, 2007.

\bibitem[N{\SortNoop{Put}}dP07]{nguyen_vdp}
K.A. Nguyen and M.~{\SortNoop{Put}}van~der Put.
\newblock Solving linear differential equations.
\newblock {Preprint, University of Groningen}, 2007.

\bibitem[Obe03]{oberlies}
T.~Oberlies.
\newblock {\em Embedding problems in differential {Galois} theory}.
\newblock PhD thesis, University of Heidelberg, 2003.
\newblock (available at {\tt www.ub.uni-heidelberg.de/archiv/4550}).

\bibitem[Per02]{person}
A.~Person.
\newblock {\em {Solving Homogeneous Linear Differential Equations of Order 4 in
  Terms of Equations of Lower Order}}.
\newblock PhD thesis, North Carolina State University, 2002.
\newblock {\tt http://www.lib.ncsu.edu/theses/available/etd-08062002-104315/}.

\bibitem[Pil97]{pillaygalois2}
A. Pillay.
\newblock Differential {G}alois theory. {II}.
\newblock {\em Ann. Pure Appl. Logic}, 88(2-3):181--191, 1997.
\newblock Joint AILA-KGS Model Theory Meeting (Florence, 1995).

\bibitem[Pil98]{pillaygalois3}
A. Pillay.
\newblock Differential {G}alois theory. {I}.
\newblock {\em Illinois J. Math.}, 42(4):678--699, 1998.

\bibitem[Pil02]{pillay2004}
A. Pillay.
\newblock Finite-dimensional differential algebraic groups and the
  {P}icard-{V}essiot theory.
\newblock In  T.~Crespo and Z.~Hajto, editors, {\em {Differential Galois Theory}}, volume~58 of {\em Banach
  Center Publications}, pages 189--199. Institute of Mathematics, Polish Academy
  of Sciences, Warszawa, 2002. 

\bibitem[Pil04]{pillaygalois4}
A. Pillay.
\newblock Algebraic {$D$}-groups and differential {G}alois theory.
\newblock {\em Pacific J. Math.}, 216(2):343--360, 2004.

\bibitem[Poo60]{poole}
E.G.C. Poole.
\newblock {\em {Introduction to the Theory of Linear Differential Equations}}.
\newblock Dover Publications, New York, 1960.

\bibitem[{\SortNoop{Put}}dP95]{vdp_charp}
M.~{\SortNoop{Put}}van~der Put.
\newblock Differential equations in characteristic p.
\newblock {\em Compositio Mathematica}, 97:227--251, 1995.

\bibitem[{\SortNoop{Put}}dP96]{vdp_reduction}
M.~{\SortNoop{Put}}van~der Put.
\newblock Reduction modulo p of differential equations.
\newblock {\em Indagationes Mathematicae}, 7:367--387, 1996.

\bibitem[{\SortNoop{Put}}dPR07]{vdp_reversat}
M. {\SortNoop{Put}}van~der Put and M. Reversat.
\newblock {Galois Theory of $q$-difference equations}.
\newblock {\em Ann. Fac. Sci. de Toulouse}, XVI(2):1--54, 2007.

\bibitem[{\SortNoop{Put}}dPS97]{PuSi}
M. {\SortNoop{Put}}van~der Put and M.~F. Singer.
\newblock {\em {Galois Theory of Difference Equations}}, volume 1666 of {\em
  Lecture Notes in Mathematics}.
\newblock Springer-Verlag, Heidelberg, 1997.

\bibitem[{\SortNoop{Put}}dPS03]{PuSi2003}
M. {\SortNoop{Put}}van~der Put and M.~F. Singer.
\newblock {\em {Galois Theory of Linear Differential Equations}}, volume 328 of
  {\em Grundlehren der mathematischen Wissenshaften}.
\newblock Springer, Heidelberg, 2003.

\bibitem[Ram85a]{ramis85a}
J.-P. Ramis.
\newblock Filtration {Gevrey} sur le groupe de {Picard-Vessiot} d'une
  \'equation diff\'erentielle irr\'eguli\`ere.
\newblock Informes de Matematica, Preprint {IMPA}, {Series A-045/85}, 1985.

\bibitem[Ram85b]{ramis85c}
J.-P. Ramis.
\newblock Ph\'enom\`ene de {Stokes} et filtration {Gevrey} sur le groupe de
  {Picard-Vessiot}.
\newblock {\em Comptes Rendus de l'Acad\'emie des Sciences, Paris},
  301:165--167, 1985.

\bibitem[Ram85c]{ramis85b}
J.-P. Ramis.
\newblock Ph\'enom\`ene de {Stokes} et resommation.
\newblock {\em Comptes Rendus de l'Acad\'emie des Sciences, Paris},
  301:99--102, 1985.

\bibitem[Ram93]{ramis93}
J.-P. Ramis.
\newblock {\em {S\'eries Divergentes et Th\'eories Asymptotiques}}.
\newblock Number 121 in Panoramas et Synth\`eses. Soci\'et\'e Math\'ematique de
  France, Paris, 1993.

\bibitem[Ram96]{ramis_inverse}
J.-P. Ramis.
\newblock About the inverse problem in differential {Galois} theory: Solutions
  of the local inverse problem and of the differential {Abhyankar} conjecture.
\newblock Technical report, Universit\'e Paul Sabatier, Toulouse, 1996.

\bibitem[Ram07]{sau4}
J.-P. Ramis.
\newblock Local classification of linear meromorphic q-difference equations
  (joint with {Jacques Sauloy}).
\newblock to appear Oberwohlfach Reports, 2007.

\bibitem[Ros61]{toroid}
M.~Rosenlicht.
\newblock Toroidal algebraic groups.
\newblock {\em Proc. Amer. Math. Soc.}, 12:984--988, 1961.

\bibitem[RS07]{sau1}
J.-P. Ramis and J.~Sauloy.
\newblock The {$q$}-analogue of the wild fundamental group. {I}.
\newblock In {\em Algebraic, analytic and geometric aspects of complex
  differential equations and their deformations. Painlev\'e hierarchies}, RIMS
  K\^oky\^uroku Bessatsu, B2, pages 167--193. Res. Inst. Math. Sci. (RIMS),
  Kyoto, 2007.

\bibitem[RSZ06]{sau2}
J.-P. Ramis, J. Sauloy, and C. Zhang.
\newblock D\'eveloppement asymptotique et sommabilit\'e des solutions des
  \'equations lin\'eaires aux {$q$}-diff\'erences.
\newblock {\em C. R. Math. Acad. Sci. Paris}, 342(7):515--518, 2006.

\bibitem[Sau03]{sauloy_galois}
J. Sauloy.
\newblock Galois theory of {F}uchsian {$q$}-difference equations.
\newblock {\em Ann. Sci. \'Ecole Norm. Sup. (4)}, 36(6):925--968 (2004), 2003.

\bibitem[Sei58]{sei58}
A. Seidenberg.
\newblock Abstract differential algebra and the analytic case.
\newblock {\em Proc. Amer. Math. Soc.}, 9:159--164, 1958.

\bibitem[Sei69]{sei69}
A. Seidenberg.
\newblock Abstract differential algebra and the analytic case. {II}.
\newblock {\em Proc. Amer. Math. Soc.}, 23:689--691, 1969.

\bibitem[Ser64]{serre}
J.-P. Serre.
\newblock {\em {Cohomologie Galoisienne}}.
\newblock Number~5 in Lecture Notes in Mathematics. Springer-Verlag, New York,
  1964.

\bibitem[Sin81]{singer_liouvillian}
M.~F. Singer.
\newblock Liouvillian solutions of $n^{th}$ order homogeneous linear
  differential equations.
\newblock {\em American Journal of Mathematics}, 103:661--681, 1981.

\bibitem[Sin85]{singer_second}
M.~F. Singer.
\newblock Solving homogeneous linear differential equations in terms of second
  order linear differential equations.
\newblock {\em American Journal of Mathematics}, 107:663--696, 1985.

\bibitem[Sin86]{singer_relations}
M.~F. Singer.
\newblock Algebraic relations among solutions of linear differential equations.
\newblock {\em Transactions of the American Mathematical Society},
  295:753--763, 1986.

\bibitem[Sin88]{singer_fano}
M.~F. Singer.
\newblock Algebraic relations among solutions of linear differential equations:
  {Fano's} theorem.
\newblock {\em American Journal of Mathematics}, 110:115--144, 1988.

\bibitem[Sin93]{singer_moduli}
M.~F. Singer.
\newblock Moduli of linear differential equations on the {Riemann} sphere with
  fixed {Galois} group.
\newblock {\em Pacific Journal of Mathematics}, 106(2):343--395, 1993.

\bibitem[Sin07]{sing_oberwol}
M.~F. Singer.
\newblock Differential groups and differential relations.
\newblock to appear in Oberwohlfach Reports, 2007.

\bibitem[Spr98]{springer}
T.~A. Springer.
\newblock {\em Linear Algebraic Groups, Second Edition}, volume~9 of {\em
  Progress in Mathematics}.
\newblock Birkh\"auser, Boston, 1998.

\bibitem[SST00]{sst}
M.~Sato, B.~Sturmfels, and N.~Takayama.
\newblock {\em {Gr\"obner Deformations of Hypergeometric Differential
  Equations}}, volume~6 of {\em Algorithms and Computation in Mathematics}.
\newblock Springer-Verlag, Berlin, 2000.

\bibitem[SU97]{singer_ulmer_linearforms}
M.~F. Singer and F.~Ulmer.
\newblock Linear differential equations and products of linear forms.
\newblock {\em Journal of Pure and Applied Algebra}, 117:549--564, 1997.

\bibitem[Tre71]{tret71}
M. Tretkoff.
\newblock Algebraic extensions of the field of rational functions.
\newblock {\em Comm. Pure Appl. Math.}, 24:491--497, 1971.

\bibitem[TT79]{tretkoff79}
C.~Tretkoff and M.~Tretkoff.
\newblock Solution of the inverse problem in differential {Galois} theory in
  the classical case.
\newblock {\em American Journal of Mathematics}, 101:1327--1332, 1979.


\bibitem[Ulm94]{ulmer_prime}
F.~Ulmer.
\newblock Irreducible linear differential equations of prime order.
\newblock {\em Journal of Symbolic Computation}, 18(4):385--401, 1994.


\bibitem[Ume]{u5}
H. Umemura.
\newblock {Galios Theory and Painlev\'e Equations}.
\newblock {Preprint, 2004}.

\bibitem[Ume90]{u1}
H. Umemura.
\newblock Birational automorphism groups and differential equations.
\newblock {\em Nagoya Mathematics Journal}, 119:1--80, 1990.

\bibitem[Ume96a]{u3}
H. Umemura.
\newblock Differential {Galois} theory of infinite dimension.
\newblock {\em Nagoya Mathematics Journal}, 144:59--135, 1996.

\bibitem[Ume96b]{u2}
H. Umemura.
\newblock Galois theory of algebraic and differential equations.
\newblock {\em Nagoya Mathematics Journal}, 144:1--58, 1996.

\bibitem[Ume99]{u4}
H. Umemura.
\newblock {Lie-Drach-Vessiot Theory}.
\newblock In {\em {CR}-Geometry and Overdetermined Systems}, volume~25 of {\em
  Advanced Studies in Pure Mathematics}, pages 364--385. North-Holland, 1999.

\bibitem[Ume04]{u6}
H. Umemura.
\newblock Monodromy preserving deformation and differential {G}alois group.
  {I}.
\newblock Analyse complexe, syst\`emes dynamiques, sommabilit\'e des s\'eries
  divergentes et th\'eories galoisiennes. I, 2004.

\bibitem[UW96]{ulmer_weil}
F.~Ulmer and {J.-A.} Weil.
\newblock A note on {Kovacic}'s algorithm.
\newblock {\em Journal of Symbolic Computation}, 22(2):179 -- 200, 1996.

\bibitem[Var91]{varadarajan_mero}
V.~S. Varadarajan.
\newblock Meromorphic differential equations.
\newblock {\em Expositiones Mathematicae}, 9(2):97--188, 1991.

\bibitem[Var96]{varadarajan}
V.~S. Varadarajan.
\newblock Linear meromorphic differential equations: A modern point of view.
\newblock {\em {Bulletin (New Series) of the American Mathematical Society}},
  33(1):1 -- 42, 1996.

\bibitem[Wu05]{wu05}
M. Wu.
\newblock {\em On Solutions of Linear Functional Systems and Factorization of
  Modules over Laurent-Ore Algebras}.
\newblock Ph.d. thesis, {Chinese Academy of Sciences and l'Universit\'e de
  Nice-Sophia Antipolis}, 2005.


\end{thebibliography}

\end{document}